\documentclass{amsart}
\usepackage{amssymb,mathtools}
\usepackage[british]{babel}
\usepackage{enumitem}
\usepackage[latin1]{inputenc}
\usepackage{xcolor}

\usepackage{hyperref}

\newtheorem{theorem}{Theorem}
\numberwithin{theorem}{section}
\newtheorem{corollary}[theorem]{Corollary}
\newtheorem{lemma}[theorem]{Lemma}
\newtheorem{proposition}[theorem]{Proposition}
\theoremstyle{definition}
\newtheorem{definition}[theorem]{Definition}
\newtheorem{remark}[theorem]{Remark}

\newtheorem{assumption}[theorem]{Standing Assumption}

\numberwithin{equation}{section}

\newcommand{\supp}{\operatorname{supp}}
\newcommand{\rng}{\operatorname{rng}}

\title[Partial impredicativity and well-ordering principles]{Fra\"iss\'e's conjecture, partial impredicativity and well-ordering principles, part II}
\author{Anton Freund}
\author{Katarzyna W. Kowalik}
\author{Davide Manca}

\address{Anton Freund, University of W\"urzburg, Institute of Mathematics, Emil-Fischer-Str.~40, 97074 W\"urz\-burg, Germany}
\email{anton.freund@uni-wuerzburg.de}

\address{Katarzyna W. Kowalik, Faculty of Philosophy, University of Warsaw, Krakowskie Przedmie\'scie 3, 00-047 Warsaw, Poland}
\email{kwz.kowalik@gmail.com}

\address{Davide Manca, Institute of Discrete Mathematics and Geometry, TU Wien, Wiedner Haupt\-strasse~\mbox{8-10}, 1040 Vienna, Austria}
\email{d.manca.math@gmail.com}

\thanks{The work of Freund has been funded by the Deutsche Forschungsgemeinschaft (DFG, German Research Foundation) -- Project number 460597863.}

\begin{document}

\begin{abstract}
We exhibit a well-ordering principle that is equivalent to a theory of partial impredicativity. The latter goes back to Towsner and relates to recent work of Suzuki and Yokoyama. Together with part~I of the present paper, this yields a combinatorial upper bound on the axiomatic strength of Fra\"iss\'e's conjecture.
\end{abstract}

\keywords{Partial impredicativity, well-ordering principle, reverse mathematics}
\subjclass[2020]{03B30, 03F35}

\maketitle

\section{Introduction}

Reverse mathematics is a research program in mathematical logic, which aims to determine the minimal (set existence) axioms that are needed to prove a given mathematical theorem (see~\cite{simpson09}). It turns out that many theorems fall into one of five linearly ordered categories (`big five'), though a much more diverse landscape has emerged over the last years (see Figure~9.5 in~\cite{dzhafarov-mummert}). The two strongest categories are given by the axioms of arithmetical transfinite recursion ($\mathsf{ATR}_0$) and $\Pi^1_1$-comprehension ($\Pi^1_1\textsf{-CA}_0$). These are often classified as predicatively reducible and impredicative, respectively (see~\cite{feferman-predicativity} for philosophical background).

There are several mathematical theorems that are strictly weaker than $\Pi^1_1\textsf{-CA}_0$ (often for reasons of quantifier complexity) but not (known to be) provable in~$\mathsf{ATR}_0$. To analyse these theorems, Towsner has suggested some intermediate axioms of partial impredicativity~\cite{partial-impred}. In work of Suzuki and Yokoyama, these have been extended into a hierarchy that exhausts all $\Pi^1_2$-consequences of $\Pi^1_1\textsf{-CA}_0$ (see~\cite{suzuki-yokoyama}).

It is a long-standing open problem in reverse mathematics to determine the exact strength of Fra\"iss\'e's conjecture. The latter is actually a theorem of Laver~\cite{laver71}, which asserts that the embeddability order on countable (or more generally on $\sigma$-scattered) linear orders admits no infinitely descending chains and no infinite antichains (i.e., that it is a well-quasi-order). Shore~\cite{shore-comp-wos} has shown that Fra\"iss\'e's conjecture entails $\mathsf{ATR}_0$ (over $\mathsf{RCA}_0$ by a boot-strapping argument in~\cite{freund-manca-fraisse}), while Montalb\'an~\cite{montalban-fraisse} has established that it can be proved in~$\Pi^1_1\textsf{-CA}_0$. It is thus natural to study Fra\"iss\'e's conjecture through the lens of partial impredicativity. This is particularly interesting since Fra\"iss\'e's conjecture is equivalent to further relevant theorems (see the introduction of~\cite{montalban-fraisse}).

By inspecting the argument of Montalb\'an, one of the present authors has shown that Fra\"iss\'e's conjecture can be proved in a certain theory~$\Pi^1_1\textsf{-CA}^\Gamma_0$ (see~\cite{freund-fraisse-partial-impred}). This theory is somewhat stronger than Towsner's theories of partial impredicativity. It lies between the first two stages of the hierarchy of Suzuki and Yokoyama (who denote it by $\beta^1_0\mathsf{RFN}(1;\mathsf{ATR})$). Somewhat informally, $\Pi^1_1\textsf{-CA}^\Gamma_0$ extends $\mathsf{ACA}_0$ by the statement that any set lies in an $\omega$-model of parameter-free $\Pi^1_1$-comprehension. Here we work over $\mathsf{ACA}_0$ (which is stronger than $\mathsf{RCA}_0$) to have a smooth encoding of countable $\omega$-models. When $\omega$-models occur in the object language, it will always be understood that they are countable and coded (as in Section~VII.2 of~\cite{simpson09}).

\begin{definition}\label{def:Pi11-Gamma}
  For $X\subseteq\mathbb N$ and an $\omega$-model~$\mathcal M\ni X$, we write $\mathcal M\vDash\Pi^1_1\textsf{-CA}(X)$ if
  \begin{equation*}
      \{n\in\mathbb N:\mathcal M\vDash\varphi(n,X)\}\in\mathcal M
  \end{equation*}
  holds for any $\Pi^1_1$-formula $\varphi$ that has no set parameters other than~$X$. The axiom $\Pi^1_1\textsf{-CA}^\Gamma_0$ states that each $X\subseteq\mathbb N$ lies in an $\omega$-model~$\mathcal M$ with $\mathcal M\vDash\mathsf{ATR}_0+\Pi^1_1\textsf{-CA}(X)$. We also write $\Pi^1_1\textsf{-CA}^\Gamma_0$ for the theory that extends~$\mathsf{ACA}_0$ by this axiom.
\end{definition}

We have mentioned that Fra\"iss\'e's conjecture can be proved in $\Pi^1_1\textsf{-CA}^\Gamma_0$. The aim of the present paper is to make this bound more concrete in a combinatorial sense. We will rely on the notion of well-ordering principle in order to achieve this aim. Somewhat informally, a well-ordering principle (of type one) states that some computable transformation of linear orders preserves well-foundedness. As an example, when~$X$ is a linear order, we consider
\begin{equation*}
    \omega^X=\{\langle x_1,\ldots,x_n\rangle:x_1\geq\ldots\geq x_n\text{ in }X\}
\end{equation*}
with the lexicographic ordering (think of $\langle x_1,\ldots,x_n\rangle$ as a formal Cantor normal form $\omega^{x_1}+\ldots+\omega^{x_n}$). As shown by Girard~\cite{girard87} (and in a different way by Hirst~\cite{hirst94}), the statement that $\omega^X$ is well-founded for any well-order~$X$ is equivalent to arithmetical comprehension over $\mathsf{RCA}_0$. This can be used to determine the strength of mathematical theorems such as Higman's lemma and the Hilbert basis theorem (though the latter only requires the well-foundedness of $\omega^\omega$; see~\cite{simpson-higman}). The literature contains a great number of analogous equivalences between natural axioms and order transformations as well as applications~\cite{rathjen-afshari,freund-3-bqo,marcone-montalban-hausdorff,marcone-montalban,rathjen-atr,rathjen-weiermann-atr,thomson-rathjen-Pi-1-1}. In connection with the present paper, we mention a result of Rathjen and Valencia-Vizca\'ino~\cite{rathjen-model-bi}, which gives a well-ordering principle that is equivalent to the statement that any set is contained in an $\omega$-model of transfinite induction (also known as bar induction). This last statement is equivalent to a principle of partial impredicativity, the arithmetical leftmost path principle~$\mathsf{ALPP}$ of Towsner~\cite{partial-impred} (see~\cite{freund-fraisse-partial-impred,suzuki-tlpp-omega} for proofs of the equivalence). Apart from this somewhat indirect connection, the present paper seems to be the first that relates well-ordering principles and partial impredicativity.

Let us now state our main result, though its meaning will only be clear once the order~$\psi(\Gamma_{\Omega+X})$ has been defined. We will motivate the definition in the following paragraphs, but details are deferred to the next section.

\begin{theorem}\label{thm:main}
    The following are equivalent over~$\mathsf{ACA}_0$:
    \begin{enumerate}[label=(\roman*)]
        \item The principle $\Pi^1_1\text{-}\mathsf{CA}^\Gamma_0$ holds.
        \item The linear order~$\psi(\Gamma_{\Omega+X})$ is well-founded for every well-order~$X$.
    \end{enumerate}
\end{theorem}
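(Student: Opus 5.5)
We prove the two directions separately. Throughout, $\psi(\Gamma_{\Omega+X})$ is the relativized collapse built from terms $0$, $\Omega$, $\Omega^{\cdot}$-type sums, a binary Veblen-style function $\varphi$ iterated up to the first fixed point $\Gamma$ above $\Omega+X$, and a collapsing function $\psi$ restricted to arguments whose coefficients below $\Omega$ are smaller than the argument.

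\textbf{From (i) to (ii).} Given a well-order $X$, we apply $\Pi^1_1\textsf{-CA}^\Gamma_0$ to obtain an $\omega$-model $\mathcal M\ni X$ with $\mathcal M\vDash\mathsf{ATR}_0+\Pi^1_1\textsf{-CA}(X)$. The plan is to interpret $\Omega$ inside $\mathcal M$ by the class of (codes of) countable well-orders of $\mathcal M$, or more concretely by the $\Pi^1_1(X)$-definable accessible part. Parameter-free $\Pi^1_1$-comprehension relative to $X$ gives, inside $\mathcal M$, the set
\[
W=\{a\in\psi(\Gamma_{\Omega+X}):\text{the initial segment below }a\text{ is well-founded}\}.
\]
This is a single $\Pi^1_1$-formula with parameter~$X$ only, since the order is computable in~$X$. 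We then run the usual accessible-part argument of Buchholz type, showing that $W$ contains $0$ and is closed under addition, $\varphi$ and $\psi$. For closure under the operations below $\Omega$, $\mathsf{ATR}_0$ inside $\mathcal M$ supplies the relevant well-ordering principles, e.g.\ that $\varphi$-type and $\Gamma$-type transformations preserve well-foundedness (Rathjen--Weiermann). The closure under $\psi$ is handled by a main induction on terms of $\Gamma_{\Omega+X}$ with coefficients in~$W$. The key point is that this outer induction is along $\Gamma_{\Omega+X}$ with $\Omega$ replaced by the \emph{set} $W$, so the ordering used is $\Gamma_{W+X}$. Since $W$ and $X$ are well-orders in $\mathcal M$ and $\mathsf{ATR}_0$ proves $\Gamma$-preservation, this ordering is well-founded in $\mathcal M$, and hence truly well-founded because $\mathcal M$ is an $\omega$-model containing the relevant descending-sequence data arithmetically. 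We conclude that $W$ is everything, so $\psi(\Gamma_{\Omega+X})$ is well-founded. The careful point is that well-foundedness in $\mathcal M$ must transfer to the real world; for this we note that $W$ is correct in the downward direction, so a real descending sequence from an element of $W$ yields a contradiction via arithmetical reasoning.

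\textbf{From (ii) to (i).} Fix $X$. We aim to build an $\omega$-model of $\mathsf{ATR}_0+\Pi^1_1\textsf{-CA}(X)$. Following the method of Rathjen and Valencia-Vizcaíno and Freund's $\Pi^1_1$-comprehension via $\psi$-collapses, we argue by contradiction using a search-tree or Schütte deduction-chain construction. Suppose no such $\omega$-model containing $X$ exists. Consider the $\omega$-logic proof search for the theory $\mathsf{ATR}_0$ plus the parameter-free $\Pi^1_1(X)$ comprehension instances, with sets witnessing comprehension added as new constants. Absence of a model, via an $\omega$-completeness argument, yields a well-founded search tree, i.e.\ an $\omega$-proof of the empty sequent. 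We embed this proof into an infinitary system with an $\Omega$-rule in Buchholz's style, where $\Omega$ handles the $\Pi^1_1$-comprehension instances. We then apply cut elimination and collapsing, using predicative elimination up to $\Gamma$ to absorb the $\mathsf{ATR}_0$ part. This assigns to the derivation an ordinal notation in $\psi(\Gamma_{\Omega+Y})$, where $Y$ is the Kleene--Brouwer order of the search tree, which is well-founded by assumption. Applying (ii) to $Y$ makes the collapsed derivation well-founded, and a cut-free derivation of the empty sequent is impossible. Formalizing this inside $\mathsf{ACA}_0$ requires treating derivations as trees with ordinal labels checked locally, with truth at the end obtained by induction along the well-founded labels.

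The main obstacle is this second direction, in particular designing the $\Omega$-rule embedding and the collapsing lemma so that all ordinal bounds land in $\psi(\Gamma_{\Omega+Y})$ rather than a larger notation system, and verifying the local correctness conditions arithmetically. We would first try adapting Freund's proof that $\Pi^1_1\textsf{-CA}_0$ is equivalent to a $\psi$-based well-ordering principle, replacing $\varepsilon$-closure by $\Gamma$-closure to account for $\mathsf{ATR}_0$ inside the model.
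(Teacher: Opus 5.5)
Your direction (i)$\Rightarrow$(ii) is essentially the paper's argument: form a $\Pi^1_1(X)$-definable well-founded part $W$ inside $\mathcal M$, use $\mathsf{ATR}_0$ in $\mathcal M$ to make $\Gamma_{W+X}$ well-founded there, and induct along $\Gamma_{W+X}$ to show that $W$ exhausts the order. The final transfer step fails as you state it. Since $\mathcal M$ is merely an $\omega$-model, well-foundedness is only downward absolute: a truly well-founded order is well-founded in $\mathcal M$, but not conversely. So $a\in W$ only says that $\mathcal M$ believes the segment below $a$ to be well-founded. It gives no information about a descending sequence that lies outside $\mathcal M$, and nothing in your set-up puts such a sequence into $\mathcal M$. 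Your remark that $W$ is correct in the downward direction is either mere downward closure or exactly what is to be proved. The repair, which is what the paper does in Theorem~\ref{thm:WO_principle}, is to fix a putative descending sequence $f$ first and apply $\Pi^1_1\text{-}\mathsf{CA}^\Gamma_0$ to $X\oplus f$. Then $f\in\mathcal M$, $\Pi^1_1$-comprehension with parameter $X$ is still available, and $\mathcal M\vDash\mathsf{WO}(\Gamma_{\Omega+X})$ contradicts $f\in\mathcal M$ outright.

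Your direction (ii)$\Rightarrow$(i) is a plan whose decisive ingredients are missing. First, in the paper the embedding, cut elimination and collapsing are inductions on derivation heights such as $\Gamma_{\Omega+1+x}\succ\Omega$. They therefore need well-foundedness of all of $\Gamma_{\Omega+Y}$, and they need $\mathsf{ATR}_0$ to define derivability by recursion along it. But (ii) applied to $Y$ only makes $\psi(\Gamma_{\Omega+Y})$ well-founded, and the base theory is $\mathsf{ACA}_0$. The paper bridges both gaps with Proposition~\ref{prop:wo-princ-robust}: applying (ii) to $Z=\Omega(X)+X$ and embedding $\Gamma_Z$ into $\Omega(Z)$ via $\overline\psi$ gives well-foundedness of $\Gamma_{\Omega+Y}$ for every well-order $Y$, hence of its suborder $\Gamma_Y$, and then Rathjen's theorem yields $\mathsf{ATR}_0$. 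Your alternative of locally checked derivation trees over $\mathsf{ACA}_0$ would need a continuous form of cut elimination and collapsing, which you do not supply.

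Second, and more importantly, you do not say how $\mathsf{ATR}_0$ is kept strictly above the collapsed level, and this is exactly what makes the bounds land in $\psi(\Gamma_{\Omega+Y})$. The difficulty is that its recursions take arbitrary parameters, including the sets produced by $\Pi^1_1(X)$-comprehension. The paper's device is to replace $\mathsf{ATR}_0+\Pi^1_1\textsf{-CA}(Z)$ by the first-order theory $\mathsf{ID}^\Gamma$:
\begin{itemize}
\item least fixed points $I^0_\varphi$ of $Q$-positive operators give parameter-$Z$ $\Pi^1_1$-comprehension via well-founded parts (Proposition~\ref{prop:ID-to-Pi11});
\item a hierarchy of non-least fixed points $I^n_\varphi$, $n>0$, gives $\mathsf{ATR}_0$ by Avigad's equivalence with $\mathsf{FP}$ (Theorem~\ref{thm:ATR-FP}).
\end{itemize}
Level $0$ is then read as stages $I^0_{\varphi,\alpha}$ with $\alpha\preceq\Omega$, which makes axiom~(\textsf{L}) derivable. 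The levels $n>0$ are removed by an asymmetric interpretation with Veblen bounds, which is where $\Gamma$ enters (Theorem~\ref{thm:hat-elim}). Only the remaining derivation of cut rank $\Omega+1$ is collapsed. Without this layering or an equivalent mechanism, ``an $\Omega$-rule for the comprehension instances plus predicative elimination of the $\mathsf{ATR}_0$ part'' does not yet specify a system in which collapsing goes through.
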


Given the theorem, one can readily determine the proof-theoretic ordinal of the theory $\Pi^1_1\textsf{-CA}^\Gamma_0$. To describe this ordinal, we would need several constructions that are not otherwise relevant for the present paper. So we simply refer to the general connection between well-ordering principles and proof-theoretic ordinals~\cite{arai-strength-wop,bcdf-wo-princ}.

It is a classical result that $\mathsf{ATR}_0$ has proof-theoretic ordinal~$\Gamma_0$, which is the first fixed point of the so called Veblen hierarchy (see the next section for more details). More generally, we can consider the $\alpha$-th fixed point~$\Gamma_\alpha$, which can be represented by a term system that includes the elements of~$\alpha$ as constants. The symbol~$\Omega$ that occurs in the theorem above refers to an ordinal with strong closure properties. While the precise meaning will only become clear in the next section, it may help to think of the first uncountable ordinal. Then $\psi(\Gamma_{\Omega+X})$ should be interpreted as a countable ordinal that admits an order-preserving but partial function
\begin{equation*}
    \psi:\Gamma_{\Omega+X}\rightharpoonup\psi(\Gamma_{\Omega+X}).
\end{equation*}
 Note that there can be no total order embedding of $\Gamma_{\Omega+X}$ into $\psi(\Gamma_{\Omega+X})$, since the former ordinal is larger than the latter. By demanding that $\psi$ is defined on `most' arguments, we force $\psi(\Gamma_{\Omega+X})$ to be large. We will also write $\psi$ for an extension of the indicated function, which will be total but only order-preserving on `most' pairs of arguments. The function $\psi$ is a typical collapsing function as used for impredicative ordinal analysis (see Section~5.3 of~\cite{rathjen-sieg-stanford}).

To provide some intuition on the combinatorial side of our well-ordering principle, we recall Kruskal's theorem. The latter asserts that there are no infinite antichains (and of course no infinitely descending sequences) in the set of finite trees ordered by infimum-preserving embeddability (i.e., that we have a well-quasi-order). This remains true when the trees have leaf labels from a given well-quasi-order. By a classical result of H.~Friedman, Kruskal's theorem is unprovable in~$\mathsf{ATR}_0$ (see~\cite{simpson85}). This is a particularly important example of the incompleteness phenomenon from G\"odel's theorem, also because Kruskal's theorem is relevant for computer science. Its precise strength has been investigated from different viewpoints~\cite{rathjen-weiermann-kruskal,partial-impred}, and one may draw the informal conclusion that Kruskal's theorem is much closer to $\mathsf{ATR}_0$ than to $\Pi^1_1\textsf{-CA}_0$. Now Kruskal's theorem with labels is equivalent over~$\mathsf{RCA}_0$ to the assertion that $\vartheta(\Omega^\omega\cdot X)$ is well-founded for any well-order~$X$ (see~\cite{buriola-weiermann-kruskal}). Here $\vartheta$ is a collapsing function similar to~$\psi$. In the previously published first part~\cite{freund-fraisse-partial-impred} of the present paper, Theorem~\ref{thm:main} was actually announced with $\vartheta$ at the place of~$\psi$. We have decided to work with $\psi$ because it is more suitable for extensions of the result, which we envisage for future work. It is standard (though tedious) to translate between the results for $\vartheta$ and for~$\psi$ (cf.~\cite{buchholz-bachmann-howard,rathjen-weiermann-kruskal}). In any case, the analysis in terms of well-ordering principles allows for particularly informative comparisons between different theorems, e.g., between Kruskal's theorem and Fra\"iss\'e's conjecture. Let us also note that theories of related strength have been investigated in~\cite{buchholtz-jaeger-strahm}.

To conclude this introduction, we summarize the structure of our paper. As we have mentioned above, Section~\ref{sect:ordinal-notations} defines the order $\psi(\Gamma_{\Omega+X})$ and shows its central properties. In Section~\ref{sect:well-ordering-proof}, we establish the direction from~(i) to~(ii) of Theorem~\ref{thm:main}. The remaining sections are devoted to the proof of the converse: Section~\ref{sect:ind-def} builds on classical work to reformulate $\Pi^1_1\textsf{-CA}^\Gamma_0$ in terms of inductive definitions. In Section~\ref{sec:omega-completeness} we use Sch\"utte's proof of $\omega$-completeness (see \cite{schuette56} and the applications in~\cite{rathjen-afshari,jaeger-strahm-bi-reflection}) to construct the $\omega$-model that is required by $\Pi^1_1\textsf{-CA}^\Gamma_0$. This relies on the assumption that there is no $\omega$-proof of contradiction in the aforementioned theory of inductive definitions. To discharge this assumption, we adapt classical work from ordinal analysis. Specifically, our ordinal analysis has a predicative part (due to the occurrence of $\mathsf{ATR}_0$ in $\Pi^1_1\textsf{-CA}^\Gamma_0$), which is carried out in Section~\ref{sect:pred-ord-ana}. This is complemented by an impredicative part (due to the occurrence of parameter-free $\Pi^1_1$-comprehension), which we provide in Section~\ref{sect:impred-ord-ana}.

\section{Ordinal notations}\label{sect:ordinal-notations}

In this section, we define the order~$\psi(\Gamma_{\Omega+X})$ that was mentioned in the intro\-duction. The traditional approach would be to construct it via a single ordinal notation system (see Section~2 of~\cite{rathjen-model-bi} for a closely related case). We find it easier to separate the construction into two steps. First, we recall the notation system~$\Gamma_Y$ from~\cite{rathjen-atr}, which relativizes classical notations for the first fixed point~$\Gamma_0$ of the Veblen hierarchy. Secondly, we define $\psi(\Gamma_{\Omega+X})$ as the $1$-fixed point of the transformation~$Y\mapsto\Gamma_{Y+X}$ in the sense of~\cite{FR_Pi11-recursion}.

Before we define the ordinal notation system~$\Gamma_Y$, we give an informal motivation in set-theoretic terms. The so-called Veblen functions on the ordinal numbers are given by~$\varphi_0(\gamma)=\omega^\gamma$ and, when we have $\beta>0$, by
\begin{equation*}
\varphi_{\beta}(\gamma)=\text{``the $\gamma$-th joint fixed point of~$\varphi_\alpha$ for~$\alpha<\beta$"}.
\end{equation*}
To justify the definition, we point out that the~$\varphi_\alpha$ are normal functions, i.e., that they are strictly increasing and continuous at limits -- or equivalently, that they enumerate closed and unbounded (club) classes of ordinals. It is not hard to derive that $\varphi_\alpha$ has a club class of fixed points, and the intersection for~$\alpha<\beta$ is still club. We also define
\begin{equation*}
    \Gamma_\beta=\text{``the $\beta$-th fixed point of~$\alpha\mapsto\varphi_\alpha(0)$"},
\end{equation*}
where the indicated class of fixed points is club as a diagonal intersection. In order to give some intuition for the size of values, we note that $\varphi_1(0)=\varepsilon_0$ and $\Gamma_0$ are the proof-theoretic ordinals of Peano arithmetic and of~$\mathsf{ATR}_0$, respectively.

Values of the Veblen functions can be compared according to
\begin{equation*}
    \varphi_\alpha(\gamma)<\varphi_\beta(\delta)\quad\Leftrightarrow\quad\begin{cases}
        \gamma<\varphi_\beta(\delta)&\text{when }\alpha<\beta,\\
        \gamma<\delta&\text{when }\alpha=\beta,\\
        \varphi_\alpha(\gamma)<\delta & \text{when }\alpha>\beta.
    \end{cases}
\end{equation*}
For $\alpha<\beta$, the equivalence holds because $\varphi_\alpha$ is strictly increasing and because we have $\varphi_\beta(\delta)=\varphi_\alpha(\varphi_\beta(\delta))$, as $\varphi_\beta$ enumerates fixed points of~$\varphi_\alpha$. At least intuitively, the equivalence allows for recursive comparisons, as e.g.\ the expression $\gamma<\varphi_\beta(\delta)$ is shorter than $\varphi_\alpha(\gamma)<\varphi_\beta(\delta)$. Also recall that a non-zero ordinal has the form~$\omega^\gamma$ precisely if it is additively indecomposable (by convention, in Definition \ref{def:Gamma} we will use $H$ in reference to the German word ``Hauptzahlen"). This yields
\begin{equation*}
    \alpha+\beta<\varphi_\gamma(\delta)\quad\Leftrightarrow\quad\alpha,\beta<\varphi_\gamma(\delta).
\end{equation*}
The indicated properties of the Veblen functions motivate the following definition (see below for additional explanations). Our notation system coincides with the one from Definition~4.5 of~\cite{FR_Pi11-recursion} (see also the earlier construction in~\cite{rathjen-atr}). It is a straightforward extension of classical notation systems for~$\Gamma_0$.

\begin{definition}\label{def:Gamma}
    Given a linear order~$Y$, we use simultaneous recursion to define term systems~$H\subseteq\Gamma_Y$, a function~$v:\Gamma_Y\to\Gamma_Y$ and a binary relation~$\prec$ on $\Gamma_Y$ (where $\alpha\preceq\beta$ will mean that we have $\alpha\prec\beta$ or $\alpha=\beta$ as terms):
    \begin{enumerate}[label=(\roman*)]
    \item We have an element $0\in\Gamma_Y\backslash H$ with $v(0)=0$.
    \item For each~$x\in Y$, we have an element~$\Gamma_x\in H$ with $v(\Gamma_x)=\Gamma_x$.
    \item Consider terms $\alpha,\gamma\in\Gamma_Y$ with $v(\gamma)\preceq\alpha$, where we require~$\gamma\neq 0$ if~$\alpha$ is of the form~$\Gamma_x$. We then get another term~$\overline\varphi_\alpha(\gamma)\in H$ with~$v(\overline\varphi_\alpha(\gamma))=\alpha$.
    \item Given $n\geq 2$ terms~$\alpha_i\in H$ with $\alpha_1\succeq\ldots\succeq\alpha_n$, we get a term~$\alpha_1+\ldots+\alpha_n$ in $\Gamma_Y\backslash H$ with $v(\alpha_1+\ldots+\alpha_n)=0$.
    \end{enumerate}
    We write any $\alpha\in\Gamma_Y$ as $\alpha=\alpha_1+\ldots+\alpha_n$ with $\alpha_i\in H$, where $n=0$ and $n=1$ correspond to~$\alpha=0$ and~$\alpha\in H$, respectively. The relation~${\prec}\subseteq\Gamma_Y\times\Gamma_Y$ is defined by the following closure properties:
    \begin{enumerate}[label=(\roman*')]
    \item Given~$x<y$ in~$Y$, we get $\Gamma_x\prec\Gamma_y$.
    \item We obtain $\overline\varphi_\alpha(\gamma)\prec\overline\varphi_\beta(\delta)$ in each of the following cases:
    \begin{itemize}[label=--]
        \item we have $\alpha\prec\beta$ and $\gamma\prec\overline\varphi_\beta(\delta)$,
        \item we have $\alpha=\beta$ and $\gamma\prec\delta$,
        \item we have $\overline\varphi_\alpha(\gamma)\preceq\delta$.
    \end{itemize}
    \item We get $\overline\varphi_\alpha(\gamma)\prec\Gamma_x$ when we have $\alpha,\gamma\prec\Gamma_x$, and we get $\Gamma_x\prec\overline\varphi_\alpha(\gamma)$ when we have~$\Gamma_x\preceq\alpha$ or $\Gamma_x\preceq\gamma$.
    \item We obtain $\alpha_1+\ldots+\alpha_m\prec\beta_1+\ldots+\beta_n$ in each of the following cases (where any $m,n\geq 0$ are admitted):
    \begin{itemize}[label=--]
    \item we have $m<n$ and $\alpha_i=\beta_i$ for all~$i\leq m$,
    \item there is a $j\leq\min(m,n)$ with $\alpha_j\prec\beta_j$ and $\alpha_i=\beta_i$ for all~$i<j$.
    \end{itemize}
    \end{enumerate}
    Having completed the construction of the term sytem~$\Gamma_Y$, we now declare that the function $\varphi:\Gamma_Y\times\Gamma_Y\to H\subseteq\Gamma_Y$ is given by
    \begin{equation*}
        \varphi_\alpha(\gamma)=\begin{cases}
            \alpha & \text{if $\alpha$ is of the form~$\Gamma_x$ and $\gamma=0$},\\
            \gamma & \text{if $\alpha\prec v(\gamma)$},\\
            \overline\varphi_\alpha(\gamma) & \text{otherwise}.
        \end{cases}
    \end{equation*}
Note that the third clause applies precisely when~$\overline\varphi_\alpha(\gamma)$ is a term of~$\Gamma_Y$.
\end{definition}

If one wants to disentangle the simultaneous recursion in the previous definition, one can first generate a larger set of terms by omitting the conditions~$v(\gamma)\preceq\alpha$ and~$\alpha_1\succeq\ldots\succeq\alpha_n$ in~(iii) and~(iv). In this larger set, one then decides~$\alpha\prec\beta$ by recursion on the combined complexity of~$\alpha$ and~$\beta$. Once~$\prec$ is known, one re\-introduces the omitted conditions in~(iii) and~(iv) to determine the terms in~$\Gamma_Y$.

In clause~(ii') of Definition~\ref{def:Gamma}, the reader may have expected that the third condition demands not just $\overline\varphi_\alpha(\gamma)\preceq\delta$ but rather $\overline\varphi_\alpha(\gamma)\prec\delta$ and additionally~$\alpha\succ\beta$, as in Definition~2.5 of~\cite{rathjen-atr}. By Section~4 of~\cite{FR_Pi11-recursion}, the two conditions yield the same linear order, but our version facilitates the proof of the next lemma below. Crucially, Lemma~4.6(a)~of~\cite{FR_Pi11-recursion} shows that we have $\alpha,\gamma\prec\overline\varphi_\alpha(\gamma)$ when~$\overline\varphi_\alpha(\gamma)$ is a term of~$\Gamma_Y$. This relates to the requirement $v(\gamma)\preceq \alpha$ in condition~(iii) of Definition~\ref{def:Gamma}, which prevents redundant terms from entering the notation system. Specifically, a term $\overline \varphi_\alpha(\overline \varphi_\beta(\gamma))$ would be redundant for ${\alpha\prec\beta}$ due to $\varphi_\alpha\circ \varphi_\beta=\varphi_\beta$. One can view~$\overline\varphi$ as a partial function that is undefined where~$\varphi$ has fixed points. This has the effect that term notations are unique, which is required if identity of terms is to provide the right notion of equality. The following standard result is proved, e.g., as Lemma~4.6(b)~of~\cite{FR_Pi11-recursion}.

\begin{lemma}[$\mathsf{RCA}_0$]\label{lem:Gamma-lin}
If $Y$ is a linear order, so is $(\Gamma_Y,\prec)$.
\end{lemma}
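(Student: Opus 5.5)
We prove that $\prec$ is irreflexive, transitive and total on $\Gamma_Y$. All three arguments go by induction on the combined term complexity of the terms involved; this is a $\Sigma^0_0$-style induction on a code length, so it is available in $\mathsf{RCA}_0$. Since $\prec$ is decided by recursion on this complexity, the relation is computable relative to $Y$, and all inductions concern decidable properties. As auxiliary facts we establish, simultaneously with or before the main claims:
\begin{enumerate}[label=(\alph*)]
\item the subterm property $\alpha,\gamma\prec\overline\varphi_\alpha(\gamma)$, proved by the same induction (this is the fact quoted from Lemma~4.6(a) of~\cite{FR_Pi11-recursion});
\item $\alpha_1\preceq\alpha_1+\ldots+\alpha_n$;
\item for $\beta\in H$ we have $\alpha_1+\ldots+\alpha_n\prec\beta$ if and only if $\alpha_1\prec\beta$.
\end{enumerate}
Fact~(c) follows from~(iv') and the ordering $\alpha_1\succeq\ldots\succeq\alpha_n$.

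\emph{Irreflexivity} is a direct induction. For instance, $\overline\varphi_\alpha(\gamma)\prec\overline\varphi_\alpha(\gamma)$ could only arise from the third case of~(ii'), which requires $\overline\varphi_\alpha(\gamma)\preceq\gamma$. Combined with $\gamma\prec\overline\varphi_\alpha(\gamma)$ from~(a), transitivity at smaller complexity would yield $\gamma\prec\gamma$, contradicting the induction hypothesis.

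\emph{Totality}: take distinct terms $\alpha,\beta$ and split by their outer forms.
\begin{itemize}[label=--]
\item Sums reduce lexicographically to their components via~(iv').
\item For $\Gamma_x$ against $\overline\varphi_\alpha(\gamma)$, the induction hypothesis compares $\Gamma_x$ with $\alpha$ and with $\gamma$. Either some comparison gives $\Gamma_x\preceq\alpha$ or $\Gamma_x\preceq\gamma$, or we get $\alpha,\gamma\prec\Gamma_x$; clause~(iii') covers both outcomes.
\item For $\overline\varphi_\alpha(\gamma)$ against $\overline\varphi_\beta(\delta)$, compare $\alpha$ with $\beta$ by induction.
\begin{itemize}[label=$\cdot$]
\item If $\alpha=\beta$, compare $\gamma$ with $\delta$.
\item If $\alpha\prec\beta$, compare $\gamma$ with $\overline\varphi_\beta(\delta)$ (smaller combined complexity). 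If $\gamma\prec\overline\varphi_\beta(\delta)$, the first case of~(ii') applies. Otherwise $\overline\varphi_\beta(\delta)\preceq\gamma$, and the third case of~(ii') gives $\overline\varphi_\beta(\delta)\prec\overline\varphi_\alpha(\gamma)$.
\item The case $\beta\prec\alpha$ is symmetric.
\end{itemize}
\end{itemize}
The weakened third clause ($\preceq$ instead of $\prec$, without the condition $\alpha\succ\beta$) is exactly what makes this case distinction clean.

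\emph{Transitivity}: given $\alpha\prec\beta\prec\delta$, we show $\alpha\prec\delta$ by induction on the sum of the complexities of all three terms, with a case distinction on the clauses used for the two hypotheses. The sum and $\Gamma_x$ cases are routine using~(b) and~(c). The main obstacle is the case where all three terms are $\overline\varphi$-terms, say $\overline\varphi_{\alpha_0}(\alpha_1)\prec\overline\varphi_{\beta_0}(\beta_1)\prec\overline\varphi_{\delta_0}(\delta_1)$. This gives nine combinations of the clauses of~(ii').
\begin{itemize}[label=--]
\item Whenever the second inequality uses the third clause, we have $\overline\varphi_{\beta_0}(\beta_1)\preceq\delta_1$. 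Then the induction hypothesis gives $\overline\varphi_{\alpha_0}(\alpha_1)\prec\delta_1$, and the third clause applies again.
\item When the first inequality uses the third clause, we have $\overline\varphi_{\alpha_0}(\alpha_1)\preceq\beta_1$. Since $\beta_1\prec\overline\varphi_{\beta_0}(\beta_1)$ by~(a), and since the second inequality carries $\beta_1$ below $\overline\varphi_{\delta_0}(\delta_1)$ or below $\delta_1$ by its first or second clause, we obtain the conclusion by induction.
\item The remaining combinations use the first two clauses. Here we compare $\alpha_0$ with $\delta_0$ using transitivity at lower complexity, and then check $\alpha_1\prec\overline\varphi_{\delta_0}(\delta_1)$ via~(a) and the induction hypothesis.
\end{itemize}
We will organise this case analysis carefully, since every step must stay within terms of strictly smaller total complexity. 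Alternatively, we can cite Lemma~4.6(b) of~\cite{FR_Pi11-recursion}, where the same argument is carried out.
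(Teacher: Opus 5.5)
Your argument is correct. The paper gives no proof of its own at this point: it cites Lemma~4.6(b) of~\cite{FR_Pi11-recursion}, which is your fallback. So your direct induction reconstructs the standard argument behind that citation rather than offering a competing route. Your remark that the weakened third case of~(ii') is what makes totality clean matches the paper's comment that its version ``facilitates the proof of the next lemma''.

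Two points would tighten the organisation.

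First, your transitivity argument never actually needs~(a), irreflexivity or totality. Every case closes by the clauses together with transitivity for a triple of strictly smaller total size. For example, when both inequalities use the first case of~(ii'), you use the triple $\alpha_1\prec\overline\varphi_{\beta_0}(\beta_1)\prec\overline\varphi_{\delta_0}(\delta_1)$. So prove transitivity first and outright. This matters because your irreflexivity step applies transitivity to the triple $\gamma,\overline\varphi_\alpha(\gamma),\gamma$. That triple need not have smaller total complexity than the instance $\overline\varphi_\alpha(\gamma)$ under consideration, so the step cannot be justified as the induction hypothesis of one simultaneous induction.

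Second, only the half $\gamma\prec\overline\varphi_\alpha(\gamma)$ of~(a) is ever used, and it is immediate from the clauses:
\begin{itemize}
\item For the leading summand $\gamma_1$ of~$\gamma$ we have $\gamma_1\preceq\gamma$ by~(iv').
\item Hence $\gamma_1\prec\overline\varphi_\alpha(\gamma)$, by the third case of~(ii') if $\gamma_1$ is a $\overline\varphi$-term, or by~(iii') if it is some $\Gamma_x$.
\item Then $\gamma\prec\overline\varphi_\alpha(\gamma)$ follows by~(iv'), and trivially if $\gamma=0$.
\end{itemize}
The other half, $\alpha\prec\overline\varphi_\alpha(\gamma)$, would need an inner induction over the subterms of~$\alpha$ to get $\delta\prec\overline\varphi_\alpha(\gamma)$ for $\alpha=\overline\varphi_\beta(\delta)$. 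Your proof of linearity does not depend on it.

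Two minor points: fact~(c) follows from~(iv') alone, without the ordering of the summands. Also, all your inductions can be confined to the finitely many subterms of the given terms, so they are unproblematic in $\mathsf{RCA}_0$.
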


The following proposition shows that our notation system reflects central properties of the set-theoretic construction above, which provides further justification for Definition~\ref{def:Gamma}. For a comprehensive list of other fundamental properties, we refer to Proposition~4.13 of~\cite{FR_Pi11-recursion}.

\begin{proposition}[$\mathsf{RCA}_0$]\label{prop:Veblen-order}
(a) We have $\alpha,\gamma\preceq\varphi_\alpha(\gamma)$.

(b) For $\gamma\prec\delta$ we have $\varphi_\alpha(\gamma)\prec\varphi_\alpha(\delta)$.

(c) Given $\alpha\prec\beta$, we get
\begin{equation*}
\varphi_\alpha(\gamma)\prec\varphi_\beta(\delta)\quad\Leftrightarrow\quad\gamma\prec\varphi_\beta(\delta).
\end{equation*}
\end{proposition}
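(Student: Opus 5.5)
The plan is to argue by case distinction along the three clauses that define $\varphi_\alpha(\gamma)$. Throughout I use Lemma~\ref{lem:Gamma-lin} (linearity of $\prec$) and the fact quoted from Lemma~4.6(a) of~\cite{FR_Pi11-recursion}: whenever $\overline\varphi_\alpha(\gamma)$ is a term, we have $\alpha,\gamma\prec\overline\varphi_\alpha(\gamma)$. I also need two auxiliary facts, each proved by induction on term complexity.
\begin{itemize}
\item[(A)] $v(\gamma)\preceq\gamma$. For $\gamma\in H$ of the form $\overline\varphi$ this is Lemma~4.6(a); for $\gamma=\Gamma_x$ and $\gamma=0$ it is trivial; for sums, $v=0$.
\item[(B)] For $\gamma\in H$ with $\alpha\prec v(\gamma)$ we have $\varphi_\alpha(\gamma)=\gamma$. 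This is immediate from the second clause.
\end{itemize}

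\textbf{Part (a).} In the first clause, $\varphi_\alpha(0)=\alpha$ and both bounds are clear. In the second clause, $\alpha\prec v(\gamma)\preceq\gamma=\varphi_\alpha(\gamma)$ by (A). In the third clause we use Lemma~4.6(a).

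\textbf{Part (c).} I prove it before (b), since (b) will reduce to it. Fix $\alpha\prec\beta$. First dispose of the degenerate cases for $\varphi_\beta(\delta)$.
\begin{itemize}
\item If $\beta\prec v(\delta)$, then $\varphi_\beta(\delta)=\delta$ and $\alpha\prec v(\delta)$, so $\varphi_\alpha(\delta)=\delta$ as well.
\item If $\beta=\Gamma_x$ and $\delta=0$, then $\varphi_\beta(\delta)=\Gamma_x$.
\end{itemize}
Then split according to the form of $\varphi_\alpha(\gamma)$.
\begin{itemize}
\item If $\varphi_\alpha(\gamma)=\overline\varphi_\alpha(\gamma)$ and $\varphi_\beta(\delta)=\overline\varphi_\beta(\delta)$, the forward implication is the first bullet of clause~(ii'). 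For the converse, suppose $\gamma\succeq\overline\varphi_\beta(\delta)$. Then either $\gamma=\overline\varphi_\beta(\delta)$, which is excluded because $v(\gamma)=\beta\succ\alpha$ would make $\overline\varphi_\alpha(\gamma)$ illegal, or $\overline\varphi_\beta(\delta)\prec\gamma\prec\overline\varphi_\alpha(\gamma)$ by Lemma~4.6(a). In both cases linearity gives the claim.
\item If $\varphi_\alpha(\gamma)=\gamma$, the equivalence is trivial.
\item If $\varphi_\alpha(\gamma)=\Gamma_y$ with $\gamma=0$, use clause~(iii').
\end{itemize}
When $\varphi_\beta(\delta)=\Gamma_x$ comes from the first clause, I compare via clause~(iii'): $\overline\varphi_\alpha(\gamma)\prec\Gamma_x$ iff $\alpha,\gamma\prec\Gamma_x$, and here $\alpha\prec\beta=\Gamma_x$ already holds. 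The degenerate case $\delta=\varphi_\beta(\delta)$ should reduce to the same computation by an induction on the build-up of $\delta$. This requires showing $\varphi_\alpha(\gamma)\prec\delta\Leftrightarrow\gamma\prec\delta$ whenever $\alpha\prec v(\delta)$, which again follows from the bullets of~(ii') and~(iii').

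\textbf{Part (b).} Fix $\gamma\prec\delta$. If both values are genuine $\overline\varphi_\alpha$-terms, the second bullet of~(ii') applies directly. Otherwise, at least one of $\gamma,\delta$ is a fixed point, i.e.\ $\alpha\prec v(\cdot)$ (or the $\Gamma_x$-with-$0$ case). I then rewrite that side as $\varphi_{v(\cdot)}(\dots)$ or as $\Gamma_x$ and invoke (c), together with (a) for the inequalities $\gamma\preceq\varphi_\alpha(\gamma)$.

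\textbf{Main obstacle.} I expect the main difficulty to be the bookkeeping in these mixed cases, especially when $\delta$ is a sum or $\Gamma_x$, where one must check that $\overline\varphi_\alpha(\gamma)\prec\delta$ follows from $\gamma\prec\delta$ together with $\alpha\prec v(\delta)$. I would first try a single simultaneous induction on term length, proving the auxiliary claim
\[
\alpha\prec v(\delta)\ \wedge\ \gamma\prec\delta\ \Rightarrow\ \varphi_\alpha(\gamma)\prec\delta,
\]
from which both (b) and (c) follow quickly.
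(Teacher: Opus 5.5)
Your proposal is correct, but it reverses the paper's order of (b) and (c).

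The paper proves (b) directly by cases on the forms of $\varphi_\alpha(\gamma)$ and $\varphi_\alpha(\delta)$. Its last case is precisely your auxiliary claim, settled by clause~(iii') or the first bullet of~(ii'), because a $\delta$ with $v(\delta)\neq 0$ is either $\Gamma_y$ or $\overline\varphi_{v(\delta)}(\eta)$. It then derives (c) in two lines. A case check gives $v(\varphi_\beta(\delta))\succeq\beta$, so $\alpha\prec\beta$ yields the fixed-point identity $\varphi_\alpha(\varphi_\beta(\delta))=\varphi_\beta(\delta)$. Then (c) follows from (b) and linearity, mirroring the set-theoretic motivation before Definition~\ref{def:Gamma}.

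You instead verify (c) by a direct term-level case analysis and then reduce (b) to (c) by rewriting a fixed point $\delta$ as $\varphi_{v(\delta)}(\eta)$ or $\varphi_{\Gamma_z}(0)$. Your proof of (c) uses only (a), $v(\delta)\preceq\delta$, linearity and clauses~(ii') and (iii'), so there is no circularity. This works, but the fixed-point bookkeeping is done twice, whereas the paper's order needs it only once.

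Some streamlining is available:
\begin{itemize}
\item The left-to-right direction of (c) is immediate from (a) and transitivity, via $\gamma\preceq\varphi_\alpha(\gamma)\prec\varphi_\beta(\delta)$. Note that the first bullet of~(ii') gives the right-to-left direction, not the forward one.
\item No induction on the build-up of $\delta$ is needed in the case $\beta\prec v(\delta)$.
\end{itemize}

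One case of (b) needs an extra remark: $\alpha=\Gamma_x$, $\gamma=0$ and $\varphi_\alpha(\delta)=\overline\varphi_\alpha(\delta)$. Here (c) does not apply, since the indices coincide, and (a) only gives $\Gamma_x\preceq\varphi_\alpha(\delta)$. Strictness follows because $\overline\varphi_\alpha(\delta)$ and $\Gamma_x$ are distinct terms (this is how the paper argues), or directly from clause~(iii').
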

\begin{proof}
(a) We distinguish cases according to the definition of~$\varphi$. If we have $\alpha=\Gamma_x$ and $\gamma=0$, we get $\varphi_\alpha(\gamma)=\alpha$ and the claim is immediate. Now assume that we have~$\alpha\prec v(\gamma)$ and hence~$\varphi_\alpha(\gamma)=\gamma$. The task is to show~$\alpha\preceq\gamma$. We have already mentioned that $\beta,\delta\prec\overline\varphi_\beta(\delta)$ holds whenever~$\overline\varphi_\beta(\delta)$ is a term of~$\Gamma_Y$, as shown in~\cite{FR_Pi11-recursion}. It is straightforward to see that this entails~$v(\gamma)\preceq\gamma$. So we indeed get~$\alpha\prec\gamma$. In the remaining case, we have $\alpha,\gamma\prec\overline\varphi_\alpha(\gamma)=\varphi_\alpha(\gamma)$.

(b) First assume that we have~$\varphi_\alpha(\gamma)=\alpha$. Due to~(a), we obtain~$\varphi_\alpha(\gamma)\preceq\varphi_\alpha(\delta)$. Aiming at a contradiction, we assume that the latter is an equality. In view of~$\delta\neq 0$ and $\overline\varphi_\alpha(\delta)\neq\alpha$, we must then have~$\varphi_\alpha(\delta)=\delta$ with $\alpha\prec v(\delta)$. We thus get $\delta=\alpha$ and hence~$\delta\prec v(\delta)$, which goes against an observation from the proof of~(a). If we have $\varphi_\alpha(\gamma)=\gamma$, we can use~(a) to derive $\varphi_\alpha(\gamma)\prec\delta\preceq\varphi_\alpha(\delta)$. Finally, assume that we have $\varphi_\alpha(\gamma)=\overline\varphi_\alpha(\gamma)$. If we also have $\varphi_\alpha(\delta)=\overline\varphi_\alpha(\delta)$, the claim holds by clause~(ii') of Definition~\ref{def:Gamma}. In view of~$\delta\neq 0$, it only remains to consider the case where we have~$\varphi_\alpha(\delta)=\delta$ with $\alpha\prec v(\delta)$. In view of $v(\delta)\neq 0$, we have two subcases. First assume that $\delta$ is of the form~$\Gamma_y$. Given~$\alpha,\gamma\prec\delta$, clause~(iii') yields the claim. Finally, if we have~$\delta=\overline\varphi_{v(\delta)}(\eta)$, we can again conclude by clause~(ii').

(c) It is straightforward to check $v(\varphi_\beta(\delta))\succeq\beta$ by case distinction on the definition of~$\varphi$. Given~$\alpha\prec\beta$, we thus get $\varphi_\alpha(\varphi_\beta(\delta))=\varphi_\beta(\delta)$. The desired equivalence follows by part~(b) and the fact that $\prec$ is linear.
\end{proof}

In addition to the function $\varphi$, we will need a few other operations on the term system $\Gamma_Y$.

\begin{definition}\label{def:Gamma-operations}
    To define an addition operation on~$\Gamma_Y$, we recall that elements of~$\Gamma_Y$ may be uniquely written as $\alpha_1+\ldots+\alpha_m$ with $\alpha_i\in H$ and $\alpha_1\succeq\ldots\succeq\alpha_m$, where $m=0,1$ is permitted. Using this representation, we set
\begin{equation*}
    (\alpha_1+\ldots+\alpha_m)+(\beta_1+\ldots+\beta_n)=\alpha_1+\ldots+\alpha_i+\beta_1+\ldots+\beta_n
\end{equation*}
    for the maximal~$i\leq m$ with $\alpha_i\succeq\beta_1$, where we put $i=0$ in case we have~$\alpha_1\prec\beta_1$ and $i=m$ if we have~$n=0$. We adopt the notation
    \begin{equation*}
    n:=\underbrace{\varphi_0(0) + \dots + \varphi_0(0)}_{n\, \text{times}}\quad\text{and}\quad \omega:=\varphi_0(1).
    \end{equation*}
    Multiplication by a natural number $n\prec \omega$ is defined by
        \begin{equation*}
        \alpha\cdot n:= \underbrace{\alpha + \dots + \alpha}_{n\, \text{times}}.
    \end{equation*}
    To define multiplication by $\omega$ from the left, we stipulate
    \begin{multline*}
        \quad\omega\cdot\alpha:=\varphi_0(1+\beta_1)+\ldots+\varphi_0(1+\beta_m)\\
        \text{for }\alpha=\alpha_1+\ldots+\alpha_m\text{ and }\beta_i=\begin{cases}
            \beta & \text{if }\alpha_i=\overline\varphi_0(\beta),\\
            \alpha_i & \text{otherwise}.
        \end{cases}
    \end{multline*}
\end{definition}

In the definition of multiplication, we always have $\alpha_i=\varphi_0(\beta_i)$. With the more familiar notation $\omega^\beta=\varphi_0(\beta)$, the definition thus amounts to
\begin{equation*}
    \omega\cdot\left(\omega^{\beta_1}+\ldots+\omega^{\beta_m}\right)=\omega^{1+\beta_1}+\ldots+\omega^{1+\beta_m}.
\end{equation*}
 It is straightforward to see that $\alpha,\beta\prec\gamma\in H$ implies~$\alpha+\beta\prec\gamma$. Other basic facts are provided by Lemma~4.15 of~\cite{FR_Pi11-recursion}.

In the rest of this section, we construct the order~$\psi(\Gamma_{\Omega+X})$ that was mentioned in the introduction. Here~$X$ is a fixed linear order. For linear orders~$X_0$ and $X_1$, we define~$X_0+X_1$ as the linear order with elements~$(i,x)$ for~$x\in X_i$, where the maps~$X_i\ni x\mapsto(i,x)$ are embeddings and we have $(0,x)<(1,x')$ for all~$x\in X_0$ and~$x'\in X_1$. We will write $x$ and $X_0+x'$ at the place of~$(0,x)$ and~$(1,x')$. Intuitively, our aim is to find an order~$\Omega$ with a function
\begin{equation*}
    \overline\psi:\Gamma_{\Omega+X}\to\Omega
\end{equation*}
that preserves the order `as much as possible'. Assuming that $X$ is a non-empty well-order, $\Omega$ would have to be ill-founded if~$\overline\psi$ was fully order preserving (since the order-type of~$\Gamma_{\Omega+X}$ exceeds the one of~$\Omega$). The idea is that a slightly weaker condition on~$\overline\psi$ allows~$\Omega$ to be well-founded but forces it to be very large. We will eventually (see Corollary~\ref{cor:psi-order-pres} below) arrive at a condition of the form
\begin{equation*}
    \alpha\prec\beta\text{ and }\alpha\in C(\beta)\quad\Rightarrow\quad\overline\psi(\alpha)<\overline\psi(\beta).
\end{equation*}
Here $C(\beta)$ has good closure properties (see Lemma~\ref{lem:C-sets-closure} and Proposition~\ref{prop:C-sets-psi}(a)), which ensure that it contains many relevant~$\alpha$.

In our construction, $\Omega$ will be a certain countable order. There is a related set-theoretic construction that interprets~$\Omega$ as the least uncountable cardinal (see~\cite{buchholz-new-system} or Section~5.3.1 of~\cite{rathjen-sieg-stanford}). This construction defines~$C(\beta)$ and~$\overline\psi(\beta)$ by a simultaneous recursion on~$\beta$. In the recursion step, one first defines~$C(\beta)$ as the smallest set with the aforementioned closure properties (where closure as in Proposition~\ref{prop:C-sets-psi}(a) refers to the recursively constructed values~$\overline\psi(\alpha)$ with $\alpha\prec\beta$). The resulting set~$C(\beta)$ is easily seen to be countable. One then defines~$\overline\psi(\beta)$ as the (countable) supremum of the ordinals~$\overline\psi(\alpha)$ with $\alpha\prec\beta$ and $\alpha\in C(\beta)$. 

In order to carry out the indicated construction, we need to know that each element of~$\Gamma_{Y+X}$ depends on just finitely many elements of~$Y$, as expressed by the following definition and lemma.  Let us agree that $[Y]^{<\omega}$ denotes the set of all finite subsets of a set or order~$Y$.

\begin{definition}\label{def:Gamma-supp}
For a linear order~$Y$, define~$\supp_Y:\Gamma_{Y+X}\to[Y]^{<\omega}$ recursively~by
\begin{gather*}
    \supp_Y(\Gamma_y)=\{y\},\quad\supp_Y(\Gamma_{Y+x})=\emptyset,\\
    \supp_Y(\overline\varphi_\alpha(\beta))=\supp_Y(\alpha)\cup\supp_Y(\beta),\\
    \supp_Y(\alpha_1+\ldots+\alpha_n)=\textstyle\bigcup_{i\leq n}\supp_Y(\alpha_i)\text{ for }\alpha_1\succeq\ldots\succeq\alpha_n\text{ in H}.
\end{gather*}
Note that the last clause for~$n=0$ yields~$\supp_Y(0)=\emptyset$.
\end{definition}

For a more general version and a proof of the following result, we refer the reader to Proposition~4.8 of~\cite{FR_Pi11-recursion}.

\begin{lemma}[$\mathsf{RCA}_0$]\label{lem:Gamma-funct}
    If $Y$ is a suborder of~$Z$, we have the following:

    (a) $\Gamma_{Y+X}$ is a suborder of~$\Gamma_{Z+X}$,

    (b) for $\alpha\in\Gamma_{Y+X}$ we have $\supp_Y(\alpha)=\supp_Z(\alpha)$,

    (c) any $\alpha\in\Gamma_{Z+X}$ with $\supp_Z(\alpha)\subseteq Y$ lies in~$\Gamma_{Y+X}$.
\end{lemma}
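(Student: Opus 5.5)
We will argue by simultaneous induction on the build-up of terms (more precisely, on the combined length of terms, following the disentangled construction described after Definition~\ref{def:Gamma}). First we consider the larger set of ``raw'' terms over~$Y+X$ and over~$Z+X$, where the conditions $v(\gamma)\preceq\alpha$ in clause~(iii) and $\alpha_1\succeq\ldots\succeq\alpha_n$ in clause~(iv) are omitted. The embedding $Y+X\hookrightarrow Z+X$, given by $y\mapsto y$ and $Y+x\mapsto Z+x$, induces a map $\iota$ on raw terms. It sends $\Gamma_y$ to $\Gamma_y$ and $\Gamma_{Y+x}$ to $\Gamma_{Z+x}$, and it commutes with $\overline\varphi$ and with~$+$. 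This map is clearly injective, and its image is exactly the set of raw terms over~$Z+X$ that do not mention constants $\Gamma_z$ with $z\notin Y$.

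The first step is to show, by induction on the combined length of raw terms $\alpha,\beta$, that $\alpha\prec\beta$ holds over~$Y+X$ iff $\iota(\alpha)\prec\iota(\beta)$ holds over~$Z+X$. Each clause (i')--(iv') refers only to the order on the constants (which $\iota$ preserves and reflects, as $Y$ is a suborder of~$Z$), to the relation $\prec$ or $\preceq$ on strictly shorter pairs of terms (induction hypothesis), and to identity of terms (preserved and reflected by injectivity of~$\iota$). The function~$v$ commutes with~$\iota$ by a trivial induction. It then follows, by induction on term length, that a raw term $\alpha$ lies in $\Gamma_{Y+X}$ iff $\iota(\alpha)$ lies in $\Gamma_{Z+X}$, because the side conditions in (iii) and (iv) are instances of $\preceq$ on subterms together with~$v$. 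Identifying $\alpha$ with $\iota(\alpha)$, this yields~(a). All of this is a $\Delta^0_1$ definition by course-of-values recursion, so it is available in~$\mathsf{RCA}_0$.

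For~(b), we use a straightforward induction on~$\alpha\in\Gamma_{Y+X}$ along Definition~\ref{def:Gamma-supp}. The only non-trivial base cases are $\supp_Y(\Gamma_y)=\{y\}=\supp_Z(\Gamma_y)$ and $\supp_Y(\Gamma_{Y+x})=\emptyset=\supp_Z(\Gamma_{Z+x})$. The recursive clauses coincide because the decomposition of $\alpha$ into $\overline\varphi$ and sums is the same under~$\iota$.

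For~(c), take $\alpha\in\Gamma_{Z+X}$ with $\supp_Z(\alpha)\subseteq Y$. By induction on~$\alpha$, every constant $\Gamma_z$ occurring in~$\alpha$ satisfies $z\in\supp_Z(\alpha)\subseteq Y$. Hence $\alpha=\iota(\alpha')$ for a raw term~$\alpha'$ over~$Y+X$. The equivalence established for~(a) then gives $\alpha'\in\Gamma_{Y+X}$.

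The main obstacle is to state the first step cleanly. Membership in the term system and the relation~$\prec$ are defined by a simultaneous recursion, so the induction must run on raw terms, where $\prec$ is defined unconditionally, rather than on $\Gamma_{Y+X}$ itself. It must also be checked that the third case of (ii') (the one with $\preceq\delta$) and clause~(iii') only invoke shorter instances. Once this is arranged, every case is a routine transfer.
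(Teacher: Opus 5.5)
Your proposal is correct, and it is the standard argument: relabel constants along the inclusion, transfer $\prec$, $v$ and the term-formation side conditions by induction on the combined complexity of raw terms, and then read off the supports. The paper gives no proof of its own but refers to Proposition~4.8 of \cite{FR_Pi11-recursion}, which states a more general version (functoriality along arbitrary order embeddings, i.e.\ the dilator property, which is exactly what such a term induction yields); your map $\iota$ carries over verbatim, since you only use that it is injective and preserves and reflects the order on constants.
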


The previous result effectively shows that $\Gamma_{(\cdot)+X}$ is a dilator. This terminology will not be used in the present paper. We only need one case of a general construction on dilators, which is given by the following definition. Existence and uniqueness (up to isomorphism) of the objects in the definition are established in Section~2 of~\cite{FR_Pi11-recursion} (specialized to~$\nu=1$ and~$D(Y)=\Gamma_{Y+X}$), where $\Omega$ and $\pi$ are constructed via a computable system of terms (so that they are available in~$\mathsf{RCA}_0$). Note that we write $\rng(g)=\{g(x):x\in X\}$ for the range of~$g:X\to Y$.

\begin{definition}\label{def:Gamma-Omega}
Consider a linear order~$X$ (as fixed throughout this section). We characterize~$\Omega=\Omega(X)$ and~$\pi:\Omega\to\Gamma_{\Omega+X}$ as the (up to isomorphism unique) linear order and embedding that validate
    \begin{equation*}
        \rng(\pi)=\{\alpha\in\Gamma_{\Omega+X}:\pi(s)\prec\alpha\text{ for all }s\in\supp_\Omega(\alpha)\}
    \end{equation*}
and admit an $h:\Omega\to\mathbb N$ with $h(s)<h(t)$ for all $s\in\supp_\Omega(\pi(t))$.
\end{definition}

The definition is gleaned from traditional notation systems for the Bachmann-Howard ordinal, which involve a partial collapsing function that is the inverse of~$\pi$ (see~\cite{buchholz-bachmann-howard,rathjen-sieg-stanford} for context). It may first seem ad hoc, but we now show how to extract the collapsing structure that was motivated above. The function~$h$ gives us a substitute for the height of terms. When $\Gamma_{\Omega+X}$ is well-founded (as asserted by our well-ordering principle), the existence of~$h$ is automatic (over a suitable base theory). Indeed, we can then define $h(t)=\sup\{h(s)+1:s\in\supp_\Omega(\pi(t))\}$ by recursion on the value~$\pi(t)\in\Gamma_{\Omega+X}$, as the condition on~$\rng(\pi)$ yields~$\pi(s)\prec\pi(t)$.

\begin{definition}\label{def:psi}
    Let $\overline\psi:\Gamma_{\Omega+X}\to\Omega$ be given by
    \begin{equation*}
        \overline\psi(\alpha)=\begin{cases}
            s & \text{if }\alpha=\pi(s),\\
            \min_\Omega\{s\in\supp_\Omega(\alpha):\pi(s)\succeq\alpha\} & \text{if }\alpha\notin\rng(\pi).
        \end{cases}
    \end{equation*}
    Note that the minimum is taken over a non-empty finite set for $\alpha\notin\rng(\pi)$, by the condition from Definition~\ref{def:Gamma-Omega}. We write
    \begin{equation*}
      \psi(\Gamma_{\Omega+X})=\{\beta\in\Gamma_{\Omega+X}:\beta\prec\Gamma_{\Omega+x}\text{ for all }x\in X\}
    \end{equation*}
    and define~$\psi:\Gamma_{\Omega+X}\to\psi(\Gamma_{\Omega+X})$ by $\psi(\alpha)=\Gamma_{\overline\psi(\alpha)}$.
\end{definition}

The advantage of~$\psi$ over~$\overline\psi$ is that it maps~$\Gamma_{\Omega+X}$ into itself and that its values are terms with strong closure properties (in the order given by Definition~\ref{def:Gamma}). We now define the sets that were mentioned in the informal discussion before Definition~\ref{def:Gamma-supp}.

\begin{definition}\label{def:C-psi}
 For~$\alpha\in\Gamma_{\Omega+X}$, we set
    \begin{equation*}
    C(\alpha)=\{\gamma\in\Gamma_{\Omega+X}:\pi(s)\prec\alpha\text{ for all }s\in\supp_\Omega(\gamma)\}.
    \end{equation*}
\end{definition}

Let us note that~$\alpha\in\rng(\pi)$ is equivalent to~$\alpha\in C(\alpha)$. As promised, we now show that~$C(\alpha)$ has good closure properties. The following lemma is immediate in view of Definition~\ref{def:Gamma-supp}. Part~(c) refers to addition in the sense of Definition~\ref{def:Gamma-operations}. We note that part~(a) does, in particular, yield~$0\in C(\gamma)$.

\begin{lemma}[$\mathsf{RCA}_0$]\label{lem:C-sets-closure}
    (a) For $\alpha_1\succeq\ldots\succeq\alpha_n$ in~$H$, we have
    \begin{equation*}
        \alpha_1+\ldots+\alpha_n\in C(\gamma)\quad\Leftrightarrow\quad\{\alpha_1,\ldots,\alpha_n\}\subseteq C(\gamma).
    \end{equation*}

    (b) A term~$\overline\varphi_\alpha(\beta)$ lies in~$C(\gamma)$ precisely when we have~$\alpha,\beta\in C(\gamma)$.
    
    (c) Given $\alpha,\beta\in C(\gamma)$, we get $\alpha+\beta\in C(\gamma)$ and $\varphi_\alpha(\beta)\in C(\gamma)$.

    (d) We have $\Gamma_{\Omega+x}\in C(\gamma)$ for any~$x\in X$ and any~$\gamma\in\Gamma_{\Omega+X}$.    
\end{lemma}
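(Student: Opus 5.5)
All four parts come from unfolding Definition~\ref{def:C-psi}: membership of $\gamma$ in $C(\delta)$ depends only on the finite set $\supp_\Omega(\gamma)$, through the condition that $\pi(s)\prec\delta$ for each $s$ in it. It therefore suffices to compute supports using Definition~\ref{def:Gamma-supp}. Throughout we take $Y=\Omega$ and the ambient system $\Gamma_{\Omega+X}$, and we use the convention that $\supp_\Omega$ of a sum is the union of the supports of its summands.

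For (a), Definition~\ref{def:Gamma-supp} gives $\supp_\Omega(\alpha_1+\ldots+\alpha_n)=\bigcup_{i\leq n}\supp_\Omega(\alpha_i)$. A universally quantified condition holds on a finite union exactly when it holds on each piece, so the equivalence follows. The case $n=0$ gives $\supp_\Omega(0)=\emptyset$, hence $0\in C(\gamma)$. Part (b) is the same argument applied to $\supp_\Omega(\overline\varphi_\alpha(\beta))=\supp_\Omega(\alpha)\cup\supp_\Omega(\beta)$. Part (d) holds because $\supp_\Omega(\Gamma_{\Omega+x})=\emptyset$, so the defining condition is vacuous.

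Part (c) is the only step that needs a small case analysis.

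For the sum, write $\alpha=\alpha_1+\ldots+\alpha_m$ and $\beta=\beta_1+\ldots+\beta_n$ in normal form. By Definition~\ref{def:Gamma-operations}, $\alpha+\beta$ is the normal form $\alpha_1+\ldots+\alpha_i+\beta_1+\ldots+\beta_n$. By (a) applied to $\alpha$ and $\beta$, every $\alpha_j$ and every $\beta_k$ lies in $C(\gamma)$, so (a) in the reverse direction puts the sum in $C(\gamma)$. The edge cases $m=0$, $n=0$ and single summands are covered by the same reasoning, since $m=1$ means $\alpha\in H$.

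For $\varphi_\alpha(\beta)$, go through the three clauses defining $\varphi$:
\begin{itemize}
\item If $\alpha=\Gamma_x$ and $\beta=0$, the value is $\alpha$, which lies in $C(\gamma)$ by hypothesis.
\item If $\alpha\prec v(\beta)$, the value is $\beta$, which lies in $C(\gamma)$ by hypothesis.
\item Otherwise the value is the term $\overline\varphi_\alpha(\beta)$, which lies in $C(\gamma)$ by (b).
\end{itemize}

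I expect no real obstacle. The only point needing care is confirming that the result of each operation is the normal-form term with exactly the support computed above. For addition this is automatic: the output is a sub-list of the summands of $\alpha$ followed by all summands of $\beta$, so no new support elements can appear.
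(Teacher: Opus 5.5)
Your proposal is correct and takes the same route as the paper, which simply states that the lemma is immediate from Definition~\ref{def:Gamma-supp}: membership in $C(\gamma)$ depends only on $\supp_\Omega$, and $\supp_\Omega$ is computed compositionally over the term structure. Your three-case analysis of $\varphi_\alpha(\beta)$ and your normal-form check for addition spell out exactly the details that the paper leaves implicit.
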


Further closure properties can be derived. We work out the following case. The reader may verify other basic closure properties as they occur.

\begin{remark}\label{rmk:omega*alpha-closure}
    For given $\alpha\in C(\gamma)$, we obtain $\omega\cdot\alpha\in C(\gamma)$. To see this, let us write $\alpha=\varphi_0(\beta_1)+\ldots+\varphi_0(\beta_n)$ as in Definition~\ref{def:Gamma-operations}. By the lemma above, we learn that $\varphi_0(\beta_i)\in C(\gamma)$ holds for each~$i$. Now $\varphi_0(\beta_i)$ is either $\beta_i$ or $\overline\varphi_0(\beta_i)$. So we can conclude $\beta_i\in C(\gamma)$ by the lemma. The latter then yields
    \begin{equation*}
        \omega\cdot\alpha=\varphi_0(1+\beta_1)+\ldots+\varphi_0(1+\beta_n)\in C(\gamma),
    \end{equation*}
    as desired.
\end{remark}

The following is a special case of Proposition~7.5 from~\cite{FR_Pi11-recursion}.

\begin{proposition}[$\mathsf{RCA}_0$]\label{prop:C-sets-psi}
    (a) Given $\alpha\in C(\beta)$ and $\alpha\prec\beta$, we get $\psi(\alpha)\in C(\beta)$.

    (b) From $\alpha\in C(\beta)\cap\psi(\Gamma_{\Omega+X})$, we obtain~$\alpha\prec\psi(\beta)$.

    (c) If we have $\beta\in C(\beta)$, then $\alpha\prec\psi(\beta)$ implies~$\alpha\in C(\beta)$.
\end{proposition}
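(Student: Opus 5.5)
The plan is to reduce all three parts to two elementary observations and then check each part directly from Definitions~\ref{def:psi} and~\ref{def:C-psi}.

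\emph{Observation 1.} By Definition~\ref{def:Gamma-supp} we have $\supp_\Omega(\psi(\alpha))=\supp_\Omega(\Gamma_{\overline\psi(\alpha)})=\{\overline\psi(\alpha)\}$. Hence
\begin{equation*}
\psi(\alpha)\in C(\beta)\quad\Leftrightarrow\quad\pi(\overline\psi(\alpha))\prec\beta.
\end{equation*}

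\emph{Observation 2.} For $t\in\Omega$ and $\alpha\in\Gamma_{\Omega+X}$, we claim that $\alpha\prec\Gamma_t$ holds if and only if two conditions are met: no constant $\Gamma_{\Omega+x}$ occurs in~$\alpha$, and $s<t$ for all $s\in\supp_\Omega(\alpha)$. We prove this by induction on the term~$\alpha$.
\begin{itemize}
\item For $\alpha=\Gamma_u$, we use clause~(i') and the fact that $(1,x)$ lies above $\Omega$ in $\Omega+X$.
\item For $\alpha=\overline\varphi_a(g)$, clause~(iii') gives $\alpha\prec\Gamma_t$ if and only if $a,g\prec\Gamma_t$, and $\Gamma_t\prec\alpha$ otherwise. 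So we conclude by the induction hypothesis.
\item For a sum $\alpha_1+\ldots+\alpha_n$ with $n\geq2$, clause~(iv') with $\Gamma_t\in H$ shows that $\alpha\prec\Gamma_t$ is equivalent to $\alpha_1\prec\Gamma_t$. Since $\alpha_i\preceq\alpha_1$, this is in turn equivalent to $\alpha_i\prec\Gamma_t$ for all~$i$.
\end{itemize}
The same argument with $\Gamma_{\Omega+x}$ in place of $\Gamma_t$ shows the following: $\alpha\prec\Gamma_{\Omega+x}$ for all $x\in X$ holds exactly when no $\Gamma_{\Omega+x}$ occurs in~$\alpha$. Indeed, an occurrence of $\Gamma_{\Omega+x}$ would give $\Gamma_{\Omega+x}\preceq\alpha$. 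Consequently, $\alpha\in\psi(\Gamma_{\Omega+X})$ implies that $\alpha\prec\Gamma_t$ is equivalent to $s<t$ for all $s\in\supp_\Omega(\alpha)$.

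\emph{Observation 3.} We also need that $\pi(s)\prec\beta$ implies $s<\overline\psi(\beta)$. If $\beta=\pi(t)$, then $\overline\psi(\beta)=t$, and $s<t$ holds because $\pi$ is an embedding. If $\beta\notin\rng(\pi)$, then $t=\overline\psi(\beta)$ satisfies $\pi(t)\succeq\beta\succ\pi(s)$, so again $s<t$.

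\emph{Part (a).} By Observation~1 we must show $\pi(\overline\psi(\alpha))\prec\beta$.
\begin{itemize}
\item If $\alpha=\pi(s)$, then $\pi(\overline\psi(\alpha))=\alpha\prec\beta$.
\item Otherwise $\overline\psi(\alpha)$ is some $s\in\supp_\Omega(\alpha)$. Then $\alpha\in C(\beta)$ yields $\pi(s)\prec\beta$ directly.
\end{itemize}
Note that the hypothesis $\alpha\prec\beta$ is only used in the first case.

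\emph{Part (b).} Let $s\in\supp_\Omega(\alpha)$. Since $\alpha\in C(\beta)$, we have $\pi(s)\prec\beta$, so Observation~3 gives $s<\overline\psi(\beta)$. As $\alpha\in\psi(\Gamma_{\Omega+X})$, Observation~2 then yields $\alpha\prec\Gamma_{\overline\psi(\beta)}=\psi(\beta)$.

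\emph{Part (c).} From $\beta\in C(\beta)$ we get $\beta=\pi(t)$ and $\overline\psi(\beta)=t$. First we check that $\alpha\prec\psi(\beta)=\Gamma_t$ forces $\alpha\in\psi(\Gamma_{\Omega+X})$: by transitivity we have $\alpha\prec\Gamma_t\prec\Gamma_{\Omega+x}$ for every $x\in X$. Observation~2 then gives $s<t$ for every $s\in\supp_\Omega(\alpha)$. Hence $\pi(s)\prec\pi(t)=\beta$, that is, $\alpha\in C(\beta)$.

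The main obstacle is Observation~2, specifically the sum case and the interplay with clause~(iv'). I expect this to go through routinely once linearity from Lemma~\ref{lem:Gamma-lin} is available. The inductions are on term height and are available in $\mathsf{RCA}_0$.
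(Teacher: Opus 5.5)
Your proof is correct and follows essentially the paper's route. Observation~1 with the case split in (a) is the paper's $\alpha^+$ argument (indeed $\alpha^+=\pi(\overline\psi(\alpha))$), Observation~3 is the step $\pi(t)=\beta^+\succeq\beta$ used in (b), and Observation~2 packages once and for all the induction on the build-up of $\alpha$ via clauses (i'), (iii') and (iv') that the paper runs separately in (b) and (c) with the help of Lemma~\ref{lem:C-sets-closure}.
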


Clearly, the codomain $\psi(\Gamma_{\Omega+X})$ of~$\psi$ is an initial segment of~$\Gamma_{\Omega+X}$. By combining parts~(b) and~(c), we thus obtain
\begin{equation*}
C(\beta)\cap\psi(\Gamma_{\Omega+X})=\{\alpha\in\Gamma_{\Omega+X}:\alpha\prec\psi(\beta)\}\quad\text{when}\quad\beta\in C(\beta).
\end{equation*}

\begin{proof}
    (a) Let us put
    \begin{equation*}
        \alpha^+=\begin{cases}
            \alpha & \text{if }\alpha\in C(\alpha),\\
            \pi(s) & \text{for the $<_{\Omega}$-minimal $s\in\supp_\Omega(\alpha)$ with $\pi(s)\succeq\alpha$ if~$\alpha\notin C(\alpha)$}.
        \end{cases}
    \end{equation*}
    One can verify that for any $\alpha\in\Gamma_{\Omega+X}$ and $s\in\Omega$ we have
    \begin{equation*}
        \psi(\alpha)=\Gamma_s\quad\Leftrightarrow\quad\pi(s)=\alpha^+.
    \end{equation*}
    The left side yields~$\supp_\Omega(\psi(\alpha))=\{s\}$, so that the desired conclusion $\psi(\alpha)\in C(\beta)$ is equivalent to~$\alpha^+\prec\beta$. We have
    \begin{equation*}
        \alpha^+\in\{\alpha\}\cup\{\pi(r):r\in\supp_\Omega(\alpha)\}.
    \end{equation*}
    By the assumptions of~(a), all elements of the set on the right are bounded by~$\beta$.

    (b) Let us first assume that $\alpha$ is of the form~$\Gamma_s$. Here we have $\supp_\Omega(\alpha)=s$, so that the assumption~$\alpha\in C(\beta)$ amounts to~$\pi(s)\prec\beta$. We now write $\psi(\beta)=\Gamma_t$ with $\pi(t)=\beta^+\succeq\beta$. Given that~$\pi$ is an order embedding, we get $s<t$ and thus indeed~$\alpha\prec\psi(\beta)$. To obtain the general case, we argue by induction on the build-up of the term~$\alpha$. First assume that the latter has the form~$\overline\varphi_\gamma(\delta)$. Using the previous lemma, we get $\gamma,\delta\in C(\beta)\cap\psi(\Gamma_{\Omega+X})$, so that the induction hypothesis yields~$\gamma,\delta\prec\psi(\beta)$. Given that $\psi(\beta)$ has the form~$\Gamma_t$, this entails~$\alpha\prec\psi(\beta)$. The argument for~$\alpha$ of the form~$\alpha_1+\ldots+\alpha_n$ is similar.

    (c) Again, we first assume $\alpha=\Gamma_s$ and write $\beta=\Gamma_t$ with $\pi(t)=\beta^+$. Due to the assumption~$\beta\in C(\beta)$, we have $\beta^+=\beta$ (cf.~the comment after Definition~\ref{def:C-psi}). Given $\alpha\prec\psi(\beta)$, we thus get~$s<t$ and hence~$\pi(s)\prec\beta$. In view of~$\supp_\Omega(\alpha)=\{s\}$, this shows~$\alpha\in C(\beta)$. For the general case, one again argues by induction on the build-up of~$\alpha$, using the previous lemma.
\end{proof}

The following corollary confirms that we get an `almost' order preserving collapsing function, as needed for the impredicative ordinal analysis that will be carried out in Section~\ref{sect:impred-ord-ana}. Note that $\alpha\prec\beta$ entails~$C(\alpha)\subseteq C(\beta)$. Thus the assumption of the corollary is satisfied for~$\alpha\in C(\alpha)$, which is the condition in Lemma~1.3 of~\cite{buchholz-new-system}.

\begin{corollary}[$\mathsf{RCA}_0$]\label{cor:psi-order-pres}
    If we have $\alpha\in C(\beta)$, then $\alpha\prec\beta$ implies~$\psi(\alpha)\prec\psi(\beta)$.
\end{corollary}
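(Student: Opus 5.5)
The plan is to combine parts~(a) and~(b) of Proposition~\ref{prop:C-sets-psi}, together with a check that the values of~$\psi$ lie in the initial segment~$\psi(\Gamma_{\Omega+X})$. Assume $\alpha\in C(\beta)$ and $\alpha\prec\beta$.

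First, Proposition~\ref{prop:C-sets-psi}(a) applies directly to these two assumptions and yields $\psi(\alpha)\in C(\beta)$.

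Second, we verify that $\psi(\alpha)\in\psi(\Gamma_{\Omega+X})$. By Definition~\ref{def:psi}, $\psi(\alpha)=\Gamma_s$ for some $s=\overline\psi(\alpha)\in\Omega$. In the order $\Omega+X$ we have $s<\Omega+x$ for every $x\in X$. Clause~(i') of Definition~\ref{def:Gamma}, applied with $Y=\Omega+X$, then gives $\Gamma_s\prec\Gamma_{\Omega+x}$ for all $x\in X$. This is exactly the defining condition of $\psi(\Gamma_{\Omega+X})$.

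Finally, we have $\psi(\alpha)\in C(\beta)\cap\psi(\Gamma_{\Omega+X})$, so Proposition~\ref{prop:C-sets-psi}(b), applied with $\psi(\alpha)$ in place of~$\alpha$, gives $\psi(\alpha)\prec\psi(\beta)$, as required.

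No step is a genuine obstacle, since the real work sits in Proposition~\ref{prop:C-sets-psi}. The only point needing care is the membership check in the second step. It rests on the fact that $\psi$ lands in terms $\Gamma_s$ with $s\in\Omega$, and these lie below every $\Gamma_{\Omega+x}$ by the construction of the sum order $\Omega+X$.
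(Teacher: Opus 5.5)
Your proof is correct and is the derivation the paper intends. The paper states this corollary without a separate proof, as an immediate consequence of Proposition~\ref{prop:C-sets-psi}(a) and~(b), with $\psi(\alpha)\in\psi(\Gamma_{\Omega+X})$ holding because Definition~\ref{def:psi} already declares $\psi$ to map into that set; your explicit check of this membership via clause~(i') of Definition~\ref{def:Gamma} is sound.
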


For the ordinal analysis in later sections, it is fruitful to recast the previous considerations in terms of so-called operators (as introduced by Buchholz~\cite{buchholz-local-predicativity}).

\begin{definition}\label{def:H-operator}
    For each $\gamma\in\Gamma_{\Omega+X}$ and any finite~$a,b\subseteq\Gamma_{\Omega+X}$, we put
    \begin{equation*}
        \mathcal H_\gamma[a](b)=\bigcap\{C(\delta):\gamma\prec\delta\text{ and }a\cup b\subseteq C(\delta)\}\subseteq\Gamma_{\Omega+X}.
    \end{equation*}
    By an operator, we mean a function of the form~$b\mapsto\mathcal H_\gamma[a](b)$. When we have $a=\emptyset$, we also write $\mathcal H_\gamma$ at the place of~$\mathcal H_\gamma[a]$.
\end{definition}

In the literature, one typically allows~$a$ and~$b$ to be infinite. This has the (purely) aesthetic effect that operators have the same domain and codomain (the power set of~$\Gamma_{\Omega+X}$). The restriction to finite sets allows us to construct operators over~$\mathsf{RCA}_0$, as Definition~\ref{def:C-psi} yields $\mathcal H_\gamma[a](b)=C(\delta)$ for
\begin{equation*}
    \delta=\max\left(\{\gamma+1\}\cup\{\pi(s)+1:s\in\supp_\Omega(\eta)\text{ for some }\eta\in a\cup b\}\right).
\end{equation*}
That $\mathcal H_\gamma[a](b)$ is equal to a single set $C(\delta)$ with $\delta\succ\gamma$ (rather than an intersection of such sets) will also simplify some proofs (though only slightly). In particular, we get the following.

\begin{remark}\label{rmk:H-closure}
    All values $\mathcal H(b)$ of our operators enjoy the closure properties from Lemma~\ref{lem:C-sets-closure} (i.e., the latter remains valid when $C(\gamma)$ is replaced by~$\mathcal H(b)$). This means that our operators are nice in the sense of~\cite{buchholz-local-predicativity}.
\end{remark}

The following lemma shows that we are indeed concerned with closure operators. By combining (a) and~(b), we see that $b\subseteq c$ implies~$\mathcal H(b)\subseteq\mathcal H(c)$.

\begin{lemma}[$\mathsf{RCA}_0$]\label{lem:closure-op}
Consider any operator $\mathcal H$ and finite $b,c\subseteq\Gamma_{\Omega+X}$.

    (a) We have $b\subseteq\mathcal H(b)$.
    
    (b) From $b\subseteq\mathcal H(c)$ we get $\mathcal H(b)\subseteq\mathcal H(c)$.
\end{lemma}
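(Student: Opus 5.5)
Both parts will be derived directly from Definition~\ref{def:H-operator} together with Definition~\ref{def:C-psi}. Fix an operator $\mathcal H=\mathcal H_\gamma[a]$. By definition, $\mathcal H(b)$ is the intersection of the sets $C(\delta)$ over all $\delta$ with $\gamma\prec\delta$ and $a\cup b\subseteq C(\delta)$. Over $\mathsf{RCA}_0$ we do not want to quantify over an infinite family of sets. We therefore use the observation after Definition~\ref{def:H-operator}: the intersection equals the single set $C(\delta_b)$, where
\begin{equation*}
\delta_b=\max\left(\{\gamma+1\}\cup\{\pi(s)+1:s\in\supp_\Omega(\eta)\text{ for some }\eta\in a\cup b\}\right).
\end{equation*}
The first step is to justify this. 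For any $\delta\succ\gamma$ with $a\cup b\subseteq C(\delta)$, Definition~\ref{def:C-psi} gives $\pi(s)\prec\delta$ for all relevant $s$, and hence $\delta_b\preceq\delta$, since $\delta$ exceeds each element of the finite set whose maximum is $\delta_b$. On the other hand, $\delta_b$ itself is a witness. Monotonicity of $C(\cdot)$, i.e.\ $\delta\preceq\delta'\Rightarrow C(\delta)\subseteq C(\delta')$, is immediate from Definition~\ref{def:C-psi}. So the intersection is $C(\delta_b)$.

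Part~(a) is then immediate. The term $\delta_b$ is one of the $\delta$ in the intersecting family, so each member $C(\delta)$ of the family satisfies $b\subseteq a\cup b\subseteq C(\delta)$. Hence $b\subseteq\mathcal H(b)$. This argument does not even need the identification with $C(\delta_b)$, only that every set in the family contains $b$. There is one degenerate concern, namely an empty family, where the intersection would be everything. This cannot happen, because $\delta_b$ is a member.

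For part~(b), assume $b\subseteq\mathcal H(c)=C(\delta_c)$. Let $\delta$ be arbitrary with $\gamma\prec\delta$ and $a\cup c\subseteq C(\delta)$. Since $C(\delta_c)\subseteq C(\delta)$, we get $b\subseteq C(\delta)$, and therefore $a\cup b\subseteq C(\delta)$. So every $C(\delta)$ in the family defining $\mathcal H(c)$ also belongs to the family defining $\mathcal H(b)$. The intersection over the larger family is smaller, which gives $\mathcal H(b)\subseteq\mathcal H(c)$.

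The only step needing any care is the inequality $\delta_b\preceq\delta$, or equivalently the monotonicity of $C$, used to justify the single-set description. For (b) this can be avoided entirely by the abstract intersection argument just given. The remainder is routine bookkeeping with finite supports.
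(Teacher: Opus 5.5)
Your proof is correct and follows the paper's: (a) is the same observation $b\subseteq a\cup b\subseteq\mathcal H(b)$. For (b), the paper notes that $\mathcal H(c)=C(\delta)$ for some $\delta\succ\gamma$ already contains $a\cup b$ (the part $a$ directly from the definition), so it is itself one of the sets intersected in $\mathcal H(b)$; you instead show that the whole family defining $\mathcal H(c)$ lies inside the family defining $\mathcal H(b)$, an equally valid variant that avoids the single-set description.
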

\begin{proof}
    (a) Writing $\mathcal H=\mathcal H_\gamma[a]$, Definition~\ref{def:H-operator} gives $b\subseteq a\cup b\subseteq\mathcal H_\gamma[a](b)=\mathcal H(b)$.

    (b) With $\mathcal H=\mathcal H_\gamma[a]$, we get $a\cup b\subseteq\mathcal H(c)=\mathcal H_\gamma[a](c)=C(\delta)$ for some~$\delta\succ\gamma$. The intersection from Definition~\ref{def:H-operator} yields $\mathcal H(b)=\mathcal H_\gamma[a](b)\subseteq C(\delta)=\mathcal H(c)$.
\end{proof}

We will employ the following construction on operators.

\begin{definition}\label{def:operator-variant}
    For any operator~$\mathcal H$, we define $\mathcal H[b]$ by $\mathcal H[b](c)=\mathcal H(b\cup c)$. When we have $b=\{\beta\}$, we also write $\mathcal H[\beta]$ at the place of~$\mathcal H[b]$.
\end{definition}

Note that $\mathcal H=\mathcal H_\gamma[a]$ entails $\mathcal H[b](c)=\mathcal H_\gamma[a](b\cup c)=\mathcal H_\gamma[a\cup b](c)$. It follows that $\mathcal H[b]=\mathcal H_\gamma[a\cup b]$ is again an operator in the sense of Definition~\ref{def:H-operator}. In particular, we get $\mathcal H_\gamma[\emptyset][a]=\mathcal H_\gamma[a]$, which defuses the slight ambiguity that arises from the abbreviation $\mathcal H_\gamma=\mathcal H_\gamma[\emptyset]$.

\begin{corollary}[$\mathsf{RCA}_0$]\label{cor:H[b]=H}
    For $b\subseteq\mathcal H(\emptyset)$ we have $\mathcal H[b]=\mathcal H$.
\end{corollary}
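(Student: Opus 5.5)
We have to show $\mathcal H[b](c)=\mathcal H(c)$ for every finite $c\subseteq\Gamma_{\Omega+X}$. By Definition~\ref{def:operator-variant} this means $\mathcal H(b\cup c)=\mathcal H(c)$, and we get it from the two parts of Lemma~\ref{lem:closure-op} applied in each direction.

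For the inclusion $\mathcal H(c)\subseteq\mathcal H(b\cup c)$, note that $c\subseteq b\cup c$. By the monotonicity remark after Lemma~\ref{lem:closure-op} (which combines parts~(a) and~(b)), this gives $\mathcal H(c)\subseteq\mathcal H(b\cup c)$.

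For the converse inclusion, we aim to apply Lemma~\ref{lem:closure-op}(b) with $b\cup c$ in place of~$b$. This requires $b\cup c\subseteq\mathcal H(c)$. First, $c\subseteq\mathcal H(c)$ holds by part~(a). Second, $b\subseteq\mathcal H(\emptyset)$ holds by assumption, and $\mathcal H(\emptyset)\subseteq\mathcal H(c)$ holds by monotonicity since $\emptyset\subseteq c$. Part~(b) then yields $\mathcal H(b\cup c)\subseteq\mathcal H(c)$.

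The two inclusions together give $\mathcal H[b](c)=\mathcal H(c)$ for all finite~$c$, i.e.\ $\mathcal H[b]=\mathcal H$ as functions. The finite sets involved are all available in $\mathsf{RCA}_0$, and equality of operators means pointwise equality of their values, so the argument goes through in the base theory. There is no real obstacle. The only point to watch is that monotonicity must be derived from Lemma~\ref{lem:closure-op} as indicated: from $c\subseteq b\cup c\subseteq\mathcal H(b\cup c)$, part~(b) gives $\mathcal H(c)\subseteq\mathcal H(b\cup c)$.
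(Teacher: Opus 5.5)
Your proof is correct and follows the paper's argument: the key step is $b\cup c\subseteq\mathcal H(c)$, after which Lemma~\ref{lem:closure-op}(b) gives $\mathcal H[b](c)=\mathcal H(b\cup c)\subseteq\mathcal H(c)$. You also write out the reverse inclusion $\mathcal H(c)\subseteq\mathcal H(b\cup c)$ via monotonicity, which the paper leaves implicit.
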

\begin{proof}
    We get $b\cup c\subseteq\mathcal H(c)$ and hence $\mathcal H[b](c)=\mathcal H(b\cup c)\subseteq\mathcal H(c)$.
\end{proof}

Due to $\mathcal H_\gamma[a](b)=\mathcal H_\gamma[\emptyset](a\cup b)$, it is often enough to consider the operators $\mathcal H_\gamma$. The following is a direct consequence of Definition~\ref{def:H-operator}.

\begin{lemma}[$\mathsf{RCA}_0$]\label{lem:operators-monotone}
Given $\gamma\preceq\delta$ and $a\subseteq b$, we get $\mathcal H_\gamma(a)\subseteq\mathcal H_\delta(b)$.
\end{lemma}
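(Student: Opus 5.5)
The plan is to unfold Definition~\ref{def:H-operator} and argue that every set occurring in the intersection for $\mathcal H_\delta(b)$ also occurs in the intersection for $\mathcal H_\gamma(a)$. This gives the inclusion directly, because an intersection over a smaller family of sets can only be larger.

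Concretely, we have
\begin{equation*}
\mathcal H_\gamma(a)=\bigcap\{C(\eta):\gamma\prec\eta\text{ and }a\subseteq C(\eta)\}
\end{equation*}
and
\begin{equation*}
\mathcal H_\delta(b)=\bigcap\{C(\eta):\delta\prec\eta\text{ and }b\subseteq C(\eta)\}.
\end{equation*}
Fix an index $\eta$ from the second family, so that $\delta\prec\eta$ and $b\subseteq C(\eta)$. From $\gamma\preceq\delta\prec\eta$ we get $\gamma\prec\eta$, by transitivity of the linear order (Lemma~\ref{lem:Gamma-lin}). From $a\subseteq b\subseteq C(\eta)$ we get $a\subseteq C(\eta)$. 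Hence $\eta$ is also an index in the first family, and so $\mathcal H_\gamma(a)\subseteq C(\eta)$. Since $\eta$ was an arbitrary index of the second family, taking the intersection over all such $\eta$ yields $\mathcal H_\gamma(a)\subseteq\mathcal H_\delta(b)$.

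The one point to check is formalizability in $\mathsf{RCA}_0$, since intersections over infinitely many sets are not directly available there. This is handled by the remark after Definition~\ref{def:H-operator}: each intersection equals a single set $C(\delta')$ for an explicitly computable $\delta'$. So both sides are $\Delta^0_1$-definable sets, and the argument above is a statement about membership that needs no set existence beyond what is given. One could instead compare the explicit maxima $\delta'$ and invoke monotonicity of $C$ (namely that $\alpha\prec\beta$ entails $C(\alpha)\subseteq C(\beta)$). However, the intersection argument avoids that computation, and I expect no real obstacle.
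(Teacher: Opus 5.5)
Your proof is correct and is exactly the unfolding of Definition~\ref{def:H-operator} that the paper has in mind when it calls the lemma a direct consequence of that definition: every index $\eta$ of the intersection defining $\mathcal H_\delta(b)$ is also an index for $\mathcal H_\gamma(a)$, so the inclusion follows. Your remark on formalizability in $\mathsf{RCA}_0$, via the representation of each value as a single set $C(\delta')$, is also fine.
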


To conclude this section, we record the important connection between operators and collapsing functions.

\begin{proposition}[$\mathsf{RCA}_0$]\label{prop:H-closure-psi}
(a) We have
\begin{align*}
    \gamma\preceq\delta\text{ and }\gamma\in\mathcal H_\delta(a)\quad&\Rightarrow\quad\psi(\gamma)\in\mathcal H_\delta(a),\\
    \gamma,\delta\prec\eta\text{ and }\delta\in\mathcal H_\gamma(a)\text{ for some }a\subseteq C(\eta)\quad&\Rightarrow\quad\psi(\delta)\prec\psi(\eta).
\end{align*}
(b) If we have~$\delta\in\mathcal H_\delta(\emptyset)$, then
\begin{equation*}
    \mathcal H_\delta(\emptyset)\cap\psi(\Gamma_{\Omega+X})=\{\gamma\in\Gamma_{\Omega+X}:\gamma\prec\psi(\delta+1)\}
\end{equation*}
is an initial segment of~$\Gamma_{\Omega+X}$.
\end{proposition}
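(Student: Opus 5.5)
The plan is to unfold every operator as a single $C$-set, using the remark after Definition~\ref{def:H-operator}: $\mathcal H_\delta(a)=C(\delta')$ for some $\delta'\succ\delta$. With this, all three assertions become statements about the sets $C(\cdot)$. Those are handled by Proposition~\ref{prop:C-sets-psi} and Corollary~\ref{cor:psi-order-pres}, together with the monotonicity $C(\alpha)\subseteq C(\beta)$ for $\alpha\preceq\beta$, which is immediate from Definition~\ref{def:C-psi}.

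For the first implication of (a), write $\mathcal H_\delta(a)=C(\delta')$ with $\delta\prec\delta'$. From $\gamma\preceq\delta\prec\delta'$ and $\gamma\in C(\delta')$, Proposition~\ref{prop:C-sets-psi}(a) gives $\psi(\gamma)\in C(\delta')=\mathcal H_\delta(a)$.

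For the second implication, the goal is $\delta\in C(\eta)$, so that Corollary~\ref{cor:psi-order-pres} together with $\delta\prec\eta$ yields $\psi(\delta)\prec\psi(\eta)$. Since $\gamma\prec\eta$ and $a\subseteq C(\eta)$, the set $C(\eta)$ is one of the sets intersected in Definition~\ref{def:H-operator} for $\mathcal H_\gamma(a)$. Hence $\mathcal H_\gamma(a)\subseteq C(\eta)$, and in particular $\delta\in C(\eta)$.

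For (b), put $\delta'=\delta+1$, which satisfies $\delta\prec\delta'$. I would first check that $\mathcal H_\delta(\emptyset)=C(\delta+1)$. The explicit formula gives $\mathcal H_\delta(\emptyset)=C(\gamma+1)$ with $\gamma=\delta+1$ (here $a\cup b=\emptyset$). More directly, Definition~\ref{def:H-operator} gives $\bigcap\{C(\epsilon):\delta\prec\epsilon\}$, and by monotonicity this intersection equals $C(\epsilon_0)$ for the least $\epsilon_0\succ\delta$, namely $\delta+1$. Next I need $\delta+1\in C(\delta+1)$, so that Proposition~\ref{prop:C-sets-psi}(b),(c) apply. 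By Lemma~\ref{lem:C-sets-closure}, this reduces to $\delta\in C(\delta+1)$ and $1\in C(\delta+1)$. The latter is trivial since $1$ has empty support. For the former, the hypothesis $\delta\in\mathcal H_\delta(\emptyset)$ is exactly what is needed.

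With $\beta:=\delta+1\in C(\beta)$, the displayed consequence of Proposition~\ref{prop:C-sets-psi} stated after that proposition gives $C(\beta)\cap\psi(\Gamma_{\Omega+X})=\{\alpha:\alpha\prec\psi(\beta)\}$. Substituting $C(\beta)=\mathcal H_\delta(\emptyset)$ gives the claim, and the right-hand side is evidently an initial segment.

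The main obstacle is the bookkeeping in (b). One has to make sure the operator value really coincides with $C(\delta+1)$; this uses that $\delta+1$ is the successor of $\delta$ in $\Gamma_{\Omega+X}$ and that $C(\cdot)$ is monotone. One also has to confirm that $\delta+1\in C(\delta+1)$ follows from $\delta\in C(\delta+1)$ via the closure of Lemma~\ref{lem:C-sets-closure}(c). Everything else is direct citation.
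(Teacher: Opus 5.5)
Your proposal is correct and matches the paper's proof: in (a) you reduce to Proposition~\ref{prop:C-sets-psi}(a) via $\mathcal H_\delta(a)=C(\xi)$ with $\xi\succ\delta$, and to Corollary~\ref{cor:psi-order-pres} via $\mathcal H_\gamma(a)\subseteq C(\eta)$; in (b) you use $\mathcal H_\delta(\emptyset)=C(\delta+1)$ and $\delta+1\in C(\delta+1)$ together with Proposition~\ref{prop:C-sets-psi}, and you helpfully derive the latter membership from the hypothesis via Lemma~\ref{lem:C-sets-closure}(c). One small slip: the explicit formula gives $C(\delta+1)$ directly, because the operator index plays the role of $\gamma$ there, so it is not ``$C(\gamma+1)$ with $\gamma=\delta+1$''; that would be $C(\delta+2)$, which can differ from $C(\delta+1)$, but your direct argument via the least $\epsilon\succ\delta$ gets this right.
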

\begin{proof}
    (a) For the first implication, recall $\mathcal H_\delta(a)=C(\xi)$ with $\xi\succ\delta$, which reduces the claim to Proposition~\ref{prop:C-sets-psi}(a). In the second implication, the premises $\gamma\prec\eta$ and $a\subseteq C(\eta)$ ensure $\mathcal H_\gamma(a)\subseteq C(\eta)$. The claim follows by Corollary~\ref{cor:psi-order-pres}.

    (b) By the paragraph after Definition~\ref{def:H-operator}, we have $\mathcal H_\delta(\emptyset)=C(\delta+1)$. Since we get $\delta+1\in C(\delta+1)$, the claim follows by Proposition~\ref{prop:C-sets-psi}.
\end{proof}

\section{A well-ordering proof}\label{sect:well-ordering-proof}

In this section, we prove that the order~$\psi(\Gamma_{\Omega+X})$ from the previous section is well-founded for any well-order~$X$. Our proof will be based on the principle~$\Pi^1_1\text{-}\mathsf{CA}^\Gamma_0$, which asserts that any set~$Z\subseteq\mathbb N$ is contained in an $\omega$-model that validates both $\Pi^1_1$-comprehension with no set parameters except for~$Z$ and arithmetical transfinite recursion with arbitrary parameters. We thus establish the direction from~(i) to~(ii) in our main Theorem~\ref{thm:main}.

Throughout the previous section, we have fixed a well-order~$X$. When we want to make the dependency more explicit, we write $\Omega(X)$ at the place of~$\Omega$. The following result shows that our well-ordering principle is robust.

\begin{proposition}[$\mathsf{RCA}_0$]\label{prop:wo-princ-robust}
If one of the transformations
\begin{equation*}
    X\mapsto\Gamma_{\Omega+X}\quad\text{and}\quad X\mapsto\psi(\Gamma_{\Omega+X})\quad\text{and}\quad X\mapsto\Omega(X)
\end{equation*}
preserves well-orders, then so do the other two.
\end{proposition}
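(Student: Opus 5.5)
We establish the cycle (i)$\Rightarrow$(ii)$\Rightarrow$(iii)$\Rightarrow$(i), where (i), (ii), (iii) refer to the transformations $X\mapsto\Gamma_{\Omega+X}$, $X\mapsto\psi(\Gamma_{\Omega+X})$ and $X\mapsto\Omega(X)$, in this order. The first two steps are immediate embeddings; the third, from well-foundedness of $\Omega$ back to well-foundedness of $\Gamma_{\Omega+X}$, is where the work lies.

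\emph{(i)$\Rightarrow$(ii).} By Definition~\ref{def:psi}, $\psi(\Gamma_{\Omega+X})$ is a suborder of $\Gamma_{\Omega+X}$. A suborder of a well-order is a well-order.

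\emph{(ii)$\Rightarrow$(iii).} The map $s\mapsto\Gamma_s$ is an order embedding of $\Omega$ into $\Gamma_{\Omega+X}$, by clause~(i') of Definition~\ref{def:Gamma} and the fact that $s<\Omega+x$ in $\Omega+X$. Also $\Gamma_s\prec\Gamma_{\Omega+x}$ for every $x\in X$, so the embedding lands in $\psi(\Gamma_{\Omega+X})$. Hence $\Omega(X)$ is well-founded whenever $\psi(\Gamma_{\Omega+X})$ is.

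\emph{(iii)$\Rightarrow$(i).} Given a well-order $X$, put $X'=X+1$ and write $\top$ for the new top element. Let $\Omega'=\Omega(X')$ with embedding $\pi'$ and height function $h'$, and let $C'$ and $\psi'$ be the associated objects. By assumption $\Omega'$ is well-founded. The plan is to build an order embedding $f:\Gamma_{\Omega(X)+X}\to\Omega'$, which yields (i). The idea is to code $\alpha$ by $\overline{\psi}{}'(\Gamma_{\Omega'+\top}+\tilde\alpha)$, where $\tilde\alpha$ is a translation of $\alpha$ into $\Gamma_{\Omega'+X'}$. Concretely, I define simultaneously, by recursion on $h$ (from Definition~\ref{def:Gamma-Omega}) and on term complexity:
\begin{itemize}
\item a map $e:\Omega\to\Omega'$;
\item a translation $\alpha\mapsto\tilde\alpha$ from $\Gamma_{\Omega+X}$ into $\Gamma_{\Omega'+X'}$.
\end{itemize}
The translation replaces each $\Gamma_s$ by $\Gamma_{e(s)}$, each $\Gamma_{\Omega+x}$ by $\Gamma_{\Omega'+x}$, and commutes with $\overline\varphi$ and $+$. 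The map $e$ is given by
\begin{equation*}
e(s)=\overline{\psi}{}'\bigl(\Gamma_{\Omega'+\top}+\widetilde{\pi(s)}\bigr).
\end{equation*}
This is well-defined because every $r\in\supp_\Omega(\pi(s))$ satisfies $h(r)<h(s)$. Finally I set $f(\alpha)=\overline{\psi}{}'(\Gamma_{\Omega'+\top}+\tilde\alpha)$.

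Simultaneously with the recursion, I prove by induction the following invariants.
\begin{enumerate}[label=(\alph*)]
\item $e$ is order preserving.
\item The translation is an order embedding, i.e.\ $\alpha\prec\beta$ implies $\tilde\alpha\prec\tilde\beta$. In particular it maps terms to terms, since the side conditions $v(\gamma)\preceq\alpha$ and $\alpha_1\succeq\ldots\succeq\alpha_n$ transfer along an embedding.
\item Every $\tilde\alpha$ lies in $C'(\Gamma_{\Omega'+\top}+\tilde\beta)$ for every $\beta$. In other words, $\pi'(e(s))\prec\Gamma_{\Omega'+\top}$ for all $s$.
\end{enumerate}
Invariant~(c) should follow because $\pi'(e(s))$ is either $\Gamma_{\Omega'+\top}+\widetilde{\pi(s)}$ itself (when that term lies in the range of $\pi'$), or $\pi'(t)$ for some $t\in\supp_{\Omega'}$ of it, and these supports consist of earlier values $e(r)$.

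Given the invariants, Corollary~\ref{cor:psi-order-pres} together with Lemma~\ref{lem:C-sets-closure}(d) gives, for $\alpha\prec\beta$,
\begin{equation*}
\Gamma_{\Omega'+\top}+\tilde\alpha\prec\Gamma_{\Omega'+\top}+\tilde\beta
\quad\text{and}\quad
\Gamma_{\Omega'+\top}+\tilde\alpha\in C'\bigl(\Gamma_{\Omega'+\top}+\tilde\beta\bigr),
\end{equation*}
and hence $f(\alpha)<f(\beta)$. The same argument yields~(a), and~(a) in turn gives~(b) by Lemma~\ref{lem:Gamma-funct}.

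The main obstacle is invariant~(c). One must rule out that $\pi'(e(s))$ exceeds $\Gamma_{\Omega'+\top}$. The danger is the case where $\Gamma_{\Omega'+\top}+\widetilde{\pi(s)}$ is not in $\operatorname{rng}(\pi')$ and $\overline{\psi}{}'$ picks a large support element. I would first try to show directly that $\Gamma_{\Omega'+\top}+\widetilde{\pi(s)}$ always lies in $\operatorname{rng}(\pi')$. By Definition~\ref{def:Gamma-Omega}, this reduces to $\pi'(e(r))\prec\Gamma_{\Omega'+\top}+\widetilde{\pi(s)}$ for $r\in\supp_\Omega(\pi(s))$. That in turn follows by induction on $h$: we have $\pi'(e(r))=\Gamma_{\Omega'+\top}+\widetilde{\pi(r)}$, and $\pi(r)\prec\pi(s)$ by Definition~\ref{def:Gamma-Omega}. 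This would make $e(s)=\pi'^{-1}(\Gamma_{\Omega'+\top}+\widetilde{\pi(s)})$ and close the induction. All of this is a $\Delta^0_1$-recursion, so it is available in $\mathsf{RCA}_0$.
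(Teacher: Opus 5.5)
Your steps (i)$\Rightarrow$(ii)$\Rightarrow$(iii) are correct and agree with the paper. The construction for (iii)$\Rightarrow$(i), however, fails.

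Invariant~(c) can never hold for your $e$. Definition~\ref{def:psi} always gives $\pi'(\overline{\psi}{}'(\gamma))\succeq\gamma$, so $\pi'(e(s))\succeq\Gamma_{\Omega'+\top}+\widetilde{\pi(s)}\succeq\Gamma_{\Omega'+\top}$. Your final paragraph even establishes $\pi'(e(r))=\Gamma_{\Omega'+\top}+\widetilde{\pi(r)}$, which contradicts~(c) outright. Without~(c), the membership $\Gamma_{\Omega'+\top}+\tilde\alpha\in C'(\Gamma_{\Omega'+\top}+\tilde\beta)$ that you need for Corollary~\ref{cor:psi-order-pres} says exactly that $\pi(s)\prec\beta$ for all $s\in\supp_\Omega(\alpha)$. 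That is $\alpha\in C(\beta)$ in the original system, and it does not follow from $\alpha\prec\beta$.

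Here is a concrete failure. Pick $x\in X$. Since $\supp_\Omega(\Gamma_{\Omega+x})=\emptyset$, we have $\Gamma_{\Omega+x}=\pi(s)$ for some $s$, and your recursion gives $\pi'(e(s))=\Gamma_{\Omega'+\top}+\Gamma_{\Omega'+x}$. Both $\Gamma_{\Omega'+\top}+\Gamma_{e(s)}$ and $\Gamma_{\Omega'+\top}+\Gamma_{e(s)}+1$ have support $\{e(s)\}$ and lie strictly below $\pi'(e(s))$. Hence $f(\Gamma_s)=e(s)=f(\Gamma_s+1)$, although $\Gamma_s\prec\Gamma_s+1$, so $f$ is not even injective.

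The repair is to use the shift by $\Gamma_{\Omega'+\top}$ only at the outer level. Let $e$ be the canonical embedding determined by $\pi'(e(s))=\widetilde{\pi(s)}$. This is well defined by your recursion on $h$, because $\pi'(e(r))=\widetilde{\pi(r)}\prec\widetilde{\pi(s)}$ for $r\in\supp_\Omega(\pi(s))$. Then $\pi'(e(s))\prec\Gamma_{\Omega'+\top}$ genuinely holds, since translated terms are built from constants below $\Gamma_{\Omega'+\top}$. So every $\Gamma_{\Omega'+\top}+\tilde\alpha$ lies in $C'(\gamma)$ for all $\gamma\succeq\Gamma_{\Omega'+\top}$, and $f(\alpha)=\overline{\psi}{}'(\Gamma_{\Omega'+\top}+\tilde\alpha)$ becomes an embedding by Corollary~\ref{cor:psi-order-pres}.

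The paper avoids this recursion entirely by applying the hypothesis twice. $\Omega(X)$ is well-founded, hence so are $Z=\Omega(X)+X$ and then $\Omega(Z)$. Now $\Gamma_{\Omega(X)+X}=\Gamma_Z$ sits inside $\Gamma_{\Omega(Z)+Z}$ via the inclusion $\iota$ as terms with empty $\Omega(Z)$-support, which belong to every $C(\gamma)$. Therefore $\alpha\mapsto\overline\psi(\iota(\alpha))$ is directly an embedding of $\Gamma_Z$ into $\Omega(Z)$.
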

\begin{proof}
    In view of the embedding and inclusion
    \begin{equation*}
        \Omega\ni s\mapsto\Gamma_s\in \psi(\Gamma_{\Omega+X})\subseteq\Gamma_{\Omega+X},
    \end{equation*}
    it suffices to show that $X\mapsto\Gamma_{\Omega+X}$ preserves well-orders if this holds for~$X\mapsto\Omega(X)$. Assuming $X$ is a well-order, we first infer that the same holds for $Z=\Omega(X)+X$ and then for $\Omega(Z)$. To conclude, we construct an embedding $e:\Gamma_Z\to\Omega(Z)$. As in Lemma~\ref{lem:Gamma-funct}(a), we get an inclusion~$\iota:\Gamma_Z\to\Gamma_{\Omega(Z)+Z}$ with $\iota(\Gamma_x)=\Gamma_{\Omega(Z)+x}$.
    In view of Definition~\ref{def:Gamma-supp}, we obtain $\supp_{\Omega(Z)}(\iota(\alpha))=\emptyset$ for any~$\alpha\in\Gamma_Z$, which entails that we have $\iota(\alpha)\in C(\gamma)$ for all~$\gamma\in\Gamma_{\Omega(Z)+Z}$ and in particular for $\gamma=\iota(\beta)$. Let us put $e(\alpha):=\overline\psi(\iota(\alpha))$. Due to Corollary~\ref{cor:psi-order-pres} and Definition~\ref{def:psi}, we obtain
    \begin{equation*}
        \alpha\prec\beta\quad\Rightarrow\quad\Gamma_{\overline\psi(\iota(\alpha))}=\psi(\iota(\alpha))\prec\psi(\iota(\beta))=\Gamma_{\overline\psi(\iota(\beta))}\quad\Rightarrow\quad e(\alpha)<e(\beta),
    \end{equation*}
    as desired.
\end{proof}

For a linear order~$Y$ and any $x\in Y$, we write
\begin{equation*}
    Y[x]=\{z\in Y:z<_Y x\}.
\end{equation*}
Let us also agree on the abbreviation 
\begin{equation*}
    \mathsf{WO}(Y)\quad \iff \quad\text{``$Y$ is a well-order''}.
\end{equation*}
The next result holds by Section~4.2 of~\cite{rathjen-martinlof1}, since induction for any second-order formula is available in $\omega$-models . We note that $\Gamma_Y$ is computable relative to~$Y$ and hence absolute between $\mathcal M$ and the ambient world.

\begin{lemma}[$\mathsf{ACA}_0$]\label{lem:Gamma-prog}
    Consider an $\omega$-model $\mathcal M\vDash\mathsf{ATR}_0$ and a linear order $Y\in\mathcal M$. We have
    \begin{equation*}
        \mathcal M\vDash\forall x\in Y\big(\forall z<_Y\!x\,\,\mathsf{WO}(\Gamma_{Y[z]})\,\rightarrow\,\mathsf{WO}(\Gamma_{Y[x]})\big).
    \end{equation*}
\end{lemma}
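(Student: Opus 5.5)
This is the relativized form of the classical well-ordering proof for $\Gamma_0$, carried out inside $\mathcal M$. Since $\mathcal M$ is an $\omega$-model of $\mathsf{ATR}_0$, we may argue within $\mathcal M$ using $\mathsf{ATR}_0$ together with full second-order induction along $\mathbb N$, which holds in any $\omega$-model.

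Fix $x\in Y$ and assume that $\mathsf{WO}(\Gamma_{Y[z]})$ holds in $\mathcal M$ for all $z<_Y x$. We must show that $\Gamma_{Y[x]}$ is well-founded in $\mathcal M$. Every term of $\Gamma_{Y[x]}$ lies below some $\Gamma_z$ with $z<_Y x$, or it is built from such constants. So the first step is to describe the segment of $\Gamma_{Y[x]}$ below $\Gamma_z$, or more precisely below the next constant. By Lemma~\ref{lem:Gamma-funct} and the support machinery, a term $\alpha\prec\Gamma_z$ has support contained in $Y[z]$, so $\alpha$ already lies in $\Gamma_{Y[z]}$. Hence it suffices to show that the closure under $\varphi$ and $+$ above a well-founded initial segment remains well-founded.

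The second step is the classical Gentzen–Feferman–Schütte argument, expressed with the accessible part. Let $W$ be the well-founded part (accessible part) of $\Gamma_{Y[x]}$. In $\mathcal M\vDash\mathsf{ATR}_0$ this is not a set in general, so instead we work with the $\Pi^1_1$ class ``$\alpha$ is accessible''. We use the standard Jäger/Schütte lemmas for this class, in order:
\begin{enumerate}[label=(\arabic*)]
\item $W$ is closed under $+$.
\item If $\alpha\in W$ and $W$ is closed under $\varphi_\beta$ for all $\beta\prec\alpha$, then $W$ is closed under $\varphi_\alpha$. This is proved by a main induction on $\gamma$ and a side induction following clause~(ii') of Definition~\ref{def:Gamma}.
\item By induction on $\alpha\in W$, the class $W$ is closed under $\varphi$ in both arguments.
\end{enumerate}
Each of these steps is an induction along a well-founded relation for $\Pi^1_1$ formulas. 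This is available in $\mathcal M$ because $\mathsf{ATR}_0$ proves $\Pi^1_1$ transfinite induction along well-orders only in a restricted sense. We therefore get around this as Rathjen does: we replace the class $W$ by Feferman's ``jump'' trick. Concretely, we use that $\mathsf{ATR}_0$ proves $\mathsf{WO}(\alpha)\to\mathsf{WO}(\varphi_\alpha\text{-closure})$ through the relativized statement $\forall Z(\mathsf{WO}(Z)\to\mathsf{WO}(\varphi_\alpha(Z)))$. Each such statement is $\Pi^1_1$, and its progressiveness in $\alpha$ follows from arithmetical comprehension together with $\Sigma^1_1$ choice.

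The final step is to combine the pieces. Every constant $\Gamma_z$ with $z<_Y x$ lies in $W$ by the hypothesis together with the first step. Then, by induction on the term build-up of $\alpha\in\Gamma_{Y[x]}$, using the closure properties (1)–(3), every term lies in $W$, and hence $\Gamma_{Y[x]}$ is well-founded. This induction on term build-up is just induction along $\mathbb N$, which is valid in the $\omega$-model for arbitrary formulas.

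I expect the main obstacle to be the second step: making the induction on $\alpha$ in (3) legitimate inside $\mathsf{ATR}_0$. This requires formulating it so that only the available amount of transfinite induction is used, namely via $\mathsf{ATR}_0\vdash \mathsf{WO}(Z)\to\mathsf{WO}(\Gamma_Z)$-style arguments relativized to the initial segments, as in Section~4.2 of~\cite{rathjen-martinlof1}. This is where I would cite or adapt that source rather than redo the argument.
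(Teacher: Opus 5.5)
Your skeleton is sound. The element $0$ and every $\Gamma_z$ with $z<_Yx$ lie in the accessible part $W$ of $\Gamma_{Y[x]}$, since the segment below $\Gamma_z$ is $\Gamma_{Y[z]}$. Once $W$ is known to be closed under $+$ and $\varphi$, induction on term build-up yields $\mathsf{WO}(\Gamma_{Y[x]})$; this is a $\Pi^1_1$-induction along $\mathbb N$, which holds in the $\omega$-model.

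The gap is closure under $\varphi$, i.e.\ your items (2) and (3). As you set them up, both are transfinite inductions along $W$:
\begin{itemize}
\item on $\gamma$, for the $\Pi^1_1$ property $\varphi_\alpha(\gamma)\in W$;
\item on $\alpha\in W$, for the still more complex property ``$W$ is closed under $\varphi_\alpha$''.
\end{itemize}
An $\omega$-model gives you induction along $\mathbb N$ for arbitrary formulas, but nothing gives you transfinite induction along its internal well-orders for such formulas. Nor does $\mathsf{ATR}_0$ prove $\Pi^1_1$ transfinite induction. It does not even prove $\Pi^1_1$-induction along $\mathbb N$: otherwise, iterating $\mathsf{WO}(\alpha)\to\mathsf{WO}(\varphi_\alpha(0))$ along $\omega$ would prove $\mathsf{WO}(\Gamma_\emptyset)$, the well-foundedness of the usual notation system for $\Gamma_0$. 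Compare the proof of Theorem~\ref{thm:WO_principle}, where $W$ is formed by $\Pi^1_1$-comprehension so that the subsequent transfinite induction is arithmetical.

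Your workaround does not close the gap. The statement $\forall Z(\mathsf{WO}(Z)\to\mathsf{WO}(\varphi_\alpha(Z)))$ is $\Pi^1_2$, not $\Pi^1_1$. Showing it progressive in $\alpha$ only helps if you can then carry out a $\Pi^1_2$ transfinite induction on $\alpha$, which is exactly what is unavailable.

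What you need is that closure under $\varphi$ is a \emph{theorem of} $\mathsf{ATR}_0$, proved without any internal induction on $\alpha$. The precise form is: if the segments below $\alpha$ and $\beta$ are well-founded, then so is the segment below $\varphi_\alpha(\beta)$. One way to get it:
\begin{itemize}
\item Put $\alpha'=\max(\alpha,\beta)+1$; then $\varphi_\alpha(\beta)\prec\varphi_{\alpha'}(0)$.
\item Every term below $\varphi_{\alpha'}(0)$ has all its Veblen indices and constants below $\alpha'$.
\item So this segment embeds into the relativized Veblen order $\varphi(A,0)$, where $A$ is the well-founded segment below $\alpha'$. Here $\varphi(A,0)$ consists of the terms built from $0$ by $+$ and $\varphi_a$ for $a\in A$, and the embedding sends $\Gamma_z$ to $\varphi_{\Gamma_z}(0)$.
\item Now apply Friedman's theorem that $\mathsf{ATR}_0$ proves $\forall X(\mathsf{WO}(X)\to\mathsf{WO}(\varphi(X,0)))$ (Marcone--Montalb\'an, Rathjen--Weiermann). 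Its proof uses arithmetical transfinite recursion to build a jump hierarchy along $X$, and afterwards needs only arithmetical induction relative to that hierarchy.
\end{itemize}

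With this in place of (2) and (3), your final step goes through unchanged. It is then the one place where the $\omega$-model is genuinely used. Some such use is unavoidable: for $x$ minimal in $Y$ the lemma yields $\mathsf{WO}(\Gamma_\emptyset)$, which $\mathsf{ATR}_0$ does not prove. The paper itself only cites Section~4.2 of \cite{rathjen-martinlof1} and notes that full induction holds in $\omega$-models.
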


In the next result, it is crucial that~$X$ is a well-order in the ambient world and hence also but not only in~$\mathcal M$ (in contrast to the order~$W$ in the proof of Theorem~\ref{thm:WO_principle}).

\begin{corollary}[$\mathsf{ACA}_0$]
    For a well-order~$X$ and any $\omega$-model~$\mathcal M\vDash\mathsf{ATR}_0$ such that we have $X\in\mathcal M$, we get
    \begin{equation*}
        \mathcal M\vDash\forall Y\big(\mathsf{WO}(\Gamma_Y)\to\mathsf{WO}(\Gamma_{Y+X})\big).
    \end{equation*}
\end{corollary}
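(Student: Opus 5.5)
We argue inside $\mathcal M$, using that $\mathcal M$ satisfies induction along the external well-order~$X$ for arbitrary formulas. Fix $Y\in\mathcal M$ with $\mathcal M\vDash\mathsf{WO}(\Gamma_Y)$. We aim to show by induction on $x\in X$ (in the ambient world, which is legitimate since $X$ is a genuine well-order and the property below is a statement about $\mathcal M$, hence arithmetical in the code of~$\mathcal M$ and available in $\mathsf{ACA}_0$) that
\begin{equation*}
P(x):\quad\mathcal M\vDash\mathsf{WO}(\Gamma_{(Y+X)[Y+x]}).
\end{equation*}
Note that $(Y+X)[Y+x]=Y+X[x]$. In addition, we record that $\mathcal M\vDash\mathsf{WO}(\Gamma_{(Y+X)[y]})$ for every $y\in Y$. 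Indeed, $(Y+X)[y]=Y[y]$, so $\Gamma_{Y[y]}$ is a suborder of $\Gamma_Y$ by Lemma~\ref{lem:Gamma-funct}(a) (or directly from Definition~\ref{def:Gamma}). Since $\mathsf{ATR}_0$ proves that suborders of well-orders are well-orders, this holds in~$\mathcal M$.

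For the induction step, assume $P(x')$ for all $x'<_X x$. Then every $z<_{Y+X}Y+x$ is either some $y\in Y$ or some $Y+x'$ with $x'<_X x$. In both cases $\mathcal M\vDash\mathsf{WO}(\Gamma_{(Y+X)[z]})$, by the preliminary remark or by the induction hypothesis. Since the relevant quantifier ranges over $Y+X$ and the statement for each $z$ is evaluated in $\mathcal M$, we get
\begin{equation*}
\mathcal M\vDash\forall z<_{Y+X}Y+x\,\,\mathsf{WO}(\Gamma_{(Y+X)[z]}).
\end{equation*}
This uses that $\mathcal M$ is an $\omega$-model, so its numbers are the true ones. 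Applying Lemma~\ref{lem:Gamma-prog} to the order $Y+X\in\mathcal M$ then yields $P(x)$. This completes the induction, so $P(x)$ holds for all $x\in X$.

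It remains to pass from $P$ to $\mathcal M\vDash\mathsf{WO}(\Gamma_{Y+X})$. The main obstacle is that $Y+X$ has no top element, so no single initial segment covers everything. The trick is to use the order $Y+X+1$, which lies in~$\mathcal M$. We run the same induction for this order, and at the top element $*$ apply Lemma~\ref{lem:Gamma-prog} once more: every $z<*$ lies in $Y+X$, and we have already shown $\mathcal M\vDash\mathsf{WO}(\Gamma_{(Y+X+1)[z]})$ for all such~$z$. This gives $\mathcal M\vDash\mathsf{WO}(\Gamma_{(Y+X+1)[*]})=\mathsf{WO}(\Gamma_{Y+X})$.

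The induction along~$X$ is carried out externally, which is exactly why $X$ must be truly well-founded. The predicate ``$\mathcal M\vDash\ldots$'' is arithmetical in (the code of) $\mathcal M$, so arithmetical transfinite induction along $X$ suffices. Arithmetical transfinite induction along a well-order is provable in $\mathsf{ACA}_0$: consider the arithmetically definable set of counterexamples, which, if nonempty, has a least element. Since $Y$ was arbitrary in~$\mathcal M$, the corollary follows.
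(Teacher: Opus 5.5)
Your proof is correct and follows the paper's argument. The paper likewise runs an arithmetical induction in the ambient world, applying Lemma~\ref{lem:Gamma-prog} to the order $Y+Z$ with $Z=X+\{\top\}$ (your $Y+X+1$), and reads off the claim at the top element. Your preliminary step for $y\in Y$ (via $\Gamma_{Y[y]}\subseteq\Gamma_Y$) makes explicit what the paper leaves implicit, while your separate first induction on $Y+X$ is redundant, since a single induction on $Y+X+1$ already covers it.
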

\begin{proof}
 Write $Z$ for the extension of~$X$ by a new largest element~$\top$. Due to the previous lemma, we can use arithmetical induction on~$z\in Z$ to establish
 \begin{equation*}
     \mathcal M\vDash\mathsf{WO}(\Gamma_{Y+Z[z]}).
 \end{equation*}
 The claim follows in view of $X=Z[\top]$.
\end{proof}

Finally, we can derive our well-ordering principle.

\begin{theorem}[$\Pi^1_1\textsf{-CA}^\Gamma_0$]\label{thm:WO_principle}
    If $X$ is a well-order, then so is $\psi(\Gamma_{\Omega+X})$.
\end{theorem}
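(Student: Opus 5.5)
We argue by contradiction. By Proposition~\ref{prop:wo-princ-robust} it suffices to show that $\Omega=\Omega(X)$ is well-founded. Suppose $f$ is a descending sequence in~$\Omega$. Apply $\Pi^1_1\textsf{-CA}^\Gamma_0$ to the set $X\oplus f$ to obtain an $\omega$-model $\mathcal M\ni X,f$ with $\mathcal M\vDash\mathsf{ATR}_0+\Pi^1_1\textsf{-CA}(X\oplus f)$. Since $\Omega$ and $\pi$ are computable from~$X$, the formula $\mathsf{WO}(\Omega[s])$ is $\Pi^1_1$ with parameter~$X$ only. Hence
\begin{equation*}
A=\{s\in\Omega:\mathcal M\vDash\mathsf{WO}(\Omega[s])\}\in\mathcal M .
\end{equation*}
This set is an initial segment of~$\Omega$, and $\mathcal M\vDash\mathsf{WO}(A)$, since a descending sequence in $A$ lies below its first entry~$s\in A$. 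Because $X$ is a genuine well-order, the corollary to Lemma~\ref{lem:Gamma-prog} yields $\mathcal M\vDash\mathsf{WO}(\Gamma_{A+X})$. We aim to show $A=\Omega$. Then $\mathcal M\vDash\mathsf{WO}(\Omega)$, which contradicts $f\in\mathcal M$.

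The key step is the following claim. For every $\beta\in\Gamma_{A+X}$ and every $t\in\Omega$ with $\pi(t)=\beta$ (viewing $\Gamma_{A+X}\subseteq\Gamma_{\Omega+X}$ by Lemma~\ref{lem:Gamma-funct}), we have $t\in A$. We prove it inside~$\mathcal M$ by transfinite induction on~$\beta$ along the $\mathcal M$-well-order $\Gamma_{A+X}$. The induction formula ``$t\in A$'' is arithmetical in the set $A\in\mathcal M$, so $\mathsf{ATR}_0$ in~$\mathcal M$ suffices; in fact arithmetical transfinite induction is already available in~$\mathsf{ACA}_0$.

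For the induction step, let $\pi(t)=\beta$ and $s<t$. We must show $s\in A$, because then $\Omega[t]\subseteq A$ is $\mathcal M$-well-ordered and so $t\in A$.
\begin{itemize}[label=--]
\item If $\supp_\Omega(\pi(s))\subseteq A$, then Lemma~\ref{lem:Gamma-funct}(c) gives $\pi(s)\in\Gamma_{A+X}$. Moreover $\pi(s)\prec\pi(t)=\beta$, so the induction hypothesis yields $s\in A$.
\item Otherwise pick $r_1\in\supp_\Omega(\pi(s))\setminus A$. The condition on $\rng(\pi)$ from Definition~\ref{def:Gamma-Omega} gives $\pi(r_1)\prec\pi(s)\prec\beta$, so $r_1<t$. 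We now iterate the argument with $r_1$ in place of~$s$. Each time the first case fails, we get some $r_{k+1}\in\supp_\Omega(\pi(r_k))\setminus A$ with $r_{k+1}<t$ and $h(r_{k+1})<h(r_k)$. Since $h$ takes values in~$\mathbb N$, this cannot go on forever. Hence some $r_k\notin A$ falls under the first case, which gives $r_k\in A$, a contradiction. So the second case never occurs, and $s\in A$.
\end{itemize}
Formally, we prove ``every $s<t$ lies in $A$'' by ordinary induction on $h(s)$, keeping the outer hypothesis on~$\beta$ fixed. The function $h$ need not lie in~$\mathcal M$, so this induction is carried out externally in $\mathsf{ACA}_0$. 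This is unproblematic, because $A$ and $\Omega$ are genuine sets and $t\in A$ is an arithmetical statement about them.

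Finally we show $t\in A$ for all $t\in\Omega$ by external induction on~$h(t)$. By the induction hypothesis, every $r\in\supp_\Omega(\pi(t))$ has $h(r)<h(t)$ and therefore lies in~$A$. So $\pi(t)\in\Gamma_{A+X}$, and the claim gives $t\in A$. Thus $A=\Omega$, completing the contradiction.

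I expect the main obstacle to be the bookkeeping between internal and external reasoning. The $\Pi^1_1$ definition of $A$ must use only the permitted parameters, and the transfinite induction on $\Gamma_{A+X}$ must be justified inside~$\mathcal M$. The $h$-inductions, by contrast, must be justified outside~$\mathcal M$, where only arithmetical induction is at hand.
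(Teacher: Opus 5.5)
There is a genuine gap at the step where you invoke the corollary to Lemma~\ref{lem:Gamma-prog}. That corollary reads $\mathcal M\vDash\forall Y(\mathsf{WO}(\Gamma_Y)\to\mathsf{WO}(\Gamma_{Y+X}))$. It therefore needs $\mathcal M\vDash\mathsf{WO}(\Gamma_A)$, but your choice $A=\{s:\mathcal M\vDash\mathsf{WO}(\Omega[s])\}$ only gives $\mathcal M\vDash\mathsf{WO}(A)$.

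Passing from $\mathsf{WO}(A)$ to $\mathsf{WO}(\Gamma_A)$ is an instance of the well-ordering principle for $Y\mapsto\Gamma_Y$. By Rathjen's result, which the paper uses in its final proof, that principle already yields $\omega$-models of $\mathsf{ATR}_0$, so it is not provable in $\mathsf{ATR}_0$. Hence it cannot be read off from $\mathcal M\vDash\mathsf{ATR}_0$. Lemma~\ref{lem:Gamma-prog} only gives progressiveness of $z\mapsto\mathsf{WO}(\Gamma_{A[z]})$. Climbing up along $A$ with it would be a transfinite induction for a $\Pi^1_1$ formula inside $\mathcal M$, and you have not justified that.

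The fix is to put $\Gamma$ into the $\Pi^1_1$ definition, as the paper does. Set $W=\{s\in\Omega:\mathcal M\vDash\mathsf{WO}(\Gamma_{\Omega[s]})\}$. This is still $\Pi^1_1$ in $X$ alone, so $W\in\mathcal M$. It is an initial segment with $\mathcal M\vDash\mathsf{WO}(\Gamma_{W[s]})$ for every $s\in W$.
\begin{itemize}
\item A single application of Lemma~\ref{lem:Gamma-prog}, to $W$ extended by a new top element, gives $\mathcal M\vDash\mathsf{WO}(\Gamma_W)$ with no transfinite induction. The corollary then gives $\mathcal M\vDash\mathsf{WO}(\Gamma_{W+X})$.
\item In your induction step, conclude $t\in W$ from $\Omega[t]\subseteq W$ via Lemma~\ref{lem:Gamma-prog}, not via your direct argument.
\item Alternatively, keep $A$, form $W$ as well, and show $A\subseteq W$ by applying Lemma~\ref{lem:Gamma-prog} to the least element of $A\setminus W\in\mathcal M$. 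But then $A$ is superfluous.
\end{itemize}
With this change, your $h$-inductions and your internal/external bookkeeping go through and match the paper's proof.

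Once $W=\Omega$, you have $\mathcal M\vDash\mathsf{WO}(\Gamma_{\Omega+X})$. So you can take $f$ descending in $\psi(\Gamma_{\Omega+X})\subseteq\Gamma_{\Omega+X}$ directly, as the paper does. The detour through Proposition~\ref{prop:wo-princ-robust} is unnecessary. It is also only legitimate because you argue for all well-orders $X$ at once, since its proof applies the hypothesis to $\Omega(X)+X$.
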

\begin{proof}
    Towards a contradiction, we assume that there is a strictly decreasing sequence $f:\mathbb N\to\Gamma_{\Omega+X}$. By $\Pi^1_1\textsf{-CA}^\Gamma_0$, we find an $\omega$-model $\mathcal M\vDash\mathsf{ATR}_0$ that validates $\Pi^1_1$-comprehension with parameters~$X,f\in\mathcal M$ (which can be coded into a single parameter). As noted before Definition~\ref{def:Gamma-Omega}, the construction of~$\Omega$ from~$X$ is computable and hence absolute. We will establish $\mathcal M\vDash\text{``$\Gamma_{\Omega+X}$ is well-founded''}$. In view of $f\in\mathcal M$, this yields the desired contradiction.

    By $\Pi^1_1$-comprehension with parameter~$X$, we form the set
    \begin{equation*}
        W=\{s\in\Omega:\mathcal M\vDash\text{``\,$\Gamma_{\Omega[s]}$ is a well-order"}\}\in\mathcal M.
    \end{equation*}
    Due to the previous lemma and corollary, we get $\mathcal M\vDash\text{``\,$\Gamma_{W+X}$ is a well-order"}$. To complete the proof, it suffices to establish $W=\Omega$.
    
    Working in~$\mathcal M$, we use arithmetical induction over~$\tau\in\Gamma_{W+X}\subseteq\Gamma_{\Omega+X}$ to show that any $t\in\Omega$ with $\pi(t)=\tau$ satisfies~$t\in W$. Again by Lemma~\ref{lem:Gamma-prog}, it suffices to prove that $s\in W$ holds for all~$s<t$. We argue by side induction over~$h(s)\in\mathbb N$. For any~$r\in\supp_\Omega(\pi(s))$, Definition~\ref{def:Gamma-Omega} yields both $h(r)<h(s)$ and $\pi(r)\prec\pi(s)$, so that we get $r<s<t$ and then inductively~$r\in W$. Now Lemma~\ref{lem:Gamma-funct}(c) entails that we have~$\pi(s)\in\Gamma_{W+X}$. In view of $\pi(s)\prec\tau$, we can conclude $s\in W$ by the main induction hypothesis.

    Finally, we use induction over~$h(t)\in\mathbb N$ to show that $t\in W$ holds for all~$t\in\Omega$. As before, we inductively get $\supp_\Omega(\pi(t))\subseteq W$ and then $\pi(t)\in\Gamma_{W+X}$. We can infer $t\in W$ by the previous paragraph.
\end{proof}

In the previous proof, only the applications of Lemma~\ref{lem:Gamma-prog} and its corollary rely on the fact that we have $\mathcal M\vDash\mathsf{ATR}_0$. It is the latter that distinguishes $\Pi^1_1\text{-}\mathsf{CA}^\Gamma_0$ from weaker theories of partial impredicativity.

\section{Inductive definitions}\label{sect:ind-def}

We will use ordinal analysis to derive $\Pi^1_1\textsf{-CA}^\Gamma_0$ from our well-ordering principle. It is known that theories of inductive definitions can be used for a perspicuous ordinal analysis of both $\mathsf{ATR}_0$ and $\Pi^1_1$-comprehension~\cite{avigad-atr,bfps-inductive}. In this section, we adapt results from the literature to reformulate $\Pi^1_1\textsf{-CA}^\Gamma_0$ in terms of inductive definitions.

From this point on, we assume that all formulas are in negation normal form, i.e., that they are built from literals (negated and unnegated prime formulas) via the connectives~$\land,\lor$ and the quantifiers~$\forall,\exists$. Negation and implication are considered as defined operations that apply de Morgan's laws and eliminate double negations. For example, the expression $\neg\forall x(Qx\to\neg Qy)$ is not itself a formula but refers to the formula~$\exists x(Qx\land Qy)$.

We first consider a reformulation of $\Pi^1_1\textsf{-CA}^\Gamma_0$ within the language of second-order arithmetic. A formula~$\varphi$ (in negation normal form) is called $X$-positive for a set variable~$X$ if all free occurrences of~$X$ in~$\varphi$ are in subformulas~$t\in X$ rather than~$t\notin X$. When a formula~$\varphi(x,X)$ is $X$-positive, the implication
\begin{equation*}
X\subseteq Y\to\{x\in\mathbb N:\varphi(x,X)\}\subseteq\{x\in\mathbb N:\varphi(x,Y)\}
\end{equation*}
is logically valid. So positive formulas define monotone maps on the power set~$\mathcal P(\mathbb N)$, which are also called operators (but should not be confused with the operators of Definition~\ref{def:H-operator}). It is well-known that such operators have least fixed points. We consider an axiom schema~$\mathsf{FP}$ that asserts the existence of fixed points but does no require them to be least. This schema consists of the universal closures of all formulas
\begin{equation*}\tag{$\mathsf{FP}$}
\exists X\forall x(x\in X\leftrightarrow\varphi(x,X))
\end{equation*}
such that $\varphi$ is arithmetical and $X$-positive. Let us note that $\mathsf{FP}$ includes the principle of arithmetical comprehension (as the case where $\varphi$ does not contain~$X$ and is thus trivially $X$-positive). The following was shown by Avigad.

\begin{theorem}[$\mathsf{ACA}_0$; \cite{avigad-atr}]\label{thm:ATR-FP}
Arithmetical transfinite recursion is equivalent to~$\mathsf{FP}$.
\end{theorem}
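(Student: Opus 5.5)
Both directions are proved over $\mathsf{ACA}_0$, and I would use Simpson's characterization of $\mathsf{ATR}_0$ via comparability of well-orders or, more conveniently, via the existence of hierarchies: $\mathsf{ATR}_0$ is equivalent over $\mathsf{ACA}_0$ to the statement that for every well-order $W$ and arithmetical $\theta(n,Y,Z)$ there is a set $H$ with $H_w=\{n:\theta(n,H^w,Z)\}$ for all $w\in W$. Here $H^w=\{(v,m):v<_W w,\ (v,m)\in H\}$.

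For the direction from transfinite recursion to $\mathsf{FP}$, let $\varphi(x,X)$ be arithmetical and $X$-positive. I would iterate the monotone operator $\Phi(X)=\{x:\varphi(x,X)\}$ starting from $\emptyset$, via $X_\alpha=\Phi(\bigcup_{\beta<\alpha}X_\beta)$, along a suitable well-order. The difficulty is that the closure ordinal may not be available as a well-order in the model. So I would follow Avigad and argue by cases using the pseudohierarchy method.
\begin{enumerate}
\item By $\Sigma^1_1$ choice (a consequence of $\mathsf{ATR}_0$), either there is a well-order $W$ whose iteration stabilizes, in which case we are done, or we use $\Sigma^1_1$-separation/bounding to obtain a non-well-ordered linear order $L$ carrying a hierarchy $(X_\ell)_{\ell\in L}$ that satisfies the recursion equations.
\item By $\Sigma^1_1$-bounding, $L$ has a well-founded initial segment $I$ and an ill-founded remainder.
\item Take a descending sequence $\ell_0>\ell_1>\cdots$ in $L\setminus I$, with all $\ell_i$ above the well-founded part, and set $F=\bigcap_i X_{\ell_i}$. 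More simply, take $F=\{x:\exists\ell\notin I'\,\forall k\ (\ldots)\}$ arithmetically in $H$, choosing the cut so that $\bigcup_{\ell\in I}X_\ell\subseteq F\subseteq X_{\ell}$ for every $\ell$ beyond it.
\item Monotonicity (positivity) then gives $\Phi(F)\subseteq\Phi(X_{<\ell})=X_\ell$, and hence $\Phi(F)\subseteq F$ after intersecting. Conversely, $\Phi(F)\supseteq F$ follows because $F\subseteq X_{\ell_{i+1}}$ implies $X_{\ell_i}\subseteq\Phi(\ldots)$. Choosing the descending sequence with stabilization is exactly the delicate point.
\end{enumerate}
Honestly, I expect step 4 to be the main obstacle: extracting an exact fixed point (not merely pre- or post-fixed) from a pseudohierarchy. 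I would try a dichotomy. Either some stage in $I$ satisfies $X_{\ell+1}=X_\ell$ on the well-founded part, or else the fixed point is obtained as the union over the well-founded part, using a $\Pi^1_1$ definition of $I$ made arithmetical by comparing with a pseudo-well-order.

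For the converse, from $\mathsf{FP}$ to $\mathsf{ATR}_0$, given a well-order $W$ and arithmetical $\theta$, I would define the positive operator whose fixed point $X$ codes pairs $(w,n,i)$, where $i\in\{0,1\}$ records membership ($i=1$) or non-membership ($i=0$) of $n$ in $H_w$. Positivity is arranged by replacing each negative occurrence $n\notin H^v$ with $(v,n,0)\in X$. The clause for $(w,n,1)$ is $\theta$ with the literals translated in this way, and the clause for $(w,n,0)$ uses the translated $\neg\theta$. Any fixed point $X$ then satisfies, by arithmetical transfinite induction along $W$ (available since $W$ is well-ordered and the statement is arithmetical in $X$), that for every $w$ and $n$ exactly one of $(w,n,0),(w,n,1)$ lies in $X$. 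Reading off $H=\{(w,n):(w,n,1)\in X\}$ then gives the required hierarchy. This direction is routine once the disjointness and totality induction is set up carefully, and it uses arithmetical comprehension from $\mathsf{FP}$.
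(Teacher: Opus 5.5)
The paper gives no proof of this theorem and simply cites Avigad. Your direction from $\mathsf{FP}$ to $\mathsf{ATR}_0$ is fine and is the standard argument: coding membership and non-membership by triples makes the operator positive, and arithmetical induction along $W$ shows that a fixed point decides every pair $(w,n)$ exactly once.

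The gap is in the direction from $\mathsf{ATR}_0$ to $\mathsf{FP}$, at exactly the step you flag. For $F=\bigcap_iX_{\ell_i}$ you do get $\Phi(F)\subseteq F$. Indeed $F\subseteq X_{\ell_{i+1}}\subseteq X_{<\ell_i}$, and hence $\Phi(F)\subseteq X_{\ell_i}$ for every $i$. But your justification of $F\subseteq\Phi(F)$ is invalid. From $F\subseteq X_{\ell_i}=\Phi(X_{<\ell_i})$ for all $i$, nothing follows about $\Phi(F)$, because positive arithmetical operators do not commute with intersections of descending chains. Also, the well-founded initial segment $I$ you want to use is only a $\Pi^1_1$ class and need not exist as a set in $\mathsf{ATR}_0$.

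Here is a concrete failure. Order $\{1,2,3,\ldots\}$ by $n+1<_Ln$ and let $\varphi(x,X)\equiv(x\geq1\wedge\forall y>x\,(y\in X))\vee(x=0\wedge\exists y\geq1\,(y\in X))$. Then $X_\ell=\{0\}\cup\{x:x\geq\ell\}$ satisfies $X_\ell=\Phi(X_{<\ell})$ for all $\ell\in L$, and it even adds a new element at every stage. The well-founded part of $L$ is empty, and the descending sequence $\ell_i=i+1$ gives $F=\{0\}$ with $\Phi(F)=\emptyset$. Nothing in your plan controls which pseudohierarchy bounding hands you, so this behaviour is not excluded.

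The missing idea is to build the needed continuity into the $\Sigma^1_1$ formula to which you apply bounding. Let $\theta(L)$ say three things:
\begin{enumerate}
\item $L$ is a linear order with some $H$;
\item $H_\ell=\Phi(H_{<\ell})$ for all $\ell\in L$;
\item every $n$ that lies in some $H_\ell$ has a least such $\ell$.
\end{enumerate}
The last clause is arithmetical in $H$ and automatic along a well-order. So $\mathsf{ATR}_0$ gives $\theta(W)$ for every well-order $W$, and bounding yields an ill-founded $L$ with $\theta(L)$. In the example above, this clause fails for $n=0$.

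Now take any descending sequence $\ell_0>_L\ell_1>_L\cdots$ and put $F=\bigcap_iH_{\ell_i}$. Positivity alone makes $H$ increasing, which gives $\Phi(F)\subseteq F$ as before. Conversely, let $n\in F$ have least entry stage $e$. Then $e\leq_L\ell_{i+1}<_L\ell_i$ for all $i$, so $H_k\subseteq F$ for every $k<_Le$. Thus $H_{<e}\subseteq F$ and $n\in H_e=\Phi(H_{<e})\subseteq\Phi(F)$. This yields an exact fixed point without the dichotomy of your step~1 and without any appeal to the well-founded part.
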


Below, we discuss a reformulation of $\Pi^1_1\textsf{-CA}^\Gamma_0$ in a first-order language for inductive definitions. Whenever $\mathcal L$ is a first-order language, we write $\mathcal L^X$ for the extension by a fresh unary predicate symbol~$X$. We say that an $\mathcal L^X$-formula is positive when its negation normal form has no subformulas~$\neg Xt$.

Let $\mathcal L_0$ be the usual language of first-order arithmetic extended by a unary predicate symbol~$Q$ (which will allow us to relativize to a given set). Recursively, we define $\mathcal L_{n+1}$ as the extension of~$\mathcal L_n$ by a unary predicate symbol~$I_\varphi^n$ for each positive $\mathcal L_n^X$-formula~$\varphi$ with precisely one free variable. Finally, we set $\mathcal L_\omega=\bigcup_{n\in\mathbb N}\mathcal L_n$.

The condition that $\varphi$ has a unique free variable~$x$ is notationally convenient. For a formula~$\psi$, it allows us to define $\varphi[t,\lambda y\,\psi]$ as the formula that results from~$\varphi[x/t]$ when each occurrence of~$Xs$ is replaced by the substitution instance $\psi[y/s]$ (where variables are renamed to avoid clashes). When $\psi$ is of the form~$Py$ for a relation symbol~$P$, we write $\varphi[t,P]$ at the place of~$\varphi[t,\lambda y\,\psi]$. At the same time, the restriction to a single free variable is no real limitation: We will focus on $\omega$-models, where parameters can be denoted by numerals. In more general settings, one can simulate parameters via an encoding of tuples.

We define $\mathsf{ID}^\Gamma$ as the $\mathcal L_\omega$-theory that extends Peano arithmetic (with equality and induction axioms for all of~$\mathcal L_\omega$) by the following principles. First, each of our predicate symbols~$I_\varphi^n$ gives rise to a fixed point axiom~($\mathsf F$), which is given by
\begin{equation*}\tag{\textsf F}
\begin{array}{ll}
\forall x\big(\varphi[x,I_\varphi^n]\rightarrow I_\varphi^n x\big) &  \quad\text{when }n=0,\\[1.5ex]
\forall x\big(\varphi[x,I_\varphi^n]\leftrightarrow I_\varphi^n x\big) & \quad\text{when }n>0.
\end{array}
\end{equation*}
Secondly, we add a least fixed point axiom
\begin{equation*}\tag{\textsf L}
\forall x\big(\varphi[x,\lambda x\,\psi]\to\psi\big)\to\forall x\big(I_\varphi^0x\to\psi\big)
\end{equation*}
for each predicate symbol~$I_\varphi^0$ (note the restriction to $n=0$) and each $\mathcal L_\omega$-formula~$\psi$. To connect with the literature, we point out that $\mathsf{ID}^\Gamma$ is a version of $\widehat{\mathsf{ID}}_{<\omega}$ (see~\cite{avigad-atr,feferman-IDhat}) that is built on top of~$\mathsf{ID}_1$ (see~\cite{bfps-inductive,howard-ID1}). The case distinction in~($\mathsf F$) is explained by the following standard result.

\begin{lemma}\label{lem:F-equiv}
    For each predicate symbol~$I^0_\varphi$, we have
    \begin{equation*}
        \mathsf{ID}^\Gamma\vdash\forall x\big(I_\varphi^0 x\rightarrow\varphi[x,I_\varphi^0]\big).
    \end{equation*}
\end{lemma}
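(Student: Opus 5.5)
The argument is the standard one showing that a least fixed point is a fixed point. We apply the least fixed point axiom~($\mathsf L$) for~$I_\varphi^0$ to the $\mathcal L_\omega$-formula
\begin{equation*}
    \psi(x)\;:=\;\varphi[x,I_\varphi^0].
\end{equation*}
Once the premise of~($\mathsf L$) is verified, that is $\forall x\big(\varphi[x,\lambda x\,\psi]\to\psi\big)$, the axiom yields $\forall x\big(I_\varphi^0x\to\varphi[x,I_\varphi^0]\big)$, which is exactly the claim. The predicate~$I_\varphi^0$ belongs to~$\mathcal L_1\subseteq\mathcal L_\omega$, so $\psi$ is an admissible instance of~($\mathsf L$).

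To verify the premise, we first use the fixed point axiom~($\mathsf F$) for $n=0$. This gives $\forall y\big(\varphi[y,I_\varphi^0]\to I_\varphi^0y\big)$, i.e., $\forall y\big(\psi(y)\to I_\varphi^0 y\big)$. Next we use monotonicity: since $\varphi$ is a positive $\mathcal L_0^X$-formula, we have
\begin{equation*}
    \forall y\big(\theta_0(y)\to\theta_1(y)\big)\;\to\;\big(\varphi[x,\lambda y\,\theta_0]\to\varphi[x,\lambda y\,\theta_1]\big)
\end{equation*}
for arbitrary formulas~$\theta_0,\theta_1$. This is proved by a routine induction on the build-up of the negation normal form of~$\varphi$. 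Literals not involving~$X$ are unchanged, occurrences~$Xs$ are handled by the hypothesis instantiated at~$s$, and the connectives $\land,\lor$ and quantifiers $\forall,\exists$ pass monotonicity through; for quantifiers we rename bound variables to avoid clashes. Negative occurrences $\neg Xs$ do not exist because $\varphi$ is positive. Taking $\theta_0=\psi$ and $\theta_1=I_\varphi^0y$, we obtain $\varphi[x,\lambda y\,\psi]\to\varphi[x,I_\varphi^0]=\psi(x)$ for all~$x$. This is the required premise, and the proof is complete.

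No step is a genuine obstacle. The only care needed is in the monotonicity lemma: it has to be stated as a schema and proved by meta-induction on formulas, with attention to the variable renaming in the substitution $\varphi[t,\lambda y\,\psi]$ as defined in the paper. We also note that $\psi$ contains~$I_\varphi^0$ itself. This is harmless, because ($\mathsf L$) is stated for all $\mathcal L_\omega$-formulas.
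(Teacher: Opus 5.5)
Your proposal is correct and is essentially the paper's own proof. You take $\psi$ to be $\varphi[x,I^0_\varphi]$, read $(\mathsf F)$ as $\forall x(\psi\to I^0_\varphi x)$, use induction on the positive formula $\varphi$ to obtain $\varphi[x,\lambda x\,\psi]\to\psi$, and conclude by $(\mathsf L)$, with your remarks on variable renaming and on $\psi$ containing $I^0_\varphi$ only making explicit what the paper leaves implicit.
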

\begin{proof}
    Define $\psi$ as $\varphi[x,I^0_\varphi]$ in order to write ($\mathsf F$) as $\forall x(\psi\to I^0_\varphi x)$. By induction on the positive formula~$\varphi$, we obtain $\varphi[x,\lambda x\,\psi]\to\varphi[x,I^0_\varphi]$, which is $\varphi[x,\lambda x\,\psi]\to\psi$. We can conclude by~($\mathsf L$).
\end{proof}

Concerning the following result, we recall that $\omega$-models for the language of second-order arithmetic can be identified with subsets of~$\mathcal P(\mathbb N)$. By an $\omega$-model of~$\mathsf{ID}^\Gamma$, we mean a model in which~$\mathcal L_0\backslash\{Q\}$ is interpreted by the standard structure of natural numbers. The following proof is included for the convenience of the reader, even though it is a standard variant of known arguments (see~\cite{feferman-ID-Pi11CA}).

\begin{proposition}[$\mathsf{ACA}_0$]\label{prop:ID-to-Pi11}
Assume that $\mathcal M$ is an $\omega$-model of $\mathsf{ID}^\Gamma$. Let $\mathcal N\subseteq\mathcal P(\mathbb N)$ consist of the interpretations of all predicate symbols~$Q$ and $I_\varphi^n$ in~$\mathcal M$. If $Z$ is the interpretation of~$Q$, we have $\mathcal N\vDash\mathsf{ATR}_0+\Pi^1_1\text{-}\mathsf{CA}(Z)$.
\end{proposition}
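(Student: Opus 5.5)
The plan is to reduce both claims to one lemma: every set in $\mathcal N$ is definable in $\mathcal M$ by an $\mathcal L_\omega$-formula, and conversely every $\mathcal L_\omega$-definable set is in $\mathcal N$. Concretely, I will show by induction on $\mathcal L_\omega$-formulas $\theta(x)$ with one free variable that $\{n:\mathcal M\vDash\theta(\bar n)\}\in\mathcal N$. Each $\theta$ lies in some $\mathcal L_n$, so it is $X$-positive as an $\mathcal L_n^X$-formula in which $X$ does not occur. Hence $I^{n+1}_{\theta}$ is in $\mathcal L_{n+1}$, and because the index is $n+1>0$, axiom ($\mathsf F$) gives $\forall x(\theta\leftrightarrow I^{n+1}_\theta x)$. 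So no real induction is needed: every definable set is the interpretation of some symbol. Numerals take care of parameters coded into $x$, and $\omega$-models evaluate arithmetic correctly.

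For $\mathsf{ACA}_0$ and $\mathsf{ATR}_0$, I will use Theorem~\ref{thm:ATR-FP} and verify $\mathsf{FP}$ in $\mathcal N$. Take an arithmetical $X$-positive $\varphi(x,X,\vec Y)$ whose set parameters $\vec Y$ come from $\mathcal N$, so each $Y_i$ is the interpretation of some symbol $P_i$ of $\mathcal L_n$. Substituting $P_i$ for $Y_i$ produces a positive $\mathcal L_n^X$-formula $\varphi'$. Number parameters are handled by coding them into $x$, or by using numerals. Then $I^{n+1}_{\varphi'}$ satisfies the fixed point equivalence by ($\mathsf F$). Its interpretation is in $\mathcal N$ and is a fixed point there, since truth of arithmetical formulas in $\mathcal N$ agrees with $\mathcal M$ on these sets. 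Arithmetical comprehension is the special case where $X$ does not occur. Set induction in $\mathcal N$ holds because $\mathcal N$ is an $\omega$-model. Note that only the $n>0$ symbols are used here, which is exactly why $\mathsf{ID}^\Gamma$ has non-least fixed points at higher levels.

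For $\Pi^1_1\text{-}\mathsf{CA}(Z)$, I will use the classical normal form. A $\Pi^1_1$-formula $\forall Y\,\theta(n,Y,Z)$ with $\theta$ arithmetical is equivalent, provably in $\mathsf{ACA}_0$ and uniformly in $n$, to $n$ lying in the accessible part of a $Z$-computable tree relation $\prec_{Z}$ (Kleene--Brouwer/well-foundedness normal form). The accessible part is the least fixed point of the positive operator
\begin{equation*}
\varphi(x,X):\equiv\forall y\,(y\prec_Q x\to Xy),
\end{equation*}
written with $Q$ in place of $Z$, so it is a positive $\mathcal L_0^X$-formula and $I^0_\varphi$ exists. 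The key step is to show that $\mathcal N\vDash\forall Y\theta(n,Y,Z)$ holds iff $\mathcal M\vDash I^0_\varphi(\bar m_n)$, where $m_n$ is the relevant root.

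For ($\Leftarrow$), I will use ($\mathsf L$) with $\psi$ expressing ``every descending sequence from $x$ that is coded by the interpretation of some symbol fails''. This is not a single formula, so instead I will fix each $Y\in\mathcal N$, given by a symbol $P$, and apply ($\mathsf L$) with the $\mathcal L_\omega$-formula ``$P$ codes no infinite descending path through $x$''. This formula is progressive, so $I^0_\varphi$ is contained in it. For ($\Rightarrow$), suppose $m_n\notin I^0_\varphi$. Lemma~\ref{lem:F-equiv} gives $I^0_\varphi x\leftrightarrow\varphi[x,I^0_\varphi]$, so outside $I^0_\varphi$ every node has a predecessor outside it. Arithmetical choice in $\mathcal N$ (available by $\mathsf{ACA}_0$, e.g.\ taking the least such predecessor) applied to the complement of $I^0_\varphi$, which lies in $\mathcal N$, yields a descending path in $\mathcal N$, contradicting $\forall Y\theta$ in $\mathcal N$.

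Combining these, the set $\{n:\mathcal N\vDash\forall Y\theta\}$ equals $\{n:\mathcal M\vDash I^0_\varphi(\bar m_n)\}$, which lies in $\mathcal N$ by the first paragraph. I expect the main obstacle to be the careful use of the normal form inside $\mathcal N$, since $\mathcal N$ need not be a $\beta$-model. The point is that both directions are argued relative to $\mathcal N$'s own sets, never ambient well-foundedness. This is precisely why ($\mathsf L$) for arbitrary $\mathcal L_\omega$-formulas is needed.
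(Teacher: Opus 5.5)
Your proposal is correct and follows essentially the same route as the paper: you verify $\mathsf{FP}$ in $\mathcal N$ via the biconditional fixed-point axioms at levels $n>0$ and invoke Avigad's theorem. For $\Pi^1_1\text{-}\mathsf{CA}(Z)$, you compare $\mathcal N$-well-foundedness of a $Z$-computable relation with the predicate $I^0_\varphi$ for the accessibility operator, using ($\mathsf L$) with predicate symbols naming sets of $\mathcal N$ in one direction and the closure axiom ($\mathsf F$) in the other.

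The differences are cosmetic: you work with coded descending paths where the paper uses sets without minimal elements, and you use your definability observation where the paper closes with arithmetical comprehension in $\mathcal N$. One small slip: $I^{n+1}_\theta$ lives in $\mathcal L_{n+2}$ (regarding $\theta$ as an $\mathcal L_{n+1}^X$-formula), not in $\mathcal L_{n+1}$, which is harmless.
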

\begin{proof}
To ensure $\mathcal N\vDash\mathsf{ATR}_0$ we only need $\mathcal N\vDash\mathsf{FP}$, by Theorem~\ref{thm:ATR-FP} (due to Avigad) and since arithmetical comprehension is a special case of~$\mathsf{FP}$. Consider an $X$-positive formula~$\varphi(x,X,Y)$ with parameter~$Y\in\mathcal N$. In order to validate the corresponding instance of~$(\mathsf{FP}$), we need to find an~$X\in\mathcal N$ with
\begin{equation*}
\mathcal N\vDash\forall x\big(x\in X\leftrightarrow\varphi(x,X,Y)\big).
\end{equation*}
We may assume that~$\varphi$ has no number parameters, as discussed above. The restric\-tion to a single set parameter is pure notational convenience. Assume that $Y$ is the interpretation of a predicate symbol~$I_\theta^n$ (the case of~$Q$ being analogous). We write $\psi$ for the positive $\mathcal L_{n+1}^X$-formula that results from~$\varphi(x,X,Y)$ when we replace all (positive and negative occurrences of) subformulas $t\in X$ and $t\in Y$ by $Xt$ and~$I_\theta^nt$, respectively. The corresponding instance of~($\mathsf F$)~yields
\begin{equation*}
\mathcal M\vDash\forall x\big(I_\psi^{n+1}x\leftrightarrow\psi[x,I_\psi^{n+1}]\big).
\end{equation*}
Given that $\varphi$ is arithmetical and hence absolute for $\omega$-models, our instance of ($\mathsf{FP}$) is thus satisfied when $X$ is the interpretation of~$I_\psi^{n+1}$ in~$\mathcal M$. 

It remains to show that
\begin{equation*}
\{x\in\mathbb N:\mathcal N\vDash\xi(x,Z)\}\in\mathcal N
\end{equation*}
holds for any $\Pi^1_1$-formula~$\xi$ with no set parameters other than~$Z$ (where the latter is the interpretation of~$Q$ in~$\mathcal M$). Since we already know $\mathcal N\vDash\mathsf{ACA}_0$, we get
\begin{equation*}
\mathcal N\vDash\xi(x,Z)\leftrightarrow\text{``$\vartriangleleft_x^Z$ is well-founded"}
\end{equation*}
for some family of binary relations~$\vartriangleleft_x^Z$ defined by a formula that is arithmetical (in fact~$\Delta^0_1$) with parameter~$Z$ (employ Lemma~V.1.4 of~\cite{simpson09} to write $\xi(x,Z)$ in the form $\forall f\!:\mathbb N\to\mathbb N\,\exists n\,\theta(x,f[n],Z)$ and declare that $\sigma\vartriangleleft_x^Z\tau$ holds for any sequences $\tau\sqsubset\sigma$ with $\neg\theta(x,\sigma[n],Z)$ for all~$n<|\sigma|$). Using Cantor pairing, we define $\rho(\langle x,y\rangle)$ as the positive $\mathcal L_0^X$-formula that results from $\forall y'(y'\vartriangleleft_x^Z y\to \langle x,y'\rangle\in X)$ when each sub\-formula~$t\in Z$ is replaced by $Qt$. Let $W\in\mathcal N$ be the interpretation of~$I_\rho^0$ in~$\mathcal M$. By the corresponding axioms ($\mathsf F$) and ($\mathsf L$) in~$\mathcal M$, we obtain
\begin{equation*}
W=\bigcap\left\{U\in\mathcal N:\mathcal N\vDash\forall y'(y'\vartriangleleft_x^Z y\to\langle x,y'\rangle\in U)\to\langle x,y\rangle\in U\!\right\}\!.
\end{equation*}
It follows that we have
\begin{equation*}
\mathcal N\vDash\forall y\,\langle x,y\rangle\in W\quad\Leftrightarrow\quad\mathcal N\vDash\text{``$\vartriangleleft_x^Z$ is well-founded"}.
\end{equation*}
To see this, first assume that the right side holds. For each $U$ from the intersection above, we then get $\langle x,y\rangle\in U$ by induction on~$y$ in the model~$\mathcal N$. This yields the left side. Now assume that the right side fails. We then have a non-empty set $V\in\mathcal N$ without a $\vartriangleleft_x^Z$-minimal element. Let $U\in\mathcal N$ consist of the pairs $\langle x',y\rangle$ with either $x'=x$ and $y\notin V$ or $x'\neq x$ (in which case~$y$ can be arbitrary). This $U$ contributes to the intersection that characterizes~$W$, so that we get $V\subseteq U$ and thus $\langle x,y\rangle\notin W$ for any~$y\in V$. We have shown
\begin{equation*}
\{x\in\mathbb N:\mathcal N\vDash\xi(x,Z)\}=\{x\in\mathbb N:\mathcal N\vDash\forall y\,\langle x,y\rangle\in W\}.
\end{equation*}
The right side reveals that our set lies in~$\mathcal N$, due to arithmetical comprehension.
\end{proof}

\section{\texorpdfstring{$\omega$}{omega}-completeness}\label{sec:omega-completeness}

In view of Proposition~\ref{prop:ID-to-Pi11} above, we want to show that there are $\omega$-models of~$\mathsf{ID}^\Gamma$. This can be achieved via a variant of the $\omega$-completeness theorem, which will be proved in the present section.

An $\mathcal L_\omega$-sequent (often just called sequent) is a finite set of $\mathcal L_\omega$-sentences. Let us note that free variables can be avoided in the infinitary proof system that is given by Definition~\ref{def:inf-proofs} below, where universal quantifiers are introduced by the $\omega$-rule (clause~(v) of the definition). A sequent~$\Delta$ should be interpreted as the disjunction of its elements. In the context of sequents, it is common to omit set parentheses and to use commata for unions (so that $\Delta,\varphi$ denotes~$\Delta\cup\{\varphi\}$).

Throughout this section, we fix an intended interpretation~$\mathcal Q\subseteq\mathbb N$ for the predicate symbol~$Q$ from~$\mathcal L_\omega$. For a closed term~$t$ with value~$n\in\mathbb N$, we agree that $\overline Qt$ denotes the formula~$Qt$ if we have $n\in\mathcal Q$ and the formula $\neg Qt$ otherwise. The following definition will only be invoked when the metatheory contains~$\mathsf{ATR}_0$, where arithmetical recursion on~$x\in X$ can be used to construct the sets $\{\Delta:{\vdash^x\Delta}\}$.

\begin{definition}\label{def:inf-proofs}
Consider a well-order~$X$. For~$x\in X$ and an $\mathcal L_\omega$-sequent~$\Delta$, we declare that $\vdash^x\Delta$ holds precisely if one of the following recursive clauses applies:
\begin{enumerate}[label=(\roman*)]
\item The sequent~$\Delta$ contains a true literal of first-order arithmetic, a formula~$\overline Qt$ or an instance of an axiom~($\mathsf F$) or~($\mathsf L$) (see the previous section).
\item The sequent~$\Delta$ contains~$I^n_\varphi s$ and $\neg I^n_\varphi t$ for~$s$ and $t$ with the same value.
\item The sequent~$\Delta$ contains a formula~$\varphi_0\land\varphi_1$ such that we have $\vdash^{x(0)}\Delta,\varphi_0$ and $\vdash^{x(1)}\Delta,\varphi_1$ for some~$x(0),x(1)<_X x$.
\item There is a formula~$\varphi_0\lor\varphi_1$ in~$\Delta$ and an index~$i\leq 1$ such that $\vdash^{y}\Delta,\varphi_i$ holds for some~$y<_X x$.
\item The sequent~$\Delta$ contains a formula~$\forall x\,\varphi(x)$ such that each~$n\in\mathbb N$ admits an~$x(n)<_X x$ with $\vdash^{x(n)}\Delta,\varphi(n)$ (where the last~$n$ denotes a numeral).
\item There is a formula $\exists x\,\varphi(x)$ in~$\Delta$ as well as a number~$n\in\mathbb N$ such that we have $\vdash^{y}\Delta,\varphi(n)$ for some~$y<_X x$.
\item There is an $\mathcal L_\omega$-sentence~$\varphi$ such that we have $\vdash^{x(0)}\Delta,\varphi$ and $\vdash^{x(1)}\Delta,\neg\varphi$ for some $x(0),x(1)<_X x$.
\end{enumerate}
\end{definition}

The following is a variant of $\omega$-completeness, which goes back to Sch\"utte~\cite{schuette56} and Shoenfield~\cite{shoenfield59}. As far as the authors are aware, no explicit reference for our variant is available, though similar results were at least implicitly proved by the same method (see, e.g., Section~4 of~\cite{rathjen-model-bi} for a variant over second-order arithmetic). Let us recall that we have fixed an interpretation~$\mathcal Q$ of the predicate symbol~$Q$.

\begin{proposition}[$\mathsf{ATR}_0$]
For any~$\mathcal L_\omega$-sequent~$\Delta$, the following are equivalent:
\begin{enumerate}[label=(\roman*)]
\item In every $\omega$-model of~$\mathsf{ID}^\Gamma$ that interprets~$Q$ by~$\mathcal Q$, some sentence from the sequent~$\Delta$ is satisfied.
\item We have $\vdash^x\Delta$ for some well-order~$X$ and some element~$x\in X$.
\end{enumerate}
\end{proposition}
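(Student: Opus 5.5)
We prove (ii)$\Rightarrow$(i) by soundness and (i)$\Rightarrow$(ii) by a Schütte-style search tree.

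\emph{Soundness, (ii)$\Rightarrow$(i).} Fix an $\omega$-model $\mathcal M$ of $\mathsf{ID}^\Gamma$ that interprets $Q$ by $\mathcal Q$. We show that $\vdash^x\Delta$ implies $\mathcal M\vDash\bigvee\Delta$ by transfinite induction on $x\in X$. The induction statement refers to the satisfaction relation of $\mathcal M$, which is arithmetical in the code of $\mathcal M$, and to the set $\{\Delta:\vdash^x\Delta\}$ obtained by arithmetical recursion. So the induction is arithmetical transfinite induction, which is available in $\mathsf{ATR}_0$.

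The clauses are checked as follows.
\begin{itemize}
\item Clause~(i) holds because $\mathcal M$ is an $\omega$-model of the axioms ($\mathsf F$) and ($\mathsf L$) and interprets $Q$ by~$\mathcal Q$.
\item Clause~(ii) holds because $s$ and $t$ have the same value.
\item The logical clauses (iii)--(vi) are immediate; for~(v) we use that $\mathcal M$ is an $\omega$-model, so that the $\omega$-rule is sound.
\item The cut clause~(vii) holds because one of $\varphi$ and $\neg\varphi$ is false in $\mathcal M$.
\end{itemize}

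\emph{Completeness, (i)$\Rightarrow$(ii), by contraposition.} Suppose $\vdash^x\Delta$ fails for every well-order $X$ and every $x\in X$. We build the canonical search tree $T$ for $\Delta$. Fix an enumeration of all instances of ($\mathsf F$) and ($\mathsf L$) and a fair schedule over the formulas and numerals. Each node of $T$ is labelled by a sequent, and its children are obtained as follows.
\begin{itemize}
\item Reduce the scheduled non-atomic formula: conjunctions branch, the $\omega$-rule branches over all $n\in\mathbb N$, and disjunctions and existentials add the scheduled witness $\varphi(n)$.
\item At stage $k$, add the negation $\neg A_k$ of the $k$-th axiom instance. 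This is a derived use of cut, which is admissible since $\vdash A_k$ holds by clause~(i).
\item Stop at axiomatic sequents, i.e.\ those covered by clauses (i) and (ii).
\end{itemize}
The tree $T$ is computable from $\mathcal Q$.

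If $T$ were well-founded, its Kleene--Brouwer ordering would be a well-order $X$. Taking $x$ to be the root, a routine induction (using $\mathsf{ATR}_0$ to form the derivability sets along $X$) would show $\vdash^x\Delta$, with the cuts justified by clause~(vii). This contradicts our assumption, so $T$ is ill-founded. The existence of an infinite path in an ill-founded tree is the leftmost-path/$\Sigma^1_1$-choice content of $\mathsf{ATR}_0$ (Lemma~V.6.? in Simpson); we use a pseudo-hierarchy-free formulation, namely that every ill-founded tree has a path, which is provable in $\mathsf{ATR}_0$.

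Along a path $P$ we define the model. The arithmetic symbols get their standard interpretation and $Q$ is interpreted by $\mathcal Q$. Each $I^n_\varphi$ is interpreted by $\{m:\neg I^n_\varphi m\text{ occurs on }P\}$. By a standard induction on formulas, every formula occurring on $P$ is false in the resulting structure $\mathcal M$:
\begin{itemize}
\item Clause~(ii) was never triggered, so positive atoms $I^n_\varphi m$ on $P$ are false and negated atoms on $P$ are false by definition of the interpretation.
\item By fairness, every non-atomic formula on $P$ is eventually reduced along $P$, which propagates falsity.
\end{itemize}
Since all $\neg A_k$ were added along $P$ and are false in $\mathcal M$, all axiom instances hold. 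So $\mathcal M$ is an $\omega$-model of $\mathsf{ID}^\Gamma$ with $Q$ interpreted by $\mathcal Q$ in which every member of $\Delta$ fails, contradicting~(i). Finally, $\mathcal M$ is arithmetical in $P$, so the countable coded model exists in $\mathsf{ACA}_0$.

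\emph{Main obstacle.} The delicate step is carrying this out in $\mathsf{ATR}_0$. Converting a well-founded $T$ into the ordinal-indexed relation $\vdash^x$ needs arithmetical transfinite recursion along the Kleene--Brouwer order. Obtaining a path through an ill-founded but non-hyperarithmetic $T$ also needs care. I expect the cleanest route is to use the $\mathsf{ATR}_0$ dichotomy that a tree is either well-founded or has a path, which is provable in $\mathsf{ATR}_0$ (Simpson, Lemma~V.5.? on $\Sigma^1_1$ separation/perfect kernel style). If that fails, I would route through the pseudo-hierarchy method.
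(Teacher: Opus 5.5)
Your proposal is correct and follows essentially the same route as the paper: soundness by arithmetical induction along $X$, and completeness via a Sch\"utte search tree with cuts on the axiom instances, using the Kleene--Brouwer order when $T$ has no branch and a countermodel read off from a branch otherwise. Two corrections to your commentary: the case split is plain excluded middle on whether $T$ has a branch (with $\mathsf{ACA}_0$ showing that the Kleene--Brouwer order is a well-order when there is none), so no leftmost-path or $\Sigma^1_1$-choice principle is involved; and the satisfaction relation of $\mathcal M$ is not arithmetical in its code, but $\mathsf{ATR}_0$ forms it as a set by arithmetical recursion along $\mathbb N$, after which your inductions are arithmetical in that parameter, exactly as in the paper.
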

\begin{proof}
    We first show that~(ii) implies~(i). The set of sentences that are true in a given $\omega$-model $\mathcal M\vDash\mathsf{ID}^\Gamma$ is defined by arithmetical recursion over~$\mathbb N$. In particular, it is available in $\mathsf{ATR}_0$. With this set as a parameter, we can use arithmetical induction on~$x\in X$ to show that $\vdash^x\Delta$ implies~$\mathcal M\vDash\varphi$ for some~$\varphi\in\Delta$, as the clauses from Definition~\ref{def:inf-proofs} are sound.

    To establish the converse implication, we recursively construct a tree~$T$ that is labelled by $\mathcal L_\omega$-sequents $l(\sigma)$ for all nodes~$\sigma\in T$. In the base case of the recursion, we declare that $T$ contains a root node~$\langle\rangle$ (the empty sequence), to which we assign the given sequent~$l(\langle\rangle)=\Delta$. For the recursion step, assume that we have just added a finite sequence~$\sigma\in T$ with label $l(\sigma)$. If $l(\sigma)$ validates clause~(i) or~(ii) from Definition~\ref{def:inf-proofs} (meaning that one of these clauses holds with $l(\sigma)$ at the place of~$\Delta$), then we declare that~$\sigma$ is a leaf of~$T$, i.e., we do not add extensions of~$\sigma$. Otherwise, we distinguish two cases depending on the length of our sequence.
    
    First assume that~$\sigma$ has even length. We fix an enumeration of all formulas~$\alpha_n$ that are instances of the axiom schemata $(\mathsf F)$ and $(\mathsf L)$ or axioms of the form~$\overline Qt$. By $\sigma\star i$ we denote the extension of~$\sigma$ by a new last element~$i$. If the length of~$\sigma$ is equal to~$2n$, we now declare that we have $\sigma\star i\in T$ for both $i\in\{0,1\}$, where the labels are given by
    \begin{equation*}
        l(\sigma\star 0)=l(\sigma),\alpha_n\quad\text{and}\quad l(\sigma\star 1)=l(\sigma),\neg\alpha_n.
    \end{equation*}
    This corresponds to an application of the cut rule (clause~(vii) of Definition~\ref{def:inf-proofs}). We stipulate that $\sigma\star 0$ is always a leaf in this situation.

    Now assume that $\sigma$ has odd length and the form~$\sigma=\tau\star 1$. The idea is to add labelled descendants of~$\sigma$ by applying the rules~(iii) to~(vi) of Definition~\ref{def:inf-proofs} backwards in all possible ways. Let us write $l(\sigma)=\varphi_\sigma,\Gamma_\sigma$ to single out the first sentence in our sequent (which is non-empty be the even case), according to some fixed enumeration. We include the following descendants:
    \begin{itemize}
        \item If $\varphi_\sigma$ is a literal, let $\sigma\star 0\in T$ and $l(\sigma\star i)=\Gamma_\sigma, \varphi_\sigma$.
        \item If $\varphi_\sigma=\psi_0\wedge \psi_1$, let $\sigma\star i\in T$ and  $l(\sigma\star i)=\Gamma_\sigma,\varphi_\sigma,\psi_i$ for each $i\in\{0,1\}$.
        \item If $\varphi_\sigma=\psi_0\vee \psi_1$, let $\sigma\star 0\in T$ and  $l(\sigma\star 0)=\Gamma_\sigma,\varphi_\sigma,\psi_j$ with
        \begin{equation*}
            j=\begin{cases}
                0 & \text{if }\psi_0\notin\Gamma_\sigma,\\
                1 & \text{otherwise}.
            \end{cases}
        \end{equation*}
        \item If $\varphi_\sigma=\forall x\,\psi(x)$, let $\sigma \star n\in T$ and $l(\sigma\star n)= \Gamma_\sigma, \varphi_\sigma,\psi(n)$ for all $n\in\mathbb N$.
        \item If $\varphi_\sigma=\exists x\,\psi(x)$, let $\sigma \star 0\in T$ and $l(\sigma\star 0)=\Gamma_\sigma,\varphi_\sigma,\psi(n)$ with
        \begin{equation*}
            n=\min\{m\in\mathbb N:\psi(m) \not\in\Gamma_\sigma\}.
        \end{equation*}
    \end{itemize}
    Note that each of these clauses permutes the sentences in the sequent $l(\sigma)$. This guarantees that each sentence in the sequent is examined eventually, unless a leaf is found before.

    Now that the construction of our labelled tree is complete, we distinguish two outcomes. First assume that~$T$ has no branch, i.e., that there is no~$f:\mathbb N\to\mathbb N$ such that $f[n]=\langle f(0),\ldots,f(n-1)\rangle\in T$ holds for all~$n\in\mathbb N$. Then the Kleene-Brouwer order is a well-order on~$T$. We denote this order by~$X$ and note that we always have $\sigma\star i<_X\sigma$. A straightforward induction on~$\sigma\in X$ yields~$\vdash^\sigma l(\sigma)$ and in particular~$\vdash^{\langle\rangle}\Delta$, so that statement~(ii) from the proposition is satisfied.

    Finally, assume that~$T$ has a branch~$f$. To conclude that the implication from~(i) to~(ii) is valid once again, we show that~(i) is false in this case. Let $\mathcal F=\bigcup_{n\in\mathbb N}l(f[n])$ be the set of formulas that occur on our branch. To define an $\omega$-model~$\mathcal M\vDash\mathsf{ID}^\Gamma$, we stipulate
    \begin{equation*}
        P^\mathcal M=\{n\in\mathbb N:Ps\notin\mathcal F\text{ for every term~$s$ with value~$n$}\},
    \end{equation*}
    where~$P$ ranges over the predicate symbols~$Q$ and~$I^n_\varphi$. Induction over the build-up of formulas shows that $\varphi\in\mathcal F$ entails~$\mathcal M\nvDash\varphi$. We present two representative cases of the inductive proof.
    
    First assume that~$\varphi\in\mathcal F$ has the form~$\neg Pt$, where $t$ is a term with value~$n\in\mathbb N$. When $s$ is any term with value~$n$, we have~$Ps\notin\mathcal F$. Otherwise, since $l(\sigma)$ grows whenever~$\sigma$ is extended, we would find a single~$k\in\mathbb N$ such that $l(f[k])$ contains both $Ps$ and~$\neg Pt$. But then $f[k]$ would be a leaf of~$T$ (cf.~clauses~(i,ii) of Definition~\ref{def:inf-proofs}), against the assumption that~$f$ is a branch. So we have $n\in P^{\mathcal M}$ and thus $\mathcal M\nvDash\varphi$.
    
    As a second case, consider a sentence~$\varphi\in\mathcal F$ of the form~$\exists x\,\psi(x)$. To get $\mathcal M\nvDash\varphi$, it is enough to show that we have $\psi(n)\in\mathcal F$ and hence inductively $\mathcal M\nvDash\psi(n)$ for all~$n\in\mathbb N$. Towards a contradiction, assume that $n$ is minimal with~$\psi(n)\notin\mathcal F$. We find a single~$k$ such that $l(f[k])$ contains $\varphi$ and $\psi(m)$ for all~$m<n$. Rotating our sequent, we increase to an odd~$k$ such that $\varphi$ is the first formula of~$l(f[k])$. By the construction of~$T$, we get $\psi(n)\in l(f[k+1])\subseteq\mathcal F$, against our assumption.

    For each axiom $\alpha_n$ from the enumeration that was fixed above, the construction at even stages yields $\neg\alpha_n\in l(f[2n+1])\subseteq\mathcal F$ and hence~$\mathcal M\vDash\alpha_n$. This shows that we have $\mathcal M\vDash\mathsf{ID}^\Gamma$ as well as $\mathcal M\vDash\overline Q m$ for all~$m\in\mathbb N$. The latter yields~$\mathcal Q=Q^{\mathcal M}\in\mathcal M$. Finally, we have $\mathcal M\nvDash\varphi$ for every~$\varphi\in\Delta=l(f[0])\subseteq\mathcal F$. So statement~(i) from the proposition is false in the present case, as desired.
\end{proof}

We will use the proposition in the following form, where the sequent~$\emptyset$ (which corresponds to the empty disjunction) is a canonical representation of falsity.

\begin{corollary}[$\mathsf{ATR}_0$]\label{cor:omega-completeness}
The following are equivalent:
\begin{enumerate}[label=(\roman*)]
\item There is an $\omega$-model of~$\mathsf{ID}^\Gamma$ that interprets~$Q$ as~$\mathcal Q$.
\item We have $\nvdash^x\emptyset$ for every well-order~$X$ and every~$x\in X$.
\end{enumerate}
\end{corollary}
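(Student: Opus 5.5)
The corollary follows from the preceding proposition, applied to the empty sequent $\Delta=\emptyset$. The plan is to check that both statements of the corollary are exactly the negations of the corresponding statements of the proposition for this choice of $\Delta$.

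First I will unpack statement~(i) of the proposition for $\Delta=\emptyset$. It asserts that every $\omega$-model of~$\mathsf{ID}^\Gamma$ interpreting~$Q$ by~$\mathcal Q$ satisfies some sentence from~$\emptyset$. No model can do that, so the statement is equivalent to saying that there is no such $\omega$-model. Second, statement~(ii) of the proposition for $\Delta=\emptyset$ says that $\vdash^x\emptyset$ holds for some well-order~$X$ and some~$x\in X$. Taking negations on both sides of the equivalence from the proposition then gives: there is an $\omega$-model of~$\mathsf{ID}^\Gamma$ with $Q$ interpreted as~$\mathcal Q$ if and only if $\nvdash^x\emptyset$ holds for every well-order~$X$ and every~$x\in X$. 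This is precisely the equivalence of~(i) and~(ii) in the corollary.

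The only point that deserves a moment of care is whether the quantifier ``some well-order $X$'' ranges over the same objects in both places. The corollary is formulated in $\mathsf{ATR}_0$, as is the proposition, so both refer to well-orders that exist as sets in the ambient model. The relation $\vdash^x$ is formed by arithmetical recursion along~$X$, which is available in $\mathsf{ATR}_0$ as noted before Definition~\ref{def:inf-proofs}. The two statements therefore match literally, and I do not expect any real obstacle. The argument is a direct instantiation followed by contraposition.
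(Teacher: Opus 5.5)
Your proposal is correct and is essentially the paper's own argument. The paper likewise just observes that for $\Delta=\emptyset$ statements~(i) and~(ii) of the preceding proposition are the negations of the corresponding statements of the corollary, so the equivalence carries over by contraposition.
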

\begin{proof}
For $\Delta=\emptyset$, statements~(i) and~(ii) from the proposition are the negations of the corresponding statements from the corollary.
\end{proof}

Concerning the equivalence in our main Theorem~\ref{thm:main}, the direction from~(i) to~(ii) has been established in Section~\ref{sect:well-ordering-proof}. For the converse direction, we note that statement~(i) of the theorem reduces -- by means of Proposition~\ref{prop:ID-to-Pi11} -- to the equivalent statements from the corollary above.

\section{Ordinal analysis: the embedding}

In order to complete the proof of our main Theorem~\ref{thm:main}, we will show that the well-ordering principle from part~(ii) of the theorem implies statement~(ii) from Corollary~\ref{cor:omega-completeness}. The latter is a consistency statement and hence a classical target for an ordinal analysis, which we carry out in this section and the two following ones.

\begin{assumption}\label{as:Gamma-Omega-X}
    From this point on, we fix a well-order~$Y=1+X$ and assume that the associated order~$\Gamma_{\Omega+Y}$ (as constructed in Section~\ref{sect:ordinal-notations}) is also a well-order. As in the previous section, we also fix an interpretation~$\mathcal Q\subseteq\mathbb N$ for the predicate symbol~$Q$ from~$\mathcal L_\omega$. These assumptions will be discharged in the last proof of this paper.
\end{assumption}

The point of considering $Y=1+X$ rather than~$X$ is that $\Omega+0\in\Omega+Y$ yields an element $\Gamma_{\Omega+0}\in\Gamma_{\Omega+Y}$, which we also denote by $\Omega$ (as we have $\Gamma_\Omega=\Omega$ under classical interpretations of~$\Omega$ as, say, the least uncountable cardinal). In view of Definition~\ref{def:psi}, we get
\begin{equation*}
    \psi(\Gamma_{\Omega+Y})=\{s\in\Gamma_{\Omega+Y}:s\prec\Omega\}.
\end{equation*}
Let $\mathcal L_\omega^\Omega$ be the language that results from $\mathcal L_\omega$ when each predicate symbol~$I^0_\varphi$ is replaced by predicate symbols $I^0_{\varphi,\alpha}$ for all~$\alpha\in\Gamma_{\Omega+Y}$ with $\alpha\preceq\Omega$. We view $\mathcal L_\omega$ as a sublanguage of $\mathcal L_\omega^\Omega$ by identifying $I^0_\varphi$ with $I^0_{\varphi,\Omega}$. Note that the predicates~$I^n_\psi$ with $n>0$ are unaffected by the modification of the language. To explain why the higher levels are treated differently from the lowest one, we recall that axiom~($\mathsf L$) is only available at the latter. Intuitively, the predicates~$I^0_{\varphi,\alpha}$ serve as approximations to the fixed point~$I^0_\varphi$, which is reached at level~$\alpha=\Omega$. We will use these approximations to prove axiom~($\mathsf L$) in a new infinitary proof system. The latter will then embed the proof system from Definition~\ref{def:inf-proofs}. Whenever we refer to Definition~\ref{def:inf-proofs}, it is with respect to the order~$X$ (not~$Y$) that was fixed in Assumption~\ref{as:Gamma-Omega-X}.

The following relies on basic ordinal arithmetic as intro\-duced in Definition~\ref{def:Gamma-operations}. 

\begin{definition}\label{def:ranks-big-system}
    The rank~$|\psi|\in\Gamma_{\Omega+Y}$ of an $\mathcal L_\omega^\Omega$-formula~$\psi$ is recursively given by
    \begin{gather*}
        |\theta|=0\quad\text{when $\theta$ is a literal of }\mathcal L_0,\\
        |I^0_{\varphi,\alpha}t|=|\neg I^0_{\varphi,\alpha}t|=\omega\cdot(1+\alpha),\\
        |I^n_\varphi t|=|\neg I^n_\varphi t|=\Omega+\omega\cdot n\quad\text{when }n>0,\\
        |\psi_0\land\psi_1|=|\psi_0\lor\psi_1|=\max(|\psi_0|,|\psi_1|)+1,\\
        |\forall x\,\psi|=|\exists x\,\psi|=|\psi|+1.
    \end{gather*}
\end{definition}

To reduce the number of rules in our proof system (and hence the number of cases that we need to consider), we associate some formulas with generalized disjunctions and conjunctions (cf.~\cite{buchholz-local-predicativity}). The general approach is to write some formulas~$\psi$ as
\begin{equation*}
\psi\simeq\textstyle\bigvee_{i\prec\iota(\psi)}\psi_i.
\end{equation*}
This means that we assign a type (`disjunctive'), an arity~$\iota(\psi)\in\Gamma_{\Omega+Y}$ as well as a formula~$\psi_i$ for each~$i\prec\iota(\psi)$. In case we have~$\iota(\varphi)=0$ (so that there are no formulas~$\varphi_i$), we speak of an empty disjunction, which indicates a false formula. When the formulas of disjunctive type are given, we define the conjunctive formulas by
\begin{equation*}
    \neg\psi\simeq\textstyle\bigwedge_{i\prec\iota(\neg\psi)}\neg\psi_i\quad\text{for}\quad\psi\simeq\textstyle\bigvee_{i\prec\iota(\psi)}\psi_i,
\end{equation*}
which amounts to $\iota(\neg\psi)=\iota(\psi)$ and $(\neg\psi)_i=\neg(\psi_i)$. The last two equations also hold when $\psi$ is conjunctive and $\neg\psi$ is disjunctive. Indeed, if we write our disjunctive formula $\neg\psi$ as~$\psi'$, then $\neg\psi'$ coincides with the given conjunctive formula~$\psi$ (as we always write formulas in negation normal form and compute negations via de Morgan's rules and double negation elimination -- which also yields $|\neg\psi|=|\psi|$). We will never declare both a formula and its negation as disjunctive, so that any formula will have at most one type (disjunctive or conjunctive), though some formulas may have none. In the case at hand, this general approach is implemented as follows.

\begin{definition}\label{def:disj}
    Each false literal of first-order arithmetic and each formula~$\neg\overline Qt$ is associated with an empty disjunction (see the previous paragraph). Other than that, the $\mathcal L_\omega^\Omega$-sentences of disjunctive type are given by
    \begin{equation*}
        \psi_0\lor\psi_1\simeq\textstyle\bigvee_{i\prec2}\psi_i,\quad\exists x\,\psi(x)\simeq\bigvee_{i\prec\omega}\psi(i),\quad I^0_{\varphi,\alpha}t\simeq\bigvee_{\beta\prec\alpha}\varphi[t,I^0_{\varphi,\beta}],
    \end{equation*}
    where the $i$ in $\psi(i)$ refers to a numeral.
\end{definition}

The following is crucial for cut elimination in the presence of generalized disjunctions and conjunctions.

\begin{lemma}[$\mathsf{ATR}_0$]\label{lem:rank-subformulas}
    We have $|\psi_i|\prec|\psi|$ for $i\prec\iota(\psi)$.
\end{lemma}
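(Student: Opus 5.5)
The proof is a case distinction over the clauses of Definition~\ref{def:disj}. For each disjunctive $\psi$ we show $|\psi_i|\prec|\psi|$. The conjunctive case then follows at once, because $|\neg\psi|=|\psi|$, $\iota(\neg\psi)=\iota(\psi)$ and $(\neg\psi)_i=\neg(\psi_i)$. Empty disjunctions (false literals and $\neg\overline Qt$) make the claim vacuous. For $\psi_0\lor\psi_1$ we need $|\psi_i|\prec\max(|\psi_0|,|\psi_1|)+1$. For $\exists x\,\psi(x)$ we first check that $|\psi(n)|=|\psi|$, since ranks do not depend on terms, and then need $|\psi|\prec|\psi|+1$. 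Both reduce to the basic fact $\gamma\prec\gamma+1$ in $\Gamma_{\Omega+Y}$. This fact follows from clause~(iv') of Definition~\ref{def:Gamma}: $\gamma+1$ is the Cantor normal form of~$\gamma$ extended by the summand $\varphi_0(0)$, which is the least element of~$H$ and so can always be appended. The monotonicity of $\max$ together with linearity (Lemma~\ref{lem:Gamma-lin}) handles the rest.

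The main case is $\psi=I^0_{\varphi,\alpha}t$ with $\psi_\beta=\varphi[t,I^0_{\varphi,\beta}]$ for $\beta\prec\alpha$, where $|\psi|=\omega\cdot(1+\alpha)$. Here $\varphi$ is a positive $\mathcal L_0^X$-formula. We prove by induction on the build-up of~$\varphi$ (with $\varphi$ of logical depth $k$, say) that $|\varphi[t,I^0_{\varphi,\beta}]|\preceq\omega\cdot(1+\beta)+k$. Literals of $\mathcal L_0$, including $Q$-literals, have rank $0$. An occurrence $Xs$ becomes $I^0_{\varphi,\beta}s$, which has rank $\omega\cdot(1+\beta)$. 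Each connective or quantifier adds at most~$1$. It therefore remains to show
\[
\omega\cdot(1+\beta)+k\prec\omega\cdot(1+\alpha)\quad\text{for }\beta\prec\alpha\text{ and }k\in\mathbb N.
\]

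This inequality is the main obstacle, because it requires a careful unpacking of Definition~\ref{def:Gamma-operations}. The plan has three steps. First, show that $1+\beta\prec1+\alpha$ when $\beta\prec\alpha$; addition with a fixed left summand is monotone in the right argument by inspection of the definition and clause~(iv'). Second, show that $\gamma\prec\delta$ implies $\omega\cdot\gamma\prec\omega\cdot\delta$. Termwise this is the map $\varphi_0(\beta_i)\mapsto\varphi_0(1+\beta_i)$, which is monotone by Proposition~\ref{prop:Veblen-order}(b) combined with the first step, so the lexicographic comparison in clause~(iv') is preserved. Third, and most delicate, show that $\omega\cdot\gamma+k\prec\omega\cdot\delta$ for $\gamma\prec\delta$. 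All summands of $\omega\cdot\delta$ have the form $\varphi_0(1+\beta_i)$, so they are $\succeq\omega$. Compare $\omega\cdot\gamma+k$ with $\omega\cdot\delta$ lexicographically. At the first position where $\omega\cdot\gamma$ and $\omega\cdot\delta$ differ, either the summand of $\omega\cdot\gamma$ is smaller, and the appended $k$ has no effect, or $\omega\cdot\gamma$ is a proper initial segment of $\omega\cdot\delta$. In the latter case the next summand of $\omega\cdot\delta$ is $\succeq\omega\succ\varphi_0(0)$, so $\omega\cdot\gamma+k$ is smaller there. Taking $\gamma=1+\beta$ and $\delta=1+\alpha$ completes this case.
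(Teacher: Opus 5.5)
Your proof is correct and takes the same route as the paper: the only nontrivial case is $I^0_{\varphi,\alpha}t$, and since $\varphi$ is an $\mathcal L_0^X$-formula, the rank of $\varphi[t,I^0_{\varphi,\beta}]$ lies just above $\omega\cdot(1+\beta)$ and hence below $\omega\cdot(1+\alpha)$. The paper compresses this into $|\varphi[t,I^0_{\varphi,\beta}]|\prec\omega\cdot(1+\beta+1)\preceq\omega\cdot(1+\alpha)$, which uses $\beta+1\preceq\alpha$. You instead use the bound $\omega\cdot(1+\beta)+k$ and check the comparison directly from the lexicographic order, which fills in the term-arithmetic details the paper leaves implicit.
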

\begin{proof}
    In the interesting case, $\psi$ is of the form~$I^0_{\varphi,\alpha}t$. Here $\varphi$ is an $\mathcal L_0^X$-formula, which means that it contains no predicates~$I^0_{\theta,\gamma}$ or $I^n_\theta$. For $\beta\prec\iota(\varphi)=\alpha$, we thus get $|\varphi[t,I^0_{\varphi,\beta}]|\prec\omega\cdot(1+\beta+1)\preceq\omega\cdot(1+\alpha)=|\psi|$.
\end{proof}

As the previous proof suggests, the rank of a formula is largely determined by the following coefficients (traditionally denoted $k(\psi)$ for the German `Koeffizienten').

\begin{definition}\label{def:formula-coeffs}
For an $\mathcal L_\omega^\Omega$-sentence~$\psi$, we put
\begin{equation*}
    k(\psi)=\{\alpha:\text{a predicate symbol }I^0_{\varphi,\alpha}\text{ occurs in $\psi$}\}.
\end{equation*}
When $\Delta$ is an $\mathcal L_\omega^\Omega$-sequent, we set $k(\Delta)=\bigcup_{\psi\in\Delta}k(\psi)$.
\end{definition}

In the following proof system, the coefficients and ordinal heights are controlled by the operators that were introduced at the end of Section~\ref{sect:ordinal-notations}. This operator control (pioneered by Buchholz~\cite{buchholz-local-predicativity}) will only become relevant in the collapsing procedure, which we present in the final section of this paper. Until then, it appears like~an incidental feature of the proof system, which must nevertheless be tracked through\-out the development.

\begin{definition}\label{def:H-proofs}
    For an operator~$\mathcal H$ and $\alpha,\rho\in\Gamma_{\Omega+Y}$ as well as an $\mathcal L^\Omega_\omega$-sequent~$\Delta$, we declare that $\mathcal H\vdash^\alpha_\rho\Delta$ holds when we have $\{\alpha\}\cup k(\Delta)\subseteq\mathcal H(\emptyset)$ and one of the following recursive clauses applies:
    \begin{enumerate}[label=(\roman*)]
    \item[(\textsf{Ax})] The sequent~$\Delta$ contains formulas $I^n_\varphi s$ and $\neg I^n_\varphi t$ for terms~$s$ and $t$ with the same value (where $n>0$).
    \item[$(\bigwedge)$] There is a conjunctive $\varphi\simeq \bigwedge_{i\prec\iota(\varphi)}\varphi_i\in \Delta$ such that $\mathcal H[i]\vdash^{\alpha(i)}_\rho\Delta,\varphi_i$ holds for all~$i\prec\iota(\varphi)$ with some~$\alpha(i)\prec\alpha$.
    \item[$(\bigvee)$] There is a disjunctive $\varphi\simeq \bigvee_{i\prec\iota(\varphi)}\varphi_i\in\Delta$ such that $\mathcal H\vdash^{\beta}_\rho\Delta, \varphi_i$ holds for some~$i\prec\min(\iota(\varphi),\alpha)$ with $i\in\mathcal H(\emptyset)$ and some $\beta\prec\alpha$.
    \item[(\textsf{Cut})] There is an $\mathcal L_\omega^\Omega$-sentence~$\psi$ with $|\psi|\prec\rho$ such that we have  $\mathcal H\vdash^{\alpha(0)}_\rho\Delta,\psi$ and $\mathcal H\vdash^{\alpha(1)}_\rho\Delta,\neg\psi$ for some $\alpha(0),\alpha(1)\prec\alpha$.
    \item[($\mathsf F^\Omega$)] We have $\Omega\preceq\alpha$ and there is $I^0_{\varphi,\Omega}t\in\Delta$ with $\mathcal H\vdash^{\beta}_\rho\Delta,\varphi[t,I^0_{\varphi,\Omega}]$ for~$\beta\prec\alpha$.
    \item[($\mathsf F^+$)] There is $I^n_\varphi\, t\in\Delta$ with $\mathcal H\vdash^{\beta}_\rho\Delta,\varphi[t,I^n_\varphi]$ for some~$\beta\prec\alpha$ (where $n>0$).
    \item[($\mathsf F^-$)] There is $\neg I^n_\varphi\, t\in\Delta$ with $\mathcal H\vdash^{\beta}_\rho\Delta,\neg\varphi[t,I^n_\varphi]$ for~$\beta\prec\alpha$ (where again $n>0$).
\end{enumerate}
\end{definition}

To see how the definition is formalized in $\mathsf{ATR}_0$, recall that all our operators are of the form~$\mathcal H_\gamma[a]$ with $\{\gamma\}\cup a\in\Gamma_{\Omega+Y}$, where~$a$ is finite (see Definition~\ref{def:H-operator}). The previous definition uses arithmetical recursion on~$\alpha$ to construct the family of sets
\begin{equation*}
    \left\{(\gamma,a,\rho,\Delta):\mathcal H_\gamma[a]\vdash^\alpha_\rho\Delta\right\}.
\end{equation*}
We also note that the induction statement in the following proof is arithmetical (as quantification over operators~$\mathcal H=\mathcal H_\gamma[a]$ amounts to quantification over~$\gamma$ and~$a$). The result is standard but requires some attention under operator control.

\begin{lemma}[$\mathsf{ATR}_0$; `Weakening']\label{lem:weakening}
    The implication
    \begin{equation*}
        \mathcal H\vdash^\alpha_\rho\Delta\quad\Rightarrow\quad\mathcal H'\vdash^\beta_\pi\Delta,\Delta'
    \end{equation*}
    holds when we have $\mathcal H(a)\subseteq\mathcal H'(a)$ for all finite $a\subseteq\Gamma_{\Omega+Y}$ as well as $\alpha\preceq\beta\in\mathcal H'(\emptyset)$ and $\rho\preceq\pi$ and $k(\Delta')\subseteq\mathcal H'(\emptyset)$.
\end{lemma}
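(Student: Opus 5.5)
We argue by induction on~$\alpha$, or more precisely on the height of the derivation $\mathcal H\vdash^\alpha_\rho\Delta$. The statement to be proved is universally quantified over $\mathcal H=\mathcal H_\gamma[a]$, $\mathcal H'=\mathcal H_{\gamma'}[a']$, $\beta$, $\rho$, $\pi$, $\Delta$ and~$\Delta'$. By the remarks after Definition~\ref{def:H-proofs}, it is arithmetical, so arithmetical transfinite induction on $\alpha\in\Gamma_{\Omega+Y}$ is available in $\mathsf{ATR}_0$ (using the well-foundedness from Assumption~\ref{as:Gamma-Omega-X}). The hypothesis $\mathcal H(a)\subseteq\mathcal H'(a)$ for all finite~$a$ is stated so that it is inherited by the variants of Definition~\ref{def:operator-variant}: we have $\mathcal H[i](c)=\mathcal H(\{i\}\cup c)\subseteq\mathcal H'(\{i\}\cup c)=\mathcal H'[i](c)$.

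First we check the side conditions of the conclusion. We need $\{\beta\}\cup k(\Delta,\Delta')\subseteq\mathcal H'(\emptyset)$. Here $\beta\in\mathcal H'(\emptyset)$ and $k(\Delta')\subseteq\mathcal H'(\emptyset)$ are assumed, while $k(\Delta)\subseteq\mathcal H(\emptyset)\subseteq\mathcal H'(\emptyset)$ holds by the premise. We then go through the last rule of $\mathcal H\vdash^\alpha_\rho\Delta$.

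For (\textsf{Ax}) nothing needs to be done, since the principal formulas remain in $\Delta,\Delta'$. For the rules with a single premise $\mathcal H\vdash^{\alpha_0}_\rho\Delta,\theta$ with $\alpha_0\prec\alpha$, namely $(\bigvee)$, ($\mathsf F^\Omega$), ($\mathsf F^\pm$) and each branch of (\textsf{Cut}), we apply the induction hypothesis with the same $\mathcal H'$ and $\pi$ and with~$\alpha_0$ in place of both $\alpha$ and~$\beta$. This yields $\mathcal H'\vdash^{\alpha_0}_\pi\Delta,\Delta',\theta$. The conditions are met: $\alpha_0\in\mathcal H(\emptyset)\subseteq\mathcal H'(\emptyset)$ because the premise derivation carries $\alpha_0$ in its operator, and $\alpha_0\prec\alpha\preceq\beta$. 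We then reapply the rule. The remaining side conditions are also preserved. In $(\bigvee)$ we have $i\in\mathcal H(\emptyset)\subseteq\mathcal H'(\emptyset)$ and $i\prec\min(\iota,\alpha)\preceq\min(\iota,\beta)$. In (\textsf{Cut}) we have $|\psi|\prec\rho\preceq\pi$. In ($\mathsf F^\Omega$) we have $\Omega\preceq\alpha\preceq\beta$.

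For $(\bigwedge)$ the premises are $\mathcal H[i]\vdash^{\alpha(i)}_\rho\Delta,\varphi_i$. Here we use the induction hypothesis with $\mathcal H[i]$ and $\mathcal H'[i]$, which satisfy the inclusion hypothesis as shown above. The side conditions $\alpha(i)\in\mathcal H'[i](\emptyset)$ and $k(\Delta')\subseteq\mathcal H'(\emptyset)\subseteq\mathcal H'[i](\emptyset)$ follow from Lemma~\ref{lem:closure-op}. This gives $\mathcal H'[i]\vdash^{\alpha(i)}_\pi\Delta,\Delta',\varphi_i$, and we conclude by $(\bigwedge)$ with height $\beta\succ\alpha(i)$.

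We expect the main (mild) obstacle to be this operator bookkeeping. One must ensure that the inclusion hypothesis is formulated for all finite arguments, not just~$\emptyset$, so that it survives passage to $\mathcal H[i]$. One must also get the membership $\alpha_0\in\mathcal H'(\emptyset)$ for the intermediate heights from the premise derivations, not from~$\beta$. Everything else is routine.
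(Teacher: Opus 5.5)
Your proof is correct and follows the paper's argument. It uses induction on $\alpha$ with the statement quantified over all operators, and the only delicate case is $(\bigwedge)$, which you handle exactly as the paper does, by transferring the inclusion $\mathcal H(a)\subseteq\mathcal H'(a)$ (for all finite $a$) to $\mathcal H[i]$ and $\mathcal H'[i]$. One small slip in attribution: $\alpha(i)\in\mathcal H'[i](\emptyset)$ follows from the premise's initial condition $\alpha(i)\in\mathcal H[i](\emptyset)$ together with the inherited inclusion $\mathcal H[i](\emptyset)\subseteq\mathcal H'[i](\emptyset)$, not from Lemma~\ref{lem:closure-op}, which you need only for $k(\Delta')\subseteq\mathcal H'[i](\emptyset)$.
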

\begin{proof}
    We argue by induction on~$\alpha$ and distinguish cases according to the previous definition. In the most interesting case, we have a conjunctive $\varphi\simeq \bigwedge_{i\prec\iota(\varphi)}\varphi_i\in \Delta$ such that
    \begin{equation*}
        \mathcal H[i]\vdash^{\alpha(i)}_\rho\Delta,\varphi_i\quad\text{with}\quad\alpha(i)\prec\alpha
    \end{equation*}
    holds for all~$i\prec\iota(\varphi)$. Note that we get $\alpha(i)\in\mathcal H[i](\emptyset)$ by the initial condition from Definition~\ref{def:H-proofs}. Any finite $a\subseteq\Gamma_{\Omega+Y}$ validates
    \begin{equation*}
        \mathcal H[i](a)=\mathcal H(\{i\}\cup a)\subseteq\mathcal H'(\{i\}\cup a)=\mathcal H'[i](a).
    \end{equation*}
    Inductively, we obtain
    \begin{equation*}
        \mathcal H'[i]\vdash^{\alpha(i)}_\pi\Delta,\Delta',\varphi_i
    \end{equation*}
    for every~$i\prec\iota(\varphi)$. To infer $\mathcal H'\vdash^\beta_\pi\Delta,\Delta'$ by the rule ($\bigwedge$), it suffices to note that the condition $\{\beta\}\cup k(\Delta,\Delta')\subseteq\mathcal H'(\emptyset)$ is satisfied by assumption.
\end{proof}

Our next result is also standard but somewhat subtle in the presence of operators.

\begin{lemma}[$\mathsf{ATR}_0$]\label{lem:psi-neg-psi}
    For any operator~$\mathcal H$ and any $\mathcal L_\omega^\Omega$-sentence $\psi$, we have
    \begin{equation*}
        \mathcal H[k(\psi)]\vdash^{\omega\cdot|\psi|}_0\psi,\neg\psi.
    \end{equation*}
\end{lemma}
\begin{proof}
    We have $k(\psi)\subseteq\mathcal H(k(\psi))=\mathcal H[k(\psi)](\emptyset)$ by Lemma~\ref{lem:closure-op}. Also, $|\psi|$ has one of the forms $n$ and $\Omega+\omega\cdot n$ and $\omega\cdot(1+\alpha)+n$ with $n\prec\omega$ and $\alpha\in k(\psi)$. According to Remarks~\ref{rmk:omega*alpha-closure} and~\ref{rmk:H-closure}, we get $\omega\cdot|\psi|+1\in\mathcal H[k(\psi)](\emptyset)$, so that the initial condition from Definition~\ref{def:H-proofs} is satisfied. It remains to check that one of the rules from this definition can be applied. First assume that neither $\psi$ nor $\neg\psi$ is disjunctive in the sense of Definition~\ref{def:disj}. Then $\psi$ or $\neg\psi$ is of the form $I^n_\varphi t$ with $n>0$. Here we can conclude by ($\mathsf{Ax}$). Now assume that one of the two formulas is disjunctive, say with
    \begin{equation*}
        \psi\simeq\textstyle\bigvee_{i\prec\iota(\varphi)}\psi_i\quad\text{and}\quad\neg\psi\simeq\textstyle\bigwedge_{i\prec\iota(\varphi)}\neg\psi_i.
    \end{equation*}
    For each~$i\prec\iota(\psi)$, we have $k(\psi_i)\subseteq k(\psi)\cup\{i\}$. So the induction hypothesis and weakening yield
    \begin{equation*}
        \mathcal H[k(\psi)][i]\vdash^{\omega\cdot|\psi_i|}_0\psi,\neg\psi,\psi_i,\neg\psi_i.
    \end{equation*}
    We can apply rule ($\bigvee$) to get
    \begin{equation*}
        \mathcal H[k(\psi)][i]\vdash^\alpha_0\psi,\neg\psi,\neg\psi_i\quad\text{with}\quad\alpha_i=\max(\omega\cdot|\psi_i|,i)+1.
    \end{equation*}
    In view of $i\prec\iota(\psi)\preceq\omega\cdot|\psi|$ we get $\alpha_i\prec\omega\cdot|\psi|$. Thus the claim follows by an application of~($\bigwedge$).
\end{proof}

In the rest of this section, we show that the proof system from Definition~\ref{def:inf-proofs} (see also Corollary~\ref{cor:omega-completeness} and the discussion that follows it) embeds into the system from Definition~\ref{def:H-proofs}. By the following result, the latter proves all instances of axiom~($\mathsf F$). Concerning the case of~$n=0$, we recall that $I^0_\varphi$ is identified with~$I^0_{\varphi,\Omega}$.

\begin{proposition}[$\mathsf{ATR}_0$]\label{prop:proof-F}
    For any operator~$\mathcal H$ and predicate symbol~$I^n_\varphi$, we have
    \begin{equation*}
        \mathcal H\vdash^{\Omega+\omega^3}_0\forall x\big(\varphi[x,I^n_\varphi]\to I^n_\varphi x\big).
    \end{equation*}
    When we have $n>0$, we also get $\mathcal H\vdash^{\Omega+\omega^3}_0\forall x(I^n_\varphi x\to\varphi[x,I^n_\varphi])$.
\end{proposition}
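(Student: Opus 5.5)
The plan is to derive each instance by applying rule ($\bigwedge$) for the universal quantifier, then rule ($\bigvee$) for the implication (read as a disjunction $\neg\varphi[n,I^n_\varphi]\lor I^n_\varphi n$), and then the appropriate fixed point rule, with the identity lemma (Lemma~\ref{lem:psi-neg-psi}) as the starting point. Note first that the formula is closed and its coefficients are harmless. For $n=0$ the only coefficient is $\Omega$, since $\varphi$ is an $\mathcal L_0^X$-formula and $I^0_\varphi$ is identified with $I^0_{\varphi,\Omega}$. For $n>0$ there are no coefficients, or at most those of $\varphi$, which lie in $\mathcal H(\emptyset)$ by Lemma~\ref{lem:C-sets-closure}(d). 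Hence $\mathcal H[k(\psi)]=\mathcal H$ by Corollary~\ref{cor:H[b]=H}. We also have $\Omega+\omega^3\in\mathcal H(\emptyset)$ and $\Omega+\omega^k+m\in\mathcal H(\emptyset)$ by the closure properties in Remark~\ref{rmk:H-closure}.

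Fix a numeral $m$ and put $\theta=\varphi[m,I^n_\varphi]$. By Lemma~\ref{lem:psi-neg-psi} we get $\mathcal H\vdash^{\omega\cdot|\theta|}_0\neg\theta,\theta$. The crucial estimate is the bound on $\omega\cdot|\theta|$. The formula $\theta$ is built from literals of rank $0$ or $|I^n_\varphi t|$ by finitely many connectives, say $k$ of them, so $|\theta|\preceq|I^n_\varphi t|+k$.

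For $n=0$ we have $|I^0_{\varphi,\Omega}t|=\omega\cdot(1+\Omega)=\omega\cdot\Omega$. Here one must check, using Definition~\ref{def:Gamma-operations}, that $\omega\cdot\Omega=\Omega$: writing $\Omega=\varphi_\Omega(0)$, which is not of the form $\overline\varphi_0(\beta)$, we get $\omega\cdot\Omega=\varphi_0(1+\Omega)=\varphi_0(\Omega)=\Omega$. So $|\theta|\preceq\Omega+k$ and $\omega\cdot|\theta|\preceq\Omega+\omega\cdot k\prec\Omega+\omega^2$.

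For $n>0$ we get $|\theta|\preceq\Omega+\omega\cdot n+k$, so $\omega\cdot|\theta|\prec\Omega+\omega^2\cdot(n+1)\prec\Omega+\omega^3$. This computation of left multiplication by $\omega$ on sums is the main bookkeeping obstacle, and I would settle it via Lemma~4.15 of~\cite{FR_Pi11-recursion}.

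Next apply ($\mathsf F^\Omega$) (for $n=0$, legitimate since $\Omega\preceq\Omega+\omega^2$) or ($\mathsf F^+$) (for $n>0$) to the formula $I^n_\varphi m$. By weakening (Lemma~\ref{lem:weakening}) we first add $I^n_\varphi m$ to the sequent, which gives $\mathcal H\vdash^{\beta}_0\neg\theta,I^n_\varphi m$ with $\beta\prec\Omega+\omega^3$ and $\beta\in\mathcal H(\emptyset)$. Two applications of ($\bigvee$) then give the disjunction $\neg\theta\lor I^n_\varphi m$, for which the indices $0,1\prec 2$ lie in $\mathcal H(\emptyset)$. Finally ($\bigwedge$) over $i\prec\omega$ with the operators $\mathcal H[i]$ yields the universal formula at height $\Omega+\omega^3$. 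Here each premise is obtained with $\mathcal H[i]$ in place of $\mathcal H$, via weakening, since $\mathcal H(a)\subseteq\mathcal H[i](a)$. The heights stay below $\Omega+\omega^3$ because each is at most $\omega\cdot|\theta|+3\prec\Omega+\omega^3$.

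For the converse direction with $n>0$, we start again from $\mathcal H\vdash_0\neg\theta,\theta$, weaken by $\neg I^n_\varphi m$, and apply ($\mathsf F^-$) to obtain $\neg I^n_\varphi m,\theta$. The remaining steps ($\bigvee$ twice, then $\bigwedge$) are the same as before.
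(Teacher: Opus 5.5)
Your proposal is correct and follows the paper's proof step for step. The paper also uses the identity lemma with $\mathcal H[k(\theta)]=\mathcal H$ (as $k(\theta)\subseteq\{\Omega\}$), weakening, ($\mathsf F^\Omega$) resp.\ ($\mathsf F^+$), then ($\bigvee$) and ($\bigwedge$), and ($\mathsf F^-$) for the converse when $n>0$. The only slip is your closing estimate $\omega\cdot|\theta|+3$: for $n=0$ it must be read as $\Omega+\omega^2+3$, as you yourself arranged when applying ($\mathsf F^\Omega$), because $\omega\cdot|\theta|$ lies below $\Omega$ when $X$ does not occur in $\varphi$.
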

\begin{proof}
We have $|\varphi[m,I^n_\varphi]|<\Omega+\omega\cdot(n+1)$ and $k(\varphi[m,I^n_\varphi])\subseteq\{\Omega\}$. The latter holds because $\varphi$ is an $\mathcal{L}_\omega$-formula (rather than an $\mathcal L_\omega^\Omega$-formula), which can contain predicates $I^0_{\theta,\Omega}$ as identified with $I^0_\theta$ but no predicates $I^0_{\theta,\eta}$ for $\eta\prec\Omega$. Given that we have $\mathcal H[\Omega]=\mathcal H$ by Lemma~\ref{lem:C-sets-closure}(d) and Corollary~\ref{cor:H[b]=H} (recall~$\Omega=\Gamma_{\Omega+0}$), the previous lemma and weakening yield
\begin{equation*}
\mathcal H\vdash^{\Omega+\omega^2\cdot(n+1)}_0\neg\varphi[m,I^n_\varphi],\varphi[m,I^n_\varphi]
\end{equation*}
for every~$m\prec\omega$. By ($\mathsf F^\Omega$) or ($\mathsf F^+$) when $n=0$ or $n>0$, respectively, we get
    \begin{equation*}
        \mathcal H\vdash^{\Omega+\omega^2\cdot(n+2)}_0\neg\varphi[m,I^n_\varphi],I^n_\varphi m.
    \end{equation*}
In view of $\neg\psi_0\to\psi_1\simeq\bigvee_{i\prec 2}\psi_i$ and $\forall x\,\psi(x)\simeq\bigwedge_{m\prec\omega}\psi(m)$, we can conclude by clauses~($\bigvee$) and~($\bigwedge$). For $n>0$, we use ($\mathsf F^-$) to prove the converse implication.
\end{proof}

In Definition~\ref{def:H-proofs}, the only aspect of terms that we consider is the value. This has the following consequence.

\begin{remark}\label{rmk:term-value-variant}
    Our infinitary proof system allows us to replace any term by another term with the same value. More precisely, when $\Delta(x_1,\ldots,x_n)$ is a sequent with the indicated free variables, we have
    \begin{equation*}
        \mathcal H\vdash^\alpha_\rho\Delta(s_1,\ldots,s_n)\quad\Rightarrow\quad\mathcal H\vdash^\alpha_\rho\Delta(t_1,\ldots,t_n)
    \end{equation*}
    whenever each pair~$(s_i,t_i)$ consists of closed terms with equal value (in the standard model). One can check this by a straightforward induction on~$\alpha$.
\end{remark}

The following will be crucial to prove instances of axiom~($\mathsf L$).

\begin{lemma}[$\mathsf{ATR}_0$]
 Consider $\mathcal L_\omega^\Omega$-formulas $\theta(y)$ and $\psi(y)$ such that
 \begin{equation*}
     \mathcal H\vdash^\alpha_\rho\Delta,\neg\theta(m),\psi(m)
 \end{equation*}
 with $\alpha\succeq\omega$ holds for every $m\prec\omega$. If $\varphi$ is a positive $\mathcal L_0^X$-formula with a single free number variable, there is an~$h\prec\omega$ such that all $n\prec\omega$ validate
 \begin{equation*}
     \mathcal H\vdash^{\alpha+2h}_\rho\Delta,\neg\varphi[n,\lambda y\,\theta],\varphi[n,\lambda y\,\psi].
 \end{equation*}
\end{lemma}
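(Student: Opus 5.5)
We argue by induction on the build-up of the positive $\mathcal L_0^X$-formula $\varphi$ and choose $h$ as (a bound on) the logical depth of $\varphi$, so that $h$ depends only on $\varphi$ and not on~$n$. The induction statement has to be general enough for the inductive steps. We therefore prove, for every positive $\mathcal L_0^X$-formula $\chi$ whose free number variables are among $\vec x$, that there is an $h_\chi\prec\omega$ with
\begin{equation*}
\mathcal H\vdash^{\alpha+2h_\chi}_\rho\Delta,\neg\chi[\vec n,\lambda y\,\theta],\chi[\vec n,\lambda y\,\psi]
\end{equation*}
for all tuples of numerals $\vec n$. Here we set $h_\chi=0$ for literals and $h_\chi=\max(h_{\chi_0},h_{\chi_1})+1$ for compound formulas, with the obvious modification for quantifiers. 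Throughout we use that $\alpha\succeq\omega$ guarantees that all $\alpha+2h$ lie in $\mathcal H(\emptyset)$: we have $\alpha\in\mathcal H(\emptyset)$ by the initial condition on the given derivations, and closure under addition holds by Remark~\ref{rmk:H-closure}. We also use that $k$ of the new formulas is contained in $k(\theta(m))\cup k(\psi(m))$, which lies in $\mathcal H(\emptyset)$, because $\chi$ itself contains no predicates $I^0_{\cdot,\cdot}$.

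For the base cases, first let $\chi$ be $Xt$. Then the desired sequent is $\Delta,\neg\theta(m),\psi(m)$ with $m$ the value of $t$, up to replacing $t$ by the numeral~$m$, which is allowed by Remark~\ref{rmk:term-value-variant}. The hypothesis gives this sequent with height~$\alpha$. Next let $\chi$ be a literal that does not involve $X$ (since $\chi$ is positive, $\neg Xt$ does not occur). Then $\chi$ is an arithmetical literal or a literal $\overline Q t$ or $\neg Qt$, and $\neg\chi,\chi$ are closed. One of the two is true, and we want a derivation of it of finite height, which is $\prec\alpha$ because $\alpha\succeq\omega$. The false one among them is an empty disjunction, so its negation is an empty conjunction, derivable by ($\bigwedge$) with no premises at any height. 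We then weaken via Lemma~\ref{lem:weakening}. Here we need $k(\Delta)\subseteq\mathcal H(\emptyset)$, which we obtain from the hypothesis derivation (the case $m=0$ suffices).

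For the inductive steps, consider $\chi=\chi_0\lor\chi_1$. By the induction hypothesis we have derivations of $\Delta,\neg\chi_i[\ldots\theta],\chi_i[\ldots\psi]$ of height at most $\alpha+2h'$, where $h'=\max(h_{\chi_0},h_{\chi_1})$. Applying ($\bigvee$) introduces $\chi[\ldots\psi]$ at height $\alpha+2h'+1$. Here $i\prec 2$, and $i\in\mathcal H(\emptyset)$ holds trivially. Then ($\bigwedge$) introduces $\neg\chi[\ldots\theta]\simeq\bigwedge_{i\prec2}\neg\chi_i[\ldots\theta]$ at height $\alpha+2h'+2$. The operator becomes $\mathcal H[i]$ in the premises, but $\mathcal H[i]=\mathcal H$ by Corollary~\ref{cor:H[b]=H}, since $i\in\mathcal H(\emptyset)$. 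Conjunction is symmetric: we first apply ($\bigvee$) to $\neg\chi[\ldots\theta]$ and then ($\bigwedge$) to $\chi[\ldots\psi]$. Quantifiers $\exists x\,\chi_0$ and $\forall x\,\chi_0$ are handled the same way, with ($\bigvee$) picking the witness numeral $k\prec\omega$, which lies in $\mathcal H(\emptyset)$ by closure, and ($\bigwedge$) ranging over all $k\prec\omega$. Again $\mathcal H[k]=\mathcal H$ holds, and the uniform bound $\alpha+2h_{\chi_0}$ from the induction hypothesis is essential here. We must check that the $\bigvee$-index condition $i\prec\min(\iota,\alpha+2h'+1)$ holds; this follows from $i\prec\omega\preceq\alpha$. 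Heights are monotone in the sense that we may raise them by weakening when $h_{\chi_0}\ne h_{\chi_1}$.

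The main obstacle is the bookkeeping of the operator-control side conditions rather than the logic. We need $\alpha+2h\in\mathcal H(\emptyset)$, the indices $i\in\mathcal H(\emptyset)$, and the coefficient condition on the sequents, and we need to dispose of the passage to $\mathcal H[i]$ in ($\bigwedge$). The assumption $\alpha\succeq\omega$ is exactly what makes the numeral indices admissible in ($\bigvee$) and what makes the derivations of true literals fit below~$\alpha$.
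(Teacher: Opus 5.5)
Your proposal is correct and follows the paper's proof. Like the paper, you argue by induction on the build-up of $\varphi$, generalised to several free variables with $h$ the logical depth; the base case $Xt$ comes from the hypothesis plus Remark~\ref{rmk:term-value-variant}, and each inductive step is one application of $(\bigvee)$ (using $k\prec\omega\preceq\alpha$) followed by one of $(\bigwedge)$ (using $\mathcal H[k]=\mathcal H$). Your direct handling of the $X$-free literals, deriving the true one as an empty conjunction by $(\bigwedge)$ and then weakening, is exactly the relevant case of Lemma~\ref{lem:psi-neg-psi}, which the paper cites at that point.
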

\begin{proof}
    We argue by induction on the height~$h$ of the formula~$\varphi$. This requires us to treat the more general case where $\varphi(x_1,\ldots,x_k)$ has several free variables. Generalizing our previous notation in the obvious way, we then write $\varphi[n_1,\ldots,n_k,\lambda y\,\psi]$ or shorter $\varphi[\overline n,\lambda y\,\psi]$ (where~$\overline n$ is a tuple of numerals).

    First assume that $\varphi(\overline x)$ is of the form~$Xt(\overline x)$ for a term~$t$. In view of the previous remark, the assumption (with the value of~$t(\overline n)$ as $m$) gives
    \begin{equation*}
     \mathcal H\vdash^\alpha_\rho\Delta,\neg\theta(t(\overline n)),\psi(t(\overline n))
 \end{equation*}
 for any tuple~$\overline n$ of numbers. This is what we need, since $\varphi[\overline n,\lambda y\,\psi]$ is the same formula as~$\psi(t(\overline n))$. When~$\varphi$ is a (negated) atom of~$\mathcal L_0$, the desired conclusion reads~$\mathcal H\vdash^\alpha_\rho\Delta,\neg Pt(\overline n),Pt(\overline n)$ for a predicate~$P$ other than~$X$. It holds by clause~($\mathsf{Ax}$) or with a slightly worse bound by Lemma~\ref{lem:psi-neg-psi} (where $k(\Delta)\subseteq\mathcal H(\emptyset)$ is ensured by the assumption $\mathcal H\vdash^\alpha_\rho\Delta,\neg\theta(m),\psi(m)$).

 In the induction step, we consider one representative case, where~$\varphi(\overline x)$ is of the form~$\exists z\,\varphi_0(\overline x,z)$. Consider arbitrary~$\overline n$ and note $\varphi[\overline n,\lambda_y\,\psi]\simeq\bigvee_{k\prec\omega}\varphi_0[\overline n,k,\lambda_y\,\psi]$. For any~$k$, the induction hypothesis provides
 \begin{equation*}
\mathcal H\vdash^{\alpha+2h-2}_\rho\Delta,\neg\varphi_0[\overline n,k,\lambda y\,\theta],\varphi_0[\overline n,k,\lambda y\,\psi].
 \end{equation*}
 The assumption~$\alpha\succeq\omega$ in the lemma ensures $k\prec\alpha+2h-2$, which is needed to apply clause~($\bigvee$) from Definition~\ref{def:H-proofs}. By the latter, we get
 \begin{equation*}
\mathcal H\vdash^{\alpha+2h-1}_\rho\Delta,\neg\varphi_0[\overline n,k,\lambda y\,\theta],\varphi[\overline n,\lambda y\,\psi].
 \end{equation*}
An application of ($\bigwedge$) completes the induction step.
\end{proof}

As promised, we now obtain infinitary proofs for axiom~($\mathsf L$).

\begin{proposition}[$\mathsf{ATR}_0$]\label{prop:proof-L}
    For any predicate~$I_\varphi^0$ and any $\mathcal L_\omega$-formula~$\psi$, we have
    \begin{equation*}
        \mathcal H\vdash_0^{\Omega\cdot 2+\omega}\forall\overline z\bigg(\forall x\big(\varphi[x,\lambda x\,\psi]\to\psi(x,\overline z)\big)\to\forall x\big(I^0_{\varphi,\Omega}x\to\psi(x,\overline z)\big)\bigg).
    \end{equation*}
\end{proposition}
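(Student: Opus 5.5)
Our plan is the standard derivation of the least fixed point axiom via induction on the stages $I^0_{\varphi,\beta}$, written as
\begin{equation*}
\Delta_{\overline n}:=\neg\forall x\big(\varphi[x,\lambda x\,\psi]\to\psi(x,\overline n)\big),\ \neg I^0_{\varphi,\beta}m,\ \psi(m,\overline n).
\end{equation*}
Fix numerals $\overline n$ and abbreviate $A=\forall x(\varphi[x,\lambda x\,\psi]\to\psi(x,\overline n))$. The core claim is that for every $\beta\preceq\Omega$ with $\beta\in\mathcal H(\emptyset)$ and every $m\prec\omega$ we have
\begin{equation*}
\mathcal H\vdash^{\Omega+\omega\cdot\beta}_0\neg A,\ \neg I^0_{\varphi,\beta}m,\ \psi(m,\overline n),
\end{equation*}
or a similar bound of the form $\Omega+\omega\cdot\beta+c$. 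We prove it by induction on $\beta$, which is available since $\Gamma_{\Omega+Y}$ is well-founded by Assumption~\ref{as:Gamma-Omega-X}. The induction statement is arithmetical, quantifying over $\beta,m$ and the code of $\mathcal H=\mathcal H_\gamma[a]$.

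For the induction step, note that $\neg I^0_{\varphi,\beta}m\simeq\bigwedge_{\gamma\prec\beta}\neg\varphi[m,I^0_{\varphi,\gamma}]$ is conjunctive, so by $(\bigwedge)$ it suffices to derive, for each $\gamma\prec\beta$,
\begin{equation*}
\mathcal H[\gamma]\vdash \neg A,\ \neg\varphi[m,I^0_{\varphi,\gamma}],\ \psi(m,\overline n)
\end{equation*}
with height below $\Omega+\omega\cdot\beta$. Since $\gamma\in\mathcal H[\gamma](\emptyset)$, the induction hypothesis applied to $\mathcal H[\gamma]$ gives $\mathcal H[\gamma]\vdash^{\Omega+\omega\cdot\gamma}_0\neg A,\neg I^0_{\varphi,\gamma}k,\psi(k,\overline n)$ for all $k$. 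The preceding lemma (with $\theta=I^0_{\varphi,\gamma}$, $\Delta=\neg A$, and $\alpha=\Omega+\omega\cdot\gamma\succeq\omega$) then yields $\neg A,\neg\varphi[m,I^0_{\varphi,\gamma}],\varphi[m,\lambda x\,\psi]$ at height $\Omega+\omega\cdot\gamma+2h$. Lemma~\ref{lem:psi-neg-psi} gives $\neg\psi(m,\overline n),\psi(m,\overline n)$ at a height below $\Omega$ plus a small amount; here $k(\psi)\subseteq\{\Omega\}$ and $\mathcal H[\Omega]=\mathcal H$ hold as in Proposition~\ref{prop:proof-F}, and the rank of $\psi$ is below $\Omega+\omega\cdot\omega$. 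A $(\bigwedge)$ application (the implication $\varphi[\dots]\to\psi$ is the disjunction $\neg\varphi[\dots]\lor\psi$) followed by $(\bigvee)$ on $\exists x\,\neg(\dots)$, that is on $\neg A$ with witness $m\in\mathcal H(\emptyset)$, gives the desired sequent at height $\Omega+\omega\cdot\gamma+2h+3\prec\Omega+\omega\cdot\beta$. This is cut-free, which is consistent with the subscript $0$.

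Finally, take $\beta=\Omega$. Then $\Omega+\omega\cdot\Omega=\Omega\cdot 2$, up to the convention that $\omega\cdot\Omega=\Omega$ in our notation via Definition~\ref{def:Gamma-operations}; this should be checked, and if needed the stage bound should be chosen as $\Omega+\omega\cdot(1+\beta)$ or similar. Then apply $(\bigvee)$ twice to form the implications, $(\bigwedge)$ over $x$, and $(\bigwedge)$ over $\overline z$, reaching $\Omega\cdot2+\omega$.

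The main obstacle is bookkeeping rather than ideas, in two places. First, the operator control: $\mathcal H$ must be replaced by $\mathcal H[\gamma]$ in the induction hypothesis, and every height $\Omega+\omega\cdot\gamma+c$ must lie in $\mathcal H[\gamma](\emptyset)$; this follows from the closure properties in Remark~\ref{rmk:H-closure} and Remark~\ref{rmk:omega*alpha-closure}. Second, the arithmetic must confirm that $\Omega+\omega\cdot\gamma+c\prec\Omega+\omega\cdot\beta$ for $\gamma\prec\beta$. The $\Omega$ offset is needed so that the heights from Lemma~\ref{lem:psi-neg-psi} fit below the bound.
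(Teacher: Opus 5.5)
Your strategy is the paper's: induction on the stage $\delta\preceq\Omega$, applying the induction hypothesis to $\mathcal H[\gamma]$, using the preceding lemma to pass from $\neg I^0_{\varphi,\gamma}$ to $\neg\varphi[m,I^0_{\varphi,\gamma}]$, then $(\bigwedge)$ to form $\varphi[m,\lambda x\,\psi]\land\neg\psi(m)$, $(\bigvee)$ on $\neg A$, and a final $(\bigwedge)$ over $\gamma\prec\delta$. However, the height bound $\Omega+\omega\cdot\beta$ does not survive the induction step, and this is exactly the bookkeeping you flagged.

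\textbf{The gap.} When you form $\varphi[m,\lambda x\,\psi]\land\neg\psi(m)$, the second premise is the identity $\psi(m),\neg\psi(m)$. Lemma~\ref{lem:psi-neg-psi} only gives this at height $\omega\cdot|\psi|$, and your estimate $\Omega+\omega\cdot\gamma+2h+3$ ignores that premise.
\begin{itemize}
\item Since $\psi$ is an arbitrary $\mathcal L_\omega$-formula, $|\psi|$ can be $\Omega+\omega\cdot n+k$. Then $\omega\cdot|\psi|=\Omega+\omega^2\cdot n+\omega\cdot k$, which exceeds $\Omega+\omega\cdot\beta$ for small $\beta$.
\item Already for $\psi\equiv I^0_\theta x\lor x=x$ you get $\omega\cdot|\psi|=\omega\cdot(\Omega+1)=\Omega+\omega$, so the step to $\beta=1$ fails.
\item Your fallbacks do not help. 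With $\Omega+\omega\cdot(1+\beta)$, the case $\beta=1$ still fails once $|\psi|\succeq\Omega+2$.
\item A summand $c$ appended at the end is either too small (finite $c$) or swallows the $\omega\cdot\beta$ part. For example, $\Omega+\omega\cdot\gamma+\omega^3=\Omega+\omega^3$ for all $\gamma\prec\omega^2$, so consecutive stages are no longer separated.
\end{itemize}

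\textbf{The fix.} Put an offset in front that dominates $\omega\cdot|\psi|$.
\begin{itemize}
\item The paper sets $\beta=|\psi(n)|+1$ and proves $\mathcal H[\delta]\vdash_0^{\omega\cdot(\beta+\delta)}\neg\text{Cl}_\varphi(\psi),\neg I^0_{\varphi,\delta}n,\psi(n)$.
\item Then the identity, available as $\mathcal H\vdash^{\omega\cdot\beta}_0\psi(n),\neg\psi(n)$, fits below every stage. The step yields height $\omega\cdot(\beta+\gamma)+2h+2\prec\omega\cdot(\beta+\delta)$.
\item The endpoint still works: $|\psi|\prec\Omega+\omega^2$ for $\mathcal L_\omega$-formulas, so $\omega\cdot(\beta+\Omega)\preceq\Omega\cdot 2$.
\item A uniform offset such as $\Omega+\omega^3+\omega\cdot\delta$ would also work, since $\omega\cdot|\psi|\prec\Omega+\omega^3$.
\end{itemize}
With this correction the rest of your argument goes through as in the paper. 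That includes the operator control via $\mathcal H[\gamma]$ with $\gamma\in\mathcal H[\gamma](\emptyset)$, the choice $\mathcal H[\Omega]=\mathcal H$, and the finitely many closing inferences up to $\Omega\cdot 2+\omega$.
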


\begin{proof}
We fix values~$\overline m$ for the variables~$\overline z$ and mostly leave them implicit, i.e., we write $\psi$ for $\psi(x,\overline m)$. With this in mind, we write $\text{Cl}_\varphi(\psi)$ for $\forall x\big(\varphi[x,\lambda x\,\psi]\to\psi\big)$. Let us set $\beta=|\psi(n)|+1$, which is independent of~$n$ and also of~$\overline m$. By in\-duc\-tion over~$\delta\preceq\Omega$, we will show that all $n\prec\omega$ validate
    \begin{equation*}
        \mathcal H[\delta]\vdash_0^{\omega\cdot(\beta +\delta)}\neg \text{Cl}_\varphi(\psi), \neg I^0_{\varphi,\delta}\,n, \psi(n).
    \end{equation*}
For $\delta=\Omega$ this will yield the claim: We have $\mathcal{H}[\Omega] = \mathcal{H}$ and always $|\psi(n)|\prec\Omega+\omega^2$, so that we get $\omega\cdot(\beta+\Omega)\preceq\Omega\cdot 2$ and hence
    \begin{equation*}
        \mathcal H\vdash_0^{\Omega\cdot 2}\neg\forall x\big(\varphi[x,\lambda x\,\psi(x,\overline m)]\to\psi(x,\overline m)\big), \neg I^0_{\varphi,\Omega}\,n, \psi(n,\overline m).
    \end{equation*}
We conclude by applications of ($\bigwedge$) and ($\bigvee$).

In the remaining argument by induction, first observe $k(\psi)\subseteq\{\Omega\}\subseteq\mathcal{H}(\emptyset)$. By the induction hypothesis and previous proposition, each~$\gamma\prec\delta$ admits an $h\prec\omega$ such that all $n\prec\omega$ validate
\begin{equation*}
\mathcal H[\gamma]\vdash^{\omega\cdot(\beta+\gamma)+2h}_0\neg \text{Cl}_\varphi(\psi), \neg\varphi[n,I^0_{\varphi,\gamma}],\varphi[n,\lambda x\,\psi].
\end{equation*}
As Lemma~\ref{lem:psi-neg-psi} provides $\mathcal H\vdash^{\omega\cdot\beta}_0\psi(n),\neg\psi(n)$, we can infer
\begin{equation*}
\mathcal H[\gamma]\vdash^{\omega\cdot(\beta+\gamma)+2h+1}_0\neg \text{Cl}_\varphi(\psi), \neg\varphi[n,I^0_{\varphi,\gamma}],\varphi[n,\lambda x\,\psi]\land\neg\psi(n),\psi(n).
\end{equation*}
Now observe that we have
\begin{equation*}
\neg \text{Cl}_\varphi(\psi) \simeq\textstyle\bigvee_{n\prec\omega}\varphi[n,\lambda x\, \psi]\wedge\neg\psi(n). 
\end{equation*}
We can thus apply~($\bigvee$) to get
\begin{equation*}
\mathcal H[\gamma]\vdash^{\omega\cdot(\beta+\gamma)+2h+2}_0\neg \text{Cl}_\varphi(\psi), \neg\varphi[n,I^0_{\varphi,\gamma}],\psi(n).
\end{equation*}
By weakening, we can replace~$\mathcal H[\gamma]$ with~$\mathcal H[\delta][\gamma]$. In view of $\neg I^0_{\varphi,\delta}\simeq\bigwedge_{\gamma\prec\delta}\neg\varphi[n,I^0_{\varphi,\gamma}]$ and $k(I^0_{\varphi,\delta}n)=\{\delta\}\subseteq\mathcal H[\delta](\emptyset)$, we can apply~($\bigwedge$) to complete the induction step.   
\end{proof}

We state the following embedding result in the form of a remark and without detailed proof, as an improved embedding will be obtained in the following section.

\begin{remark}
    For any $\mathcal L_\omega$-sequent~$\Delta$ and every operator~$\mathcal H$, we have
    \begin{equation*}
        \vdash^x\Delta\quad\Rightarrow\quad\mathcal H\vdash^{\Gamma_{\Omega+1+x}}_{\Omega+\omega^2}\Delta.
    \end{equation*}
    This can be shown by induction over~$x$ in our fixed well-order~$X$. In the induction step, one distinguishes cases according to the clauses from Definition~\ref{def:inf-proofs}. The axioms~($\mathsf F$) and~($\mathsf L$) -- which appear in clause~(i) -- are covered by the previous results in the present section.
\end{remark}

\section{Ordinal analysis: the predicative part}\label{sect:pred-ord-ana}

For $N>0$ we define $\mathcal L_N^\Omega$ as the sublanguage of $\mathcal L_\omega^\Omega$ that omits all predicates~$I^n_\varphi$ with $n\geq N$. In this section, we eliminate detours through~$\mathcal L_{N+1}^\Omega$-formulas -- and corresponding applications of the rules ($\mathsf F^+$) and~($\mathsf F^-$) -- from proofs of \mbox{$\mathcal L_N^\Omega$-}formulas (in the proof system from Definition~\ref{def:H-proofs}). For $N=1$, we are left with the theory of non-iterated (least) fixed points, which will be analyzed in the next section.

In view of Definition~\ref{def:ranks-big-system}, an $\mathcal L^\Omega_\omega$-sentence~$\psi$ is an $\mathcal L^\Omega_N$-sentence precisely if it has rank~$|\psi|\prec\Omega+\omega\cdot N$. So the idea from the previous paragraph can be cast into the following result. Its proof will occupy us for the rest of this section.

\begin{theorem}\label{thm:hat-elim}
    For any $\mathcal L_N^\Omega$-sequent $\Delta$ with $N>0$ we have
    \begin{equation*}
        \mathcal H\vdash^\gamma_{\Omega+\omega\cdot(N+1)}\Delta\qquad\Rightarrow\qquad\mathcal H\vdash^{\varphi_\delta(\delta)}_{\Omega+\omega\cdot N}\Delta\quad\text{for}\quad\delta=\varepsilon_{\Omega+\gamma}.
    \end{equation*}
\end{theorem}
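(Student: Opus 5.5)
We follow the standard predicative cut elimination (as for $\widehat{\mathsf{ID}}_{<\omega}$ and $\mathsf{ATR}_0$), with the Veblen function taking the place of the exponential. The cut formulas of rank in $[\Omega+\omega\cdot N,\Omega+\omega\cdot(N+1))$ are exactly the $\mathcal L_{N+1}^\Omega$-formulas that contain some $I^N_\varphi$, and the only rules acting on such predicates are ($\mathsf{Ax}$), ($\mathsf F^+$) and ($\mathsf F^-$). Since the fixed points at level $N$ are not least, there is no induction principle for them, so we cannot reduce cuts on $I^N_\varphi t$ by ordinary rank reduction. Instead we use the asymmetric interpretation familiar from $\widehat{\mathsf{ID}}$.

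\textbf{Step 1: a unification lemma.} Relativise each $I^N_\varphi$ to approximation stages. For $\mathcal L_N^\Omega$-sequents $\Delta$ and $\mathcal L_{N+1}^\Omega$-formulas, let positive occurrences of $I^N_\varphi t$ be read at stage $\varphi_\zeta(\cdot)$-bounded levels and negative occurrences at higher stages. Concretely, we prove by induction on $\gamma$ the following: if $\mathcal H\vdash^\gamma_{\Omega+\omega\cdot(N+1)}\Lambda,\Delta$, where $\Lambda$ consists of negative and $\Delta$ of positive $I^N$-occurrences (and of $\mathcal L_N^\Omega$-formulas), then for all $\xi$ we get $\mathcal H[\xi]\vdash^{\xi+\varphi_\zeta(\gamma')}_{\Omega+\omega\cdot N}\Lambda^{\xi},\Delta^{\xi+\varphi_\zeta(\gamma')}$. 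Here the superscripts denote the stage-relativised translations, in which $I^N_\varphi$ is unfolded a bounded number of times by iterating $\varphi$ as a positive operator.

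Rather than introducing new stage predicates into the language, the cleaner route is the Buchholz-style approach. Let $\rho=\Omega+\omega\cdot N$. We first carry out ordinary cut elimination for all cut ranks strictly between $\rho$ and $\Omega+\omega\cdot(N+1)$ except for the atomic ones $I^N_\varphi t$. This uses the standard reduction lemma with the generalized $\bigvee/\bigwedge$ formalism, exploiting Lemma~\ref{lem:rank-subformulas} and the closure properties of operators (Remark~\ref{rmk:H-closure}). It yields bounds $\omega^{\omega^{\cdots}}$ and hence an $\varepsilon$-bound. We then have to remove the atomic cuts on $I^N_\varphi t$. These cuts interact with ($\mathsf F^\pm$): we push the cut upward through ($\mathsf F^+$)/($\mathsf F^-$) via an inversion lemma for $\neg I^N_\varphi t$. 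That inversion turns a proof of $\Delta,\neg I^N_\varphi t$ into a proof of $\Delta,\neg\varphi[t,I^N_\varphi]$, because $\neg I^N_\varphi t$ is principal only in ($\mathsf F^-$) and ($\mathsf{Ax}$). This replaces an atomic cut by a cut on $\varphi[t,I^N_\varphi]$, whose rank may again be that of an $I^N$-atom. This regress is the non-well-founded feature of non-least fixed points.

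\textbf{Step 2: the main obstacle.} The hardest point is controlling this regress. We resolve it with the asymmetric interpretation and an iterated-Veblen bound. At each unfolding we pass to proofs in which the $I^N$-predicates are replaced by their $\beta$-fold approximations $\varphi^{\beta}$, where each $\varphi^\beta$ is an $\mathcal L_N^\Omega$-formula of rank about $\Omega+\omega\cdot N\cdot\beta$. The cut ranks then become ordinals of size below $\delta=\varepsilon_{\Omega+\gamma}$; the choice of $\varepsilon_{\Omega+\gamma}$ ensures that all coefficients and approximation ranks lie below $\delta$ and belong to $\mathcal H(\emptyset)$. Predicative cut elimination (the Veblen-style elimination lemma: $\mathcal H\vdash^\alpha_{\rho+\omega^{\eta}}\Delta$ implies $\mathcal H\vdash^{\varphi_\eta(\alpha)}_\rho\Delta$, proved by main induction on $\eta$ and side induction on $\alpha$ using Proposition~\ref{prop:Veblen-order}) then removes cuts of rank in $[\rho,\rho+\delta)$. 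The final height is $\varphi_\delta(\delta)$, because $\rho+\delta=\rho+\omega^\delta$ with $\delta$ an $\varepsilon$-number, and the proof height before elimination is bounded by $\delta$.

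\textbf{Step 3: routine checks.} Throughout, we check operator control. Each new ordinal, namely $\varphi_\eta(\alpha)$, $\varepsilon_{\Omega+\gamma}$ and the approximation indices, must lie in $\mathcal H(\emptyset)$. This follows from Lemma~\ref{lem:C-sets-closure}(c), Remark~\ref{rmk:H-closure}, and the fact that $\gamma\in\mathcal H(\emptyset)$ by the initial condition of Definition~\ref{def:H-proofs}. We also check that the side conditions of ($\bigvee$), namely $i\prec\alpha$, survive increasing the heights; this is where the weakening Lemma~\ref{lem:weakening} is used. Finally, the rule ($\mathsf F^\Omega$) and the $I^0$-predicates are untouched, since their ranks are below $\Omega$ or they are atoms not involved in the cuts being eliminated.
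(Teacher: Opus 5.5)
There is a genuine gap at the heart of your Step~2. You explicitly decline to introduce stage predicates. Instead you replace $I^N_\varphi$ by ``$\beta$-fold approximations $\varphi^\beta$'' that are supposed to be $\mathcal L_N^\Omega$-formulas of rank about $\Omega+\omega\cdot N\cdot\beta$. No such formulas exist, for three reasons:
\begin{enumerate}
\item An $\mathcal L_N^\Omega$-formula is a finite object, so it can only express finitely many unfoldings of $\varphi$. The asymmetric interpretation, however, needs transfinite stages of the form $\alpha+\omega^\beta$, with $\beta$ a proof height.
\item Every $\mathcal L_N^\Omega$-sentence has rank $\prec\Omega+\omega\cdot N$, so a rank like $\Omega+\omega\cdot N\cdot\beta$ is impossible.
\item If the translations really were $\mathcal L_N^\Omega$-formulas, no cuts of rank $\succeq\Omega+\omega\cdot N$ would remain for your Veblen elimination to remove.
\end{enumerate}
The same problem affects your ``unification lemma''. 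Its superscripted translations are undefined, and it claims cut rank $\Omega+\omega\cdot N$ directly. It is also stated for input cut rank $\Omega+\omega\cdot(N+1)$, where cut formulas may contain $I^N$ both positively and negatively and so cannot be interpreted asymmetrically at all. The preliminary reduction to cut rank $\Omega+\omega\cdot N+1$ (Corollary~\ref{cor:cut-elim-eps}), which you do mention, is essential here. Finally, your prose reads negative occurrences at the \emph{higher} stage. That is backwards: the translation $I^N_{\varphi,\beta}s,\neg I^N_{\varphi,\alpha}t$ of an axiom ($\mathsf{Ax}$) would be unprovable for $\beta\prec\alpha$. Your displayed formula has the correct direction.

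The missing idea is an intermediate calculus.
\begin{itemize}
\item Extend $\mathcal L_N^\Omega$ by fresh predicates $I^N_{\varphi,\alpha}$ for all $\alpha\in\Gamma_{\Omega+Y}$. Declare them disjunctive via $I^N_{\varphi,\alpha}t\simeq\bigvee_{\beta\prec\alpha}\varphi[t,I^N_{\varphi,\beta}]$ with rank $\Omega+\omega\cdot(N+\alpha)$, and drop ($\mathsf{Ax}$) and ($\mathsf F^\pm$) at level $N$ (Definitions~\ref{def:L+-disjunctions} and~\ref{def:H-proofs-plus}).
\item In this calculus the components $\varphi[t,I^N_{\varphi,\beta}]$ have strictly smaller rank than $I^N_{\varphi,\alpha}t$. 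So inversion and reduction work at every rank $\succeq\Omega+\omega\cdot N$, and only then is the Veblen elimination of Proposition~\ref{prop:pred-cut-elim} available.
\item The asymmetric interpretation reads negative occurrences at stage $\alpha$ and positive ones at $\alpha+\omega^\beta$. The monotonicity Lemma~\ref{lem:asym-monotone} handles the axiom, cut and ($\mathsf F^-$) cases. This embeds $\mathcal H\vdash^{\beta}_{\Omega+\omega\cdot N+1}$ into the new calculus with height and cut rank $\delta$ (Corollary~\ref{cor:asymmetric}).
\item Veblen elimination then lowers the cut rank to $\Omega+\omega\cdot N$ at height $\varphi_\delta(\delta)$. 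At that point no stage predicate can occur, and Lemma~\ref{lem:Vdash-conservative} returns the proof to $\mathcal H\vdash$.
\end{itemize}
Your overall architecture and your final ordinal computation match this. Without the stage predicates and the intermediate system, though, the central step has nothing to run on.
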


Note that the theorem involves the Veblen function~$\varphi$, which we have introduced in Definition~\ref{def:Gamma}. The $\varepsilon$-function is explained by $\varepsilon_\gamma=\varphi_1(\gamma)$. At least intuitively, this means that $\varepsilon_\gamma$ is the $\gamma$-th fixed point of the function~$\eta\mapsto\omega^\eta=\varphi_0(\eta)$.

The Veblenian bound indicates that we are concerned with predicative ordinal analysis. Indeed, the present section is analogous to an ordinal analysis of the pre\-dica\-tive theory~$\widehat{\mathsf{ID}}_{<\omega}$ (see the explanations and references in Section~\ref{sect:ind-def}). A main technical ingredient is the so-called asymmetric interpretation (see~\cite{cantini-asymmetric,jaeger-strahm-pos-ind}). Even though the approach is classical, it takes care to adapt it to our setting. We thus provide full details.

Let us begin with a cut elimination result that already appears in the ordinal analysis of Peano arithmetic. We include a (condensed) proof for the convenience of the reader and to show that the procedure is compatible with operator control.

\begin{lemma}[$\mathsf{ATR}_0$]\label{lem:basic-cut-elim}
    When $\rho\in\Gamma_{\Omega+Y}$ is not of the form~$\Omega+\omega\cdot n$ with $n\prec\omega$ and for any $\mathcal L^\Omega_\omega$-sequent~$\Delta$, we have
    \begin{equation*}
        \mathcal H\vdash^\alpha_{\rho+1}\Delta\quad\Rightarrow\quad\mathcal H\vdash^{\omega^\alpha}_\rho\Delta.
    \end{equation*}
\end{lemma}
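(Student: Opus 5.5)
We follow the standard Gentzen--Tait route: a reduction lemma for cuts of rank exactly~$\rho$, then an induction on~$\alpha$. The hypothesis that $\rho$ is not of the form $\Omega+\omega\cdot n$ is used only in the first step. There it guarantees that a cut formula~$\psi$ with $|\psi|=\rho$ is either disjunctive or conjunctive in the sense of Definition~\ref{def:disj}. The formulas without a type are the atoms $I^n_\varphi t$ and $\neg I^n_\varphi t$ with $n>0$, and these have rank $\Omega+\omega\cdot n$.

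\emph{Reduction lemma.} Let $\psi\simeq\bigvee_{i\prec\iota(\psi)}\psi_i$ be disjunctive with $|\psi|=\rho$. From $\mathcal H\vdash^\alpha_\rho\Delta,\neg\psi$ and $\mathcal H\vdash^\beta_\rho\Delta,\psi$ we want to derive $\mathcal H\vdash^{\alpha+\beta}_\rho\Delta$. We first prove inversion for conjunctions: $\mathcal H\vdash^\alpha_\rho\Delta,\neg\psi$ implies $\mathcal H[i]\vdash^\alpha_\rho\Delta,\neg\psi_i$ for every $i\prec\iota(\psi)$. This goes by induction on~$\alpha$. If $\neg\psi$ is the principal formula of~$(\bigwedge)$, we take the premise and weaken; otherwise we pass through the rule, using Lemma~\ref{lem:weakening} with $\mathcal H(a)\subseteq\mathcal H[i](a)$.

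We then prove the reduction by induction on~$\beta$. Suppose the last rule in the derivation of $\Delta,\psi$ is $(\bigvee)$ with principal formula~$\psi$. Then we have $\mathcal H\vdash^{\beta_0}_\rho\Delta,\psi,\psi_i$ with $i\in\mathcal H(\emptyset)$. The induction hypothesis gives $\mathcal H\vdash^{\alpha+\beta_0}_\rho\Delta,\psi_i$. Inversion gives $\mathcal H[i]\vdash^\alpha_\rho\Delta,\neg\psi_i$, and since $i\in\mathcal H(\emptyset)$, Corollary~\ref{cor:H[b]=H} yields $\mathcal H[i]=\mathcal H$. A cut on~$\psi_i$ is admissible because $|\psi_i|\prec\rho$ by Lemma~\ref{lem:rank-subformulas}, and it produces $\mathcal H\vdash^{\alpha+\beta}_\rho\Delta$. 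In all other cases we apply the induction hypothesis to the premises and repeat the rule. For $(\bigwedge)$ this requires $\mathcal H[j]$ in place of~$\mathcal H$ on the other side as well, which weakening provides. Throughout, we must check that $\alpha+\beta\in\mathcal H(\emptyset)$; this follows from Remark~\ref{rmk:H-closure} and Lemma~\ref{lem:C-sets-closure}(c).

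\emph{Main induction.} We show $\mathcal H\vdash^\alpha_{\rho+1}\Delta\Rightarrow\mathcal H\vdash^{\omega^\alpha}_\rho\Delta$ by induction on~$\alpha$. Since $\alpha\in\mathcal H(\emptyset)$, closure gives $\omega^\alpha=\varphi_0(\alpha)\in\mathcal H(\emptyset)$. For every rule other than a cut of rank exactly~$\rho$, we apply the induction hypothesis to the premises and use $\omega^{\alpha_0}\prec\omega^\alpha$ (Proposition~\ref{prop:Veblen-order}(b)). The rules $(\bigwedge)$ and $(\bigvee)$ carry the side conditions $i\prec\min(\iota,\cdot)$; since $\alpha\preceq\omega^\alpha$, these survive the bound increase.

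For a cut with $|\psi|=\rho$, the induction hypothesis gives both premises with heights $\omega^{\alpha_0}$ and $\omega^{\alpha_1}$. By the hypothesis on~$\rho$, either $\psi$ or $\neg\psi$ is disjunctive, so the reduction lemma applies and yields height $\omega^{\alpha_0}+\omega^{\alpha_1}\prec\omega^\alpha$, because $\omega^\alpha\in H$ is additively principal. Cuts of rank $\prec\rho$ are simply kept.

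The main obstacle is the operator bookkeeping inside inversion and reduction. Nothing is conceptually hard there, but every weakening step must respect the requirement $\{\alpha\}\cup k(\Delta)\subseteq\mathcal H(\emptyset)$ of Definition~\ref{def:H-proofs}.
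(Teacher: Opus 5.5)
Your overall structure matches the paper's: inversion for conjunctive formulas, a reduction lemma by induction on the height of the disjunctive side, then a main induction on~$\alpha$. But your reduction lemma has a missing case, and it is exactly the case the hypothesis on~$\rho$ is there to exclude. You say the hypothesis only gives the cut formula a type, i.e.\ it rules out the ranks $\Omega+\omega\cdot n$ with $n>0$. The statement also excludes $n=0$, i.e.\ $\rho=\Omega$, and your argument never uses this.

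The sentence $I^0_{\theta,\Omega}t$ is disjunctive and has rank $\omega\cdot(1+\Omega)=\Omega$. It can be the principal formula not only of $(\bigvee)$ but also of $(\mathsf F^\Omega)$. Suppose the derivation of $\Delta,\psi$ ends with $(\mathsf F^\Omega)$ applied to $\psi\equiv I^0_{\theta,\Omega}t$. Then your step \emph{apply the induction hypothesis to the premises and repeat the rule} fails. The induction hypothesis gives $\Delta,\theta[t,I^0_{\theta,\Omega}]$, but $\psi$ is gone from the sequent, so $(\mathsf F^\Omega)$ cannot be reapplied. You cannot cut instead either. Inverting $\neg I^0_{\theta,\Omega}t\simeq\bigwedge_{\beta\prec\Omega}\neg\theta[t,I^0_{\theta,\beta}]$ only yields the approximations with $\beta\prec\Omega$, never $\neg\theta[t,I^0_{\theta,\Omega}]$. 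Moreover, that formula's rank generally exceeds~$\Omega$. As written, your proof would therefore also eliminate cuts of rank~$\Omega$, which is exactly the impredicative obstacle that requires Theorem~\ref{thm:collapsing}.

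The repair is short. State the reduction lemma only for $|\psi|=\rho\neq\Omega$. Since $|I^0_{\theta,\Omega}t|=\Omega$, the disjunctive cut formula is then not of this form. Hence $(\mathsf F^\Omega)$ can only act on side formulas in~$\Delta$, just like $(\mathsf{Ax})$, $(\mathsf F^+)$ and $(\mathsf F^-)$, whose principal formulas are untyped. With that observation your treatment of the remaining cases is correct.

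A smaller point concerns your inversion lemma. When $\neg\psi$ is principal for $(\bigwedge)$, the premise is $\Delta,\neg\psi,\neg\psi_i$. You must first apply the induction hypothesis to remove $\neg\psi$, and then weaken using $\mathcal H[i][i]=\mathcal H[i]$. Weakening alone cannot delete a formula.
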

\begin{proof}
    The first ingredient of the argument is the so-called inversion lemma. For a conjunctive formula~$\varphi\simeq\bigwedge_{i\prec\iota(\varphi)}\varphi_i$, it states that we have
    \begin{equation*}
        \mathcal H\vdash^\alpha_\rho\Delta,\varphi\qquad\Rightarrow\qquad\mathcal H[i]\vdash^\alpha_\rho\Delta,\varphi_i\quad\text{for all}\quad i\prec\iota(\varphi).
    \end{equation*}
    Crucially, $\varphi$ cannot have the form $I^0_{\varphi,\Omega}t$ (as this formula is disjunctive) or the form $I^n_\varphi t$ or $\neg I^n_\varphi t$ with $n>0$ (as these formulas are neither conjunctive nor disjunctive). So at least intuitively, the only way to introduce $\varphi$ according to Definition~\ref{def:H-proofs} is clause~($\bigwedge$). Formally, one proves the inversion lemma by induction over~$\alpha$. In~the induction step, first observe that we get $k(\varphi_i)\subseteq k(\varphi)\cup\{i\}\subseteq\mathcal H[i](\emptyset)$. Then distinguish cases according to the clauses from Definition~\ref{def:H-proofs}. In the most interesting case, we have an application of~($\bigwedge$) with premises
    \begin{equation*}
        \mathcal H[i]\vdash^{\alpha(i)}_\rho\Delta,\varphi,\varphi_i.
    \end{equation*}
    We can conclude by the induction hypothesis and weakening (noting that $i\in\mathcal H[i](\emptyset)$ entails $\mathcal H[i][i]=\mathcal H[i]$). Let us point out that~($\bigwedge$) can also be applied to a different conjunctive formula~$\psi\in\Delta$, in which case each $j\prec\iota(\psi)$ has a premise
    \begin{equation*}
        \mathcal H[j]\vdash^{\alpha(j)}_\rho\Delta,\varphi,\psi_j.
    \end{equation*}
    Given~$i\prec\iota(\varphi)$, we inductively have $\mathcal H[i][j]\vdash^{\alpha(j)}_\rho\Delta,\varphi_i,\psi_j$ for all~$j\prec\iota(\psi)$, so that we can reapply~($\bigwedge$) to get $\mathcal H[i]\vdash^\alpha_\rho\Delta,\varphi_i$. The remaining cases are left to the reader.

    The next step is the so-called reduction lemma. For disjunctive~$\varphi\simeq\bigvee_{i\prec\iota(\varphi)}\varphi_i$ of rank~$|\varphi|=\rho\neq\Omega$, it asserts
    \begin{equation*}
        \mathcal H\vdash^\alpha_\rho\Delta,\neg\varphi\quad\text{and}\quad\mathcal H\vdash^\beta_\rho\Delta,\varphi\qquad\Rightarrow\qquad\mathcal H\vdash^{\alpha+\beta}_\rho\Delta.
    \end{equation*}
    Note that we cannot conclude by~($\mathsf{Cut}$), as this would require $|\varphi|\prec\rho$. Thus the reduction lemma shows how a single cut can be avoided. One proves it by induction on~$\beta$. In the induction step, distinguish cases according to the clause of Definition~\ref{def:H-proofs} that was used to obtain~$\mathcal H\vdash^\beta_\rho\Delta,\varphi$. The point is that $\varphi$ cannot be of the form~$I^0_{\theta,\Omega}t$ (as this formula has rank~$\Omega$) or of the form~$I^n_\theta t$ or~$\neg I^n_\theta t$ (as these formulas are not disjunctive), so that there can be no `critical' applications of the rules~($\mathsf F^\Omega$) as well as ($\mathsf F^+$) and ($\mathsf F^-$) with `principal' formula~$\varphi$ (though these rules may apply to `side' formulas in~$\Delta$). In the only interesting case that remains, $\varphi$ has been introduced by~($\bigvee$) with a premise
    \begin{equation*}
        \mathcal H\vdash^{\beta(i)}_\rho\Delta,\varphi,\varphi_i
    \end{equation*}
    for some~$i\prec\min(\iota(\varphi),\beta)$ with $i\in\mathcal H(\emptyset)$ and with~$\beta(i)\prec\beta$. By induction hypothesis (in conjunction with weakening), we obtain
    \begin{equation*}
        \mathcal H\vdash^{\alpha+\beta(i)}_\rho\Delta,\varphi_i.
    \end{equation*}
    Now we cannot reapply~($\bigvee$), because~$\varphi$ is in general not contained in~$\Delta$. Instead, we use inversion on the conjunctive formula $\neg\varphi\simeq\bigwedge_{i\prec\iota(\varphi)}\neg\varphi_i$ in the given proof of $\mathcal H\vdash^\alpha_\rho\Delta,\neg\varphi$. Due to~$\mathcal H[i]=\mathcal H$, this gives
    \begin{equation*}
        \mathcal H\vdash^\alpha_\rho\Delta,\neg\varphi_i.
    \end{equation*}
    Lemma~\ref{lem:rank-subformulas} ensures~$|\varphi_i|\prec|\varphi|=\rho$. Thus we get $\mathcal H\vdash^{\alpha+\beta}_\rho\Delta$ by ($\mathsf{Cut}$).

    Now the implication from the lemma (i.e., the cut elimination result) can be derived by induction on~$\alpha$. Again, we distinguish cases according to the clauses from Definition~\ref{def:H-proofs}. In the interesting case, we are concerned with a cut over a formula~$\varphi$ with rank~$|\varphi|=\rho$, which has premises
    \begin{equation*}
        \mathcal H\vdash^{\alpha(0)}_\rho\Delta,\neg\varphi\quad\text{and}\quad\mathcal H\vdash^{\alpha(1)}_\rho\Delta,\varphi
    \end{equation*}
    for some~$\alpha(i)\prec\alpha$. The assumption on~$\rho$ ensures that~$\varphi$ does not have the form $I^n_\theta t$ or~$\neg I^n_\theta t$ with $n>0$. In view of Definition~\ref{def:disj}, this means that either~$\varphi$ or~$\neg\varphi$ is disjunctive. We may assume that the disjunctive formula is~$\varphi$. By the induction hypothesis and reduction, we get
    \begin{equation*}
        \mathcal H\vdash^{\omega^{\alpha(0)}+\omega^{\alpha(1)}}_\rho\Delta.
    \end{equation*}
    The claim follows because of $\omega^{\alpha(0)}+\omega^{\alpha(1)}\prec\omega^\alpha$ and $\omega^\alpha=\varphi_0(\alpha)\in\mathcal H(\emptyset)$, where the latter reduces to $\alpha\in\mathcal H(\emptyset)$ due to Remark~\ref{rmk:H-closure}.
\end{proof}

It is standard to derive the following result on partial cut elimination, which contributes to the proof of Theorem~\ref{thm:hat-elim}.

\begin{corollary}[$\mathsf{ATR}_0$]\label{cor:cut-elim-eps}
    For any $N\prec\omega$ and any $\mathcal L_\omega^\Omega$-sequent~$\Delta$, we have
    \begin{equation*}
        \mathcal H\vdash^\gamma_{\Omega+\omega\cdot(N+1)}\Delta\quad\Rightarrow\quad\mathcal H\vdash^{\varepsilon_\gamma}_{\Omega+\omega\cdot N+1}\Delta.
    \end{equation*}
\end{corollary}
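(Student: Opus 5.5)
We will iterate Lemma~\ref{lem:basic-cut-elim} $\omega$ times: once for each cut rank strictly between $\Omega+\omega\cdot N+1$ and $\Omega+\omega\cdot(N+1)$. Every such rank has the form $\rho=\Omega+\omega\cdot N+k$ with $1\le k\prec\omega$, and none of these is of the form $\Omega+\omega\cdot n$, so the lemma applies to each of them.

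The first step is to show by induction on $k$ that
\begin{equation*}
\mathcal H\vdash^\gamma_{\Omega+\omega\cdot N+k+1}\Delta\quad\Rightarrow\quad\mathcal H\vdash^{\omega_{k}(\gamma)}_{\Omega+\omega\cdot N+1}\Delta,
\end{equation*}
where $\omega_0(\gamma)=\gamma$ and $\omega_{k+1}(\gamma)=\omega^{\omega_k(\gamma)}$. Each step is a single application of Lemma~\ref{lem:basic-cut-elim} with $\rho=\Omega+\omega\cdot N+k$.

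The second step handles the passage from $\Omega+\omega\cdot(N+1)$ itself, where we cannot simply iterate, because a proof at that cut rank may contain cuts of all ranks $\Omega+\omega\cdot N+k$ at once. Here we argue by induction on $\gamma$ and prove $\mathcal H\vdash^\gamma_{\Omega+\omega\cdot(N+1)}\Delta\Rightarrow\mathcal H\vdash^{\varepsilon_\gamma}_{\Omega+\omega\cdot N+1}\Delta$ directly. In all rules other than ($\mathsf{Cut}$), we apply the induction hypothesis to the premises and then reapply the rule. This uses $\varepsilon_{\alpha(i)}\prec\varepsilon_\gamma$ (monotonicity from Proposition~\ref{prop:Veblen-order}(b)) and $\varepsilon_\gamma\in\mathcal H(\emptyset)$ (from $\gamma\in\mathcal H(\emptyset)$ and Remark~\ref{rmk:H-closure}, noting $1\in\mathcal H(\emptyset)$). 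Note also that for ($\bigvee$) we need $i\prec\varepsilon_\gamma$, which follows from $i\prec\gamma\preceq\varepsilon_\gamma$ by Proposition~\ref{prop:Veblen-order}(a). For ($\mathsf F^\Omega$) we need $\Omega\preceq\varepsilon_\gamma$, which holds for the same reason.

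The cut case is the main point. Consider a cut formula $\psi$ with $|\psi|\prec\Omega+\omega\cdot(N+1)$. If $|\psi|\prec\Omega+\omega\cdot N+1$, we reapply ($\mathsf{Cut}$) to the transformed premises. Otherwise $|\psi|=\Omega+\omega\cdot N+k$ with $k\ge1$. In that case we reapply ($\mathsf{Cut}$) at the enlarged rank $\Omega+\omega\cdot N+k+1$, which yields $\mathcal H\vdash^{\eta}_{\Omega+\omega\cdot N+k+1}\Delta$ with $\eta=\max(\varepsilon_{\alpha(0)},\varepsilon_{\alpha(1)})+1$. We do this after weakening (Lemma~\ref{lem:weakening}) the premises up to that rank. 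Then the first step gives $\mathcal H\vdash^{\omega_k(\eta)}_{\Omega+\omega\cdot N+1}\Delta$. We have $\omega_k(\eta)\prec\varepsilon_\gamma$, since $\varepsilon_{\alpha(i)}\prec\varepsilon_\gamma$ and $\varepsilon_\gamma$ is a fixed point of $\eta\mapsto\omega^\eta$. The latter holds by Proposition~\ref{prop:Veblen-order}(c): $\varphi_0(\varepsilon_\gamma)=\varepsilon_\gamma$, and $\varphi_0$ is monotone. A final weakening lifts the height to $\varepsilon_\gamma$.

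The main obstacle is the ordinal bookkeeping in the notation system: checking $\omega_k(\eta)\prec\varepsilon_\gamma$ formally and checking that the relevant terms lie in $\mathcal H(\emptyset)$. Both should follow from Proposition~\ref{prop:Veblen-order} and the closure properties of Lemma~\ref{lem:C-sets-closure}. The induction is arithmetical and quantifies over operators $\mathcal H_\gamma[a]$, so it is available in $\mathsf{ATR}_0$ as before.
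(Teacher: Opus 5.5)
Your proposal is correct and follows the paper's proof. Both argue by induction on $\gamma$. In the critical cut case, both cut at a slightly larger finite rank $\Omega+\omega\cdot N+k+1$, then apply Lemma~\ref{lem:basic-cut-elim} finitely often, and use Proposition~\ref{prop:Veblen-order} to keep the iterated exponentials $\omega_k(\eta)$ below $\varepsilon_\gamma$. The only difference is cosmetic: you isolate the finite iteration as a separate induction on $k$, whereas the paper does it inline. Your opening talk of iterating ``$\omega$ times'' is superseded by your own correct remark that a direct induction on $\gamma$ is needed.
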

\begin{proof}
    We argue by induction on~$\gamma$ and distinguish cases according to the clauses from Definition~\ref{def:H-proofs}. In the crucial case, we have an application of~($\mathsf{Cut}$) for some formula~$\varphi$ with~$|\varphi|\prec\Omega+\omega\cdot(N+1)$. If the premises are proved with heights~$\gamma(i)\prec\gamma$, we inductively get
    \begin{equation*}
        \mathcal H\vdash^{\varepsilon_{\gamma(0)}}_{\Omega+\omega\cdot N+1}\Delta,\neg\varphi\quad\text{and}\quad\mathcal H\vdash^{\varepsilon_{\gamma(1)}}_{\Omega+\omega\cdot N+1}\Delta,\varphi.
    \end{equation*}
    Consider~$n\prec\omega$ such that we have $|\varphi|\prec\Omega+\omega\cdot N+1+n$. For $\delta=\max(\gamma(0),\gamma(1))$, an application of~($\mathsf{Cut}$) yields
    \begin{equation*}
        \mathcal H\vdash^{\varepsilon_\delta+1}_{\Omega+\omega\cdot N+1+n}\Delta.
    \end{equation*}
    By~$n$ applications of the previous result, we get
    \begin{equation*}
        \mathcal H\vdash^{\xi(n)}_{\Omega+\omega\cdot N+1}\Delta\quad\text{with}\quad\xi(0)=\varepsilon_\delta+1\text{ and }\xi(k+1)=\omega^{\xi(k)}.
    \end{equation*}
    Given $\xi(k)\prec\varepsilon_\gamma=\varphi_1(\gamma)$, we use Proposition~\ref{prop:Veblen-order} to get $\xi(k+1)=\varphi_0(\xi(k))\prec\varepsilon_\gamma$. This inductively yields $\xi(n)\prec\varepsilon_\gamma$, so that the claim follows by weakening.
\end{proof}

In order to complete the proof of Theorem~\ref{thm:hat-elim}, we need to lower the cut rank from $\Omega+\omega\cdot N+1$ to $\Omega+\omega\cdot N$. In other words, we must eliminate cuts over formulas~$(\neg)I^N_\varphi t$, which may be introduced by the rules~($\mathsf F^+$) and~($\mathsf F^-$). In these rules, the rank of the premise $(\neg)\varphi[t,I^N_\varphi]$ is typically higher than the rank of the conclusion~$(\neg)I^N_\varphi t$. For this reason, the previous considerations no longer apply (recall how we used $|\varphi_i|\prec|\varphi|$ to get reduction in the proof of Lemma~\ref{lem:basic-cut-elim}).

To overcome the obstacle that was explained in the previous paragraph, we intro\-duce an intermediate proof system. For a number $N>0$, let $\mathcal L^+_N$ be the language that results from $\mathcal L^\Omega_{N+1}$ when each predicate~$I^N_\varphi$ is replaced by predicates~$I^N_{\varphi,\alpha}$ for all~$\alpha\in\Gamma_{\Omega+Y}$. One can also view $\mathcal L^+_N$ as the extension of $\mathcal L^\Omega_N$ by the latter predicates.

\begin{definition}\label{def:ranks-intermediate-system}
    The rank~$|\psi|_N$ of an $\mathcal L_N^+$-formula~$\psi$ is recursively given by
    \begin{gather*}
        |\theta|_N=0\quad\text{when $\theta$ is a literal of }\mathcal L_0,\\
        |I^0_{\varphi,\alpha}t|_N=|\neg I^0_{\varphi,\alpha}t|_N=\omega\cdot(1+\alpha),\\
        |I^n_\varphi t|_N=|\neg I^n_\varphi t|_N=\Omega+\omega\cdot n\quad\text{when }0<n<N,\\
        |I^N_{\varphi,\alpha} t|_N=|\neg I^N_{\varphi,\alpha} t|_N=\Omega+\omega\cdot(N+\alpha),\\
        |\psi_0\land\psi_1|_N=|\psi_0\lor\psi_1|_N=\max(|\psi_0|_N,|\psi_1|_N)+1,\\
        |\forall x\psi|_N=|\exists x\psi|_N=|\psi|_N+1.
    \end{gather*}
\end{definition}

Note that the previous definition and Definition~\ref{def:ranks-big-system} assign the same ranks to all $\mathcal L^\Omega_N$-formulas. In view of this fact, we will also write $|\psi|$ at the place of~$|\psi|_N$ when the number $N$ is clear from the context. Similarly, the following is an extension of Definition~\ref{def:disj} from $\mathcal L_N^\Omega$-sentences to~$\mathcal L^+_N$-sentences.

\begin{definition}\label{def:L+-disjunctions}
    Each false literal of first-order arithmetic and each formula~$\neg\overline Qt$ is associated with an empty disjunction. Other than that, the $\mathcal L_N^+$-sentences of~dis\-junctive type are given by
    \begin{align*}
        \psi_0\lor\psi_1&\simeq\textstyle\bigvee_{i\prec2}\psi_i,&\quad I^0_{\varphi,\alpha}t&\simeq\textstyle\bigvee_{\beta\prec\alpha}\varphi[t,I^0_{\varphi,\beta}],\\
        \exists x\,\psi(x)&\simeq\textstyle\bigvee_{i\prec\omega}\psi(i),&\quad I^N_{\varphi,\alpha}t&\simeq\textstyle\bigvee_{\beta\prec\alpha}\varphi[t,I^N_{\varphi,\beta}].
    \end{align*}
\end{definition}

Concerning the clause for $I^N_{\varphi,\alpha}t$ in the above definition, it is crucial to note that $\varphi[t,I^N_{\varphi,\beta}]$ is indeed an $\mathcal L^+_N$-sentence. This is because $\varphi$ is an $\mathcal L_N^X$-formula, which means that predicates $I^n_\theta$ occur only for~$n<N$. Lemma~\ref{lem:rank-subformulas} remains valid:

\begin{lemma}[$\mathsf{ATR}_0$]
    If $\psi$ is an $\mathcal L_N^+$-sentence with $N>0$, we have
    \begin{equation*}
        |\psi_i|_N\prec|\psi|_N\quad\text{for all}\quad i\prec\iota(\varphi).
    \end{equation*}
\end{lemma}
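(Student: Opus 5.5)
The plan is to go through the clauses of Definition~\ref{def:L+-disjunctions} one at a time and compare ranks via Definition~\ref{def:ranks-intermediate-system}, as in the proof of Lemma~\ref{lem:rank-subformulas}. (The statement writes $\iota(\varphi)$; it should be read as $\iota(\psi)$.)

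The literal cases with empty disjunctions are vacuous. For $\psi_0\lor\psi_1$ and $\exists x\,\psi(x)$, the rank of the whole formula is the maximum of the component ranks plus one, respectively the rank of $\psi(x)$ plus one. We use the substitution fact that $|\psi(i)|_N=|\psi(x)|_N$, which holds because ranks do not depend on terms. We also use that $\eta\prec\eta+1$ in $\Gamma_{\Omega+Y}$, which follows from Definition~\ref{def:Gamma-operations}. The conjunctive cases then follow because $\iota(\neg\psi)=\iota(\psi)$, $(\neg\psi)_i=\neg\psi_i$ and $|\neg\psi|_N=|\psi|_N$.

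For $I^0_{\varphi,\alpha}t$, we argue as in Lemma~\ref{lem:rank-subformulas}. Since $\varphi$ is an $\mathcal L_0^X$-formula, $\varphi[t,I^0_{\varphi,\beta}]$ contains only arithmetical literals and $I^0_{\varphi,\beta}$-literals. By induction on the build-up of $\varphi$, its rank is $\omega\cdot(1+\beta)+k$ for some $k\prec\omega$. We then need
\begin{equation*}
\omega\cdot(1+\beta)+k\prec\omega\cdot(1+\beta+1)\preceq\omega\cdot(1+\alpha)\quad\text{for}\quad\beta\prec\alpha,
\end{equation*}
which reduces to monotonicity of $\gamma\mapsto\omega\cdot\gamma$ and of left addition. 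These are basic arithmetic facts that can be taken from Lemma~4.15 of~\cite{FR_Pi11-recursion}.

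The new case is $I^N_{\varphi,\alpha}t$ with $\beta\prec\alpha$. Here $\varphi$ is an $\mathcal L_N^X$-formula, so $\varphi[t,I^N_{\varphi,\beta}]$ contains the following literals:
\begin{itemize}
\item arithmetical literals, of rank $0$;
\item literals $I^0_{\theta,\Omega}s$, of rank $\omega\cdot(1+\Omega)=\Omega$ (note that $1+\Omega=\Omega$ and $\omega\cdot\Omega=\Omega$ by Definition~\ref{def:Gamma-operations}, since $\Omega=\Gamma_{\Omega+0}$ is a fixed point of $\varphi_0$);
\item literals $I^n_\theta s$ with $0<n<N$, of rank $\Omega+\omega\cdot n$;
\item literals $I^N_{\varphi,\beta}s$, of rank $\Omega+\omega\cdot(N+\beta)$.
\end{itemize}
All of these are at most $\Omega+\omega\cdot(N+\beta)$. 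A formula of finite height over such literals therefore has rank at most $\Omega+\omega\cdot(N+\beta)+k$ for some $k\prec\omega$. This uses that $\max(\eta+k,\eta'+k')\preceq\max(\eta,\eta')+\max(k,k')$, which again is routine arithmetic. Finally,
\begin{equation*}
\Omega+\omega\cdot(N+\beta)+k\prec\Omega+\omega\cdot(N+\beta+1)\preceq\Omega+\omega\cdot(N+\alpha),
\end{equation*}
using $N+\beta+1\preceq N+\alpha$ for $\beta\prec\alpha$.

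I expect the main obstacle to be justifying these ordinal-arithmetic facts (monotonicity, left distributivity $\omega\cdot(\eta+1)=\omega\cdot\eta+\omega$, and $\omega\cdot\Omega=\Omega$) directly from the term definitions. I would cite Lemma~4.15 of~\cite{FR_Pi11-recursion}, or else verify them by case distinction on Cantor normal forms.
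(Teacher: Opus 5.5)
Your proposal is correct and follows the paper's own argument. The paper likewise handles only the new case $I^N_{\varphi,\alpha}t$, deferring the remaining cases to Lemma~\ref{lem:rank-subformulas}. It bounds the literal ranks in $\varphi[t,I^N_{\varphi,\beta}]$ using $|I^0_{\theta,\Omega}s|=\omega\cdot(1+\Omega)=\Omega$ and concludes with $|\varphi[t,I^N_{\varphi,\beta}]|\prec\Omega+\omega\cdot(N+\beta)+\omega\preceq\Omega+\omega\cdot(N+\alpha)$, which matches your chain with $+k$.
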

\begin{proof}
    In the only new case, $\psi$ is of the form~$I^N_{\varphi,\alpha}t$. Here $\varphi$ is an $\mathcal L_N^X$-formula, which means that subformulas~$I^n_\theta s$ occur only for~$n<N$. These subformulas have ranks~$\Omega+\omega\cdot n\prec\Omega+\omega\cdot N$ (note especially~$|I^0_{\theta,\Omega}s|=\omega\cdot(1+\Omega)=\Omega$). When we have $\beta\prec\iota(\psi)=\alpha$, we thus get
    \begin{equation*}
        |\psi_\beta|=|\varphi[t,I^N_{\varphi,\beta}]|\prec\Omega+\omega\cdot(N+\beta)+\omega\preceq\Omega+\omega\cdot(N+\alpha)=|I^N_{\varphi,\alpha}t|=|\psi|,
    \end{equation*}
    as desired.
\end{proof}

The following definition extends Definition~\ref{def:formula-coeffs} in the sense that both definitions assign the same coefficients to any $\mathcal L_N^\Omega$-formula.

\begin{definition}\label{def:formula-coeffs-plus}
For an $\mathcal L_N^+$-sentence~$\psi$ with $N>0$, we put
\begin{equation*}
    k(\psi)=\{\alpha:\text{a predicate symbol }I^0_{\varphi,\alpha}\text{ or }I^N_{\varphi,\alpha}\text{ occurs in $\psi$}\}.
\end{equation*}
When $\Delta$ is an $\mathcal L_N^+$-sequent, we set $k(\Delta)=\bigcup_{\psi\in\Delta}k(\psi)$.
\end{definition}

The easiest way to understand the following definition is probably to consider the changes with respect to Definition~\ref{def:H-proofs} (see the remark below). In order to avoid any ambiguity, we nevertheless give the definition in full (as with the previous definition). Note how the notation allows us to distinguish the old proof system from the new one ($\mathcal H\vdash\ldots$ against $\mathcal H,N\vdash\ldots$).

\begin{definition}\label{def:H-proofs-plus}
    Consider~$N>0$. For an operator~$\mathcal H$ and $\alpha,\rho\in\Gamma_{\Omega+Y}$ as well as an $\mathcal L_N^+$-sequent~$\Delta$, we declare that $\mathcal H,N\vdash^\alpha_\rho\Delta$ holds when we have $\{\alpha\}\cup k(\Delta)\subseteq\mathcal H(\emptyset)$ and one of the following recursive clauses applies:
    \begin{enumerate}[label=(\roman*)]
    \item[(\textsf{Ax})] The sequent~$\Delta$ contains formulas $I^n_\varphi s$ and $\neg I^n_\varphi t$ for terms~$s$ and $t$ with the same value (where $0<n<N$).
    \item[$(\bigwedge)$] There is a conjunctive $\varphi\simeq \bigwedge_{i\prec\iota(\varphi)}\varphi_i\in \Delta$ such that $\mathcal H[i],N\vdash^{\alpha(i)}_\rho\Delta,\varphi_i$ holds for all~$i\prec\iota(\varphi)$ with some~$\alpha(i)\prec\alpha$.
    \item[$(\bigvee)$] There is a disjunctive $\varphi\simeq \bigvee_{i\prec\iota(\varphi)}\varphi_i\in\Delta$ such that $\mathcal H,N\vdash^{\beta}_\rho\Delta, \varphi_i$ holds for some~$i\prec\min(\iota(\varphi),\alpha)$ with $i\in\mathcal H(\emptyset)$ and some $\beta\prec\alpha$.
    \item[(\textsf{Cut})] There is an $\mathcal L_N^+$-sentence~$\psi$ with $|\psi|\prec\rho$ such that we have  $\mathcal H,N\vdash^{\alpha(0)}_\rho\Delta,\psi$ and $\mathcal H,N\vdash^{\alpha(1)}_\rho\Delta,\neg\psi$ for some $\alpha(0),\alpha(1)\prec\alpha$.
    \item[($\mathsf F^\Omega$)] We have $\Omega\preceq\alpha$ and there is $I^0_{\varphi,\Omega}t\in\Delta$ such that $\mathcal H,N\vdash^{\beta}_\rho\Delta,\varphi[t,I^0_{\varphi,\Omega}]$ holds for some~$\beta\prec\alpha$.
    \item[($\mathsf F^+$)] There is $I^n_\varphi\, t\in\Delta$ such that we have $\mathcal H,N\vdash^{\beta}_\rho\Delta,\varphi[t,I^n_\varphi]$ for some~$\beta\prec\alpha$ (where $0<n<N$).
    \item[($\mathsf F^-$)] There is $\neg I^n_\varphi\, t\in\Delta$ such that we have $\mathcal H,N\vdash^{\beta}_\rho\Delta,\neg\varphi[t,I^n_\varphi]$ for some~$\beta\prec\alpha$ (where again $0<n<N$).
\end{enumerate}
\end{definition}

As promised, we indicate the changes with respect to the previous system:

\begin{remark}
    The above definition modifies Definition~\ref{def:H-proofs} insofar as 
    \begin{itemize}
        \item it restricts all clauses to the new language, so that ($\mathsf{Ax}$) and ($\mathsf F^+$) and ($\mathsf F^-$) apply only for $n<N$, while ($\mathsf{Cut}$) applies only to $\mathcal L_N^+$-sentences,
        \item the clauses ($\bigwedge$) and ($\bigvee$) apply to all conjunctions and disjunctions that were specified in Definition~\ref{def:L+-disjunctions}, which now includes the formulas $(\neg)I^N_{\varphi,\alpha}t$.
    \end{itemize}
\end{remark}

In one direction, the connection with our previous proof system is made by the following conservativity result.

\begin{lemma}[$\mathsf{ATR}_0$]\label{lem:Vdash-conservative}
    For $N>0$ and any $\mathcal L_N^\Omega$-sequent~$\Delta$, we have
    \begin{equation*}
        \mathcal H,N\vdash^\alpha_{\Omega+\omega\cdot N}\Delta\quad\Rightarrow\quad\mathcal H\vdash^\alpha_{\Omega+\omega\cdot N}\Delta.
    \end{equation*}
\end{lemma}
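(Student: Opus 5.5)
The plan is to argue by induction on~$\alpha$ (which is legitimate in $\mathsf{ATR}_0$ under Standing Assumption~\ref{as:Gamma-Omega-X}, since the induction statement is arithmetical once we quantify over operators $\mathcal H=\mathcal H_\gamma[a]$ via $\gamma$ and~$a$). We prove simultaneously for all operators~$\mathcal H$ and all $\mathcal L_N^\Omega$-sequents~$\Delta$ that $\mathcal H,N\vdash^\alpha_{\Omega+\omega\cdot N}\Delta$ implies $\mathcal H\vdash^\alpha_{\Omega+\omega\cdot N}\Delta$. Three compatibility facts are needed, all of which the paper has already noted. On $\mathcal L_N^\Omega$-sentences, the ranks $|\cdot|_N$ and $|\cdot|$ coincide, the coefficients from Definitions~\ref{def:formula-coeffs} and~\ref{def:formula-coeffs-plus} coincide, and the disjunctive/conjunctive types from Definitions~\ref{def:disj} and~\ref{def:L+-disjunctions} coincide, together with their arities and components. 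Hence the side condition $\{\alpha\}\cup k(\Delta)\subseteq\mathcal H(\emptyset)$ transfers verbatim.

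The key observation is that every premise of the last inference is again an $\mathcal L_N^\Omega$-sequent, so the induction hypothesis applies to it. We check this clause by clause.

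\begin{itemize}
\item For $(\mathsf{Cut})$, the cut formula~$\psi$ satisfies $|\psi|_N\prec\Omega+\omega\cdot N$. Every formula containing some $I^N_{\varphi,\beta}$ has rank at least $\Omega+\omega\cdot(N+\beta)\succeq\Omega+\omega\cdot N$, so $\psi$ and $\neg\psi$ are $\mathcal L_N^\Omega$-sentences. The cut is then admissible in the old system with the same rank bound.
\item For $(\bigwedge)$ and $(\bigvee)$, the principal formula lies in~$\Delta$ and is thus an $\mathcal L_N^\Omega$-sentence, in particular not of the form $(\neg)I^N_{\varphi,\alpha}t$. Its components, as given by Definition~\ref{def:disj}, are therefore $\mathcal L_N^\Omega$-sentences. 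The operators $\mathcal H[i]$ and the conditions $i\in\mathcal H(\emptyset)$ and $i\prec\min(\iota(\varphi),\alpha)$ are unchanged.
\item For $(\mathsf F^\Omega)$, the formula $\varphi[t,I^0_{\varphi,\Omega}]$ with $\varphi\in\mathcal L_0^X$ lies in~$\mathcal L_N^\Omega$.
\item For $(\mathsf{Ax})$, $(\mathsf F^+)$ and $(\mathsf F^-)$ we have $0<n<N$, and $\varphi$ is an $\mathcal L_n^X$-formula, so $(\neg)\varphi[t,I^n_\varphi]$ is an $\mathcal L_N^\Omega$-sentence. The corresponding old clauses, which allow any $n>0$, apply directly.
\end{itemize}

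In each case we apply the induction hypothesis to the premises, at the same heights $\alpha(i)$ or $\beta$ below~$\alpha$, and then reapply the identically named clause of Definition~\ref{def:H-proofs}.

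The only point requiring care, and the one I regard as the crux, is the cut case. There we must make sure that the rank bound genuinely excludes the new predicates~$I^N_{\varphi,\beta}$, including the case $\beta=0$ where the rank is exactly $\Omega+\omega\cdot N$. This follows from $\omega\cdot(N+\beta)\succeq\omega\cdot N$ and the rank clauses, noting that compound formulas only increase rank. Everything else is a routine bookkeeping check that the definitions agree on the sublanguage.
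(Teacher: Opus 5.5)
Your proof is correct and follows the paper's own argument: induction on $\alpha$, where the cut rank $\Omega+\omega\cdot N$ excludes every predicate $I^N_{\varphi,\beta}$ (even $\beta=0$ gives rank exactly $\Omega+\omega\cdot N$), so every inference, in particular every instance of $(\bigwedge)$ and $(\bigvee)$, is already an inference of Definition~\ref{def:H-proofs}. The only difference is that you write out the clause-by-clause check which the paper leaves implicit.
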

\begin{proof}
    In view of the cut rank, the proof in the premise contains no predicates $I^N_{\varphi,\gamma}$. As a consequence, all instances of ($\bigwedge$) and ($\bigvee$) in this proof are also allowed by the proof system in the conclusion. Formally, one argues by induction on~$\alpha$.
\end{proof}

Conversely, we want to show that the conclusion of Corollary~\ref{cor:cut-elim-eps} (specifically a proof $\mathcal H\vdash^\delta_{\Omega+\omega\cdot N+1}\Delta$ for an $\mathcal L_N^\omega$-sequent~$\Delta$) can be transformed into a proof in our new system. For this purpose, we will employ a so-called asymmetric interpretation (see~\cite{cantini-asymmetric,jaeger-strahm-pos-ind}), which is based on the following notion of relativization.

\begin{definition}\label{def:asymmetric-interpretation}
    For $N>0$ and~$\alpha,\beta\in\Gamma_{\Omega+Y}$, we transform each $\mathcal L_{N+1}^\Omega$-formula~$\varphi$ into the $\mathcal L_N^+$-formula~$\varphi^{\alpha,\beta}$ that is recursively given by
    \begin{gather*}
        (P\mathbf t)^{\alpha,\beta}=P\mathbf t\quad\text{and}\quad(\neg P\mathbf t)^{\alpha,\beta}=\neg P\mathbf t\quad\text{for any predicate~$P$ of~$\mathcal L_N^\Omega$},\\
        \begin{alignedat}{3}
        (I^N_\varphi\, t)^{\alpha,\beta}&=I^N_{\varphi,\beta}\, t\quad&&\text{and}\quad
        (\neg I^N_\varphi\, t)^{\alpha,\beta}&&=\neg I^N_{\varphi,\alpha}\, t,\\
        (\varphi\land\psi)^{\alpha,\beta}&=\varphi^{\alpha,\beta}\land\psi^{\alpha,\beta}\quad&&\text{and}\quad
        (\varphi\lor\psi)^{\alpha,\beta}&&=\varphi^{\alpha,\beta}\lor\psi^{\alpha,\beta},\\
        (\forall x\,\varphi)^{\alpha,\beta}&=\forall x\,\varphi^{\alpha,\beta}\quad&&\text{and}\quad(\exists x\,\varphi)^{\alpha,\beta}&&=\exists x\,\varphi^{\alpha,\beta}.
        \end{alignedat}
    \end{gather*}
When $\Delta$ is an $\mathcal L_{N+1}^+$-sequent, we put $\Delta^{\alpha,\beta}=\{\varphi^{\alpha,\beta}:\varphi\in\Delta\}.$
\end{definition}

More intuitively, we get $\varphi^{\alpha,\beta}$ from~$\varphi$ when we replace all negative and positive occurrences of predicates~$I^N_\varphi$ by the $\alpha$-th and $\beta$-th approximations, respectively. In view of Definition~\ref{def:L+-disjunctions}, these can be understood as occurrences of ordinal quantifiers, which are relativized to $\alpha$ and $\beta$ in the universal and existential case. This motivates the following monotonicity result.

\begin{lemma}[$\mathsf{ATR}_0$]\label{lem:asym-monotone}
    Consider an~$\mathcal L_{N+1}^\Omega$-sequent~$\Delta$ and an~$\mathcal L_N^+$-sequent~$\Xi$ for some $N>0$. For any $\gamma,\delta\in\mathcal H(\emptyset)$ with $\alpha\succeq\gamma$ and $\beta\preceq\delta$, we have
    \begin{equation*}
        \mathcal H,N\vdash^\eta_\rho\Delta^{\alpha,\beta},\Xi\qquad\Rightarrow\qquad \mathcal H,N\vdash^\eta_\rho\Delta^{\gamma,\delta},\Xi.
    \end{equation*}
\end{lemma}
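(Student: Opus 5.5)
We argue by induction on~$\eta$, simultaneously for all $\mathcal L_{N+1}^\Omega$-sequents~$\Delta$, all $\mathcal L_N^+$-sequents~$\Xi$, all operators~$\mathcal H$ and all $\alpha,\beta,\gamma,\delta$ as in the statement; the induction statement is arithmetical, as explained after Definition~\ref{def:H-proofs}. First we check the side condition $\{\eta\}\cup k(\Delta^{\gamma,\delta},\Xi)\subseteq\mathcal H(\emptyset)$. The only coefficients in $\Delta^{\gamma,\delta}$ that may be absent from $\Delta^{\alpha,\beta}$ are $\gamma$ and~$\delta$, which lie in~$\mathcal H(\emptyset)$ by assumption. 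We also note that $\mathcal H(\emptyset)\subseteq\mathcal H[i](\emptyset)$, so the hypothesis $\gamma,\delta\in\mathcal H(\emptyset)$ survives passage to the premises of~($\bigwedge$).

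Next we distinguish cases according to the last clause of Definition~\ref{def:H-proofs-plus}. If the principal formula of that clause lies in~$\Xi$, or is of the form $\chi^{\alpha,\beta}$ for some $\chi\in\Delta$ whose outermost symbol is a connective or quantifier (or a predicate of~$\mathcal L_N^\Omega$), the situation is routine. Here $(\chi_0\land\chi_1)^{\alpha,\beta}=\chi_0^{\alpha,\beta}\land\chi_1^{\alpha,\beta}$, and similarly for the other connectives and quantifiers, so each premise has the form $\Delta'^{\alpha,\beta},\Xi'$ with $\Delta'$ obtained by adding a subformula of~$\chi$. We apply the induction hypothesis to the premises, moving any new $\mathcal L_N^+$-formulas into~$\Xi'$. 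Then we reapply the same rule to the now translated principal formula. The cut formula of~($\mathsf{Cut}$) and the formulas added by ($\mathsf F^\Omega$), ($\mathsf F^\pm$) for $n<N$ all belong to~$\mathcal L_N^+$, so they simply join~$\Xi$. Axioms ($\mathsf{Ax}$) involve only $I^n$ with $n<N$, which the translation leaves unchanged.

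The main case is when the principal formula is $(\neg I^N_\varphi t)^{\alpha,\beta}=\neg I^N_{\varphi,\alpha}t$ or $(I^N_\varphi t)^{\alpha,\beta}=I^N_{\varphi,\beta}t$, and it requires some care. In the disjunctive case, ($\bigvee$) gives a premise $\mathcal H,N\vdash^{\eta'}_\rho\Delta^{\alpha,\beta},\Xi,\varphi[t,I^N_{\varphi,\xi}]$ for some $\xi\prec\min(\beta,\eta)$ with $\xi\in\mathcal H(\emptyset)$. The induction hypothesis transforms it into $\Delta^{\gamma,\delta},\Xi,\varphi[t,I^N_{\varphi,\xi}]$, where the added formula stays in~$\Xi$. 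Since $\xi\prec\beta\preceq\delta$, we may reapply ($\bigvee$) to $I^N_{\varphi,\delta}t$ with the same~$\xi$. In the conjunctive case, the premises are $\mathcal H[\xi],N\vdash^{\eta(\xi)}_\rho\Delta^{\alpha,\beta},\Xi,\neg\varphi[t,I^N_{\varphi,\xi}]$ for all $\xi\prec\alpha$. To apply ($\bigwedge$) to $\neg I^N_{\varphi,\gamma}t$, we need these premises only for $\xi\prec\gamma\preceq\alpha$, and those we have. We transform them by the induction hypothesis using the operator $\mathcal H[\xi]$, which is legitimate since $\gamma,\delta\in\mathcal H[\xi](\emptyset)$.

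A subtle point is that a formula in~$\Delta$ may be translated to the same formula as some other formula. Another subtlety is that $\Delta^{\alpha,\beta}$ may contain a formula that is treated as principal by a rule in which it could be read either as an element of~$\Xi$ or as a translated element of~$\Delta$. To handle this, we phrase the induction so that $\Xi$ may overlap with $\Delta^{\alpha,\beta}$. In each case we choose the reading in which the principal formula comes from~$\Delta$. Since sequents are sets and all rules retain the principal formula in their premises, this causes no problem. Finally, the height~$\eta$ and the cut rank~$\rho$ remain unchanged throughout.
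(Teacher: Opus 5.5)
Your proposal is correct and takes essentially the same route as the paper. Both argue by induction on~$\eta$ with a case distinction on the last clause, and the only nontrivial cases are ($\bigvee$) applied to $I^N_{\varphi,\beta}t$, reapplied to $I^N_{\varphi,\delta}t$ via $\beta\preceq\delta$, and ($\bigwedge$) applied to $\neg I^N_{\varphi,\alpha}t$, keeping only the premises with $\xi\prec\gamma\preceq\alpha$. You spell out the ($\bigwedge$) case and the persistence of $\gamma,\delta\in\mathcal H[\xi](\emptyset)$ more explicitly than the paper, which treats ($\bigvee$) in detail and calls ($\bigwedge$) similar.
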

\begin{proof}
    We argue by induction on~$\eta$. First note that we get
    \begin{equation*}
        k\left(\Delta^{\gamma,\delta}\right)\subseteq k\left(\Delta^{\alpha,\beta}\right)\cup\{\gamma,\delta\}\subseteq\mathcal H(\emptyset).
    \end{equation*}
    We now distinguish cases according to the clauses from Definition~\ref{def:H-proofs-plus}. Consider the case where clause ($\bigvee$) was used to introduce $\varphi\simeq\bigvee_{i\prec\iota(\varphi)}\varphi_i\in\Delta^{\alpha,\beta}$ with a premise
    \begin{equation*}
        \mathcal H,N\vdash^{\xi}_\rho\Delta^{\alpha,\beta},\varphi_i,\Xi,
    \end{equation*}
    where we have $i\prec\min(\iota(\varphi),\eta)$ with $i\in\mathcal H(\emptyset)$ and $\xi\prec\eta$. The sentence $\varphi$ is given as $\psi^{\alpha,\beta}$ for some~$\psi\in\Delta$. First assume that $\psi$ has the form~$I^N_\theta t$, so that we have $\varphi\equiv I^N_{\theta,\beta}t$ and hence $i\prec \iota(\varphi)=\beta$ as well as~$\varphi_i\equiv\theta[t,I^N_{\theta,i}]$. We inductively get
    \begin{equation*}
        \mathcal H,N\vdash^{\xi}_\rho\Delta^{\gamma,\delta},\theta[t,I^N_{\theta,i}],\Xi.
    \end{equation*}
    Observe that we have
    \begin{equation*}
        \psi^{\gamma,\delta}\equiv I^N_{\theta,\delta}t\simeq\textstyle\bigvee_{j\prec\delta}\theta[t,I^n_{\theta,j}]\in\Delta^{\gamma,\delta}.
    \end{equation*}
    In view of $i\prec\beta\preceq\delta=\iota(\psi^{\gamma,\delta})$, we can reapply~($\bigvee$) to get $\mathcal H,N\vdash^\eta_\rho\Delta^{\gamma,\delta},\Xi$.
    
    Still concerning~($\bigvee$) with $\varphi\equiv\psi^{\alpha,\beta}$, we now assume that $\psi$ is not of the form~$I^N_\theta t$. Given that $\psi^{\alpha,\beta}$ is disjunctive in the sense of Definition~\ref{def:L+-disjunctions}, this means that $\psi$ has the form $\psi_0\lor\psi_1$ or $\exists x\,\psi'$ or $I^0_{\theta,\tau} t$. In each of these cases, $\psi$ itself is disjunctive in the sense of Definition~\ref{def:disj} and we have
    \begin{equation*}
        \left(\psi^{\alpha,\beta}\right)_j\equiv(\psi_j)^{\alpha,\beta}\quad\text{for}\quad j\prec\iota\left(\psi^{\alpha,\beta}\right)=\iota(\psi).
    \end{equation*}
    Specifically for $\psi\equiv I^0_{\theta,\tau}t$ with $\psi_j\equiv\theta[t,I^0_{\theta,j}]$, we note that we have $(\psi_j)^{\alpha,\beta}\equiv\psi_j$, because $\theta$ is an $\mathcal L_0^X$-formula. In all relevant cases, we can now write $\psi_j^{\alpha,\beta}$ without parentheses (also with $\gamma,\delta$ at the place of~$\alpha,\beta$). Since $\varphi_i$ is $\psi_i^{\alpha,\beta}$, we inductively~get
    \begin{equation*}
        \mathcal H,N\vdash^{\xi}_\rho\Delta^{\gamma,\delta},\psi_i^{\gamma,\delta},\Xi.
    \end{equation*}
    In view of $\psi^{\gamma,\delta}\in\Delta^{\gamma,\delta}$, we can reapply~($\bigvee$) to conclude.

    The case where ($\bigwedge$) introduces a conjunctive formula in~$\Delta^{\alpha,\beta}$ is treated similarly. In any other case, it is straightforward to reduce to the induction hypothesis (as the formulas that are relevant for the inference are unaffected by the relativization).
\end{proof}

We can now examine in detail how the proof system from Definition~\ref{def:H-proofs} is rendered into the one from Definition~\ref{def:H-proofs-plus} using the asymmetric interpretation.

\begin{proposition}[$\mathsf{ATR}_0$]\label{prop:asym-int}
    For $\alpha\in\mathcal H(\emptyset)$ and any $\mathcal L_{N+1}^\Omega$-sequent~$\Delta$ with $N>0$, we have
    \begin{equation*}
        \mathcal H\vdash^\beta_{\Omega+\omega\cdot N+1}\Delta\qquad\Rightarrow\qquad \mathcal H,N\vdash^{\omega\cdot\gamma}_\gamma\Delta^{\alpha,\alpha+\omega^\beta}.
    \end{equation*}
    with $\gamma=\gamma(\alpha,\beta)=\Omega+\omega\cdot(N+\alpha+\omega^\beta)$.
\end{proposition}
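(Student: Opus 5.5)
We argue by induction on~$\beta$, simultaneously for all~$\alpha\in\mathcal H(\emptyset)$, and distinguish cases according to the last clause of Definition~\ref{def:H-proofs} that yields $\mathcal H\vdash^\beta_{\Omega+\omega\cdot N+1}\Delta$. We first secure the side condition $\{\omega\cdot\gamma\}\cup k(\Delta^{\alpha,\alpha+\omega^\beta})\subseteq\mathcal H(\emptyset)$. We have $\beta\in\mathcal H(\emptyset)$ and $\alpha\in\mathcal H(\emptyset)$, and $k(\Delta)\subseteq\mathcal H(\emptyset)$. The only new coefficients are $\alpha$ and $\alpha+\omega^\beta$, while $\gamma$ and $\omega\cdot\gamma$ arise by addition, $\varphi_0$, $\Omega=\Gamma_{\Omega+0}$ and $\omega\cdot(\cdot)$. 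So Remarks~\ref{rmk:omega*alpha-closure} and~\ref{rmk:H-closure} together with Lemma~\ref{lem:C-sets-closure}(d) give this condition.

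Most clauses are routine. For the premises $\mathcal H'\vdash^{\beta'}\Delta'$ with $\beta'\prec\beta$, the induction hypothesis with the same~$\alpha$ yields proofs of $\Delta'^{\alpha,\alpha+\omega^{\beta'}}$. Lemma~\ref{lem:asym-monotone} raises the positive index to $\alpha+\omega^\beta$, and we then reapply the same rule. The height $\omega\cdot\gamma(\alpha,\beta')$ is below $\omega\cdot\gamma(\alpha,\beta)$, so the heights fit. We use weakening (Lemma~\ref{lem:weakening}) for the cut rank and for the operators $\mathcal H[i]$.

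\begin{itemize}
\item For ($\bigwedge$), ($\bigvee$), ($\mathsf F^\Omega$), and ($\mathsf F^\pm$), ($\mathsf{Ax}$) with $n<N$, the relativization commutes with the relevant subformulas, exactly as noted in the proof of Lemma~\ref{lem:asym-monotone}.
\item Rule ($\mathsf F^+$) for $I^N_\theta t$ has premise $\Delta,\theta[t,I^N_\theta]$. This becomes $\Delta^{\alpha,\alpha+\omega^\beta},\theta[t,I^N_{\theta,\alpha+\omega^{\beta'}}]$, since $\theta$ is positive. We apply ($\bigvee$) to $I^N_{\theta,\alpha+\omega^\beta}t$ with index $\alpha+\omega^{\beta'}\prec\alpha+\omega^\beta$, which lies in $\mathcal H(\emptyset)$ and is below the height.
\item Rule ($\mathsf F^-$) for $\neg I^N_\theta t$ has premise $\Delta,\neg\theta[t,I^N_\theta]$. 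Here positive occurrences in $\neg\theta$ are negative $I^N$'s, and these are interpreted at~$\alpha$. We therefore need $\neg I^N_{\theta,\alpha}t\simeq\bigwedge_{\xi\prec\alpha}\neg\theta[t,I^N_{\theta,\xi}]$. For each $\xi\prec\alpha$ we use Lemma~\ref{lem:asym-monotone} to lower the negative index from $\alpha$ to~$\xi$, giving $\neg\theta[t,I^N_{\theta,\xi}]$, and then apply ($\bigwedge$) with operator $\mathcal H[\xi]$.
\item Axioms ($\mathsf{Ax}$) with $n=N$ require $\mathcal H,N\vdash\neg I^N_{\theta,\alpha}s, I^N_{\theta,\alpha+\omega^\beta}t$. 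We obtain this from the identity-type Lemma~\ref{lem:psi-neg-psi} in the new system, followed by Lemma~\ref{lem:asym-monotone}. The bound $\omega\cdot|I^N_{\theta,\alpha}s|\preceq\omega\cdot\gamma$ holds.
\end{itemize}

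The main obstacle is the cut rule. A cut formula $\psi$ of rank $\prec\Omega+\omega\cdot N+1$ is an $\mathcal L^\Omega_{N+1}$-formula and may contain $I^N$. Write $\beta_0,\beta_1\prec\beta$ for the heights of the premises $\Delta,\psi$ and $\Delta,\neg\psi$, and set $\alpha_1=\alpha+\omega^{\beta_0}$. We apply the induction hypothesis to $\Delta,\psi$ with parameter~$\alpha$, obtaining $\Delta^{\alpha,\alpha_1},\psi^{\alpha,\alpha_1}$. We apply it to $\Delta,\neg\psi$ with parameter $\alpha_1\in\mathcal H(\emptyset)$, obtaining $\Delta^{\alpha_1,\alpha_1+\omega^{\beta_1}},(\neg\psi)^{\alpha_1,\alpha_1+\omega^{\beta_1}}$. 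Since $(\neg\psi)^{\alpha_1,\cdot}$ is the negation of $\psi^{\cdot,\alpha_1}$, we use Lemma~\ref{lem:asym-monotone} to adjust $\Delta$ on both sides to $\Delta^{\alpha,\alpha+\omega^\beta}$. This needs $\alpha_1+\omega^{\beta_1}\preceq\alpha+\omega^\beta$, which follows from $\omega^{\beta_0}+\omega^{\beta_1}\prec\omega^\beta$. After this adjustment the two sequents contain $\psi^{\alpha,\alpha_1}$ and its negation, where the parameter~$\alpha$ only affects negative occurrences of~$I^N$ in $\psi$. These would have to match $\psi^{\alpha_1,\cdot}$ in the other premise. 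We handle this mismatch with the same lemma, choosing the indices so that the negative index in $\neg\psi$ equals the positive index in~$\psi$ (the standard asymmetric trick); we expect this to need more care with the roles of the indices. The cut formula $\psi^{\cdot,\alpha_1}$ has rank at most $\Omega+\omega\cdot(N+\alpha_1)+\omega\preceq\gamma$. Its rank is strictly below~$\gamma$ because $\alpha_1+1\preceq\alpha+\omega^\beta$ whenever $\beta\succ0$. Hence ($\mathsf{Cut}$) is admissible, and the heights $\omega\cdot\gamma(\cdot,\cdot)$ of the premises stay below $\omega\cdot\gamma$.
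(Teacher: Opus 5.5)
Your handling of the side conditions, ($\mathsf{Ax}$), ($\bigwedge$), ($\bigvee$), ($\mathsf F^\Omega$) and ($\mathsf F^\pm$) matches the paper. Your set-up for ($\mathsf{Cut}$) is also exactly the paper's: you apply the induction hypothesis with $\alpha$ to $\Delta,\psi$ and with $\alpha_1=\alpha+\omega^{\beta_0}$ to $\Delta,\neg\psi$. However, you leave the cut case unresolved, and the repair you suggest cannot work.

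Note that $(\neg\psi)^{a,b}\equiv\neg(\psi^{b,a})$. So your second premise contains $\neg(\psi^{\alpha_1+\omega^{\beta_1},\alpha_1})$, while your first contains $\psi^{\alpha,\alpha_1}$. Lemma~\ref{lem:asym-monotone} only lowers negative indices and raises positive ones. In the first premise, the negative occurrences of $I^N$ in $\psi$ sit at $\alpha$ and can only go down. In the second, they sit at $\alpha_1+\omega^{\beta_1}\succ\alpha$ and can only go up. Any other choice of parameters $p,q$ for the two applications of the induction hypothesis produces the same clash. If $\psi$ genuinely contained both positive and negative occurrences of $I^N$, the asymmetric interpretation would break down at this cut, and no ``index trick'' repairs it.

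The missing idea is that this situation cannot arise, and that this is exactly why the premise has cut rank $\Omega+\omega\cdot N+1$. The cut formula satisfies $|\psi|\preceq\Omega+\omega\cdot N$. By Definition~\ref{def:ranks-big-system}, a literal $(\neg)I^N_\theta t$ has rank exactly $\Omega+\omega\cdot N$, while every compound formula containing such a literal has rank at least $\Omega+\omega\cdot N+1$. Hence $\psi$ is either an $\mathcal L^\Omega_N$-sentence or a literal $I^N_\theta t$, after exchanging the roles of $\psi$ and $\neg\psi$ if necessary.

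In both cases $\psi^{\alpha,\alpha_1}$ depends only on its second index: it is $\psi$ itself or $I^N_{\theta,\alpha_1}t$. Likewise $(\neg\psi)^{\alpha_1,\alpha_1+\omega^{\beta_1}}$ depends only on its first index, so it is literally $\neg(\psi^{\alpha,\alpha_1})$. Its rank is at most $\Omega+\omega\cdot(N+\alpha_1)\prec\gamma$. After your adjustment of $\Delta$ by monotonicity, ($\mathsf{Cut}$) therefore applies. Your remark that $\psi$ ``may contain $I^N$'', without restriction, is what hides this point.
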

\begin{proof}
    We argue by induction on~$\beta$. First note that the premise of the desired implication ensures $\{\beta\}\cup k(\Delta)\subseteq\mathcal H(\emptyset)$. Together with the assumption~$\alpha\in\mathcal H(\emptyset)$, we get $k(\Delta^{\alpha,\alpha+\omega^\beta})\subseteq k(\Delta)\cup\{\alpha,\alpha+\omega^\beta\}\subseteq\mathcal H(\emptyset)$ and also $\gamma=\gamma(\alpha,\beta)\in\mathcal H(\emptyset)$.

    Let us now distinguish cases according to the clauses from Definition~\ref{def:H-proofs}. First assume that we have an instance of ($\mathsf{Ax}$), so that $\Delta$ contains formulas~$I^n_\varphi s$ and~$\neg I^n_\varphi t$ for terms~$s$ and $t$ with the same value. As $\Delta$ is an $\mathcal L_{N+1}^\Omega$-sequent, we have $n\leq N$. If the latter inequality is strict, the formulas $I^n_\varphi s$ and~$\neg I^n_\varphi t$ are still in~$\Delta^{\alpha,\alpha+\omega^\beta}$ and we can conclude by clause~($\mathsf{Ax}$) of Definition~\ref{def:H-proofs-plus}. Now assume that we have~$n=N$. The sequent~$\Delta^{\alpha,\alpha}$ then contains both~$I^N_{\varphi,\alpha}s$ and $\neg I^N_{\varphi,\alpha}t$. Analogous to Lemma~\ref{lem:psi-neg-psi}, we get
    \begin{equation*}
        \mathcal H,N\vdash^{\omega\cdot\delta}_0 I^N_{\varphi,\alpha}s,\neg I^N_{\varphi,\alpha}s\quad\text{with}\quad\delta=|I^N_{\varphi,\alpha}s|=\Omega+\omega\cdot(N+\alpha)\prec\gamma(\alpha,\beta)=:\gamma.
    \end{equation*}
    By the analogs of Lemma~\ref{lem:weakening} (weakening) and Remark~\ref{rmk:term-value-variant} (numerical variants), we arrive at~$\mathcal H,N\vdash^{\omega\cdot\gamma}_{\gamma}\Delta^{\alpha,\alpha}$. The claim follows by the monotonicity property from the previous lemma.

    Next, we consider the case where a sentence~$\varphi\simeq\bigwedge_{i\prec\iota(\varphi)}\varphi_i\in\Delta$ has been introduced by clause~($\bigwedge$). Here we have premises
    \begin{equation*}
        \mathcal H[i]\vdash^{\beta(i)}_{\Omega+\omega\cdot N+1}\Delta,\varphi_i
    \end{equation*}
    with $\beta(i)\prec\beta$ for~$i\prec\iota(\varphi)$. Writing $\gamma_i=\gamma(\alpha,\beta(i))$, we can use the induction hypothesis and monotonicity to get
    \begin{equation*}
        \mathcal H[i],N\vdash^{\omega\cdot\gamma_i}_\gamma\Delta^{\alpha,\alpha+\omega^\beta},(\varphi_i)^{\alpha,\alpha+\omega^\beta}.
    \end{equation*}
    Given that $\varphi$ is conjunctive in the sense of Definition~\ref{def:disj}, we see that $(\varphi_i)^{\alpha,\delta}$ coincides with $(\varphi^\delta)_i$ for all~$i\prec\iota(\varphi)=\iota(\varphi^{\alpha,\delta})$, as in the previous proof. We can thus reapply~($\bigwedge$) to obtain the claim. An analogous argument applies to clause~($\bigvee$).

    We now consider an application of~($\mathsf{Cut}$) with premises
    \begin{equation*}
        \mathcal H\vdash^{\beta(0)}_{\Omega+\omega\cdot N+1}\Delta,\psi\quad\text{and}\quad\mathcal H\vdash^{\beta(1)}_{\Omega+\omega\cdot N+1}\Delta,\neg\psi
    \end{equation*}
    for $\beta(i)\prec\beta$, where $\psi$ is an $\mathcal L_\omega^\Omega$-sentence of rank $|\psi|\prec\Omega+\omega\cdot N+1$. The latter ensures that $\psi$ is an $\mathcal L_{N+1}^\Omega$-sentence, so that the induction hypothesis applies. More precisely, $\psi$ is either an $\mathcal L_N^\Omega$-sentence or of the form~$I^N_\varphi t$ (or $\neg I^N_\varphi t$ but then we swap with~$\neg\psi$). Write $\gamma=\gamma(\alpha,\beta)$ as well as
    \begin{equation*}
        \gamma_0=\gamma(\alpha,\beta(0))\quad\text{and}\quad\gamma_1=\gamma(\overline\alpha,\beta(1))\quad\text{with}\quad\overline\alpha=\alpha+\omega^{\beta(0)},
    \end{equation*}
    where $\overline\alpha+\omega^{\beta(1)}\prec\alpha+\omega^\beta$ entails~$\gamma_i\prec\gamma$. Using the induction hypothesis (for $\alpha$ and $\overline\alpha$) in conjunction with monotonicity, we get
    \begin{equation*}
        \mathcal H,N\vdash^{\omega\cdot\gamma_0}_\gamma\Delta^{\alpha,\alpha+\omega^\beta},\psi^{\alpha,\overline\alpha}\quad\text{and}\quad\mathcal H,N\vdash^{\omega\cdot\gamma_1}_\gamma\Delta^{\alpha,\alpha+\omega^\beta},(\neg\psi)^{\overline\alpha,\alpha+\omega^\beta}.
    \end{equation*}
    Now $\psi^{\alpha,\overline\alpha}$ depends only on the second superscript. Indeed, we have $\psi^{\alpha,\overline\alpha}\equiv\psi$ when $\psi$ is an $\mathcal L_N^\Omega$-sentence and furthermore~$\psi^{\alpha,\overline\alpha}\equiv I^N_{\varphi,\overline\alpha}t$ for $\psi\equiv I^N_\varphi t$, which also shows
    \begin{equation*}
        |\psi^{\alpha,\overline\alpha}|\preceq\Omega+\omega\cdot(N+\overline\alpha)\prec\gamma.
    \end{equation*}
    Similarly, $(\neg\psi)^{\overline\alpha,\alpha+\omega^\beta}$ depends only on the first superscript, so that we have
    \begin{equation*}
        (\neg\psi)^{\overline\alpha,\alpha+\omega^\beta}\equiv\neg\left(\psi^{\alpha,\overline\alpha}\right).
    \end{equation*}
    We thus get $\mathcal H,N\vdash^{\omega\cdot\gamma}_\gamma\Delta^{\alpha,\alpha+\omega^\beta}$ by~($\mathsf{Cut}$). We emphasize that the whole point of the cut rank~$\Omega+\omega\cdot n+1$ in the premise was to ensure that $\psi$ cannot contain both positive and negative occurrences of predicates~$I^N_\varphi t$.

    In the case of clause~($F^\Omega$), one readily reduces to the induction hypothesis, as the conclusion $I^0_{\varphi,\Omega}t$ and the premise~$\varphi[t,I^0_{\varphi,\Omega}]$ are unaffected by the relativization. The same applies to applications of~($\mathsf F^+$) and~($\mathsf F^+$) to formulas~$(\neg)I^n_\varphi t$ with $n<N$. Now consider the case where~($\mathsf F^+$) was used to introduce~$I^N_\varphi t\in\Delta$ with premise
    \begin{equation*}
    \mathcal H\vdash^{\overline\beta}_{\Omega+\omega\cdot N+1}\Delta,\varphi[t,I^N_\varphi]
    \end{equation*}
    for~$\overline\beta\prec\beta$. Given that $\varphi$ is a positive~$\mathcal L_N^X$-formula, the induction hypothesis and monotonicity show that $\overline\gamma=\gamma(\alpha,\overline\beta)$ validates
    \begin{equation*}
        \mathcal H,N\vdash^{\omega\cdot\overline\gamma}_\gamma\Delta^{\alpha,\alpha+\omega^\beta},\varphi\left[t,I^N_{\varphi,\alpha+\omega^{\overline\beta}}\right].
    \end{equation*}
    In view of $\alpha+\omega^{\overline\beta}\prec\alpha+\omega^\beta\preceq\omega\cdot\gamma$ and
    \begin{equation*}
        I^N_{\varphi,\alpha+\omega^\beta}t\simeq\textstyle\bigvee_{i\prec\alpha+\omega^\beta}\varphi[t,I^N_{\varphi,i}]\in\Delta^{\alpha,\alpha+\omega^\beta},
    \end{equation*}
    we can apply~($\bigvee$) to conclude.

    Finally, we consider an application of~($\mathsf F^-$) to~$\neg I_{\varphi}^N t\in\Delta$ that has premise
    \begin{equation*}
    \mathcal H\vdash^{\overline\beta}_{\Omega+\omega\cdot N+1}\Delta,\neg\varphi[t,I^N_\varphi]
    \end{equation*}
    with~$\overline\beta\prec\beta$. For $\overline\gamma$ as above, we inductively get
    \begin{equation*}
        \mathcal H,N\vdash^{\omega\cdot\overline\gamma}_\gamma\Delta^{\alpha,\alpha+\omega^\beta},\neg\varphi[t,I^N_{\varphi,\alpha}].
    \end{equation*}
    For any~$i\prec\alpha$, we can use monotonicity (after weakening~$\mathcal H$ to~$\mathcal H[i]$) in order to get
    \begin{equation*}
        \mathcal H[i],N\vdash^{\omega\cdot\overline\gamma}_\gamma\Delta^{\alpha,\alpha+\omega^\beta},\neg\varphi[t,I^N_{\varphi,i}].
    \end{equation*}
    In view of
    \begin{equation*}
        \neg I^N_{\varphi,\alpha}t\simeq\textstyle\bigwedge_{i\prec\alpha}\neg\varphi[t,I^N_{\varphi,i}]\in\Delta^{\alpha,\alpha+\omega^\beta},
    \end{equation*}
    we can conclude by an application of~($\bigwedge$).
\end{proof}

In particular, we obtain the following conservativity result.

\begin{corollary}[$\mathsf{ATR}_0$]\label{cor:asymmetric}
    For any $\mathcal L_N^\Omega$-sequent~$\Delta$ with~$N>0$, we have
    \begin{equation*}
        \mathcal H\vdash^\beta_{\Omega+\omega\cdot N+1}\Delta\qquad\Rightarrow\qquad\mathcal H,N\vdash^{\omega\cdot\gamma}_\gamma\Delta\quad\text{for}\quad\gamma=\Omega+\omega\cdot N+\omega^{1+\beta}.
    \end{equation*}
\end{corollary}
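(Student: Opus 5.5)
The plan is to specialise Proposition~\ref{prop:asym-int} to $\alpha=0$ and then simplify. First, $0\in\mathcal H(\emptyset)$ holds because every value of an operator enjoys the closure properties of Lemma~\ref{lem:C-sets-closure} (Remark~\ref{rmk:H-closure}), and part~(a) of that lemma gives $0\in C(\delta)$. So the proposition applies and yields
\begin{equation*}
    \mathcal H,N\vdash^{\omega\cdot\gamma'}_{\gamma'}\Delta^{0,\omega^\beta}\quad\text{with}\quad\gamma'=\gamma(0,\beta)=\Omega+\omega\cdot(N+\omega^\beta).
\end{equation*}

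Next, since $\Delta$ is an $\mathcal L_N^\Omega$-sequent, no predicate $I^N_\varphi$ occurs in it. By Definition~\ref{def:asymmetric-interpretation}, the relativisation therefore leaves every formula unchanged (a straightforward induction on formulas), so $\Delta^{0,\omega^\beta}=\Delta$ as sequents.

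It then remains to identify $\gamma'$ with the stated $\gamma=\Omega+\omega\cdot N+\omega^{1+\beta}$. Using Definition~\ref{def:Gamma-operations}, I would compute $N+\omega^\beta$ in normal form. If $\beta\succ0$, the summand $\varphi_0(\beta)$ absorbs the finite part, so $N+\omega^\beta=\omega^\beta$. Multiplying by $\omega$ then gives $\omega\cdot\omega^\beta=\omega^{1+\beta}$. If $\beta=0$, we get $N+1$, and $\omega\cdot(N+1)=\omega\cdot N+\omega=\omega\cdot N+\omega^{1+0}$. For $\beta\succ0$ the stated bound is at least as large as the computed one, since $\Omega+\omega^{1+\beta}\preceq\Omega+\omega\cdot N+\omega^{1+\beta}$.

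In every case we get $\gamma'\preceq\gamma$. Moreover $\gamma\in\mathcal H(\emptyset)$, because $\beta\in\mathcal H(\emptyset)$ is guaranteed by the premise and operators are closed under $+$, $\varphi_0$ and the constant $\Omega$ (Lemma~\ref{lem:C-sets-closure}(d)); the same closure gives $\omega\cdot\gamma\in\mathcal H(\emptyset)$. The analogue of weakening (Lemma~\ref{lem:weakening}) for the system of Definition~\ref{def:H-proofs-plus} then raises both the height $\omega\cdot\gamma'$ and the cut rank $\gamma'$ to $\omega\cdot\gamma$ and $\gamma$.

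The only mildly delicate step is this ordinal bookkeeping: checking that $\gamma'\preceq\gamma$ with the paper's concrete addition and multiplication, and that $\omega\cdot\gamma'\preceq\omega\cdot\gamma$, which needs monotonicity of $\omega\cdot(\cdot)$. Everything else is immediate from the proposition.
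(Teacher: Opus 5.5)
Your proposal is correct and follows the paper's proof: you specialise Proposition~\ref{prop:asym-int} to $\alpha=0$ and observe that $\Delta^{0,\omega^\beta}=\Delta$, because no predicate $I^N_\varphi$ occurs in $\Delta$. Your case analysis in fact shows that $\gamma(0,\beta)$ and $\Omega+\omega\cdot N+\omega^{1+\beta}$ are the same term: for $\beta\succ 0$, the summands of $\omega\cdot N$ are absorbed by $\omega^{1+\beta}$ under the paper's addition. So your final weakening step is harmless but unnecessary.
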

\begin{proof}
    Apply the previous result with $\alpha=0$ and note that $\Delta$ is unaffected by the asymmetric interpretation, given that it contains predicates~$I^n_\varphi$ for $n<N$ only.
\end{proof}

In order to match the conclusion of the corollary with the premise of Lemma~\ref{lem:Vdash-conservative}, we continue with the following result on predicative cut elimination.

\begin{proposition}[$\mathsf{ATR}_0$]\label{prop:pred-cut-elim}
    For $N>0$ and any $\alpha,\beta\in\mathcal H(\emptyset)$, we have
    \begin{equation*}
        \mathcal H,N\vdash^\gamma_{\Omega+\omega\cdot N+\alpha+\omega^\beta}\Delta\quad\Rightarrow\quad\mathcal H,N\vdash^{\varphi_\beta(\gamma)}_{\Omega+\omega\cdot N+\alpha}\Delta.
    \end{equation*}
\end{proposition}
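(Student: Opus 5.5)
This is the standard predicative cut elimination via Veblen functions, carried out in the intermediate system $\mathcal H,N\vdash$. I would argue by main induction on $\beta$ and side induction on $\gamma$, distinguishing cases according to the clauses of Definition~\ref{def:H-proofs-plus}.

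\textbf{Preliminaries.} First I would establish the analogues of the inversion lemma and the reduction lemma from the proof of Lemma~\ref{lem:basic-cut-elim} for the system $\mathcal H,N\vdash$.
\begin{itemize}
\item Inversion should hold for every conjunctive $\mathcal L_N^+$-sentence, now including $\neg I^N_{\varphi,\alpha}t$.
\item Reduction should hold for every disjunctive $\varphi$ whose rank is not of the form $\Omega+\omega\cdot n$ with $n<N$. This excludes $I^0_{\varphi,\Omega}t$ (rank $\Omega$) and the predicates handled by $(\mathsf F^\pm)$.
\end{itemize}
The crucial observation is that in the intermediate system, every sentence of rank $\succeq\Omega+\omega\cdot N$ is either disjunctive or conjunctive. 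For the formulas $I^N_{\varphi,\alpha}t$, the preceding lemma supplies $|\psi_i|_N\prec|\psi|_N$. Hence reduction applies to every cut formula $\psi$ with $\Omega+\omega\cdot N\preceq|\psi|$. As in Lemma~\ref{lem:basic-cut-elim}, this yields the one-step estimate: for any $\rho\succeq\Omega+\omega\cdot N$,
\begin{equation*}
\mathcal H,N\vdash^\gamma_{\rho+1}\Delta\ \Rightarrow\ \mathcal H,N\vdash^{\omega^\gamma}_\rho\Delta.
\end{equation*}
This settles the case $\beta=0$, since $\varphi_0(\gamma)=\omega^\gamma$. The side conditions $\{\omega^\gamma\}\cup k(\Delta)\subseteq\mathcal H(\emptyset)$ follow from Remark~\ref{rmk:H-closure}.

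\textbf{Induction step.} For general $\beta$ I argue by induction on $\gamma$. All rules other than (Cut) go through directly by the induction hypothesis on $\gamma$. This uses Proposition~\ref{prop:Veblen-order}(b), which gives $\varphi_\beta(\gamma')\prec\varphi_\beta(\gamma)$ for $\gamma'\prec\gamma$. For ($\bigvee$) it also uses $i\prec\gamma\preceq\varphi_\beta(\gamma)$ (part~(a)). Weakening to $\mathcal H[i]$ in ($\bigwedge$) is harmless.

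For a cut with premises of heights $\gamma_0,\gamma_1\prec\gamma$ and cut formula $\psi$ of rank $|\psi|\prec\Omega+\omega\cdot N+\alpha+\omega^\beta$, I proceed as follows.
\begin{enumerate}
\item The side induction gives both premises at height $\varphi_\beta(\gamma_i)$ with cut rank $\Omega+\omega\cdot N+\alpha$.
\item If $|\psi|\prec\Omega+\omega\cdot N+\alpha$, I reapply (Cut) directly.
\item Otherwise, write $|\psi|\prec\Omega+\omega\cdot N+\alpha+\omega^{\beta_0}\cdot n$ for some $\beta_0\prec\beta$ and $n\prec\omega$. Here $\beta_0$ should be chosen in $\mathcal H(\emptyset)$, e.g. an exponent from the Cantor normal form of the relevant part of $|\psi|$, using $k(\psi)\subseteq\mathcal H(\emptyset)$ and the closure properties.
\item Cut at the higher rank to get height $\eta=\varphi_\beta(\max(\gamma_0,\gamma_1))+1$ and cut rank $\Omega+\omega\cdot N+\alpha+\omega^{\beta_0}\cdot n$.
\item Apply the main induction hypothesis for $\beta_0$ exactly $n$ times, lowering $\alpha+\omega^{\beta_0}\cdot(k+1)$ to $\alpha+\omega^{\beta_0}\cdot k$. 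This gives height $\varphi_{\beta_0}^{(n)}(\eta)$.
\item Since $\beta_0\prec\beta$ and $\eta\prec\varphi_\beta(\gamma)$, Proposition~\ref{prop:Veblen-order}(c) gives $\varphi_{\beta_0}(\xi)\prec\varphi_\beta(\gamma)$ whenever $\xi\prec\varphi_\beta(\gamma)$. Iterating keeps the height below $\varphi_\beta(\gamma)$, and weakening finishes the step.
\end{enumerate}

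\textbf{Main obstacle.} The delicate point is the bookkeeping of ordinal arithmetic in $\Gamma_{\Omega+Y}$. I need a decomposition $\alpha+\omega^\beta=\alpha+\omega^{\beta_0}\cdot n+\dots$ such that the rank of $\psi$ is bounded as described, and such that the intermediate cut ranks stay of the required form $\Omega+\omega\cdot N+\alpha'+\omega^{\beta_0}$ with $\alpha',\beta_0\in\mathcal H(\emptyset)$. The decomposition must also be compatible with the non-absorbing addition of Definition~\ref{def:Gamma-operations}, for instance when $\alpha$ has a trailing summand smaller than $\omega^\beta$.

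I would handle this by first reducing to the case where $\alpha+\omega^\beta$ is in normal form, which can be arranged by weakening the cut rank. I would then take $\beta_0$ minimal with $|\psi|\prec\Omega+\omega\cdot N+\alpha+\omega^{\beta_0}\cdot n$ for some $n$. Membership $\beta_0\in\mathcal H(\emptyset)$ follows because $\beta_0$ is read off from $|\psi|$, $\alpha$ and $\beta$, all of which lie in $\mathcal H(\emptyset)$.
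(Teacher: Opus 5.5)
Your proposal is correct and follows the paper's proof: main induction on $\beta$ and side induction on $\gamma$. The case $\beta=0$ is the one-step estimate $\mathcal H,N\vdash^\gamma_{\rho+1}\Delta\Rightarrow\mathcal H,N\vdash^{\omega^\gamma}_\rho\Delta$ for $\rho\succeq\Omega+\omega\cdot N$, proved via inversion and reduction. The cut case uses the main hypothesis repeatedly for exponents below $\beta$, together with Proposition~\ref{prop:Veblen-order}(c).

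The only difference is bookkeeping. The paper cuts at rank $|\psi|+1$, applies the one-step estimate once, and then removes the Cantor normal form summands $\omega^{\beta_n},\ldots,\omega^{\beta_1}$ of $|\psi|$ one by one. You instead cut directly at rank $\Omega+\omega\cdot N+\alpha+\omega^{\beta_0}\cdot n$ and apply the hypothesis for the single exponent $\beta_0$ exactly $n$ times, which works equally well. Also, the addition of Definition~\ref{def:Gamma-operations} is the ordinary absorbing, associative ordinal addition, so your preliminary normal-form reduction is harmless but unnecessary.
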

\begin{proof}
    Analogous to Lemma~\ref{lem:basic-cut-elim}, we have
    \begin{equation*}
    \mathcal H,N\vdash^\gamma_{\rho+1}\Delta\quad\Rightarrow\quad\mathcal H,N\vdash^{\omega^\gamma}_{\rho}\Delta
    \end{equation*}
    for any~$\rho\succeq\Omega+\omega\cdot N$. To explain the latter condition, we note that $\Omega+\omega\cdot N$ bounds the rank of formulas that are the subject of clauses~($F^\Omega$) and ($\mathsf F^+$) and~($\mathsf F^-$) from Definition~\ref{def:H-proofs-plus}. Formally, one derives the claim by induction on~$\gamma$, based on inversion and cut reduction results as in the proof of Lemma~\ref{lem:basic-cut-elim}.

    In order to derive the proposition, one now generalizes the proof of Corollary~\ref{cor:cut-elim-eps}. Specifically, one argues by main induction on~$\beta$ and side induction on~$\gamma$ (or by a single induction on~$(\beta,\gamma)$ with respect to the lexicographic order). First note that we get $\varphi_\beta(\gamma)\in\mathcal H(\emptyset)$ due to Remark~\ref{rmk:H-closure}. We now distinguish cases according to the clauses from Definition~\ref{def:H-proofs-plus}. In the crucial case, we have an application of~($\mathsf{Cut}$) to an~$\mathcal L_N^+$-sentence~$\psi$ of rank $|\psi|\prec\Omega+\omega\cdot N+\alpha+\omega^\beta$. Given that the premises of the cut are derived with heights~$\gamma_i\prec\gamma$, the (side) induction hypothesis yields
    \begin{equation*}
        \mathcal H,N\vdash^{\varphi_\beta(\gamma_0)}_{\Omega+\omega\cdot N+\alpha}\Delta,\psi\quad\text{and}\quad\mathcal H,N\vdash^{\varphi_\beta(\gamma_1)}_{\Omega+\omega\cdot N+\alpha}\Delta,\neg\psi.
    \end{equation*}
    If we have $|\psi|\prec\Omega+\omega\cdot N+\alpha$, an application of ($\mathsf{Cut}$) yields the desired result. Otherwise, it at least gives
    \begin{equation*}
        \mathcal H,N\vdash^{\varphi_\beta(\overline\gamma)+1}_{|\psi|+1}\Delta\quad\text{with}\quad\overline\gamma=\max(\gamma_0,\gamma_1)\prec\gamma.
    \end{equation*}
    By the first paragraph of this proof (which treats the case~$\beta=0$ with $\varphi_\beta(\delta)=\omega^\delta$), we obtain
    \begin{equation*}
        \mathcal H,N\vdash^{\delta_0}_{|\psi|}\Delta\quad\text{with}\quad\delta_0=\varphi_0(\varphi_\beta(\overline\gamma)+1).
    \end{equation*}
    Let us now write
    \begin{equation*}
        |\psi|=\Omega+\omega\cdot N+\alpha+\omega^{\beta_n}+\ldots+\omega^{\beta_1}\quad\text{with}\quad\beta\succ\beta_n\succeq\ldots\succeq\beta_1.
    \end{equation*}
    Due to~$|\psi|\in\mathcal H(k(\psi))=\mathcal H(\emptyset)$, we can use Remark~\ref{rmk:H-closure} to get $\beta_1,\ldots,\beta_n\in\mathcal H(\emptyset)$. By iterative applications of the (main) induction hypothesis (with shorter and shorter sums $\alpha+\omega^{\beta_n}+\ldots+\omega^{\beta_i}$ at the place of~$\alpha$), we thus arrive at
    \begin{equation*}
        \mathcal H,N\vdash^{\delta_n}_{\Omega+\omega\cdot N+\alpha}\Delta\quad\text{with}\quad\delta_n=\varphi_{\beta_n}(\ldots\varphi_{\beta_1}(\delta_0)\ldots).
    \end{equation*}
    More precisely, the proof height is given by the recursive clause $\delta_k=\varphi_{\beta_k}(\delta_{k-1})$ for~$0<k\leq n$. To conclude, we prove~$\delta_k\prec\varphi_\beta(\gamma)$ by induction. Here we may assume~$\beta\succ 0$. The latter ensures that $\overline\gamma\prec\gamma$ entails not only~$\varphi_\beta(\overline\gamma)+1\prec\varphi_\beta(\gamma)$ but then by Proposition~\ref{prop:Veblen-order} also $\delta_0\prec\varphi_\beta(\gamma)$, as needed for the base case. By the same proposition, $\beta_k\prec\beta$ and $\delta_{k-1}\prec\varphi_\beta(\gamma)$ entail~$\delta_k\prec\varphi_\beta(\gamma)$, which accounts for the induction step.    
\end{proof}

We can now derive the theorem that was stated at the beginning of this section. Let us recall that it asserts
\begin{equation*}
\mathcal H\vdash^\gamma_{\Omega+\omega\cdot(N+1)}\Delta\quad\Rightarrow\quad\mathcal H\vdash^{\varphi_\delta(\delta)}_{\Omega+\omega\cdot N}\Delta
\end{equation*}
for $\delta=\varepsilon_{\Omega+\gamma}$, where $\Delta$ can be any~$\mathcal L_N^\Omega$-sequent with~$N>0$.

\begin{proof}[Proof of Theorem~\ref{thm:hat-elim}]
If we have $\gamma=0$, the proof in the premise cannot involve the rule~($\mathsf{Cut}$), so that the result is immediate. We now assume~$\gamma\succ0$. Crucially, this entails $\Omega+\omega\cdot N+\omega^{1+\delta}=\delta$. We get
\begin{alignat*}{3}
    \mathcal H\vdash^\gamma_{\Omega+\omega\cdot(N+1)}\Delta\quad&\Rightarrow\quad \mathcal H\vdash^\delta_{\Omega+\omega\cdot N+1}\Delta\quad&&\Rightarrow\quad\mathcal H,N\vdash^\delta_{\Omega+\omega\cdot N+\omega^\delta}\Delta\\
    {}&\Rightarrow\quad\mathcal H,N\vdash^{\varphi_\delta(\delta)}_{\Omega+\omega\cdot N}\Delta\quad&&\Rightarrow\quad\mathcal H\vdash^{\varphi_\delta(\delta)}_{\Omega+\omega\cdot N}\Delta,
\end{alignat*}
where the implications hold by Corollaries~\ref{cor:cut-elim-eps} and~\ref{cor:asymmetric}, Proposition~\ref{prop:pred-cut-elim} and Lemma~\ref{lem:Vdash-conservative}, respectively.
\end{proof}

Finally, we derive the improved embedding result that was promised at the end of the previous section. The premise of the following theorem refers to the proof system from Definition~\ref{def:inf-proofs}, where $x$ comes from the order~$X$ that was fixed in Assumption~\ref{as:Gamma-Omega-X}. In view of the latter, we get an element $\Gamma_{\Omega+1+x}$ of the order~$\Gamma_{\Omega+Y}$ (recall $Y=1+X$), which may thus appear in the conclusion (see Definition~\ref{def:H-proofs}).

\begin{theorem}[$\mathsf{ATR}_0$]
    For any $\mathcal L_n$-sequent~$\Delta$ with $n>0$ and any operator~$\mathcal H$, we have
    \begin{equation*}
        \vdash^x\Delta\quad\Rightarrow\quad\mathcal H\vdash^{\Gamma_{\Omega+1+x}}_{\Omega+\omega\cdot n}\Delta.
    \end{equation*}
\end{theorem}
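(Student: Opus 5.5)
The plan is to first prove an embedding with an unrestricted cut rank, and then bring the cut rank down to $\Omega+\omega\cdot n$ using Theorem~\ref{thm:hat-elim}.

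\emph{Step 1: raw embedding.} Fix $N$ large enough that every cut formula in the given derivation is an $\mathcal L_N$-sentence. The derivation $\vdash^x\Delta$ may cut over arbitrary $\mathcal L_\omega$-sentences, so such an $N$ need not exist in general. To avoid this, I would let the cut rank depend on the derivation through its height and prove, by induction on $x\in X$, that $\vdash^x\Delta$ implies
\[
\mathcal H\vdash^{\Omega\cdot 3+\omega\cdot(1+x)}_{\Gamma_{\Omega+1+x}}\Delta .
\]
Here $\Omega\cdot3+\omega\cdot(1+x)$ is read inside $\Gamma_{\Omega+Y}$ by identifying $x$ with $\Gamma_{\Omega+1+x}$; any height term built from $\Omega$ and $\Gamma_{\Omega+1+x}$ will do. All such terms lie in $\mathcal H(\emptyset)$ by Lemma~\ref{lem:C-sets-closure}(d). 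Every formula of $\mathcal L_\omega$ has coefficients in $\{\Omega\}$ and rank $\prec\Omega+\omega^2\prec\Gamma_{\Omega+1+x}$, so the cut rank is never an obstacle. The cases of Definition~\ref{def:inf-proofs} go as follows. Clause~(i) is handled by Propositions~\ref{prop:proof-F} and~\ref{prop:proof-L}, by Lemma~\ref{lem:F-equiv} for the $n=0$ direction (or simply because the axiom is an instance), and for $\overline Qt$ and true literals by the empty-disjunction convention: their negations are empty disjunctions, so they are empty conjunctions and follow by $(\bigwedge)$. Clause~(ii) uses $(\mathsf{Ax})$ for $n>0$, and Lemma~\ref{lem:psi-neg-psi} together with Remark~\ref{rmk:term-value-variant} for $n=0$. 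The logical clauses map to $(\bigwedge)$, $(\bigvee)$ and $(\mathsf{Cut})$ using weakening, with heights increasing along $<_X$. Note that $\mathcal H[i]=\mathcal H$ for numerals $i$, since $i\in\mathcal H(\emptyset)$.

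\emph{Step 2: cut elimination.} The rank $\Gamma_{\Omega+1+x}$ is too large for Theorem~\ref{thm:hat-elim}, so I would redo Step~1 more carefully. Because the proofs here are infinite, I would instead use the order of the ranks directly. Every cut formula $\psi$ in an $\mathcal L_\omega$-derivation has $|\psi|\prec\Omega+\omega^2$. Lemma~\ref{lem:basic-cut-elim} fails exactly at ranks $\Omega+\omega\cdot m$. I would therefore aim to prove, again by induction on $x$, the statement simultaneously for all $n$: for every $\mathcal L_n$-sequent $\Delta$ with $\vdash^x\Delta$,
\[
\mathcal H\vdash^{\Gamma_{\Omega+1+x}}_{\Omega+\omega\cdot n}\Delta .
\]
The only problematic clause is the cut of clause~(vii) over some $\varphi\in\mathcal L_m$ with $m\ge n$. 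The induction hypothesis, applied with $m+1$ in place of $n$, gives both premises with heights $\Gamma_{\Omega+1+x(i)}$ and cut rank $\Omega+\omega\cdot(m+1)$. Cutting then yields height $\eta+1$, where $\eta=\max_i\Gamma_{\Omega+1+x(i)}$, at rank $\Omega+\omega\cdot(m+2)$. Applying Theorem~\ref{thm:hat-elim} $(m+2-n)$ times lowers the rank to $\Omega+\omega\cdot n$; this application is legitimate because $\Delta$ is an $\mathcal L_n$-sequent. The height after the $k$-th step is of the form $\varphi_{\delta_k}(\delta_k)$ with $\delta_k=\varepsilon_{\Omega+\text{previous}}$.

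The key closure fact is that $\Gamma_{\Omega+1+x}$ is closed under $\xi\mapsto\varphi_{\varepsilon_{\Omega+\xi}}(\varepsilon_{\Omega+\xi})$ for $\xi\prec\Gamma_{\Omega+1+x}$. This holds because $\Omega\prec\Gamma_{\Omega+1+x}$ and by clause~(iii') of Definition~\ref{def:Gamma}: if $\alpha,\gamma\prec\Gamma_y$ then $\overline\varphi_\alpha(\gamma)\prec\Gamma_y$, and $\Gamma_y$ is additively principal. Since $x(i)<_Xx$ gives $\Gamma_{\Omega+1+x(i)}\prec\Gamma_{\Omega+1+x}$, finitely many iterations stay below $\Gamma_{\Omega+1+x}$, and weakening (Lemma~\ref{lem:weakening}) finishes the case. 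The same bound handles clauses (i) through (vi), because Propositions~\ref{prop:proof-F} and~\ref{prop:proof-L} have cut rank $0$ and height $\prec\Omega\cdot3\prec\Gamma_{\Omega+1+x}$.

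I expect the main obstacle to be the side conditions rather than the ordinal arithmetic. First, the heights must be in $\mathcal H(\emptyset)$, which holds by Remark~\ref{rmk:H-closure} and Lemma~\ref{lem:C-sets-closure}(d). Second, subformula sequents of an $\mathcal L_n$-sequent may lie in $\mathcal L_m$ for larger $m$. This is exactly why the induction must quantify over all $n$ simultaneously. The induction statement remains arithmetical, so it is available in $\mathsf{ATR}_0$ as in the earlier lemmas.
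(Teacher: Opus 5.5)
Your Step~2 is correct and is essentially the paper's proof. It argues by induction on $x$ with the claim quantified over all $n$, handles the cut of clause~(vii) by the induction hypothesis at a higher level, a cut, and finitely many applications of Theorem~\ref{thm:hat-elim}, and keeps the heights below $\Gamma_{\Omega+1+x}$ via clause~(iii') of Definition~\ref{def:Gamma}. You can simply drop Step~1. Note also that only clause~(vii) can take premises outside $\mathcal L_n$, since clauses (iii)--(vi) stay inside it, and you could apply the induction hypothesis at level $m$ rather than $m+1$, although the extra level is harmless by weakening.
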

\begin{proof}
    Let us first note that we have $\Gamma_{\Omega+1+x}\in\mathcal H(\emptyset)$ by Remark~\ref{rmk:H-closure}. We also have $k(\Delta)\subseteq\{\Omega\}$, where $\Omega\in k(\Delta)$ is possible due to the identification of~$I^0_\varphi$ with $I^0_{\varphi,\Omega}$. In view of $\Omega=\Gamma_{\Omega+0}$ (see the paragraph after Assumption~\ref{as:Gamma-Omega-X}), we get $k(\Delta)\subseteq\mathcal H(\emptyset)$.
    
    We now argue by induction on~$x$ and distinguish cases according to the clauses from Definition~\ref{def:inf-proofs}. Concerning clause~(i), we note that formulas~$\overline Qt$ and true literals of first order arithmetic are empty conjunctions (see Definition~\ref{def:disj}), which can be derived by clause~($\bigwedge$) of Definition~\ref{def:H-proofs}. Instances of the axiom~($\mathsf F$) and ($\mathsf L$) are covered by Propositions~\ref{prop:proof-F} and~\ref{prop:proof-L} (noting $\Omega=\Gamma_{\Omega+0}\prec\Gamma_{\Omega+1+x}$).

    In clause~(ii) of Definition~\ref{def:inf-proofs}, the sequent~$\Delta$ contains sentences~$I^n_\varphi s$ and~$\neg I^n_\varphi t$ for terms~$s$ and~$t$ with the same value. We can conclude by Lemma~\ref{lem:psi-neg-psi} and Remark~\ref{rmk:term-value-variant} (or for~$n>0$ directly by clause~($\mathsf{Ax}$) of Definition~\ref{def:H-proofs}). Clauses~(iii) and~(v) of Definition~\ref{def:inf-proofs} (conjunction and universal quantifier) translate into~($\bigwedge$) of Definition~\ref{def:H-proofs}. Furthermore, clauses~(iv) and~(vi) (disjunction and existential quantifier) translate into clause~($\bigvee$).

    Finally, clause~(vii) of Definition~\ref{def:inf-proofs} concerns a cut with premises
    \begin{equation*}
        \vdash^{x(0)}\Delta,\varphi\quad\text{and}\quad\vdash^{x(1)}\Delta,\neg\varphi
    \end{equation*}
    for $x(i)<_Xx$. Here $\varphi$ can be any $\mathcal L_\omega$-sentence. We choose~$N\prec\omega$ so large that $\varphi$ is an $\mathcal L_{n+N}$-sentence. By the induction hypothesis and a cut, we obtain
    \begin{equation*}
        \mathcal H\vdash^{\delta(0)}_{\Omega+\omega\cdot(n+N)}\Delta\quad\text{with}\quad\delta(0)=\Gamma_{\Omega+1+\max\{x(0),x(1)\}}+1.
    \end{equation*}
    By $N$ applications of Theorem~\ref{thm:hat-elim}, we get $\mathcal H\vdash^{\delta(N)}_{\Omega+\omega\cdot n}\Delta$, where the proof height is recursively determined by
    \begin{equation*}
        \delta(i+1)=\varphi_\eta(\eta)\quad\text{with}\quad\eta=\varepsilon_{\Omega+\delta(i)}=\varphi_1(\Omega+\delta(i)).
    \end{equation*}
    In view of Definition~\ref{def:Gamma}, we inductively get $\delta(i)\prec\Gamma_{\Omega+1+x}$, so that we can conclude by weakening. As an aside, let us note that we have $\Omega+\delta(i)=\delta(i)$ and for $i>0$ even $\varepsilon_{\Omega+\delta(i)}=\delta(i)$, which simplifies the definition of~$\delta(i+1)$.
\end{proof}

In the conclusion of the theorem, the cut rank ensures that only~$\mathcal L_n^\Omega$-formulas occur in the proof. When we have $n=1$, this means that the clauses~($\mathsf F^+$) and~($\mathsf F^-$) cannot apply. The main clause that remains is ($\mathsf F^\Omega$). In the following section, we use methods of impredicative ordinal analysis to deal with this clause. To conclude this section, we optimize the cut rank as follows.

\begin{corollary}[$\mathsf{ATR}_0$; `Embedding']\label{cor:embedding}
For any $\mathcal L_1$-sequent~$\Delta$ and any operator~$\mathcal H$, we have
    \begin{equation*}
        \vdash^x\Delta\quad\Rightarrow\quad\mathcal H\vdash^{\Gamma_{\Omega+1+x}}_{\Omega+1}\Delta.
    \end{equation*}
\end{corollary}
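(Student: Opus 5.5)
We start from the preceding theorem with $n=1$, which gives $\mathcal H\vdash^{\Gamma_{\Omega+1+x}}_{\Omega+\omega}\Delta$ for the $\mathcal L_1$-sequent~$\Delta$. The remaining task is to lower the cut rank from $\Omega+\omega$ to $\Omega+1$. In a derivation with cut rank $\Omega+\omega$ and end sequent in~$\mathcal L_1$, every cut formula has rank $\prec\Omega+\omega$. Cut formulas of the form $(\neg)I^n_\theta t$ with $n>0$ have rank $\succeq\Omega+\omega$, so they cannot occur. Hence all cut formulas are $\mathcal L_1^\Omega$-formulas, with ranks of the form $\Omega+k$ for $k\prec\omega$ or below~$\Omega$. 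Formulas of rank $\Omega+k$ with $k\succeq 1$ are built by connectives and quantifiers from atoms $I^0_{\theta,\Omega}t$ (rank $\Omega$) and lower formulas. They are therefore disjunctive or conjunctive with immediate subformulas of strictly smaller rank, by Lemma~\ref{lem:rank-subformulas}.

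So we can apply Lemma~\ref{lem:basic-cut-elim} finitely often. Write the cut rank as a limit: the proof has some height $\gamma=\Gamma_{\Omega+1+x}$ and cut rank $\Omega+\omega$. As in Corollary~\ref{cor:cut-elim-eps}, we argue by induction on the height. At a cut over a formula~$\psi$ with $|\psi|\prec\Omega+1+k$, we obtain a proof of cut rank $\Omega+1+k$. We then apply Lemma~\ref{lem:basic-cut-elim} $k$ times, with $\rho=\Omega+k',\dots$, down to $\rho=\Omega+1$. This is permitted because these~$\rho$ are not of the form $\Omega+\omega\cdot m$; note that $\rho=\Omega$ is excluded, which is exactly why the target is $\Omega+1$ rather than~$\Omega$. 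The induction yields $\mathcal H\vdash^{\varepsilon_\gamma}_{\Omega+1}\Delta$ for the original height~$\gamma$, with the intermediate heights bounded by $\varepsilon_\gamma$ via Proposition~\ref{prop:Veblen-order}. Indeed, Corollary~\ref{cor:cut-elim-eps} with $N=0$ is literally this statement: it gives $\mathcal H\vdash^\gamma_{\Omega+\omega}\Delta\Rightarrow\mathcal H\vdash^{\varepsilon_\gamma}_{\Omega+1}\Delta$, and its proof does not depend on $N>0$.

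Finally, $\gamma=\Gamma_{\Omega+1+x}$ is a fixed point of $\eta\mapsto\varphi_\eta(0)$, so in particular $\varepsilon_\gamma=\varphi_1(\gamma)=\gamma$. In the notation system, $\varphi_1(\Gamma_{\Omega+1+x})=\Gamma_{\Omega+1+x}$ because $1\prec v(\Gamma_{\Omega+1+x})=\Gamma_{\Omega+1+x}$, by the second clause of the definition of~$\varphi$. The height is therefore unchanged, which gives $\mathcal H\vdash^{\Gamma_{\Omega+1+x}}_{\Omega+1}\Delta$.

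The one point that needs care is checking that Corollary~\ref{cor:cut-elim-eps} really applies for $N=0$. It is stated for any $N\prec\omega$, but we must confirm that the reduction step in Lemma~\ref{lem:basic-cut-elim} never faces a cut formula $I^0_{\theta,\Omega}t$ of rank exactly~$\Omega$ in a critical position. This is guaranteed because we stop at cut rank $\Omega+1$, where such cuts remain allowed.
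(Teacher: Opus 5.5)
Your proposal is correct and matches the paper's proof: apply the preceding theorem with $n=1$, then Corollary~\ref{cor:cut-elim-eps} with $N=0$, and use $\varepsilon_{\Gamma_{\Omega+1+x}}=\varphi_1(\Gamma_{\Omega+1+x})=\Gamma_{\Omega+1+x}$, which follows from Definition~\ref{def:Gamma}. Your re-examination of the individual cut-elimination steps is not needed, since that corollary is already stated for all $N\prec\omega$. It is nonetheless accurate, including the point that you must stop at rank $\Omega+1$ because Lemma~\ref{lem:basic-cut-elim} excludes $\rho=\Omega$.
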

\begin{proof}
    This holds by the previous result in conjunction with Corollary~\ref{cor:cut-elim-eps}, given that we have $\varepsilon_{\Gamma_z}=\varphi_1(\Gamma_z)=\Gamma_z$ by Definition~\ref{def:Gamma}.
\end{proof}

\section{Ordinal analysis: the impredicative part}\label{sect:impred-ord-ana}

In the previous section, we have eliminated detours through~$\mathcal L_{n+1}^\Omega$-sentences from proofs of $\mathcal L_n^\Omega$-sequents for~$n>0$. The present section will achieve the same for~$n=0$. At the end of the section, we will derive the missing direction of our main result, which was stated as Theorem~\ref{thm:main} in the introduction.

The case of $n=0$ is fundamentally different. This is because we had to interpret the predicates~$I^0_\varphi$ by approximations~$I^0_{\varphi,\Omega}$ of a fixed stage~$\Omega$ to derive axiom~($\mathsf L$) in the infinitary proof system (see Proposition~\ref{prop:proof-L}). In contrast, the predicates~$I^n_\varphi$ at levels $n>0$ were interpreted by approximations~$I^n_{\varphi,\alpha}$ in a dynamical way (see the proof of Proposition~\ref{prop:asym-int}). In view of this difference, we cannot eliminate the rule~($\mathsf F^\Omega$) in the way we eliminated~($\mathsf F^+$) and~($\mathsf F^-)$. The rule~($\mathsf F^\Omega$) is a fundamental obstacle to cut elimination in that its premise~$\varphi[t,I^0_{\varphi,\Omega}]$ generally has higher rank than its conclusion~$I^0_{\varphi,\Omega}t$. Rather than the predicative method of cut elimination, the present section will thus employ the impredicative method of collapsing (see~\cite{pohlers98} for a textbook treatment or~\cite{freund-course2} for a presentation that uses our notation).

At the place of the asymmetric interpretation from the previous section, we here use the following notion of relativization. Intuitively, the formula $\psi^\beta$ results from~$\psi$ when all (positive and negative) occurences of ~$I^0_{\varphi,\Omega}$ are replaced by $I^0_{\varphi,\beta}$.

\begin{definition}\label{def:sym-rel}
    Consider $\beta\in\Gamma_{\Omega+Y}$ with $\beta\prec\Omega$. We transform each $\mathcal L_1^\Omega$-formula~$\psi$ into another $\mathcal L_1^\Omega$-formula~$\psi^\beta$ that is recursively given by
    \begin{gather*}
        (P\mathbf t)^\beta=P\mathbf t\quad\text{and}\quad(\neg P\mathbf t)^\beta=\neg P\mathbf t\quad\parbox[t]{.37\textwidth}{when $P$ is a predicate of~$\mathcal L_0$\\ or a predicate $I^0_{\varphi,\alpha}$ with $\alpha\prec\Omega$,}\\
        \begin{alignedat}{3}
        \left(I^0_{\varphi,\Omega}\, t\right)^\beta&=I^0_{\varphi,\beta}\, t\quad&&\text{and}\quad
        \left(\neg I^0_{\varphi,\Omega}\, t\right)^\beta&&=\neg I^0_{\varphi,\beta}\, t,\\
        (\varphi\land\psi)^{\beta}&=\varphi^{\beta}\land\psi^{\beta}\quad&&\text{and}\quad
        (\varphi\lor\psi)^{\beta}&&=\varphi^{\beta}\lor\psi^{\beta},\\
        (\forall x\,\varphi)^{\beta}&=\forall x\,\varphi^{\beta}\quad&&\text{and}\quad(\exists x\,\varphi)^{\beta}&&=\exists x\,\varphi^{\beta}.
        \end{alignedat}
    \end{gather*}
\end{definition}

The following classical result shows, in particular, that the height of a proof bounds the witnesses to existential statements. This is not quite surprising, since the condition $i\prec\alpha$ in clause~($\bigvee$) of Definition~\ref{def:H-proofs} is designed to ensure this.

\begin{proposition}[$\mathsf{ATR}_0$; `Bounding']\label{prop:boundedness}
We consider an $\mathcal L_1^\Omega$-sequent~$\Delta,\psi$ as well as some~$\rho\preceq\Omega+\omega$. For any operator $\mathcal{H}$ and $\alpha\preceq\beta\prec\Omega$ with $\beta\in\mathcal{H}(\emptyset)$ we have
\begin{equation*}
    \mathcal{H}\vdash^\alpha_\rho\Delta, \psi \quad\Rightarrow\quad \mathcal{H}\vdash^\alpha_\rho\Delta, \psi^\beta.  
\end{equation*}
\end{proposition}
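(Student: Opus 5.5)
We argue by induction on~$\alpha$, where the induction statement ranges over all $\mathcal L_1^\Omega$-sequents $\Delta,\psi$, all $\beta$ with $\alpha\preceq\beta\prec\Omega$ and $\beta\in\mathcal H(\emptyset)$, and all operators~$\mathcal H$. Quantifying over operators amounts to quantifying over the parameters $\gamma,a$ of $\mathcal H_\gamma[a]$, so the statement is arithmetical and the induction is available in~$\mathsf{ATR}_0$. The initial condition of Definition~\ref{def:H-proofs} for the conclusion holds because $k(\psi^\beta)\subseteq k(\psi)\cup\{\beta\}\subseteq\mathcal H(\emptyset)$. We then distinguish cases according to the last clause in the derivation of $\mathcal H\vdash^\alpha_\rho\Delta,\psi$.

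If $\psi$ is not the principal formula of the last inference, we apply the induction hypothesis to the premises, which all contain~$\psi$, and reapply the same clause. Two points need checking here. In the case of~($\bigwedge$) the premises are derived under operators~$\mathcal H[i]$, and $\beta\in\mathcal H(\emptyset)\subseteq\mathcal H[i](\emptyset)$ still holds by Lemma~\ref{lem:operators-monotone}. The case of~($\mathsf F^\Omega$) cannot occur at all, since that rule requires $\Omega\preceq\alpha$ while $\alpha\preceq\beta\prec\Omega$. The rules ($\mathsf F^\pm$) and~($\mathsf{Ax}$) are also absent, because an $\mathcal L_1^\Omega$-sequent contains no predicates~$I^n_\varphi$ with $n>0$.

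If $\psi$ is principal and unaffected by the relativization, we again reduce to the induction hypothesis. If $\psi$ is of the form $\psi_0\land\psi_1$, $\forall x\,\psi_0$, $\psi_0\lor\psi_1$ or $\exists x\,\psi_0$, we have $(\psi_i)^\beta=(\psi^\beta)_i$. We apply the induction hypothesis twice to each premise $\Delta,\psi,\psi_i$, once for~$\psi$ and once for~$\psi_i$, and then reapply the same rule. Strictly, the induction statement should therefore be generalized to simultaneous relativization of a finite set of formulas; this is routine. Note that the side condition $i\prec\min(\iota(\psi),\alpha)$ in~($\bigvee$) is unchanged, since $\iota(\psi^\beta)=\iota(\psi)$ in these cases.

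The crucial cases concern $\psi\equiv I^0_{\varphi,\Omega}t$ and $\psi\equiv\neg I^0_{\varphi,\Omega}t$.

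For the positive case, since ($\mathsf F^\Omega$) is excluded, $\psi$ must have been introduced by~($\bigvee$) via the disjunction $I^0_{\varphi,\Omega}t\simeq\bigvee_{\eta\prec\Omega}\varphi[t,I^0_{\varphi,\eta}]$. The premise is therefore $\mathcal H\vdash^{\alpha_0}_\rho\Delta,\psi,\varphi[t,I^0_{\varphi,\eta}]$ with $\eta\prec\min(\Omega,\alpha)$, $\eta\in\mathcal H(\emptyset)$ and $\alpha_0\prec\alpha$. The induction hypothesis gives $\mathcal H\vdash^{\alpha_0}_\rho\Delta,\psi^\beta,\varphi[t,I^0_{\varphi,\eta}]$; the side formula contains only $I^0_{\varphi,\eta}$ with $\eta\prec\Omega$ and is untouched. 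Since $\eta\prec\alpha\preceq\beta=\iota(I^0_{\varphi,\beta}t)$, we may reapply~($\bigvee$) to $\psi^\beta\equiv I^0_{\varphi,\beta}t$. This is precisely where the bound $i\prec\alpha$ in~($\bigvee$) enters, and it is the heart of the argument.

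For the negative case, $\neg I^0_{\varphi,\Omega}t$ was introduced by~($\bigwedge$), with premises $\mathcal H[\eta]\vdash^{\alpha(\eta)}_\rho\Delta,\psi,\neg\varphi[t,I^0_{\varphi,\eta}]$ for all $\eta\prec\Omega$. We keep only those with $\eta\prec\beta$ and apply the induction hypothesis to each, using $\beta\in\mathcal H[\eta](\emptyset)$. Reapplying~($\bigwedge$) to $\neg I^0_{\varphi,\beta}t\simeq\bigwedge_{\eta\prec\beta}\neg\varphi[t,I^0_{\varphi,\eta}]$ then finishes the case.

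The remaining case is~($\mathsf{Cut}$). Its premises contain~$\psi$, so we relativize them inductively and cut again with the same formula; the cut formula is not relativized, so no rank condition is disturbed. Here the assumption $\rho\preceq\Omega+\omega$ guarantees that cut formulas are $\mathcal L_1^\Omega$-sentences, which keeps the whole derivation inside the language on which the relativization is defined.

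I expect the main obstacle to be bookkeeping rather than any single case: making the induction statement strong enough to relativize several formulas at once, and keeping the operator conditions correct in the ($\bigwedge$) steps.
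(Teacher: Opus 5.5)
Your proposal is correct and follows essentially the same route as the paper. The shared steps are induction on $\alpha$, exclusion of $(\mathsf F^\Omega)$ via $\alpha\prec\Omega$, the key use of the side condition $\eta\prec\alpha\preceq\beta$ when $I^0_{\varphi,\Omega}t$ is introduced by $(\bigvee)$, and restriction to the premises with $\eta\prec\beta$ when $\neg I^0_{\varphi,\Omega}t$ is introduced by $(\bigwedge)$. One small point: you do not need to generalize to simultaneous relativization, because the induction hypothesis at height $\gamma\prec\alpha$ ranges over all sequents and can simply be applied twice in succession, once to $\psi$ and once to $\psi_i$, which is exactly what the paper does.
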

\begin{proof}
We use induction over~$\alpha$. First note that we get $k(\psi^\beta)\subseteq k(\psi)\cup\{\beta\}\subseteq\mathcal H(\emptyset)$. Let us now distinguish cases according to the clauses from Definition~\ref{def:H-proofs}. Due to the assumption~$\alpha\prec\Omega$, clause~($\mathsf F^\Omega$) cannot apply. Since $\Delta,\psi$ is an $\mathcal L_1^\Omega$-sequent, this leaves only the clauses ($\bigwedge$) and ($\bigvee$) as well as ($\mathsf{Cut}$). In the case of the latter, we have premises
\begin{equation*}
    \mathcal H\vdash^{\alpha(0)}_\rho\Gamma,\psi,\varphi\quad\text{and}\quad\mathcal H\vdash^{\alpha(1)}_\rho\Gamma,\psi,\neg\varphi
\end{equation*}
with $\alpha(i)\prec\alpha$, where $\varphi$ is an $\mathcal L_\omega^\Omega$-sentence with~$|\varphi|\prec\rho$. The assumption $\rho\preceq\Omega+\omega$ ensures that $\varphi$ is an $\mathcal L_1^\Omega$-sentence, so that the induction hypothesis applies. It gives
\begin{equation*}
    \mathcal H\vdash^{\alpha(0)}_\rho\Gamma,\psi^\beta,\varphi\quad\text{and}\quad\mathcal H\vdash^{\alpha(1)}_\rho\Gamma,\psi^\beta,\neg\varphi,
\end{equation*}
so that we an reapply ($\mathsf{Cut}$) to conclude. When ($\bigwedge$) or ($\bigvee$) is used to infer some formula in~$\Delta$, it is similarly straightforward to reduce to the induction hypothesis.

Now assume that $\psi\simeq\bigvee_{i\prec\iota(\psi)}\psi_i$ was introduced by clause~($\bigvee$) with some premise
\begin{equation*}
    \mathcal H\vdash^{\gamma}_\rho\Delta,\psi,\psi_i,
\end{equation*}
where we have $\gamma\prec\alpha$ as well as $i\prec\min(\iota(\psi),\alpha)$ and $i\in\mathcal H(\emptyset)$. By two applications of the induction hypothesis, we obtain
\begin{equation*}
    \mathcal H\vdash^{\gamma}_\rho\Delta,\psi^\beta,(\psi_i)^\beta.
\end{equation*}
To reapply~($\bigvee$), we check that $\psi^\beta$ is disjunctive with $i\prec\iota(\psi^\beta)$ and $(\psi^\beta)_i\equiv(\psi_i)^\beta$. This is straightforward when~$\psi$ has the form $\psi_0\lor\psi_1$ or $\exists x\,\varphi$ or $I^0_{\theta,\delta}t$ with $\delta\prec\Omega$, where we have $\iota(\psi^\beta)=\iota(\psi)$. Let us now assume that $\psi$ has the form~$I^0_{\theta,\Omega}t$, so that we have~$\psi_j\equiv\theta[t,I^0_{\theta,j}]$ for $j\prec\iota(\psi)=\Omega$. Here $(\psi_j)^\beta$ coincides with $\psi_j$, due to the fact that $\theta$ is an~$\mathcal L_0^X$-formula. Furthermore, we get $\psi^\beta\equiv I^0_{\theta,\beta}t$, which entails that we have $(\psi^\beta)_k\equiv\theta[t,I^0_{\theta,k}]$ for $k\prec\beta=\iota(\psi^\beta)$. Due to the condition~$i\prec\alpha$, we can conclude $i\prec\iota(\psi^\beta)$ as well as $(\psi^\beta)_i\equiv\psi_i\equiv(\psi_i)^\beta$.

Finally, assume that ($\bigwedge$) was used to introduce $\psi\simeq\bigwedge_{i\prec\iota(\psi)}\psi_i$. Similarly to the case of ($\bigvee$), one can reapply ($\bigwedge$) after using the induction hypothesis on both~$\psi$ and the~$\psi_i$. The point is that we have $(\psi^\beta)_i\equiv(\psi_i)^\beta$ for $i\prec\iota(\psi^\beta)\preceq\iota(\psi)$.
\end{proof}

In view of $\neg I^0_{\varphi,\Omega}t\simeq\bigwedge_{\alpha\prec\Omega}\neg\varphi[t,I^0_{\varphi,\alpha}]$, the rule ($\bigwedge$) can have $\Omega$-many premises. If one thinks of~$\Omega$ as the first uncountable ordinal (cf.~the second paragraph before Definition~\ref{def:Gamma-supp}), this means that proof trees can be uncountable. The following definition singles out the formulas that do not have subformulas of the form~$\neg I^0_{\varphi,\Omega}t$. We will see that proofs of these formulas admit a `countable collapse'.

\begin{definition}
    An $\mathcal L_1^\Omega$-formula is a $\Sigma^\Omega$-formula if it is generated as follows:
    \begin{enumerate}[label=(\roman*)]
        \item All literals of~$\mathcal L_0$ (the extension of first-order arithmetic by the predicate~$Q$) are $\Sigma^\Omega$-formulas.
        \item Any formula $I^0_{\varphi,\alpha}t$ with~$\alpha\preceq\Omega$ and any formula~$\neg I^0_{\varphi,\beta}t$ with $\beta\prec\Omega$ (note the strict inequality) is a $\Sigma^\Omega$-formula.
        \item If $\psi_0$ and $\psi_1$ are $\Sigma^\Omega$-formulas, then so are $\psi_0\land\psi_1$ and $\psi_0\lor\psi_1$.
        \item If $\psi$ is a $\Sigma^\Omega$-formula, then so are $\forall x\,\psi$ and $\exists x\,\psi$.
    \end{enumerate}
    By a $\Sigma^\Omega$-sequent we mean a finite set of $\Sigma^\Omega$-sentences. Finally, an $\mathcal L_1^\Omega$-sentence~$\psi$ is a $\Pi^\Omega$-sentence precisely if~$\neg\psi$ is a $\Sigma^\Omega$-sentence.
\end{definition}

Intuitively, $\Pi^\Omega$-formulas do not have subformulas of the form~$I^0_{\varphi,\Omega}t$. We will need the following variant of inversion.

\begin{lemma}[$\mathsf{ATR}_0$]\label{lem:Pi-inversion}
    Consider an $\mathcal L_1^\Omega$-sequent~$\Delta$ as well as a $\Pi^\Omega$-sentence~$\psi$. For any operator~$\mathcal H$ and any $\rho\preceq\Omega+\omega$ and $\beta\prec\Omega$ with $\beta\in\mathcal H(\emptyset)$, we have
    \begin{equation*}
        \mathcal H\vdash^\alpha_\rho\Delta,\psi\quad\Rightarrow\quad\mathcal H\vdash^\alpha_\rho \Delta,\psi^\beta.
    \end{equation*}
\end{lemma}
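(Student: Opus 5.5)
We argue by induction on~$\alpha$ and distinguish cases according to the clauses from Definition~\ref{def:H-proofs}, in close analogy with the proof of Proposition~\ref{prop:boundedness}. First note that the initial condition is preserved: we have $k(\psi^\beta)\subseteq k(\psi)\cup\{\beta\}\subseteq\mathcal H(\emptyset)$, while $\alpha\in\mathcal H(\emptyset)$ and $k(\Delta)\subseteq\mathcal H(\emptyset)$ are inherited from the premise. The key observation is that $\psi^\beta$ only differs from~$\psi$ at occurrences of $\neg I^0_{\varphi,\Omega}t$, since a $\Pi^\Omega$-sentence contains no positive occurrences $I^0_{\varphi,\Omega}t$. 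In particular, rule~($\mathsf F^\Omega$) can never have a principal formula inside~$\psi$; it can only act on formulas in~$\Delta$. Likewise ($\mathsf{Ax}$), ($\mathsf F^+$) and ($\mathsf F^-$) do not apply to the $\mathcal L_1^\Omega$-sequent at hand. So, exactly as in the bounding proof, all inferences whose principal formula lies in~$\Delta$ (including ($\mathsf F^\Omega$) and ($\mathsf{Cut}$), where $\rho\preceq\Omega+\omega$ guarantees that the cut formula is an $\mathcal L_1^\Omega$-sentence) are handled by applying the induction hypothesis to the premises and reapplying the same rule. For ($\bigwedge$) on a formula of~$\Delta$, the premises use operators $\mathcal H[i]$; these still satisfy $\beta\in\mathcal H[i](\emptyset)$ by Lemma~\ref{lem:closure-op}, so the induction hypothesis applies.

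The main cases concern $\psi$ as principal formula. If $\psi$ is disjunctive and introduced by ($\bigvee$), then $\psi$ is not of the form $I^0_{\theta,\Omega}t$, because it is a $\Pi^\Omega$-sentence. Hence $\psi$ is of one of the forms $\psi_0\lor\psi_1$, $\exists x\,\varphi$ or $I^0_{\theta,\delta}t$ with $\delta\prec\Omega$. In each of these cases we have $\iota(\psi^\beta)=\iota(\psi)$ and $(\psi^\beta)_i\equiv(\psi_i)^\beta$, and each $\psi_i$ is again a $\Pi^\Omega$-sentence. We therefore apply the induction hypothesis twice, once to~$\psi$ and once to~$\psi_i$, and then reapply~($\bigvee$).

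The interesting case is ($\bigwedge$) with $\psi\simeq\bigwedge_{i\prec\iota(\psi)}\psi_i$. If $\psi\equiv\neg I^0_{\theta,\Omega}t$, the premises are
\begin{equation*}
\mathcal H[i]\vdash^{\alpha(i)}_\rho\Delta,\psi,\neg\theta[t,I^0_{\theta,i}]\quad\text{for all }i\prec\Omega.
\end{equation*}
Here $\psi^\beta\equiv\neg I^0_{\theta,\beta}t$ has arity~$\beta$ and components $(\psi^\beta)_i\equiv\psi_i\equiv(\psi_i)^\beta$, because $\theta$ is an $\mathcal L_0^X$-formula. We use only the premises with $i\prec\beta\prec\Omega$, apply the induction hypothesis to~$\psi$ in each of them (using $\beta\in\mathcal H[i](\emptyset)$), and then reapply ($\bigwedge$) to~$\psi^\beta$. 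This works because, unlike in bounding, no constraint such as $i\prec\alpha$ is needed: conjunctions may simply discard premises. For any other conjunctive $\Pi^\Omega$-sentence, we again have $\iota(\psi^\beta)=\iota(\psi)$ and $(\psi^\beta)_i\equiv(\psi_i)^\beta$, and the $\psi_i$ are $\Pi^\Omega$-sentences. So we use the induction hypothesis on both $\psi$ and~$\psi_i$ and reapply~($\bigwedge$).

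I expect the only delicate point to be the bookkeeping: checking that the subformulas $\psi_i$ remain $\Pi^\Omega$, so that the induction hypothesis is legitimate, and that relativization commutes with taking components in every case. The latter is exactly the commutation already verified in Proposition~\ref{prop:boundedness}.
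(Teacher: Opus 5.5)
Your proposal is correct and follows the paper's own argument: induction on $\alpha$ with the case split from the bounding proof. The key points are the same as in the paper. A $\Pi^\Omega$-sentence is never of the form $I^0_{\varphi,\Omega}t$, so ($\mathsf F^\Omega$) acts only on $\Delta$ and disjunctions keep their arity. For $\psi\equiv\neg I^0_{\theta,\Omega}t$ one keeps only the premises with $i\prec\beta$, using $(\psi^\beta)_i\equiv(\psi_i)^\beta$ for $i\prec\iota(\psi^\beta)\preceq\iota(\psi)$. Your separate treatment of that case, with one application of the induction hypothesis since $(\psi_i)^\beta\equiv\psi_i$, is just a more explicit version of the paper's uniform step.
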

\begin{proof}
    One argues by induction over~$\alpha$ and case distinction according to the clauses from Definition~\ref{def:H-proofs}. The argument is broadly similar to the proof of Proposition~\ref{prop:boundedness}. In the most interesting case, clause~($\bigwedge$) was used to introduce $\psi\simeq\bigwedge_{i\prec\iota(\psi)}\psi_i$ from premises
    \begin{equation*}
        \mathcal H[i]\vdash^{\alpha(i)}_\rho\Delta,\psi,\psi_i
    \end{equation*}
    with $\alpha(i)\prec\alpha$ for all~$i\prec\iota(\psi)$. We note that $\psi^\beta$ is again a conjunctive $\Pi^\Omega$-sentence with $(\psi^\beta)_i\equiv(\psi_i)^\beta$ for $i\prec\iota(\psi^\beta)\preceq\iota(\psi)$. Two applications of the induction hypothesis yield
    \begin{equation*}
        \mathcal H[i]\vdash^{\alpha(i)}_\rho\Delta,\psi^\beta,\psi_i^\beta
    \end{equation*}
    for $i\prec\iota(\psi^\beta)$. One can reapply ($\bigwedge$) to conclude.

    In the case where ($\bigvee$) was used to introduce $\psi\simeq\bigvee_{i\prec\iota(\psi)}\psi_i$, the crucial point is that we even have~$\iota(\psi^\beta)=\iota(\psi)$, as the $\Pi^\Omega$-sentence~$\psi$ cannot be of the form~$I^0_{\varphi,\Omega} t$. For the same reason, clause~($\mathsf F^\Omega$) cannot apply to~$\psi$ but only to formulas from~$\Delta$.
\end{proof}

In the previous results, we have assumed a cut bound $\Omega+\omega$ to ensure that only~$\mathcal L_1^\Omega$-sentences occur in the proof. The next theorem requires the even sharper bound~$\Omega+1$, which is justified in view of Corollary~\ref{cor:embedding}. The collapsing function
\begin{equation*}
    \psi:\Gamma_{\Omega+Y}\to\psi(\Gamma_{\Omega+Y})=\{\gamma\in\Gamma_{\Omega+Y}:\gamma\prec\Omega\},
\end{equation*}
was introduced in Definition~\ref{def:psi} (where the equality in the codomain holds due to $\Omega=\Gamma_{\Omega+0}$ for the minimal element~$0\in Y=1+X$). By the following theorem, both the proof height and the cut rank are thus collapsed to values below~$\Omega$. In particular, this eliminates applications of clause~($\mathsf F^\Omega$) from Definition~\ref{def:H-proofs} (where the condition $\Omega\preceq\alpha$ was built in to ensure this). While the previous results were true for arbitrary operators, the following theorem requires specific operators~$\mathcal H_\alpha$, which were defined at the end of Section~\ref{sect:ordinal-notations}. 

\begin{theorem}[$\mathsf{ATR}_0$; `Collapsing']\label{thm:collapsing}
    For a $\Sigma^\Omega$-sequent~$\Delta$ and any $\alpha,\beta\in\Gamma_{\Omega+Y}$ with $\alpha\in\mathcal H_\alpha(\emptyset)$, we have
    \begin{equation*}
        \mathcal H_\alpha\vdash^\beta_{\Omega+1}\Delta\quad\Rightarrow\quad\mathcal H_\eta\vdash^{\psi(\eta)}_{\psi(\eta)}\Delta\quad\text{with}\quad\eta=\alpha+\omega^\beta.
    \end{equation*}
\end{theorem}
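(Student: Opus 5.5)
We argue by induction on~$\beta$, following the classical collapsing argument of Buchholz. Before the case analysis, we check the side conditions of Definition~\ref{def:H-proofs} for the conclusion. We have $\alpha\preceq\eta$, and $\beta\in\mathcal H_\alpha(\emptyset)$ holds by the premise. Since $\alpha\in\mathcal H_\alpha(\emptyset)$, Remark~\ref{rmk:H-closure} gives $\eta=\alpha+\omega^\beta\in\mathcal H_\alpha(\emptyset)\subseteq\mathcal H_\eta(\emptyset)$. The first implication of Proposition~\ref{prop:H-closure-psi}(a), applied with $\gamma=\delta=\eta$, then yields $\psi(\eta)\in\mathcal H_\eta(\emptyset)$. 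Moreover $k(\Delta)\subseteq\mathcal H_\alpha(\emptyset)\subseteq\mathcal H_\eta(\emptyset)$ by Lemma~\ref{lem:operators-monotone}.

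The key monotonicity fact is the following. If $\beta_0\prec\beta$ with $\beta_0\in\mathcal H_\alpha(\emptyset)$ and we set $\eta_0=\alpha+\omega^{\beta_0}$, then $\eta_0\prec\eta$ and $\eta_0\in\mathcal H_\alpha(\emptyset)$. The second implication of Proposition~\ref{prop:H-closure-psi}(a), with $\gamma=\alpha$, $\delta=\eta_0$ and $a=\emptyset$, gives $\psi(\eta_0)\prec\psi(\eta)$. Together with $\mathcal H_{\eta_0}\subseteq\mathcal H_\eta$, this lets us weaken induction hypotheses to height and cut rank~$\psi(\eta)$.

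Now distinguish cases by the last rule. For ($\bigvee$) with active formula $\varphi\in\Delta$ and index~$i$, the premise sequent is still $\Sigma^\Omega$, and the induction hypothesis gives height $\psi(\eta_0)$. We need $i\prec\psi(\eta)$. If $\varphi$ is not of the form $I^0_{\theta,\Omega}t$, then $i\prec\omega$ or $i\prec\delta$ for some $\delta\prec\Omega$ with $\delta\in\mathcal H_\alpha(\emptyset)$. In both cases $i\in\mathcal H_\eta(\emptyset)\cap\psi(\Gamma_{\Omega+Y})$, so $i\prec\psi(\eta)$ by Proposition~\ref{prop:C-sets-psi}(b), since $\mathcal H_\eta(\emptyset)=C(\xi)$ with $\xi\succ\eta$. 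If $\varphi$ is of the form $I^0_{\theta,\Omega}t$, then $i\prec\Omega$ and $i\in\mathcal H_\alpha(\emptyset)$, which again gives $i\prec\psi(\eta)$. The same reasoning handles ($\mathsf F^\Omega$): its premise $\theta[t,I^0_{\theta,\Omega}]$ is $\Sigma^\Omega$ because $\theta$ is positive, and after collapsing we replace the rule by ($\bigvee$) with index~$\Omega$. This is the delicate point. Since $\Omega\not\prec\psi(\eta)$, a collapsed ($\mathsf F^\Omega$) step cannot be represented directly. We therefore first apply Proposition~\ref{prop:boundedness} to the collapsed premise $\theta[t,I^0_{\theta,\Omega}]$ with bound $\psi(\eta_0)$, turning it into $\theta[t,I^0_{\theta,\psi(\eta_0)}]$. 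Then we apply ($\bigvee$) with index $\psi(\eta_0)\prec\psi(\eta)$ and $\psi(\eta_0)\in\mathcal H_\eta(\emptyset)$. For ($\bigwedge$), the active formula lies in a $\Sigma^\Omega$-sequent and so has countable arity, i.e.\ $\iota\prec\Omega$. Each index $i$ therefore satisfies $i\in\mathcal H_\alpha[i](\emptyset)$, and we apply the induction hypothesis with operator $\mathcal H_\alpha[i]=\mathcal H_{\alpha}$ applied to~$\{i\}$. This requires generalizing the theorem to operators $\mathcal H_\alpha[a]$ with finite~$a$, or equivalently noting $\mathcal H_\eta[i]$. We plan to state the induction for $\mathcal H_\alpha[a]$ throughout and apply the second implication of Proposition~\ref{prop:H-closure-psi}(a) with that~$a$.

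The main obstacle is ($\mathsf{Cut}$) on a formula~$\varphi$ with $|\varphi|\preceq\Omega$. If $|\varphi|\prec\Omega$, then $\varphi$ and $\neg\varphi$ are both $\Sigma^\Omega$. We collapse both premises, note $|\varphi|\in\mathcal H_\alpha(\emptyset)$ so that $|\varphi|\prec\psi(\eta)$ by Proposition~\ref{prop:C-sets-psi}(b), and recut. If $|\varphi|=\Omega$, we may assume $\varphi$ is $\Sigma^\Omega$ (such as $I^0_{\theta,\Omega}t$) and $\neg\varphi$ is $\Pi^\Omega$. We collapse the premise containing $\varphi$ to obtain $\mathcal H_{\eta_0}\vdash^{\psi(\eta_0)}_{\psi(\eta_0)}\Delta,\varphi$, and by Proposition~\ref{prop:boundedness} we get $\Delta,\varphi^{\psi(\eta_0)}$. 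On the other premise we apply Lemma~\ref{lem:Pi-inversion} with $\psi(\eta_0)\in\mathcal H_{\eta_0}(\emptyset)$ to get $\mathcal H_{\eta_0}\vdash^{\beta_1}_{\Omega+1}\Delta,\neg\varphi^{\psi(\eta_0)}$, which is now a $\Sigma^\Omega$-sequent. The side hypothesis holds since $\eta_0\in\mathcal H_{\eta_0}(\emptyset)$. Collapsing with $\eta_1=\eta_0+\omega^{\beta_1}\prec\eta$ (using $\beta_0,\beta_1\prec\beta$) gives height and rank $\psi(\eta_1)\prec\psi(\eta)$. The cut formula $\varphi^{\psi(\eta_0)}$ has rank $\omega\cdot(1+\psi(\eta_0))+n$, and we must show this is below~$\psi(\eta)$. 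This holds because $\psi(\eta)$ is of the form~$\Gamma_s$, which is closed under these operations. A final cut then yields the result.
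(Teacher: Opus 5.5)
Your overall architecture matches the paper's: induction on $\beta$; $\psi(\eta_0)\prec\psi(\eta)$ via the second implication of Proposition~\ref{prop:H-closure-psi}(a); ($\mathsf F^\Omega$) via bounding followed by ($\bigvee$) with index $\psi(\eta_0)$; and the cut of rank $\Omega$ via collapsing the $\Sigma^\Omega$ side, bounding, $\Pi^\Omega$-inversion over $\mathcal H_{\eta_0}$, and a second collapse at $\eta_0+\omega^{\beta_1}$. There is, however, a genuine gap in your ($\bigwedge$) case.

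The premises there are derived under $\mathcal H_\alpha[i]$. Your claim $\mathcal H_\alpha[i]=\mathcal H_\alpha$ is not justified by ``$i\in\mathcal H_\alpha[i](\emptyset)$'', which holds trivially. The planned generalisation to $\mathcal H_\alpha[a]$ does not repair this. The second implication of Proposition~\ref{prop:H-closure-psi}(a) needs $a\subseteq C(\eta)$, and you have no way to verify that for the accumulated indices. For arbitrary finite $a$ it fails: for example, $a=\{\psi(\eta)\}\not\subseteq C(\eta)$, and then $\psi(\eta)\in\mathcal H_\alpha[a](\emptyset)\cap\psi(\Gamma_{\Omega+Y})$. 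Without $a\subseteq C(\eta)$ you lose $\psi(\eta(i))\prec\psi(\eta)$.

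The missing idea is the real role of the hypothesis $\alpha\in\mathcal H_\alpha(\emptyset)$. By Proposition~\ref{prop:H-closure-psi}(b), it makes $\mathcal H_\alpha(\emptyset)\cap\psi(\Gamma_{\Omega+Y})$ the initial segment below $\psi(\alpha+1)$. For a conjunctive $\Sigma^\Omega$-sentence $\varphi$, the arity $\iota(\varphi)$ is $0$, $2$, $\omega$, or some $\gamma\in k(\Delta)$ with $\gamma\prec\Omega$. So $\iota(\varphi)$ lies in this segment, and hence so does every $i\prec\iota(\varphi)$. Corollary~\ref{cor:H[b]=H} then gives $\mathcal H_\alpha[i]=\mathcal H_\alpha$. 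The heights $\beta(i)$ lie in $\mathcal H_\alpha(\emptyset)$, and the induction hypothesis applies exactly as stated, with no generalisation.

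There is also a local error in your ($\bigvee$) case. From $i\in\mathcal H_\eta(\emptyset)=C(\xi)$ with $\xi\succ\eta$, Proposition~\ref{prop:C-sets-psi}(b) only yields $i\prec\psi(\xi)$, not $i\prec\psi(\eta)$. You showed yourself that $\psi(\eta)\in\mathcal H_\eta(\emptyset)\cap\psi(\Gamma_{\Omega+Y})$, which is a counterexample to the inference as written. You must keep the stronger information $i\in\mathcal H_\alpha(\emptyset)$ that the rule supplies. Then $i\prec\psi(\alpha+1)\prec\psi(\eta)$ by Proposition~\ref{prop:H-closure-psi}, using $\beta\succ 0$, which holds because the rule requires $i\prec\beta$. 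Equivalently, $\mathcal H_\alpha(\emptyset)=C(\alpha+1)\subseteq C(\eta)$ since $\alpha+1\preceq\eta$, and Proposition~\ref{prop:C-sets-psi}(b) then applies to $C(\eta)$. This is the reasoning you used, correctly, for $|\varphi|\prec\psi(\eta)$ in the cut case.

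With these two repairs your proof coincides with the paper's. Your separate handling of cuts with $|\varphi|\prec\Omega$ (where both sides are $\Sigma^\Omega$) and your closure argument for the rank of $I^0_{\theta,\psi(\eta_0)}t$ below $\psi(\eta)=\Gamma_s$ are harmless alternatives to the paper's uniform treatment.
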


\begin{proof}
    Concerning the initial condition of Definition~\ref{def:H-proofs}, we get~$\eta\in\mathcal H_\alpha(\emptyset)\subseteq\mathcal H_\eta(\emptyset)$ by Remark~\ref{rmk:H-closure} and Lemma~\ref{lem:operators-monotone}. Proposition~\ref{prop:H-closure-psi} then yields $\psi(\eta)\in\mathcal H_\eta(\emptyset)$. Also, we have $k(\Delta)\subseteq\mathcal H_\alpha(\emptyset)\subseteq\mathcal H_\eta(\emptyset)$.

    We now argue by induction on~$\beta$ and distinguish cases according to the clauses from Definition~\ref{def:H-proofs}. The clauses~($\mathsf{Ax}$) and ($\mathsf F^+$) and~($\mathsf F^-$) cannot occur, due to the fact that $\Delta$ is an~$\mathcal L_1^\Omega$-sequent. Let us first consider an application of clause~($\bigwedge$) to a sentence~$\varphi\simeq\bigwedge_{i\prec\iota(\varphi)}\varphi_i\in\Delta$, where we have premises
    \begin{equation*}
        \mathcal H_\alpha[i]\vdash^{\beta(i)}_{\Omega+1}\Delta,\varphi_i
    \end{equation*}
    with $\beta(i)\prec\beta$ for all~$i\prec\iota(\varphi)$. Since~$\varphi$ is a $\Sigma^\Omega$-sentence, it must be a literal of~$\mathcal L_0$ or of the form $\varphi_0\land\varphi_1$ or $\forall x\,\psi$ or $\neg I^0_{\theta,\gamma}t$ with $\gamma\prec\Omega$ (cf.~Definition~\ref{def:disj}). In the last case we obtain $\iota(\varphi)=\gamma\in k(\Delta)\subseteq\mathcal H_\alpha(\emptyset)$. In the other cases we have $\iota(\varphi)\in\{0,2,\omega\}$, so that we always get
    \begin{equation*}
        \iota(\varphi)\in\mathcal H_\alpha(\emptyset)\cap\psi(\Gamma_{\Omega+Y}).
    \end{equation*}
    For any~$i\prec\iota(\varphi)$, Proposition~\ref{prop:H-closure-psi} now ensures $i\in\mathcal H_\alpha(\emptyset)$ and hence~$\mathcal H_\alpha[i]=\mathcal H_\alpha$. Given that $\varphi$ is a $\Sigma^\Omega$-sentence, we also see that the same holds for~$\varphi_i$. Thus the induction hypothesis yields
    \begin{equation*}
        \mathcal H_{\eta(i)}\vdash^{\psi(\eta(i))}_{\psi(\eta(i))}\Delta,\varphi_i\quad\text{with}\quad\eta(i)=\alpha+\omega^{\beta(i)}.
    \end{equation*}
    Due to~$\eta(i)\in\mathcal H_\alpha(\emptyset)$ we get $\psi(\eta(i))\prec\psi(\eta)$, again by Proposition~\ref{prop:H-closure-psi}. In particular, this allows us to apply weakening (see also Lemma~\ref{lem:operators-monotone}) in order to obtain
    \begin{equation*}
        \mathcal H_\eta[i]\vdash^{\psi(\eta(i))}_{\psi(\eta)}\Delta,\varphi_i\quad\text{for}\quad i\prec\iota(\varphi).
    \end{equation*}
    We can now reapply~($\bigwedge$) to conclude.

    Next, consider the case where ($\bigvee$) is used to infer a sentence $\varphi\simeq\bigvee_{j\prec\iota(\varphi)}\varphi_j\in\Delta$. Here we have a premise $\mathcal H_\alpha\vdash^\gamma_{\Omega+1}\Delta,\varphi_i$ for some $i\prec\min(\iota(\varphi),\alpha)$ with $i\in\mathcal H_\alpha(\emptyset)$ and $\gamma\prec\beta$. The induction hypothesis and weakening yield
    \begin{equation*}
    \mathcal H_\eta\vdash^{\psi(\xi)}_{\psi(\eta)}\Delta,\varphi_i\quad\text{with}\quad\xi=\alpha+\omega^\gamma\prec\alpha+\omega^\beta=\eta,
    \end{equation*}
    where we get $\psi(\xi)\prec\psi(\eta)$ as in the previous case. Crucially, $i\in\mathcal H_\alpha(\emptyset)$ together with $i\prec\iota(\varphi)\preceq\Omega$ entails $i\prec\psi(\alpha+1)\prec\psi(\eta)$ due to Proposition~\ref{prop:H-closure-psi} (as $i\prec\Omega$ amounts to $i\in\psi(\Gamma_{\Omega+X})$ by the paragraph after Assumption~\ref{as:Gamma-Omega-X}). This condition was required in clause~($\bigvee$) of Definition~\ref{def:H-proofs}. We now reapply this clause to conclude.

    In the penultimate case, clause~($\mathsf F^\Omega$) is used to introduce a formula~$I^0_{\varphi,\Omega}t\in\Delta$ from a premise
    \begin{equation*}
        \mathcal H_\alpha\vdash^\gamma_{\Omega+1}\Delta,\varphi[t,I^0_{\varphi,\Omega}]\quad\text{with}\quad\gamma\prec\beta.
    \end{equation*}
    Here $\varphi[t,I^0_{\varphi,\Omega}]$ is a $\Sigma^\Omega$-sentence, because $\varphi$ is a positive~$\mathcal L_0^X$-formula. We can thus apply the induction hypothesis. Together with weakening, it yields
    \begin{equation*}
        \mathcal H_\eta\vdash^{\psi(\xi)}_{\psi(\eta)}\Delta,\varphi[t,I^0_{\varphi,\Omega}]\quad\text{with}\quad\xi=\alpha+\omega^\gamma\prec\alpha+\omega^\beta=\eta,
    \end{equation*}
    where we get $\psi(\xi)\prec\psi(\eta)$ once again. Due to $\xi\in\mathcal H_\alpha(\emptyset)\subseteq\mathcal H_\eta(\emptyset)$, Proposition~\ref{prop:H-closure-psi} also yields $\psi(\xi)\in\mathcal H_\eta(\emptyset)$. We can thus use Proposition~\ref{prop:boundedness} (bounding) to get
    \begin{equation*}
        \mathcal H_\eta\vdash^{\psi(\xi)}_{\psi(\eta)}\Delta,\varphi[t,I^0_{\varphi,\psi(\xi)}].
    \end{equation*}
    In view of $I^0_{\varphi,\Omega}t\simeq\bigvee_{\delta\prec\Omega}\varphi[t,I^0_{\varphi,\delta}]\in\Delta$, we now conclude by an application of~($\bigvee$).

    Finally, we consider an application of ($\mathsf{Cut}$) with premises
    \begin{equation*}
        \mathcal H_\alpha\vdash^\gamma_{\Omega+1}\Delta,\varphi\quad\text{and}\quad\mathcal H_\alpha\vdash^\gamma_{\Omega+1}\Delta,\neg\varphi
    \end{equation*}
    for some~$\gamma\prec\beta$, where~$\varphi$ is an $\mathcal L_\omega^\Omega$-sentence with $|\varphi|\preceq\Omega$. The bound on $|\varphi|$ ensures that $\varphi$ is a $\Sigma^\Omega$-sentence or of the form~$\neg I^0_{\theta,\Omega}$. In the latter case $\neg\varphi\equiv I^0_{\theta,\Omega}t$ is~$\Sigma^\Omega$, so we may invoke duality (recall $\neg\neg\varphi\equiv\varphi$) to assume that~$\varphi$ is a~$\Sigma^\Omega$-sentence. By the induction hypothesis and weakening together with bounding, we get
    \begin{equation*}
        \mathcal H_\eta\vdash^{\psi(\xi)}_{\psi(\eta)}\Delta,\varphi^{\psi(\xi)}\quad\text{with}\quad\xi=\alpha+\omega^\gamma\prec\alpha+\omega^\beta=\eta
    \end{equation*}
    with~$\psi(\xi)\prec\psi(\eta)$ as before. On the other hand, Lemma~\ref{lem:Pi-inversion} yields
    \begin{equation*}
        \mathcal H_\xi\vdash^\gamma_{\Omega+1}\Delta,\neg\varphi^{\psi(\xi)}
    \end{equation*}
    after we have weakened~$\mathcal H_\alpha$ to get~$\psi(\xi)\in\mathcal H_\xi(\emptyset)$ (and where brackets may be omitted in view of~$\neg(\varphi^\delta)\equiv(\neg\varphi)^\delta$). Since $\neg\varphi^{\psi(\xi)}$ is a $\Sigma^\Omega$-sentence, we may now use the induction hypothesis to obtain
    \begin{equation*}
        \mathcal H_\eta\vdash^{\psi(\pi)}_{\psi(\eta)}\Delta,\neg\varphi^{\psi(\xi)}\quad\text{with}\quad\pi=\xi+\omega^\gamma=\alpha+\omega^\gamma\cdot 2\prec\eta,
    \end{equation*}
    where $\pi\in\mathcal H_\alpha(\emptyset)$ entails $\psi(\pi)\prec\psi(\eta)$ by Proposition~\ref{prop:H-closure-psi}. Due to $k(\varphi^{\psi(\xi)})\subseteq\mathcal H_\xi(\emptyset)$ (see the initial condition in Definition~\ref{def:H-proofs}), we have
    \begin{equation*}
        \left|\varphi^{\psi(\xi)}\right|\in\mathcal H_\xi\left(k\left(\varphi^{\psi(\xi)}\right)\right)=\mathcal H_\xi(\emptyset)
    \end{equation*}
    as well as $|\varphi^{\psi(\xi)}|\prec\Omega$. By Proposition~\ref{prop:H-closure-psi} we infer $|\varphi^{\psi(\xi)}|\prec\psi(\xi+1)\prec\psi(\eta)$. We can now apply~($\mathsf{Cut}$) to obtain $\mathcal H_\eta\vdash^{\psi(\eta)}_{\psi(\eta)}\Delta$, as desired.
\end{proof}

After collapsing, not only the proof heigth but also the cut rank is below~$\Omega$. This removes the obstacle to cut elimination that occurs at rank~$\Omega+1$ (see Lemma~\ref{lem:basic-cut-elim}). We can thus employ predicative cut elimination once again.

\begin{proposition}[$\mathsf{ATR}_0$]
    For~$\alpha,\beta\prec\Omega$ with $\beta\in\mathcal H(\emptyset)$ and any $\mathcal L_1^\Omega$-sequent~$\Delta$, we have
    \begin{equation*}
        \mathcal H\vdash^\gamma_{\alpha+\omega^\beta}\Delta\quad\Rightarrow\quad\mathcal H\vdash^{\varphi_\beta(\gamma)}_\alpha\Delta.
    \end{equation*}
\end{proposition}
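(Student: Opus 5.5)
This is the analogue of Proposition~\ref{prop:pred-cut-elim} below~$\Omega$, and I would prove it in the same two stages.

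\emph{Stage one} is the one-step cut elimination
\begin{equation*}
    \mathcal H\vdash^\gamma_{\rho+1}\Delta\quad\Rightarrow\quad\mathcal H\vdash^{\omega^\gamma}_\rho\Delta\qquad\text{for }\rho\prec\Omega,
\end{equation*}
which I would take over from Lemma~\ref{lem:basic-cut-elim}. There the only excluded ranks were those of the form $\Omega+\omega\cdot n$. A cut formula $\psi$ of rank $|\psi|=\rho\prec\Omega$ is an $\mathcal L_1^\Omega$-sentence containing only predicates $I^0_{\theta,\delta}$ with $\delta\prec\Omega$, since $I^0_{\theta,\Omega}$ has rank $\Omega$. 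Hence $\psi$ or $\neg\psi$ is disjunctive, and the inversion and reduction lemmas apply verbatim. The only point to check is that clause~($\mathsf F^\Omega$) never has a principal formula of rank below~$\Omega$. This holds because its principal formula is $I^0_{\varphi,\Omega}t$. Clauses ($\mathsf F^+$), ($\mathsf F^-$) and ($\mathsf{Ax}$) cannot occur, because $\Delta$ and all cut formulas are $\mathcal L_1^\Omega$-sentences when the cut rank is below~$\Omega$. The operator conditions are handled exactly as before: $\omega^\gamma\in\mathcal H(\emptyset)$ follows from Remark~\ref{rmk:H-closure}, and in the reduction step we use $\mathcal H[i]=\mathcal H$ for $i\in\mathcal H(\emptyset)$.

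\emph{Stage two} proves the proposition by main induction on~$\beta$ and side induction on~$\gamma$. This can be run as an arithmetical induction in $\mathsf{ATR}_0$, since $\Gamma_{\Omega+Y}$ is a well-order by Assumption~\ref{as:Gamma-Omega-X}. First note that $\varphi_\beta(\gamma)\in\mathcal H(\emptyset)$ by Remark~\ref{rmk:H-closure}, because $\beta,\gamma\in\mathcal H(\emptyset)$. All rules other than ($\mathsf{Cut}$) follow directly from the side induction hypothesis together with Proposition~\ref{prop:Veblen-order}(b), which gives $\varphi_\beta(\gamma')\prec\varphi_\beta(\gamma)$ for $\gamma'\prec\gamma$. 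In the ($\bigvee$) case one also needs $i\prec\varphi_\beta(\gamma)$, which follows from $i\prec\gamma\preceq\varphi_\beta(\gamma)$ by Proposition~\ref{prop:Veblen-order}(a).

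For a cut on $\psi$ with $|\psi|\prec\alpha+\omega^\beta$, the side induction hypothesis gives both premises with height $\varphi_\beta(\gamma_i)$ and cut rank~$\alpha$. If $|\psi|\prec\alpha$, we simply recut. Otherwise we write
\begin{equation*}
    |\psi|=\alpha+\omega^{\beta_n}+\ldots+\omega^{\beta_1}\qquad\text{with}\qquad\beta\succ\beta_n\succeq\ldots\succeq\beta_1.
\end{equation*}
Since $|\psi|\in\mathcal H(\emptyset)$, which follows from $k(\psi)\subseteq\mathcal H(\emptyset)$ and the closure properties, we get $\beta_k\in\mathcal H(\emptyset)$. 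Cutting yields height $\varphi_\beta(\overline\gamma)+1$ at rank $|\psi|+1$. Stage one then gives height $\delta_0=\omega^{\varphi_\beta(\overline\gamma)+1}$ at rank $|\psi|$. Next we apply the main induction hypothesis $n$ times, for $\beta_1,\dots,\beta_n$, with successively shorter sums in place of~$\alpha$. This gives $\delta_k=\varphi_{\beta_k}(\delta_{k-1})$.

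Finally, assuming $\beta\succ0$ (the case $\beta=0$ is stage one), Proposition~\ref{prop:Veblen-order}(c) yields $\delta_k\prec\varphi_\beta(\gamma)$ for all~$k$. The claim then follows by weakening.

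The step I expect to be the main obstacle is verifying that the remainders $\alpha+\omega^{\beta_n}+\ldots$ and the exponents~$\beta_k$ really lie in $\mathcal H(\emptyset)$. This membership is what the main induction hypothesis requires, so the decomposition must be compatible with the additive normal form from Definition~\ref{def:Gamma-operations}, including the absorption case where $\alpha$ ends with a summand smaller than $\omega^{\beta_n}$. Everything else should transfer routinely from the proof of Proposition~\ref{prop:pred-cut-elim}.
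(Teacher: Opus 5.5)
Your proposal is correct and follows the paper's approach: the paper's proof just says to argue as in Proposition~\ref{prop:pred-cut-elim}, which is exactly your one-step cut elimination below $\Omega$ followed by main induction on $\beta$ and side induction on $\gamma$, with the additive normal-form decomposition of the cut rank. The obstacle you flag does not arise, because this statement requires only $\beta_k\in\mathcal H(\emptyset)$ for the main induction hypothesis and not the remainders; you already obtain $\beta_k\in\mathcal H(\emptyset)$ from $|\psi|\in\mathcal H(\emptyset)$ via Lemma~\ref{lem:C-sets-closure}(a,b).
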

\begin{proof}
    One argues just like in the proof of Proposition~\ref{prop:pred-cut-elim}.
\end{proof}

As a typical application of ordinal analysis, we now derive a consistency result. It concerns the proof calculus from Definition~\ref{def:inf-proofs}. We note that the well-order~$X$ has been fixed in Assumption~\ref{as:Gamma-Omega-X}. Also recall that the empty sequent is a canonical representation of contradiction, since a sequent is interpreted as the disjunction over its elements.

\begin{corollary}[$\mathsf{ATR}_0$]
    We do not have $\vdash^x\emptyset$ for any~$x\in X$.
\end{corollary}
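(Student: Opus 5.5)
Assume towards a contradiction that $\vdash^x\emptyset$ holds for some $x\in X$. The empty sequent is trivially an $\mathcal L_1$-sequent, so Corollary~\ref{cor:embedding} (with the operator $\mathcal H_0$) yields
\begin{equation*}
    \mathcal H_0\vdash^{\beta}_{\Omega+1}\emptyset\quad\text{with}\quad\beta=\Gamma_{\Omega+1+x}.
\end{equation*}
The empty sequent is also a $\Sigma^\Omega$-sequent, and $0\in\mathcal H_0(\emptyset)$ by Lemma~\ref{lem:C-sets-closure}. We can therefore apply Theorem~\ref{thm:collapsing} with $\alpha=0$ and obtain
\begin{equation*}
    \mathcal H_\eta\vdash^{\psi(\eta)}_{\psi(\eta)}\emptyset\quad\text{with}\quad\eta=\omega^\beta.
\end{equation*}
Here $\psi(\eta)\prec\Omega$, so both the height and the cut rank now lie below $\Omega$.

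Next we remove the remaining cuts with the preceding proposition (predicative cut elimination below $\Omega$). Write the cut rank $\rho=\psi(\eta)$ in Cantor normal form as $\omega^{\beta_1}+\ldots+\omega^{\beta_n}$. We have $\rho\in\mathcal H_\eta(\emptyset)$ by the initial condition of Definition~\ref{def:H-proofs}, and Remark~\ref{rmk:H-closure} then gives $\beta_i\in\mathcal H_\eta(\emptyset)$ with $\beta_i\prec\Omega$. Applying the proposition $n$ times, peeling off the last summand each time, produces $\mathcal H_\eta\vdash^{\delta}_0\emptyset$ for some $\delta\prec\Omega$. Each step replaces the height $\gamma$ by $\varphi_{\beta_i}(\gamma)$, and the result remains in $\mathcal H_\eta(\emptyset)$ by closure. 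The precise value of $\delta$ does not matter.

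Finally, we show by induction on $\delta$ that $\mathcal H\vdash^\delta_0\Delta$ is impossible for the empty sequent. More generally, we claim it is impossible for any sequent consisting only of false closed literals of $\mathcal L_0$ (including formulas $\neg\overline Qt$), since these are empty disjunctions. Assumption~\ref{as:Gamma-Omega-X} gives well-foundedness of $\Gamma_{\Omega+Y}$, and the claim is arithmetical, so this induction is available in $\mathsf{ATR}_0$. We go through the clauses of Definition~\ref{def:H-proofs}. ($\mathsf{Cut}$) is excluded because rank $\prec0$ is impossible. ($\mathsf{Ax}$), ($\mathsf F^\Omega$), ($\mathsf F^\pm$) and ($\bigwedge$) all need a specific formula in the sequent that is absent. ($\bigvee$) would require a disjunctive formula with $\iota\succ0$, but the formulas present are empty disjunctions.

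The only step needing care is the middle one: checking the hypotheses of the predicative cut elimination proposition. These are that the exponents lie in $\mathcal H_\eta(\emptyset)$ and that the proof heights, which must stay controlled by the operator, remain legitimate. Both follow from the closure properties already recorded. The main conceptual work is done by collapsing, which is already established.
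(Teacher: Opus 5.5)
Your proof is correct and follows the paper's route exactly: embedding, then collapsing with $\alpha=0$ (note that $\omega^{\Gamma_{\Omega+1+x}}=\Gamma_{\Omega+1+x}$, so your $\eta$ is the paper's), then predicative cut elimination below $\Omega$, then the observation that no clause of Definition~\ref{def:H-proofs} derives $\emptyset$ at cut rank $0$. The paper is slightly shorter in two places. Since $\psi(\eta)=\Gamma_s$ is its own one-term normal form $\omega^{\psi(\eta)}$, a single application with $\alpha=0$ and $\beta=\psi(\eta)$ suffices. And no induction is needed at the end, because no clause can apply to the empty sequent at all.
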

\begin{proof}
Towards a contradiction, assume that we have $\vdash^x\emptyset$. By Corollary~\ref{cor:embedding} (embedding), we get
\begin{equation*}
    \mathcal H_0\vdash^{\eta}_{\Omega+1}\emptyset\quad\text{with}\quad\eta=\Gamma_{\Omega+1+x}.
\end{equation*}
Now Theorem~\ref{thm:collapsing} (collapsing) yields $\mathcal H_\eta\vdash^{\psi(\eta)}_{\psi(\eta)}\emptyset$. Due to the previous proposition (partial cut elimination), we can infer
\begin{equation*}
\mathcal H_\eta\vdash^\xi_0\emptyset\quad\text{with}\quad\xi=\varphi_{\psi(\eta)}(\psi(\eta)).
\end{equation*}
But this must be false, since no clause from Definition~\ref{def:H-proofs} can apply. Specifically, ($\mathsf{Cut}$) is excluded because $|\varphi|\prec 0$ never holds, while all other clauses would require that~$\emptyset$ contains some sentence.
\end{proof}

Finally, we can derive the main theorem of this paper, which was stated in the introduction (and which we recall in the following proof).

\begin{proof}[Proof of Theorem~\ref{thm:main}]
Working in $\mathsf{ACA}_0$, we have to show that $\Pi^1_1\text{-}\mathsf{CA}^\Gamma_0$ (recalled below) is equivalent to the well-ordering principle that $\psi(\Gamma_{\Omega+Y})$ is well-founded for every well-order~$Y$. The forward implication has been established in Theorem~\ref{thm:WO_principle}.

For the backward implication, we first boot up to~$\mathsf{ATR}_0$, which allows us to use all the previous results: By Proposition~\ref{prop:wo-princ-robust}, our well-ordering principle entails that $\Gamma_Y\subseteq\Gamma_{\Omega+Y}$ is well-founded for any well-order~$Y$. As shown by Rathjen~\cite{rathjen-atr}, it follows that each subset of~$\mathbb N$ is contained in a (countable coded) $\omega$-model of~$\mathsf{ATR}_0$. Given that the axioms of~$\mathsf{ATR}_0$ are $\Pi^1_2$, this entails that they are true.

The principle $\Pi^1_1\text{-}\mathsf{CA}^\Gamma_0$ asserts that each set $Z\subseteq\mathbb N$ lies in some $\omega$-model that satis\-fies $\mathsf{ATR}_0+\Pi^1_1\textsf{-CA}(Z)$. To show that this follows from our well-ordering principle, fix~$\mathcal Q=Z$ as the interpretation of the predicate symbol~$Q$ from the language $\mathcal L_\omega$ (see the paragraph before Definition~\ref{def:inf-proofs}). We will show that $\nvdash^x\emptyset$ holds for every well-order~$X$ and all~$x\in X$. By Corollary~\ref{cor:omega-completeness}, this yields an $\omega$-model of~$\mathsf{ID}^\Gamma$ in which $Q$ is interpreted by~$Z$. Due to Proposition~\ref{prop:ID-to-Pi11}, we obtain an $\omega$-model that satisfies $\mathsf{ATR}_0+\Pi^1_1\textsf{-CA}(Z)$, as desired.

To establish the open claim, consider an arbitrary well-order~$X$. We extend it into~$Y=1+X$ by adding a new minimal element. In view of Proposition~\ref{prop:wo-princ-robust}, our well-ordering principle entails that $\Gamma_{\Omega+Y}$ is well-founded. Thus assumption~\ref{as:Gamma-Omega-X} is satisfied and hereby discharged. The last corollary yields $\nvdash^x\emptyset$ for all~$x\in X$, which proves the claim from the previous paragraph.
\end{proof}

\bibliographystyle{amsplain}
\bibliography{Fraisse-PartialPred}

\end{document}